\newcommand{\citep}[1]{\cite{#1}}
\newcommand{\citet}[1]{\cite{#1}}
\newtheorem{thm}{Theorem}[section]
\newtheorem{lem}{Lemma}[section]
\newtheorem{cor}{Corollary}[section]
\newtheorem{prop}{Proposition}[section]
\newtheorem{hypo}{Working Hypothesis}[section]
\newcounter{subassumption}[asu]
\renewcommand{\p@subassumption}{\theasu}
\newtheoremstyle{remarkstyle}
  {}                    
  {}                    
  {\normalfont}         
  {}                    
  {\itshape}            
  {.}                   
  { }                   
  {}                    
\theoremstyle{remarkstyle}
\newtheorem{rem}{Remark}[section]
\definecolor{wjs}{RGB}{200,0,50}
\newcommand{\wjs}[1]{\textcolor{wjs}{[Weijie: #1]}}
\newcommand{\lx}[1]{\textcolor{magenta}{[Xiang: #1]}}
\definecolor{hyw}{RGB}{153,000,000}
\long\def\new#1{\bgroup\color{black}#1\egroup}
\def\floor#1{\lfloor #1 \rfloor}
\def\1{\bm{1}}
\def\eps{{\varepsilon}}
\def\rd{{\textnormal{d}}}
\def\re{{\textnormal{e}}}
\def\vx{{\bm{x}}}
\def\vy{{\bm{y}}}
\DeclareMathAlphabet{\mathsfit}{\encodingdefault}{\sfdefault}{m}{sl}
\SetMathAlphabet{\mathsfit}{bold}{\encodingdefault}{\sfdefault}{bx}{n}
\def\R{{\RB}}
\renewcommand{\xi}{\zeta}
\def\0{{\bf 0}}
\def\1{{\bf 1}}
\def\MM{{\mathcal M}}
\def\AM{{\mathcal A}}
\def\IM{{\mathcal I}}
\def\PM{{\mathcal P}}
\def\SM{{\mathcal S}}
\def\TM{{\mathcal T}}
\def\UM{{U}}
\def\WM{{\mathcal W}}
\def\RB{{\mathbb R}}
\DeclareMathOperator{\EB}{\mathbb{E}}
\def\PB{{\mathbb P}}
\def\FPM{\PM_{\Delta}}
\def\SPM{\overline{\PM}_{\Delta}}
\def\TPM{\PM_{\Delta}'}
\def\TPMo{\PM_{\Delta_0}'}
\def\Psecond{P_{(2)}}
\def\Ptop{P_{(1)}}
\def\Rlimit{\bR_{\Delta}}
\newcommand{\KL}{D_{\mathrm{KL}}}
\newcommand{\Var}{\mathrm{Var}}
\def\rd{{\mathrm{d}}}
\def\re{{\mathrm{e}}}
\def\Ent{{\mathrm{Ent}}}
\def\Key{{\mathtt{Key}}}
\def\gum{{\mathrm{gumb}}}
\def\Simplex{{\mathrm{Simp}}}
\def\token{{w}}
\def\Voca{{\WM}}
\newcommand\bP{\bm{P}}
\def\SMmax{\SM^{\mathrm{gum}}}
\def\SMinv{\SM^{\mathrm{inv}}}
\def\dif{{\mathrm{dif}}}
\newcommand{\argmax}{{\rm argmax}}
\newcommand{\gumbel}{{\rm gum}}
\def\Ydif{Y^{\mathrm{dif}}}
\def\Yars{Y^{\mathrm{gum}}}
\def\bR{\overline{R}}
\newcommand{\Ext}{{\rm Ext}}
\def\Sars{T^{\mathrm{ars}}}
\def\hid{h_{\mathrm{id}}}
\def\hllr{h_{\mathrm{llr}}}
\def\hars{h_{\mathrm{ars}}}
\def\hind{h_{\mathrm{ind},\delta}}
\def\hindo{h_{\mathrm{ind},\mathrm{e}^{-1}}}
\def\hneg{h_{\mathrm{neg}}}
\def\hlog{h_{\mathrm{log}}}
\def\hopt{{h_{\Delta}^{\star}}}
\def\hoptars{{h_{\mathrm{gum}, \Delta}^{\star}}}
\def\hoptdif{{h_{\mathrm{dif}, \Delta}^{\star}}}
\title{A Statistical Framework of Watermarks for Large Language Models: Pivot, Detection Efficiency and Optimal Rules}
\author{
{Xiang Li\thanks{University of Pennsylvania; Email: \texttt{lx10077@upenn.edu}. } } 
\and
{Feng Ruan\thanks{Northwestern University; Email: \texttt{fengruan@northwestern.edu}. }} 
\and
{Huiyuan Wang\thanks{University of Pennsylvania; Email: \texttt{huiyuanw@upenn.edu}. }  }  
\and
{Qi Long\thanks{University of Pennsylvania; Email: \texttt{qlong@upenn.edu}. }}
\and
{Weijie J.~Su\thanks{University of Pennsylvania; Email: \texttt{suw@wharton.upenn.edu}. }}
}
\date{March 28, 2024}
\begin{document}

\maketitle

\begin{abstract}
Since ChatGPT was introduced in November 2022, embedding (nearly) unnoticeable statistical signals into text generated by large language models (LLMs), also known as watermarking, has been used as a principled approach to provable detection of LLM-generated text from its human-written counterpart. In this paper, we introduce a general and flexible framework for reasoning about the statistical efficiency of watermarks and designing powerful detection rules. Inspired by the hypothesis testing formulation of watermark detection, our framework starts by selecting a pivotal statistic of the text and a secret key---provided by the LLM to the verifier---to control the false positive rate (the error of mistakenly detecting human-written text as LLM-generated). Next, this framework allows one to evaluate the power of watermark detection rules by obtaining a closed-form expression of the asymptotic false negative rate (the error of incorrectly classifying LLM-generated text as human-written). Our framework further reduces the problem of determining the optimal detection rule to solving a minimax optimization program. We apply this framework to two representative watermarks---one of which has been internally implemented at OpenAI---and obtain several findings that can be instrumental in guiding the practice of implementing watermarks. In particular, we derive optimal detection rules for these watermarks under our framework. These theoretically derived detection rules are demonstrated to be competitive and sometimes enjoy a higher power than existing detection approaches through numerical experiments.

\end{abstract}

\newpage





\section{Introduction}\label{sec:intro}





Large language models (LLMs) have emerged in recent years as a disruptive technology to generate human-like text and other media \citep{touvron2023llama,openai2023,achiam2023gpt}. While this reality enhances productivity in many sectors, the mismatch between ownership and generation of content could lead to several unwanted outcomes:
\begin{enumerate}

\item[] \textit{Exacerbating misinformation}. The ability of LLMs to generate a large amount of text in parallel can be easily leveraged to exacerbate the spread of misinformation \citep{zellers2019defending,weidinger2021ethical}, which can be enabled by deploying automated bots on social media platforms \citep{starbird2019disinformation}. It may also facilitate fraud and deception by pretending to be humans interacting with their relatives and acquaintances.

\item[] \textit{Facade of AI-assisted education}. LLMs impose challenges to education because students may use powerful LLMs to write essays for themselves \citep{stokel2022ai,milano2023large}. This deprives students of opportunities to practice their own skills and creates inequalities among students depending on the capabilities of the LLMs they access.

\item[] \textit{Data pollution}. The internet will soon consist of more LLM-generated text than human-written text. If LLM-generated text is indiscriminately mixed with human-written text for training, it becomes difficult to create high-quality data for developing next-generation LLMs \cite{radford2023robust,shumailov2023curse,das2024under}.

\end{enumerate}

An initial effort to mitigate these problems has been to leverage specific patterns of LLM-generated text to distinguish it from human-generated text \citep{gptzero2023,zerogpt2023,mitchell2023detectgpt}. However, this approach has become increasingly ineffective as models such as {ChatGPT-4}, {Claude 3}, {Gemini 1.5 Pro}, and many others have reached a level that makes it significantly difficult, if not impossible, to distinguish their generated text from human-written text \cite{weber2023testing}. 

A more viable approach is to watermark text by embedding a signal into the LLM-generated text in a manner that allows the watermark to be \textit{provably} detected \cite{christ2023undetectable,scott2023watermarking}. The property of provable detection is crucial because it allows the verifier to identify LLM-generated text for malicious purposes without relying on assumptions about the text, which may not always hold. Moreover, a reasonable watermarking scheme should be approximately unbiased, meaning it does not significantly distort the meaning or style of the original text.


The necessity of watermarking LLM-generated text was highlighted in the Biden administration's October 2023 executive order, which incorporated proposals for watermarking LLM-generated text and other AI-generated content \cite{biden2023executive}. As part of the executive order, the U.S. Department of Commerce will help develop standards to watermark LLM-generated content. Accordingly, OpenAI, Google, Meta, and other tech giants have pledged to develop watermarking systems \cite{reuters2023aiwatermark}.

This reality has made it imperative for researchers to develop watermarking methods for LLM-generated text. Within a year since 2023, numerous watermarks have been proposed \cite{kirchenbauer2023reliability,fernandez2023three,kuditipudi2023robust,hu2023unbiased,wu2023dipmark,zhao2024permute,zhao2024provable,liu2024adaptive,giboulot2024watermax,xie2024debiasing}. A common feature of these methods is leveraging the probabilistic nature of LLMs. In essence, these methods incorporate pseudorandomness into the text-generation process of LLMs, and the coupling between the LLM-generated text and the pseudorandomness serves as a signal that can be used for detecting the watermark. The constructed signal becomes pronounced for detection only when the pseudorandom numbers are known, making it practically difficult for one to remove the watermark without access to the pseudorandom numbers \cite{christ2023undetectable}.

To understand how watermarks work for LLMs in more detail, we must first introduce the concept of ``tokenization'' in LLM text processing. Tokenization involves breaking down the text into smaller units called tokens, which are informally known as sub-words. These tokens can be words, parts of words, or even punctuation marks. For example, the sentence ``Hello, world!'' when tokenized, might be split into four tokens: [``Hello'', ``,'', `` world'', ``!'']\footnote{Refer to the website \url{https://platform.openai.com/tokenizer} for user-customized examples of tokenization.}. An LLM generates each token sequentially by sampling from a probability distribution conditioned on prior tokens, among other things. Typically, the size of the token vocabulary is of the order of $10^4$ and varies with language models. Letting $\Voca$ denote the vocabulary of all tokens, for example, $|\Voca|=50,272$ for the OPT-1.3B model \citep{zhang2022opt}, $|\Voca|=32,000$ for the LLaMA series models \citep{touvron2023llama}, and $|\Voca|=50,257$ for GPT-2 and GPT-3.5 series models \citep{radford2019language,brown2020language}.


After generating text in the form of a token sequence, $\token_1 \token_2 \cdots \token_{t-1}$, the (unwatermarked) LLM generates the next token $\token_t$ according to a multinomial distribution $\bP_t := (P_{t,w})_{w \in \Voca}$ on the vocabulary $\Voca$, satisfying $\sum_{w \in \Voca} P_{t,w} = 1
$.\footnote{As a convention, throughout this paper three subscripts appear: $\text{letter}_{t}, \text{letter}_{\token}$, and $\text{letter}_{t, \token}$, which correspond to step $t$, token $\token$, and token $\token$ at step $t$, respectively.}
We call $\bP_t$ the \textit{next-token prediction} (NTP) distribution at step $t$, and it depends on all prior generated tokens, the user-supplied prompt, as well as system prompts that are hidden from users \citep{vaswani2017attention,radford2019language,brown2020language}.

In contrast, a watermarked LLM generates the next token that is jointly determined by a pseudorandom variable and the NTP distribution. Let $\xi_t$ denote the pseudorandom variable at step $t$, which is available only to the verifier. Formally, the watermarked LLM samples a token according to the rule \citep{hu2023unbiased,wu2023dipmark,li2024robust}
\[
\token_t := \SM(\bP_t, \xi_t),
\]
where $\SM$ is a (deterministic) decoder function. To achieve approximate unbiasedness, we require that the probability distribution of $\token_t \equiv \SM(\bP_t, \xi_t)$ over the randomness\footnote{Strictly speaking, $\xi_t$ has no randomness. However, modern cryptographic theories ensure that $\xi_t$ behaves very much like a random variable. See more details in Section \ref{sec:overview}.} embodied in $\xi_t$ is close to $\bP_t$, conditional on all previous tokens, $\token_1\cdots\token_{t-1}$ \cite{hu2023unbiased,kuditipudi2023robust}. In this regard, an unbiased watermark roughly corresponds to a sampling method from multinomial distributions.

With the generated text now indistinguishable from that of the unwatermarked LLM, it seems at first glance hopeless to obtain provable detectability for watermarked text \cite{sadasivan2023can,zhang2023watermarks}. Interestingly, detection can be made possible by carefully designing the decoder $\SM$ to impose a coupling relationship between the token and pseudorandom variable, even without knowing the NTP predictions \citep{christ2023undetectable}.\footnote{We cannot use the probabilities $\bP_t$'s as the verifier in general does not have access to the LLM that generates the \new{tokens}. Moreover, the prompt for generating the text is unavailable to the verifier, hence the verifier cannot obtain $\bP_t$ even having access to the LLM.} In the following, we elaborate on this point by considering perhaps the simplest example of a watermark that achieves both unbiasedness and provable detectability.

\paragraph*{A baby watermark} Envision an LLM that involves only two tokens, 0 and 1---that is, $\Voca = \{0, 1\}$. Let $\bP_t = (P_{t,0}, P_{t,1})$ denote the NTP distribution at step $t$, and let $\xi_t$ be i.i.d.~copies of the standard uniform random variable $U(0, 1)$. Set the decoder as follows:
\begin{equation}\label{eq:baby_wmk}
\SM(\bP, \xi) = 
\begin{cases}
0 & \text{ if } \xi \le P_0 \\
1 & \text{ otherwise}.
\end{cases}
\end{equation}
This watermark is unbiased. Intuitively, if $\xi_t$ is large, then $w_t$ is more likely to be 1 instead of 0, and vice versa. This intuition suggests using the following statistic for detecting the watermark:
\begin{equation}\label{eq:simple-cov}
\sum_{t=1}^n (2w_t-1)(2\xi_t-1),
\end{equation}
which measures the correlation between the tokens and pseudorandom variables. When the watermark is present, this statistic tends to be larger in distribution than when the watermark is absent. One can conclude that a watermark is detected if this statistic is above a certain threshold. Despite being intuitive, however, this statistic is ad-hoc, and one cannot rule out the possibility of a better detection rule.


To better understand practical watermarking schemes, we turn to real-world LLMs where the size of the token vocabulary is very large. As a recap, at the core of an unbiased watermark is a sampling method for multinomial distributions. Perhaps the two most common sampling methods are the Gumbel-max trick \citep{maddison2014sampling,jang2016categorical} and the inverse transform \citep{devroye2006nonuniform}. Interestingly, these two sampling methods correspond precisely to two important and representative watermarks \citep{kuditipudi2023robust,scott2023watermarking}.

\paragraph*{Gumbel-max watermark \citep{scott2023watermarking}}
Let $\xi = (U_\token)_{\token \in \Voca}$ consist of $|\Voca|$ i.i.d.~copies of $U(0, 1)$. A version of the Gumbel-max trick \citep{gumbel1948statistical} states that $\arg\max\limits_{\token \in \Voca} \frac{\log U_w}{P_w}$
follows the NTP distribution $\bP \equiv (P_{\token})_{\token \in \Voca}$.\footnote{\new{It is noteworthy that the Gumbel-max trick has a broad range of applications, including partition function estimation \citep{hazan2012partition}, ranking under the random utility model \citep{soufiani2012random}, and computational sampling in machine learning \citep{papandreou2011perturb,maddison2014sampling,jang2016categorical}.}} 
Recognizing this fact, Scott Aaronson proposed the following decoder \citep{scott2023watermarking}:
\begin{equation}\label{eq:it}
\SMmax(\xi, \bP) := \arg\max_{\token \in \Voca} \frac{\log U_w}{P_w},
\end{equation}
which is, by definition, unbiased \citep{fernandez2023three,piet2023mark,zhao2024permute}. This watermark has recently been implemented internally at OpenAI \citep{scott2023watermarking}. Aaronson suggested declaring the presence of the watermark if the following statistic is above a certain threshold: $\Sars_n = -\sum_{t=1}^n \log(1- U_{t, \token_t})$. The intuition is that, when the watermark is employed, \eqref{eq:it} implies that a token $w_t$ is more likely to be selected when its associated pseudorandom number $\xi_{t, \token_t}$ is large. In contrast, when the text is written by a human, $\xi_{t, \token_t}$ would not be larger than other entries in the distribution at step $t$. Despite being intuitive, it is worth mentioning that there are countless detection rules capable of capturing this distribution shift. In particular, it is not clear whether this detection rule is optimal in any sense.

\paragraph*{Inverse Transform Watermark \citep{kuditipudi2023robust}} It is well-known that any univariate distribution can be sampled by applying the inverse cumulative distribution function (CDF) to $U(0,1)$. Given an NTP distribution $\bP$ and $\pi$ that maps all tokens in $\Voca$ to a permutation of $1, 2, \ldots, |\Voca|$, consider the multinomial distribution with probability mass $P_{\pi^{-1}(i)}$ at $i$ for $1 \le i \le |\Voca|$. The CDF of this distribution takes the form $F(x; \pi) = \sum_{\token' \in \Voca} P_{\token'} \cdot {\mathbf{1}}_{\{\pi(\token')\le x\}}.$
Taking as input $U \sim U(0,1)$, the generalized inverse of this CDF is defined as $F^{-1}(U; \pi) = \min\{ i:\sum_{\token' \in \Voca} P_{\token'} \cdot
{\mathbf{1}}_{\{\pi({\token'})\le i\}}\ge U\},$
which by construction follows the multinomial distribution $\bP$ after applying the permutation $\pi$. Making use of this fact, \citep{kuditipudi2023robust} proposed the inverse transform watermark with the following decoder:
\begin{equation*}
\SMinv(\bP, \xi) :=\pi^{-1}(F^{-1}(U; \pi)),
\end{equation*}
where the pseudorandom variable $\xi := (\pi, U)$ and the permutation $\pi$ is uniformly at random. By definition, this watermark is unbiased. To detect the watermark, \citep{kuditipudi2023robust} proposed detection rules that, roughly speaking, examine the absolute difference between $U_t$ and the rank of the generated token, $\pi_{t}(\token_t)$, and sum the differences across the token sequence (see details in Section~\ref{sec:inverse}). When the text is watermarked, the difference turns out to be small because the involved two random variables are highly dependent. Similar to the Gumbel-max watermark, the detection of this watermark could potentially benefit from a more principled derivation to enhance its power.


\subsection{This Paper}
From a statistical viewpoint, a watermark scheme's performance at the detection phase is evaluated by two competing criteria. The first is how likely it would mistakenly detect human-written text as LLM-generated text, and the second is the probability that it would mistakenly classify LLM-generated text as its human-written counterpart. This perspective relates watermarks to hypothesis testing, which formally calls the two aforementioned errors Type I error and Type II error, respectively. In this regard, the effectiveness of the Gumbel-max watermark and inverse transform watermark, even including the baby watermark, is not clear yet, though \citep{fernandez2023three,piet2023mark} compared their efficiency through empirical experiments. For example, even though they come with detection rules that seem intuitive, it is not clear if they are statistically optimal in the sense of having the optimal trade-off between Type I and Type II errors. If they are not optimal, it is of interest to find a better detection rule.

In general, we wish to have a general and flexible framework that can guide the development of watermarks through optimizing its detection phase and assessing watermarks in a principled manner. Specifically, the challenge lies in how we can provably control the Type I error for any given watermark, considering that the NTP distributions, which are not accessible to the verifier, vary from token to token. Once Type I error control is achieved, the next question is how to evaluate the Type II error, preferably through a closed-form expression, which also hinges on the unknown NTP distributions. Having known both the Type I and Type II errors, the final step involves comparing watermark detection rules to ultimately identify the optimal detection rule based on our knowledge of the LLM.

In this paper, we address these questions and challenges in a unified way by making the following contributions.

\begin{enumerate}
\item[]

\textbf{A Statistical framework for watermarks.} A major contribution of this paper is a general statistical framework for developing statistically sound watermark detection rules through precisely evaluating the Type I and Type II errors from the hypothesis testing viewpoint. Under this framework, Type I error is controlled by leveraging a pivotal statistic that is distributionally independent of the NTP distributions under the null. This framework is accompanied by a technique for evaluating the asymptotic Type II error using large deviation theory, and moreover, relies on the notion of class-dependent efficiency to tackle the challenge of unknown and varying NTP distributions. Finally, this framework formulates the problem of finding the most powerful detection rule as a minimax optimization program.

This framework is formally developed in Section~\ref{sec:overview}.

\item[]

\textbf{Application to the Gumbel-max watermark.} We apply our framework to Aaronson's Gumbel-max watermark in Section~\ref{sec:gumbel}. Our main finding is that Aaronson's detection rule is suboptimal in the sense that its class-dependent efficiency is relatively low. Moreover, by maximizing the class-dependent efficiency, we obtain the optimal detection rule, which admits a simple analytical expression. This optimal detection rule is shown to outperform existing rules in numerical experiments. Underlying these results is a technique that can reduce the optimality problem to a convex geometry problem for the Gumbel-max watermark, which is a contribution of independent interest to future research on watermarks.

\item[]

\textbf{Application to the inverse transform watermark.} Next, we apply our framework to the inverse transform watermark in Section~\ref{sec:inverse}. Our main finding is twofold. First, we overcome a significant challenge in applying our framework to analyze this watermark by deriving an asymptotic distribution when the text is watermarked. Second, we obtain the optimal detection rule for the inverse transform watermark in a closed-form expression by maximizing its class-dependent efficiency. Our numerical experiments corroborate its efficiency.

\end{enumerate}

\subsection{Related Work}

The most closely related work to ours is the Gumbel-max watermark \cite{scott2023watermarking} and the inverse transform watermark \cite{kuditipudi2023robust}. In \cite{zhao2024permute,fernandez2023three}, the authors introduced unbiased watermarks that are robust to probability perturbations or multi-bit processing. Other unbiased watermarks in this fast-growing line of research include \citep{christ2023undetectable,wu2023dipmark,hu2023unbiased}. In addition, a popular example is the red-green list watermark, which splits the vocabulary into red-green lists based on hash values of previous n-grams and slightly increases the probability of green tokens embedding the watermark \citep{kirchenbauer2023watermark,kirchenbauer2023reliability,zhao2024provable,liu2024adaptive,cai2024towards}. In the detection phase, a high frequency of green tokens suggests the text is LLM-generated. This type of watermark is biased since the NTP distributions have been altered, thereby leading to performance degradation of the LLM.

In contrast, there is much less work on the theoretical front. \new{
A notable exception is the work of \cite{huang2023towards}, which approached watermark detection from the perspective of composite independence testing. Their framework requires model providers to offer random rejection regions. In contrast, our approach employs a pivotal statistic to detect distributional shifts between human-written and LLM-generated text, allowing verifiers to choose any rejection region. While both approaches aim to minimize a form of worst-case Type II error, the optimal detection rule proposed by \citep{huang2023towards} is hindered by a non-vanishing Type II error due to the consideration of many worst-case scenarios, and it is computationally inefficient because of the exponentially large rejection regions required.\footnote{See Theorem 3.10 of \citep{huang2023towards} and the subsequent discussion.} Conversely, our optimal rule generally achieves a vanishing Type II error by addressing fewer worst-case scenarios, benefiting from the well-regularized null behavior of pivotal statistics, and is computationally efficient due to its sum-based structure.}

Research on watermarking text has been conducted well before the advent of modern language models. This body of research focuses on watermarking text by modifying it to introduce specific patterns that are unlikely to be noticeable to readers. This includes synonym substitution \citep{topkara2006hiding}, syntactic restructuring \citep{atallah2001natural}, and linguistic steganography \citep{cox2007digital}. A common weakness of these approaches lies in their biasedness and vulnerability to attacks that aim to remove watermarks \citep{cayre2005watermarking,zhou2009security}.

In a different direction, many methods have been proposed to detect text generated by LLMs---often not watermarked---from human-written counterparts. A common feature of these methods is to examine the complete context, linguistic patterns, and other potentially revealing markers in the given text to assess whether it is likely LLM-generated. The simplest method is to build a classifier using synthetic and human text data, which is adopted by some commercial detection platforms \citep{gptzero2023,zerogpt2023}. Another category is training-free and leverages the inherent stylistic differences between human and machine writing without specific training data, using techniques such as log probability curvature \citep{mitchell2023detectgpt,bao2023fast}, divergent $n$-gram analysis \citep{yang2023dna}, and intrinsic dimension estimation \citep{tulchinskii2024intrinsic}. However, \citep{weber2023testing} find that most post-hoc detection methods are neither accurate nor reliable and suffer from a significant bias towards classifying the output as human-written rather than detecting LLM-generated text. Furthermore, these methods have proven fragile to adversarial attacks and biased against non-native English writers \citep{krishna2024paraphrasing,sadasivan2023can,liang2023gpt}.


\section{A Statistical Framework for Watermark Detection}\label{sec:overview}

In this section, we formally introduce our framework that enables statistical analysis of watermarks. Our focus is on the development of effective techniques for assessing the statistical efficiency of the watermarks accompanying this framework. Henceforth, in this paper, we write $\token_{1:n} := \token_1 \cdots \token_n$ and $\xi_{1:n} := \xi_1 \cdots \xi_n$ for the text and associated pseudorandom variables, respectively.

\subsection{Working Hypotheses}
\label{sec:working}

The problem of determining whether a watermark is present or not in the text can be formulated as a hypothesis testing problem~\citep{huang2023towards,chakraborty2023possibilities,kirchenbauer2023watermark,wu2023dipmark,giboulot2024watermax}: 
\begin{equation}\label{eq:original_test}
H_0: \token_{1:n}~\text{is written by a human}~~~~~~~H_1: \token_{1:n}~\text{is written by a watermarked LLM}.
\end{equation}

In addition to the text, this hypothesis testing problem also uses the pseudorandom variables as data. A unified way to represent $\xi_t$ based on existing constructions \citep{wu2023dipmark,kirchenbauer2023watermark,kirchenbauer2023reliability,piet2023mark,fernandez2023three} is to take the form 
\begin{equation}\label{eq:am_gen}
\xi_t = \AM(\token_{1:(t-1)}, \Key),
\end{equation}
where $\Key$ denotes a secret key that will be passed to the verifier. The (deterministic) hash function $\AM$ maps any token sequence and a key to a pseudorandom number that will be used to generate the next token. More precisely, the watermarked LLM generates the next token according to the rule
\begin{equation}\label{eq:s_deco}
\token_t := \SM(\bP_t, \xi_t),
\end{equation}
for some (deterministic) decoder $\SM$. Recall that $\bP$ denotes the probability distribution of the next token generated by the (unwatermarked) LLM.

With these notations in place, a watermark can be formally presented by the tuple $(\AM, \SM, \Key)$. To lay a solid footing of watermarks on statistical hypothesis testing, we need two working hypotheses.

\begin{hypo}[Soundness of pseudorandomness]\label{hypo:cryptography}
In the watermarked LLM, the pseudorandom variables $\xi_{1:n}$ constructed above are i.i.d.~copies of a random variable. Furthermore, $\xi_t$ is (statistically) independent of $\token_{1:(t-1)}$.
\end{hypo}

Working Hypothesis \ref{hypo:cryptography} is grounded purely in cryptographic considerations. In cryptography, there are well-established approaches to efficiently constructing and computing the pseudorandom number as a function of text and the secret key~\citep{barak2021book,paar2009understanding,stinson2005cryptography,schneier1996applied,katz2011introduction}. The pseudorandom number is very sensitive to the key, making it computationally \textit{indistinguishable} from its truly random counterpart without knowledge of the key.\footnote{\new{The phrase ``computationally indistinguishable'' means that no polynomial-time algorithm can distinguish the pseudorandom number from its truly random counterpart without knowledge of the key.}} Specifically, although $\xi_t$ is completely determined by the prior text $\token_{1:(t-1)}$ (and the secret key), a run of the hash function $\AM$ could effectively introduce fresh randomness, thereby making $\xi_t$ statistically independent of $\token_{1:(t-1)}$. Note that, during detection, the communication cost is merely to pass the secret key to the verifier as the prior tokens are publicly available. 

Hereafter, we regard $\xi$ as a random variable, which allows us to formally define the \textit{unbiasedness} of a watermark.
We say a watermark is unbiased if, for any multinomial distribution $\bP$, $\SM(\bP, \xi)$ follows $\bP$. That is, for any NTP distribution $\bP$ and token $\token \in \Voca$,
\[
\PB\left(\SM(\bP, \xi)= \token\right) = P_{w},
\]
where the expectation is taken over the randomness embodied in $\xi$. Together with the joint independence of $\xi_t$'s across the sequence of tokens, unbiasedness holds at every step conditional on prior tokens.


Our next working hypothesis concerns the joint distribution of $\token_{1:n}$ and $\xi_{1:n}$ when the text is written by a human.

\begin{hypo}[Intrinsic nature of human randomness]\label{hypo:human}
Let $\token_{1:n}$ be a sequence of tokens generated by a human who has no knowledge of the secret key. Then, the human-generated token $\token_t$ and $\xi_t$ are (statistically) independent conditional on $(\token_{1:(t-1)},\xi_{1:(t-1)})$, for all $1 \le t \le n$.
\end{hypo}

\begin{rem}
To clear up any confusion, we remark that the verifier uses the human-generated tokens $\token_1, \ldots, \token_n$ to compute the random variables $\xi_{1:n}$ in \eqref{eq:am_gen}. In particular, Working Hypothesis \ref{hypo:cryptography} remains valid for human-generated text, due to the construction of the hash function $\AM$. 
\end{rem}

In particular, this working hypothesis shows that, for human-written text, the token is not generated according to \eqref{eq:s_deco}. The rationale of this working hypothesis is that how a human writes text is intrinsically random and cannot be captured by pseudorandomness. Therefore, the human-generated token $\token_t$ has nothing to do with $\xi_t$. 
Moreover, one can also argue for this working hypothesis by recognizing that it is practically impossible for a human to generate text such that $\xi_t$ and $\token_t$ are dependent because the secret key is not available. This is even the case for a different LLM without having the secret key because it is computationally infeasible to replicate the pseudorandomness without the key.

The two working hypotheses have been adopted by the literature, albeit not as explicit as our treatment. For example, \citep{wu2023dipmark,zhao2024permute} assumed conditions on the pseudorandom hash function so that the first working hypothesis is valid.\footnote{In particular, they assumed that $\AM(\token_{1:n}, \Key)$ is i.i.d.~for any $\token_{1:n}$.} Other works directly impose distributional assumptions on some summary statistics to effectively satisfy the two working hypotheses~\citep{kirchenbauer2023watermark,kirchenbauer2023reliability,zhao2024provable,fernandez2023three}. As a departure from these works, \citep{kuditipudi2023robust} considered a hash function of the form $\xi_t = \AM(t, \Key)$. Consequently, the independence between the pseudorandom number and prior tokens follows by construction. 
However, it is worthwhile mentioning that this form of the hash function hampers computational efficiency in detection due to the necessity of iterating through the entire text sequence multiple times.


\subsection{Pivotal Statistics}

Understanding the hypothesis testing problem \eqref{eq:original_test} requires analyzing the differences between the joint distribution of $(\token_{1:n}, \xi_{1:n})$ under Working Hypotheses \ref{hypo:cryptography} and \ref{hypo:human}. This can be seen by first decomposing the joint probability density of $(\token_{1:n}, \xi_{1:n})$
into a product of conditional probabilities:\footnote{As an abuse of notation, $\PB(\cdot)$ is considered the probability density function when applied to a continuous variable, and probability mass function when applied to a discrete variable.}
\begin{equation}
\label{eqn:expansion-of-joint-distribution-into-conditionals}
\PB(\token_{1:n}, \xi_{1:n}) = \prod_{t=1}^n \PB(\token_t, \xi_t \mid \token_{1:(t-1)}, \xi_{1:(t-1)}).
\end{equation}
\begin{itemize}
\item Under $H_0$, Working Hypotheses \ref{hypo:cryptography} and \ref{hypo:human} show that $\token_t$ and $\xi_t$ are independent given $(\token_{1:(t-1)}, \xi_{1:(t-1)})$. Hence,\footnote{Here, we conceptualize a human as an LLM, using a multinomial distribution to model the selection of the next token based on prior tokens.}  
\[
\PB(\token_t, \xi_t \mid \token_{1:(t-1)}, \xi_{1:(t-1)}) = \PB(\token_t\mid \token_{1:(t-1)}, \xi_{1:(t-1)}) \PB(\xi_t \mid \token_{1:(t-1)}, \xi_{1:(t-1)}) = P_{t, \token_t} \cdot \PB_{\xi}(\xi_t).
\]
For example, the display above is equal to $P_{t, \token_t}$ for the baby watermark defined in \eqref{eq:baby_wmk}.

\item Under $H_1$, $\token_t$ and $\xi_t$ are dependent given $(\token_{1:(t-1)}, \xi_{1:(t-1)})$. By Working Hypothesis \ref{hypo:cryptography} and the decoding construction \eqref{eq:s_deco}, we have 
\begin{equation*}
\PB(\token_t, \xi_t \mid \token_{1:(t-1)}, \xi_{1:(t-1)}) = 
\begin{cases}
\PB_{\xi}(\xi_t) & \text{ if }  \SM(\bP_t, \xi_t) = \token_t\\
0 & \text{ if }  \SM(\bP_t, \xi_t) \ne \token_t
\end{cases}
\end{equation*}
because the decoder $\SM$ is deterministic. For example, the baby watermark satisfies $\PB(\token_t, \xi_t \mid \token_{1:(t-1)}, \xi_{1:(t-1)}) = 1$ if $\xi_t > P_{t,0}, \token_t = 1$, or $\xi_t \le P_{t,0}, \token_t = 0$. It is 0 otherwise.
\end{itemize}

By the Neyman--Pearson lemma, the most powerful test is based on the likelihood ratio:
\begin{equation}\label{eq:npratio}
\frac{\PB_{H_0}(\token_{1:n}, \xi_{1:n})}{\PB_{H_1}(\token_{1:n}, \xi_{1:n})} = \frac{\prod_{t=1}^n P_{t, \token_t} \cdot \PB_{\xi}(\xi_t)}{\prod_{t=1}^n \mathbf{1}_{\SM(\bP_t, \xi_t) = \token_t} \cdot \PB_{\xi}(\xi_t)} = 
\begin{cases}
P_{1, \token_1} \cdots P_{n, \token_n} & \text{ if } \SM(\bP_t, \xi_t) = \token_t \text{ for all } t\\
\infty & \text{ otherwise}.
\end{cases}
\end{equation}
Although the likelihood ratio appears simple, taking only two values, we encounter a significant challenge in distinguishing between the two cases mentioned above. This challenge arises because the NTP distribution $\bP_t$ is unknown and, worse, can vary with $t$. This nuisance parameter remains unknown even if the verifier has complete access to the LLM, as the prompt used for generating the text is usually not available to the verifier.


To address this challenge, we seek a pivotal statistic $Y_t = Y(\token_t, \xi_t)$ such that its distribution is the same for any NTP distribution $\bP_t$ under the null. Such a pivot allows us to construct test statistics for watermark detection with known distributions and consequently obtain detection rules with provable Type I error control, though at the price of information loss compared to using the full data, $\token_t$ and $\xi_t$. Formally, the original testing problem \eqref{eq:original_test} is reduced to the following: 
\begin{equation}\label{eq:H-surrogate}
H_0: Y_t \sim \mu_0~~\text{i.i.d.~for } 1 \le t \le n \qquad H_1: Y_t \sim \mu_{1,\bP_t}  \text{ for } 1 \le t \le n,
\end{equation}
where $\mu_0$ denotes the (known) distribution of $Y_t$ when the text is human-written, and $\mu_{1,\bP_t}$ denotes the (unknown) distribution of $Y_t$ conditional on $(\token_{1:(t-1)}, \xi_{1:(t-1)})$ when the text is generated by the watermarked LLM. As is clear, $\mu_{1,\bP_t}$ is determined by the NTP distribution $\bP_t$.

As a caveat, the choice of $Y(\token_t, \xi_t) \equiv \xi_t$ satisfies pivotality but is useless since the alternative distribution is the same as the null distribution. A useful choice of $Y$, while being pivotal, should allow the selected token to ``pull'' the alternative distribution toward the same direction for any NTP distribution $\bP_t$.
For example, $Y(\token_t, \xi_t) = (2\token_t-1)(2\xi_t - 1)$ is a good choice for the baby watermark since the dependence between $\token_t$ and $\xi_t$ (see \eqref{eq:baby_wmk} in Section~\ref{sec:intro}) tends to make $Y(\token_t, \xi_t)$ larger. In general, it is a case-by-case approach to find a reasonable pivot for a given watermark (see Sections~\ref{sec:gumbel} and \ref{sec:inverse}).

To test \eqref{eq:H-surrogate}, it is natural to use the sum of $Y_t$'s across the token sequence as a test statistic. To enhance the flexibility, we consider a score function $h$ that applies to the pivot $Y$. This leads to the following rejection rule for the hypothesis testing problem:
\begin{equation}\label{eq:Th}
T_h(Y_{1:n}) := 
\left\{ \begin{array}{ll}
1 &~\text{if } \sum_{t=1}^n   h(Y_t) \ge \gamma_{n, \alpha }\\
0 &~\text{if } \sum_{t=1}^n   h(Y_t) < \gamma_{n, \alpha }.\\
\end{array} \right. 
\end{equation}
That is, we reject that the text is written by a human if $T_h$ is above $\gamma_{n, \alpha }$. The threshold $\gamma_{n, \alpha }$ is chosen to ensure significance level at $\alpha$: $\PB_{H_0}(T_h(Y_{1:n}) = 1) = \alpha$. \new{For certain score functions discussed in this work, the distribution of the sum of $Y_t$'s under the null hypothesis admits a closed-form expression, allowing for analytical evaluation of $\gamma_{n, \alpha}$. } In general, as the null distribution of $h \circ Y$ is known, an estimator of $\gamma_{n,\alpha}$ when the text is sufficiently long is
\begin{equation*}
\hat{\gamma}_{n,\alpha} = n \cdot \EB_0 h (Y) +  \Phi^{-1}(1-\alpha) \cdot \sqrt{n \cdot \Var_0(h(Y))},
\end{equation*}
where $\EB_0$ and $\Var_0$ indicate that $\mu_0$ is used to take the expectation and variance. As can be seen, the underlying NTP distributions $\bP_t$'s are not involved in this detection rule.

\subsection{Class-Dependent Efficiency}
\label{sec:efficiency-inhomo}

Once the Type I error is controlled, we seek to evaluate the Type II error and use it as a measure to ascertain which choice of score function is more desired than others. If the NTP distributions $\bP_t$ were known and remained the same with respect to varying $t$, the optimal score function would simply be given by the log-likelihood ratio, according to the Neyman--Pearson lemma. However, this is not the case. We manage this challenge by assuming that the NTP distributions belong to a distribution class, denoted as $\PM$. The flexibility of using a distribution class lies in that one can choose a small class when much is known about the distributional properties of the LLM, and choose a large class if little is known.

Given a distribution class $\PM$, we can evaluate the Type II error of the test statistic $T_h(Y_{1:n})$ over the least-favorable NTP distributions in $\PM$. This gives rise to a notion of \textit{class-dependent efficiency}. For any score function $h$ and NTP distribution $\bP$, we define the following moment-generating function (MGF):
\begin{equation}\label{eq:class-dependent-MGF}
\phi_{\bP,h}(\theta) := \EB_{1, \bP} \mathrm{e}^{-\theta h(Y)},
\end{equation}
where $\EB_{1, \bP}$ indicates that the expectation is taken over the randomness embodied in $Y \sim \mu_{1, \bP}$ in \eqref{eq:H-surrogate}. Assuming that the NTP distributions are all in $\PM$, the following result delineates the Type II error in the large sample limit. We defer its proof to Appendix \ref{proof:main}, which relies on techniques in large deviation theory \citep{dembo2009large,van2000asymptotic}.

\begin{thm}\label{thm:inhomo-type-II}
Assume $\bP_t \in \PM$ for all $t$. For any $h$ satisfying $\EB_0 |h| < \infty$, the Type II error of the detection rule $T_h$ defined in \eqref{eq:Th} obeys
\begin{equation}\label{eq:inequality_Eff}
\limsup_{n \to \infty} \PB_{H_1}(T_h(Y_{1:n}) = 0)^{1/n} \le  \mathrm{e}^{-R_{\PM}(h)},
\end{equation}
where $R_{\PM}(h)$ is given by 
\begin{equation}\label{eq:efficiency-exponent}
R_{\PM}(h) = -\inf_{\theta\ge 0} \sup_{\bP \in \PM}
\left(\theta  \EB_0 h(Y) +  \log  \phi_{\bP, h}(\theta) \right) = -\inf_{\theta\ge 0} \left(\theta  \EB_0 h(Y) + \sup_{\bP \in \PM} \log  \phi_{\bP, h}(\theta) \right).
\end{equation}
\end{thm}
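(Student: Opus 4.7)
The plan is a Chernoff-style large-deviation bound combined with an iterated-conditioning argument to cope with the path-dependence of the alternative. First, I would rewrite the Type II event as $\{\sum_{t=1}^{n}h(Y_t)<\gamma_{n,\alpha}\}$ and apply Markov's inequality to the exponential moment to get, for every $\theta\ge 0$,
\[
\PB_{H_1}\!\left(T_h(Y_{1:n})=0\right) \;\le\; \mathrm{e}^{\theta\gamma_{n,\alpha}}\cdot \EB_{H_1}\exp\!\left(-\theta\sum_{t=1}^{n}h(Y_t)\right).
\]
This reduces the proof to two independent ingredients: a uniform control of the joint MGF under $H_1$ and the leading-order behaviour of the threshold $\gamma_{n,\alpha}$.

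For the joint MGF, I would peel off the summands one at a time via the tower property. Under $H_1$, conditional on $(\token_{1:(t-1)},\xi_{1:(t-1)})$ the pivot $Y_t$ has distribution $\mu_{1,\bP_t}$, so $\EB_{H_1}[\mathrm{e}^{-\theta h(Y_t)}\mid \token_{1:(t-1)},\xi_{1:(t-1)}]=\phi_{\bP_t,h}(\theta)$, which is pointwise bounded by $M_{\PM}(\theta):=\sup_{\bP\in\PM}\phi_{\bP,h}(\theta)$ since $\bP_t\in\PM$. Iterating this bound from $t=n$ down to $t=1$ yields $\EB_{H_1}\exp(-\theta\sum_{t=1}^{n}h(Y_t))\le M_{\PM}(\theta)^{n}$. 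For the threshold, under $H_0$ the $Y_t$'s are i.i.d.\ $\mu_0$ with $\EB_0|h|<\infty$, so the weak law of large numbers forces the $(1-\alpha)$-quantile of $n^{-1}\sum_{t=1}^{n}h(Y_t)$ to converge to $\EB_0 h(Y)$; hence any valid level-$\alpha$ threshold satisfies $\gamma_{n,\alpha}/n\to\EB_0 h(Y)$.

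Combining the two ingredients, taking $n$-th roots, letting $n\to\infty$, and then infimizing over $\theta\ge 0$ produces
\[
\limsup_{n\to\infty}\PB_{H_1}(T_h=0)^{1/n} \;\le\; \inf_{\theta\ge 0}\mathrm{e}^{\theta\EB_0 h(Y)} M_{\PM}(\theta) \;=\; \mathrm{e}^{-R_{\PM}(h)}.
\]
The two equivalent forms of $R_{\PM}(h)$ follow immediately: $\theta\EB_0 h(Y)$ does not depend on $\bP$, which lets us move it outside the $\sup_\bP$, and monotonicity of $\log$ lets us exchange $\sup_\bP\log\phi_{\bP,h}(\theta)=\log M_{\PM}(\theta)$.

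The main obstacle is that under $H_1$ the $\bP_t$'s are random and depend on prior tokens, so the $Y_t$'s are \emph{not} independent and a naive MGF factorization is invalid. The resolution is the pointwise bound $\phi_{\bP_t,h}(\theta)\le M_{\PM}(\theta)$ placed inside each conditional expectation, which decouples the MGF cleanly through iterated conditioning regardless of how $\bP_t$ depends on the past. A subsidiary subtlety is that the assumption $\EB_0|h|<\infty$ is weaker than what is needed for the CLT-based formula for $\hat\gamma_{n,\alpha}$ displayed before the theorem, but the LLN alone still pins down $\gamma_{n,\alpha}/n$ to the precision required for the exponential rate; no heavier large-deviations machinery (e.g., Cram\'er's theorem) is needed because optimizing Chernoff's bound in $\theta$ already produces the claimed rate function $R_{\PM}(h)$.
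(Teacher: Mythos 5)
Your proof is correct and follows essentially the same route as the paper's: a Chernoff bound $\PB_{H_1}(T_h=0)\le \mathrm{e}^{\theta\gamma_{n,\alpha}}\,\EB_{H_1}\exp(-\theta\sum_t h(Y_t))$, a uniform bound $\phi_{\bP_t,h}(\theta)\le\sup_{\bP\in\PM}\phi_{\bP,h}(\theta)$ fed through iterated conditioning to control the joint MGF by its $n$-th power, the law-of-large-numbers argument (the paper uses the strong law, but either version suffices) that $\gamma_{n,\alpha}/n\to\EB_0 h(Y)$ under $\EB_0|h|<\infty$, and finally taking $n$-th roots and infimizing over $\theta\ge 0$. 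The only presentational difference is that you spell out the tower-property decoupling explicitly, whereas the paper passes from $\EB_{H_1}\exp(-\theta\sum_t h(Y_t))$ to $\exp(n\log\phi_{\PM,h}(\theta))$ in one step; making this explicit is good practice since, as you note, the dependence of $\bP_t$ on prior tokens under $H_1$ is exactly the obstacle the bound must handle.
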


\begin{rem}
$R_{\PM}(h)$ is the $\PM$-dependent efficiency rate of $h$, or simply the $\PM$-efficiency rate. 
\end{rem}

\begin{rem}\label{rem:tight}
Under a certain regularity condition, \eqref{eq:inequality_Eff} is tight in the sense that there exists $\bP^\star$ in $\PM$ such that, if $\bP_t = \bP^\star$ for all $t$, then $\PB_{H_1}(T_h(Y_{1:n}) = 0) \ge  \mathrm{e}^{-(R_{\PM}(h) + \varepsilon)n}$
for any positive $\varepsilon$ and sufficiently large $n$. The regularity condition is satisfied by both the Gumbel-max and inverse transform watermarks and is detailed in the remark following the proof of Theorem~\ref{thm:inhomo-type-II} in Appendix \ref{proof:main}.

\end{rem}


Proposition \ref{thm:inhomo-type-II} shows the Type II error decays exponentially as long as the exponential component $R_{\PM}(h)$ is positive. The larger the value of $R_{\PM}(h)$ is, the more efficient the detection rule $T_h$ is. Put differently, this class-dependent measure of efficiency reduces comparing watermark detection rules to the rate of class-dependent efficiency. 

\new{
This efficiency rate, obtained from large deviation theory, does not depend on the specific value of $\alpha$. The same is true for Bahadur efficiency \citep{bahadur1960asymptotic}, which quantifies the rate at which the significance level ($p$-value) of a test statistic approaches zero with increasing sample size.} It emphasizes the control of Type I errors for simple hypothesis testing with i.i.d. data. In contrast, our metric $R_{\PM}(h)$ centers on quantifying the least decline of Type II errors to zero when the underlying NTP distribution $\bP$ is picked up from the distribution class $\PM$.
In short, it focuses on composite hypothesis testing where the observed data is not identically distributed.

\paragraph*{Optimality via minimax optimization}
More importantly, the notion of class-dependent efficiency serves as a concrete approach to identifying the optimal score function that achieves the largest possible value of $\PM$-efficiency rate. Following from \eqref{eq:efficiency-exponent}, formally, this amounts to solving the following optimization problem:
\[
\begin{aligned}
\sup_{h} R_{\PM}(h)
&= -\inf_{h, \theta\ge 0} \sup_{\bP \in \PM}
\left(\EB_0 \theta h(Y) +  \log \EB_{1, \bP} \mathrm{e}^{-\theta h(Y)} \right).
\end{aligned}
\]
By viewing $\theta h$ as a new score function, finding the optimal $h$ is reduced to solving the following minimax optimization program:\footnote{To highly the minimax nature of this formulation, we use $\max$ and $\min$ in place of $\sup$ and $\inf$, respectively.} 
\begin{equation}\label{eq:minmax}
\min_{h}\max_{\bP \in \PM} L(h, \bP)
~~\text{where}~~L(h, \bP) :=
\EB_0 h(Y) + \log \EB_{1, \bP}\mathrm{e}^{-h(Y)}.
\end{equation}
The function $L(h, \bP)$ is convex in the score function $h$ for any fixed $\bP$, but is generally not concave in $\bP$ when $h$ is fixed. Therefore, this minimax optimization problem is generally not convex-concave, making it challenging to solve \eqref{eq:minmax} numerically~\citep{lin2020near,rahimian2022frameworks}.

Interestingly, we will show in Sections~\ref{sec:gumbel} and \ref{sec:inverse} that, for the Gumbel-max and inverse transform watermarks, the minimax optimization problem possesses certain structural properties that enable us to \emph{analytically} identify global solutions to \eqref{eq:minmax}.

\paragraph*{Choice of distribution classes}
The choice of the distribution class $\PM$ is crucial since it follows from \eqref{eq:efficiency-exponent} that $R_{\PM_1}(h) \ge R_{\PM_2}(h)$ if $\PM_1 \subset \PM_2$. While it might be plausible to assume a fraction of NTP distributions are in a ``nice'' class, some might be outside. The following result extends Proposition \ref{thm:inhomo-type-II-general} to this practical scenario.

\begin{prop}\label{thm:inhomo-type-II-general}
Assume that at least $\gamma$-fraction of $\bP_1, \ldots, \bP_n$ is in $\PM_1$ with the rest being in $\PM_2$,\footnote{We do not need to know which are in the $\gamma$ fraction.} where $0 < \gamma < 1$ and $\PM_1 \subset \PM_2$. Then, for any $h$, we have
\[
\limsup\limits_{n \to \infty}\PB_{H_1}(T_h(Y_{1:n}) = 0)^{1/n} \le  \mathrm{e}^{-\{\gamma \cdot R_{\PM_1}(h) + (1-\gamma) \cdot R_{\PM_2}(h)\}}.
\]
\end{prop}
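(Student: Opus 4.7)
My plan is to extend the Chernoff-type argument of Theorem~\ref{thm:inhomo-type-II} by splitting the product of conditional moment generating functions according to which of $\PM_1$ or $\PM_2$ each NTP distribution $\bP_t$ belongs to, and then exploit the inclusion $\PM_1 \subset \PM_2$ together with the fraction assumption $|S_1| \ge \gamma n$, where $S_1 = \{t : \bP_t \in \PM_1\}$. The $\PM_1$-indices will produce the tighter per-step rate, while the remaining indices contribute only the looser $\PM_2$-rate.

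\textbf{Key steps.} First, I would apply Markov's inequality to $\{\sum_t h(Y_t) < \gamma_{n,\alpha}\}$: for any $\theta \ge 0$,
\[
\PB_{H_1}(T_h(Y_{1:n}) = 0) \le \mathrm{e}^{\theta \gamma_{n,\alpha}} \, \EB_{H_1} \mathrm{e}^{-\theta \sum_{t=1}^n h(Y_t)}.
\]
Next, I would handle the non-i.i.d.\ structure of the $Y_t$'s via iterated conditioning: since $\bP_t$ is $Y_{1:(t-1)}$-measurable and $Y_t \mid Y_{1:(t-1)} \sim \mu_{1, \bP_t}$, the tower property collapses the expectation to $\EB_{H_1} \prod_t \phi_{\bP_t, h}(\theta)$. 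Bounding each factor by the supremum over its class gives
\[
\prod_{t=1}^n \phi_{\bP_t, h}(\theta) \le \Bigl(\sup_{\bP \in \PM_1} \phi_{\bP, h}(\theta)\Bigr)^{|S_1|} \Bigl(\sup_{\bP \in \PM_2} \phi_{\bP, h}(\theta)\Bigr)^{n - |S_1|}.
\]
Because $\PM_1 \subset \PM_2$ forces $\sup_{\PM_1} \phi \le \sup_{\PM_2} \phi$, the right-hand side is monotonically non-increasing in $|S_1|$, so the hypothesis $|S_1| \ge \gamma n$ yields the cleaner bound with exponents $\gamma n$ and $(1-\gamma) n$.

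\textbf{Completing the argument and main obstacle.} Taking logarithms, dividing by $n$, invoking $\gamma_{n,\alpha}/n \to \EB_0 h(Y)$, and passing to the limit superior will deliver, for every $\theta \ge 0$,
\[
\limsup_{n \to \infty} \frac{1}{n} \log \PB_{H_1}(T_h = 0) \le \theta \, \EB_0 h(Y) + \gamma \log \sup_{\PM_1} \phi_{\bP, h}(\theta) + (1-\gamma) \log \sup_{\PM_2} \phi_{\bP, h}(\theta).
\]
The final step is to minimize the right-hand side over $\theta \ge 0$ and identify the infimum with $-\{\gamma R_{\PM_1}(h) + (1-\gamma) R_{\PM_2}(h)\}$. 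I anticipate this optimization being the hard part: the individual minimizers $\theta^\star_1$ and $\theta^\star_2$ attaining $R_{\PM_1}(h)$ and $R_{\PM_2}(h)$ generally do not coincide, so a single tilt parameter cannot simultaneously optimize both class contributions. Reaching the stated weighted rate will therefore require a judicious choice of $\theta$, likely exploiting the convexity of $\theta \mapsto \log \sup_{\PM_i} \phi_{\bP, h}(\theta)$ to interpolate between $\theta^\star_1$ and $\theta^\star_2$. A secondary subtlety is that the monotonicity step in $|S_1|$ relies on the sign of $\log \sup_{\PM_i} \phi_{\bP, h}(\theta)$; the argument should be restricted to the Chernoff-relevant regime $\theta \ge 0$ where this sign is controlled, matching the setup used in Theorem~\ref{thm:inhomo-type-II}.
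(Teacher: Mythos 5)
Your strategy mirrors the paper's own proof exactly: apply the Chernoff bound, split the product of per-step MGFs according to which class each $\bP_t$ lies in, use $\phi_{\PM_1,h}(\theta)\le\phi_{\PM_2,h}(\theta)$ (from $\PM_1\subset\PM_2$) together with $|S_1|\ge\gamma n$ to replace the empirical fractions by $\gamma$ and $1-\gamma$, and pass to the limit to obtain, for every $\theta\ge 0$,
\begin{equation*}
\limsup_{n\to\infty}\tfrac1n\log\PB_{H_1}(T_h(Y_{1:n})=0)\ \le\ \theta\,\EB_0 h(Y)+\gamma\log\phi_{\PM_1,h}(\theta)+(1-\gamma)\log\phi_{\PM_2,h}(\theta).
\end{equation*}
You are right to flag the remaining step---minimizing the right-hand side over $\theta$ and identifying the result with $-\gamma R_{\PM_1}(h)-(1-\gamma)R_{\PM_2}(h)$---as the hard part, but the fix you sketch cannot work. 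Writing $g_i(\theta)=\theta\,\EB_0 h(Y)+\log\phi_{\PM_i,h}(\theta)$, the right-hand side is $\gamma g_1(\theta)+(1-\gamma)g_2(\theta)$, and for any two functions one always has
\begin{equation*}
\inf_{\theta\ge 0}\bigl[\gamma g_1(\theta)+(1-\gamma)g_2(\theta)\bigr]\ \ge\ \gamma\inf_{\theta\ge 0}g_1(\theta)+(1-\gamma)\inf_{\theta\ge 0}g_2(\theta)\ =\ -\bigl[\gamma R_{\PM_1}(h)+(1-\gamma)R_{\PM_2}(h)\bigr],
\end{equation*}
with equality only when both infima are attained at the same $\theta$. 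This inequality runs in the \emph{opposite} direction from what the proposition requires, so no ``judicious'' single-$\theta$ interpolation, and no appeal to the convexity of $\theta\mapsto\log\phi_{\PM_i,h}(\theta)$, can close the gap: the Chernoff route gives a decay exponent that is at most, not at least, $\gamma R_{\PM_1}(h)+(1-\gamma)R_{\PM_2}(h)$.

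You should also be aware that the gap you identified is precisely the one present in the paper's own proof. Its last displayed line asserts $\inf_{\theta}\exp\bigl(\theta\,\EB_0 h(Y)+\gamma\log\phi_{\PM_1,h}(\theta)+(1-\gamma)\log\phi_{\PM_2,h}(\theta)\bigr)\le\exp\bigl(-[\gamma R_{\PM_1}(h)+(1-\gamma)R_{\PM_2}(h)]\bigr)$ with no justification, and by the display above that inequality is in general reversed (one can check strict reversal numerically for $\hlog$ with $|\Voca|=2$, $\PM_1=\{(1/2,1/2)\}\subset\PM_2=\{\bP:P_{(1)}\le 0.9\}$, $\gamma=1/2$). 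What the Chernoff argument genuinely proves is the bound $\limsup_n\PB_{H_1}(T_h=0)^{1/n}\le\exp\bigl(\inf_{\theta\ge0}[\theta\,\EB_0 h(Y)+\gamma\log\phi_{\PM_1,h}(\theta)+(1-\gamma)\log\phi_{\PM_2,h}(\theta)]\bigr)$, which is potentially weaker than the proposition's stated rate. So your instinct was correct---the obstacle is real, and it is not resolved in the source either.
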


When there is no prior at all about the LLMs, the associated class is the $(|\Voca|-1)$-dimensional simplex, which contains all possible NTP distributions and we denote by $\Simplex(\Voca)$. This distribution class includes \textit{singular} distributions---a distribution $\bP$ such that $P_{\token} = 1$ for some token $\token$---for which the pivotal statistic has the same distribution under the null and alternative in \eqref{eq:H-surrogate}. Formally, this gives $R_{\Simplex(\Voca)}(h)=0$, that is, the class-dependent efficiency of $\Simplex(\Voca)$ is zero. Taking $\PM_1 = \PM \subset \PM_2 = \Simplex(\Voca)$, Proposition~\ref{thm:inhomo-type-II-general} shows $\lim\sup\limits_{n \to \infty}\PB_{H_1}(T_h(Y_{1:n}) = 0)^{1/n} \le  \mathrm{e}^{- \gamma  R_{\PM}(h)}.$
This result implies that when only a fraction of NTP distributions is known to some extent, the Type II error is still governed by $R_{\PM}(h)$ up to a multiplicative constant $\gamma$. The comparison between choices of the score functions $h$ can still be reduced to comparing the value of $R_{\PM}(h)$.

\begin{figure}[t!]
\centering
\includegraphics[width=0.6\textwidth]{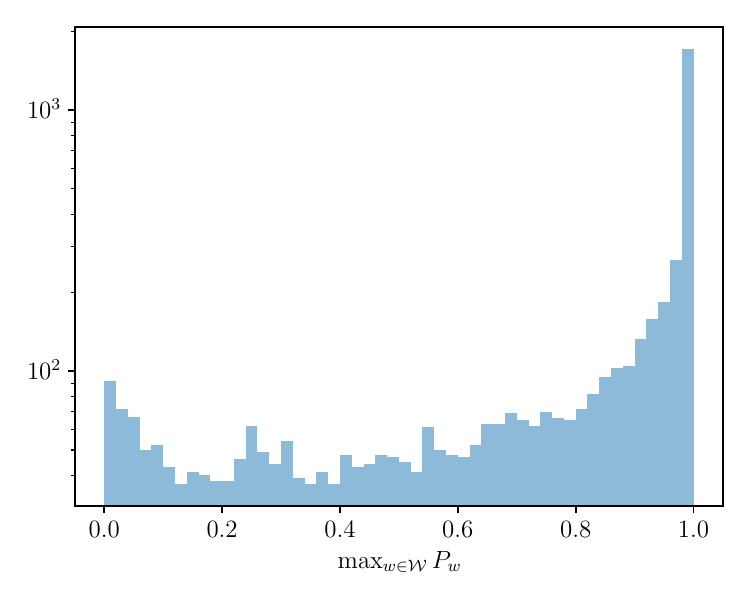}
\caption{
Empirical frequency of $\max_{\token \in \Voca} P_{t,\token}$ using outputs from {ChatGPT-3.5-turbo}. 
There are a total of 4,997 top-one probabilities recorded, 81.36\% of which are less than 0.999 while 99.79\% are larger than 0.001.  
See Appendix \ref{detail:empirical-delta} for the experimental setup.
}
\label{fig:empirical-delta}
\end{figure}

In addition, the discussion above reveals that any detection rule would become powerless if nothing is known about the NTP distributions. This justifies the necessity of imposing structural assumptions on the distribution class. For $0 < \Delta \le 1 - \frac1{|\Voca|}$, we call 
\begin{equation*}
\FPM := \Bigl\{\bP: \max_{\token \in \Voca} P_{\token} \le 1- \Delta \Bigr\}
\end{equation*}
the $\Delta$-regular distribution class. Accordingly, an NTP distribution $\bP$ is called $\Delta$-regular if $\bP\in \FPM$. This excludes singular distributions. Interestingly, $\Delta$-regularity is closely related to the Shannon entropy. For any $\Delta$-regular NTP distribution $\bP$, its Shannon entropy satisfies $\Ent(\bP) = \sum P_{\token} \log\frac1{P_\token} \ge \sum P_{\token} (1- P_\token) \ge \sum P_{\token} \cdot \Delta = \Delta$,
where the first inequality follows because $\log \frac{1}{1-x} \ge x$ for $x < 1$. This shows that $\Delta$-regularity imposes a lower bound on how much information the NTP distribution offers. 
In practice, most NTP distributions are $\Delta$-regular for a proper value of $\Delta$.
See Figure \ref{fig:empirical-delta} for an empirical investigation where 88.65\% of the NTP distributions are 0.0001-regular, 81.36\% are 0.001-regular, and 70.28\% are 0.01-regular.

This distribution class offers a way to measure the performance of a detection rule using the class-dependent efficiency in \eqref{eq:efficiency-exponent}. For example, the $\FPM$-efficiency rate of the baby watermark using the statistic \eqref{eq:simple-cov} is
\[
-\inf_{\theta\ge0}\log\biggl[\frac{1}{\theta}\biggl\{\frac{\re^{\theta(1-2\Delta)}+\re^{-\theta(1-2\Delta)}}{2}-\re^{-\theta}\biggr\}\biggr].
\]
A proof of this fact is deferred to Appendix \ref{proof:baby-watermark}.

In practice, $\Delta$-regularity is closely related to the temperature parameter in LLMs \citep{ackley1985learning}, which is used to divide the raw output of a model's last layer by a temperature parameter before applying the softmax function. A high temperature leads to more uniform probabilities (encouraging exploration), while a low temperature makes the distribution sharper, emphasizing the most likely outcomes (encouraging exploitation).

\section{Application to the Gumbel-max Watermark}
\label{sec:gumbel}

In this section, we apply the framework to the Gumbel-max watermark \cite{scott2023watermarking}. Recall that the Gumbel-max decoder can also be written as 
\begin{equation}
\label{eq:it-new}
\token_t = \SMmax(\bP_t, \xi_t) := \arg\max_{\token \in \Voca}  V^{\gumbel}(P_{t,\token}, U_{t, \token})~~\text{where}~~V^{\gumbel}(p, u) := \frac{\log u}{p}.
\end{equation}
Above, $\{\xi_t\}_{t=1}^n = \{ (U_{t, \token})_{\token \in \Voca} \}_{t=1}^n$ represents $n \times |\Voca|$ i.i.d.~replicates of the standard uniform random variable $U(0,1)$. As seen from \eqref{eq:it-new}, the Gumbel-max trick ensures that this decoder is unbiased for sampling from the NTP distribution $\bP_t$. 
In implementation, $\xi_t$ can be computed using a pseudorandom hash function $\AM$ that depends only on, for example, the last five tokens $\token_{t-5}, \ldots, \token_{t-1}$ in addition to the secrete key~\cite{scott2023watermarking}.


\paragraph*{Uniqueness of the Gumbel-max decoder}
As a deviation from the main focus, we are tempted to ask whether other forms of Gumbel-max-style decoders remain unbiased. Consider an unbiased decoder that takes the form
\[
\SM^{V}(\bP, \xi) := \arg\max_{\token \in \Voca}  V(P_{\token}, U_{\token})
\]
for some function $V$. When the vocabulary size $|\Voca| = 2$, writing $\bP = (P_0, P_1)$, unbiasedness requires\footnote{In \eqref{eq:2-token}, it evaluates to $P_0$ since $P_0 + P_1 = 1$. But we regard \eqref{eq:2-token} as an identity for any $P_0$ and $P_1$ such that $P_0 + P_1 \le 1$.}
\begin{equation}\label{eq:2-token}
\PB(\SM^{V}(\bP, \xi) = 0) = \PB(V(P_0, U_0) \ge V(P_1, U_1)) = \frac{P_0}{P_0 + P_1}.
\end{equation}

When there are more than two tokens in the vocabulary, a natural requirement is that, by ``gluing'' two tokens into a new token, the decoder remains unbiased for the new vocabulary that has been reduced by one in size. This new token is sampled whenever either of the two original tokens is sampled, which occurs with probability $P_0+P_1$. This would be true if we have
\begin{equation}\label{eq:3-token}
\max\{V(P_0, U_0), V(P_1, U_1)\} \stackrel{d}{=} V(P_0+P_1, U),
\end{equation}
where $\stackrel{d}{=}$ denotes equality in distribution and $U \sim U(0,1)$.

Both \eqref{eq:2-token} and \eqref{eq:3-token} are satisfied by Aaronson's choice $V^{\gumbel}$ for any $P_0$ and $P_1$ such that $P_0 + P_1 \le 1$. Interestingly, our following theorem shows that this choice is essentially unique. Its proof is deferred to Appendix \ref{proof:unique}.

\begin{thm}\label{prop:unique}
Suppose $V$ satisfies both \eqref{eq:2-token} and \eqref{eq:3-token}. $\SM^{V}$ is unbiased if and only if 
\[
V(p, u) = g\left(\frac{\log u}{p}\right) \equiv g\left(V^{\gumbel}(p, u)\right)
\]
for a strictly increasing function $g$.
 \end{thm}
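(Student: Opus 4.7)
The plan is to characterize $V$ by studying the distribution of $V(p, U)$ for $U \sim U(0,1)$ and exploiting the gluing identity \eqref{eq:3-token} to pin down the CDF via a multiplicative Cauchy equation. Setting $F_p(v) := \PB(V(p, U) \le v)$, since the max of independent random variables has CDF equal to the product of the individual CDFs, condition \eqref{eq:3-token} immediately gives
$$F_{p+q}(v) = F_p(v) \cdot F_q(v)$$
for all $p, q > 0$ with $p + q \le 1$ and every $v \in \mathbb{R}$. Fixing $v$ and taking logarithms where $F_p(v) > 0$, this is Cauchy's additive functional equation in $p$; combined with the automatic monotonicity of $F_p(v)$ in $p$ (since $F_{p+q}(v) \le F_p(v)$), the standard Cauchy theorem yields
$$F_p(v) = e^{-p \cdot h(v)}, \qquad h(v) := -\log F_1(v) \ge 0.$$
Since $F_p$ is a proper CDF, $h$ is nonincreasing with $h(-\infty) = \infty$ and $h(\infty) = 0$.

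Next, I would use the natural regularity that $V(p, \cdot)$ is strictly increasing and continuous in $u$---the minimal hypothesis making $\SM^V$ well-defined almost surely---so that $F_p$ is strictly increasing and continuous on the support of $V(p, U)$, forcing $h$ to be strictly decreasing and invertible. Identifying $V(p, \cdot)$ with the inverse CDF, $V(p, u) = F_p^{-1}(u)$, and solving $u = e^{-p h(v)}$ for $v$ yields
$$V(p, u) = h^{-1}\Bigl(-\frac{\log u}{p}\Bigr) = g\Bigl(\frac{\log u}{p}\Bigr),$$
where $g(x) := h^{-1}(-x)$ is strictly increasing because $h$ is strictly decreasing. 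Assumption \eqref{eq:2-token} enters as a consistency check: a direct computation confirms $\int e^{-P_1 h(v)}\, d(e^{-P_0 h(v)}) = P_0/(P_0+P_1)$, so the exponential form of $F_p$ automatically satisfies \eqref{eq:2-token}.

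For the converse direction, I would verify that if $V(p, u) = g(\log u / p)$ with $g$ strictly increasing, then since a strictly increasing map preserves the argmax, $\argmax_w V(P_w, U_w) = \argmax_w (\log U_w / P_w)$, which is exactly the Gumbel-max trick and yields unbiased sampling from $\bP$; in particular both \eqref{eq:2-token} and \eqref{eq:3-token} follow.

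The hard part will be the step converting the distributional identity for $V(p, U)$ into a pointwise formula for $V(p, \cdot)$: equality in distribution is invariant under precomposition by any measure-preserving map on $(0, 1)$, so $V(p, \cdot)$ cannot be identified with $F_p^{-1}$ without a monotonicity or continuity hypothesis. I would handle this by assuming $V(p, \cdot)$ is strictly monotone and continuous in $u$---the natural regularity context for the Gumbel-max decoder, and implicit in the theorem's conclusion since $g(\log u / p)$ is itself strictly monotone in $u$. A secondary technical point will be justifying the Cauchy step at values $v$ where $F_p(v) \in \{0, 1\}$, which should follow from the boundary behavior of $h$ and continuity of $V$.
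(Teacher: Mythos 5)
Your proposal follows the same backbone as the paper's proof: the gluing condition \eqref{eq:3-token} gives the multiplicative Cauchy equation $F_{p+q}(v)=F_p(v)F_q(v)$, monotonicity closes the Cauchy step, and one arrives at $F_p(v)=\re^{-p\,h(v)}$. However, there is a genuine gap in how you dispose of condition \eqref{eq:2-token}. You treat it as a "consistency check," claiming the computation $\int \re^{-P_1 h(v)}\,\rd(\re^{-P_0 h(v)}) = P_0/(P_0+P_1)$ holds automatically. That computation uses the substitution $w=\re^{-h(v)}$ sweeping out all of $[0,1]$, which requires $h$ to be continuous (equivalently, $F_p$ atomless). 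But the exponential form alone does not force this: take $h(v)=\infty$ for $v<0$, $h(v)=1$ for $0\le v<1$, $h(v)=0$ for $v\ge 1$; then $F_p(v)=\re^{-p\,h(v)}$ is a valid CDF family satisfying the Cauchy equation, yet $V(p,U)$ is a two-point distribution, ties occur with positive probability, and $\PB(V(P_0,U_0)\ge V(P_1,U_1))\ne P_0/(P_0+P_1)$. So \eqref{eq:2-token} is \emph{not} automatic — it is precisely the condition that rules out atoms. The paper uses it for exactly this purpose (Lemma~\ref{lem:strict}: ties would strictly inflate $\PB(\nu^{-1}(\log U_1/P_1)\le \nu^{-1}(\log U_2/P_2))$ above $P_1/(P_1+P_2)$, so the generalized inverse must be strictly increasing).

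Your replacement move — assuming $V(p,\cdot)$ is strictly monotone and continuous in $u$ — is not part of the theorem's hypotheses and cannot be added for free; strict monotonicity of $h$ should be \emph{derived} from \eqref{eq:2-token}, not postulated. You do correctly flag that equality in distribution does not by itself yield a pointwise formula for $V(p,\cdot)$, and indeed the paper's own proof only establishes $V(P,U)\stackrel{d}{=}g(\log U/P)$ rather than the pointwise identity in the statement — so this is a weakness the paper shares, and one can read the conclusion as holding up to distributional equivalence (which is all the decoder $\SM^V$ depends on). But the load-bearing use of \eqref{eq:2-token} to eliminate atoms is the step your proof is missing; as written, your argument proves a weaker statement under a stronger, un-hypothesized assumption.
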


\subsection{Main Results}

\paragraph*{Optimal score function}
Our framework starts by identifying a pivot for the hypothesis testing problem \eqref{eq:original_test}. The random number $U_{t, \token_t}$, which corresponds to the selected token $\token_t$ at step $t$, is a good candidate. This is because $U_{t, \token_t}$ follows the standard uniform distribution under $H_0$ while being stochastically larger under $H_1$, as ensured by Working Hypotheses \ref{hypo:cryptography} and \ref{hypo:human}. Indeed, $U_{t, \token_t}$ is used in \citep{scott2023watermarking} for its detection rule.

We write $\Yars_t \equiv \Yars(\token_t, \xi_t) := U_{t, \token_t}$. \new{
This choice of pivot can be seen as an approximation of the most ``optimistic'' log-likelihood ratio used in an independent hypothesis test for watermark detection (see Appendix \ref{append:gumbel-pivot} for details).} The distribution of this pivot under $H_1$ is a mixture of Beta distributions, which is a classic result in the literature.




\begin{lem}[\citep{piet2023mark,fernandez2023three}]\label{lem:dis-r}
Under $H_1$, the distribution of $\Yars_t$ given $\bP_t$ obeys 
\begin{equation*}
\PB_{H_1}( \Yars_t \le r\mid\bP_t) = \sum_{\token\in \Voca } P_{t,\token} r^{1/P_{t,\token}}~~\text{for $r \in [0, 1]$}.
\end{equation*}
\end{lem}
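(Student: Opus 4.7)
The plan is to compute the probability directly by conditioning on which token is selected as $w_t$. Under $H_1$ we have, by the decoder \eqref{eq:it-new},
\[
w_t = \arg\max_{w \in \Voca} \frac{\log U_{t,w}}{P_{t,w}},
\]
where the $U_{t,w}$ are i.i.d.\ $U(0,1)$. I would then decompose
\[
\PB_{H_1}(Y^{\mathrm{ars}}_t \le r \mid \bP_t) = \sum_{w \in \Voca} \PB_{H_1}\bigl(w_t = w,\ U_{t,w} \le r \mid \bP_t\bigr),
\]
so the task reduces to evaluating, for each fixed $w$, the probability that the argmax is achieved at $w$ \emph{and} the associated uniform is at most $r$.

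Next, I would apply the standard exponential reparametrization: set $G_w := -\log U_{t,w}/P_{t,w}$, so that the $G_w$ are independent with $G_w \sim \mathrm{Exp}(P_{t,w})$ (rate $P_{t,w}$), and the event $\{w_t = w\}$ becomes $\{G_w = \min_{w'} G_{w'}\}$. The event $\{U_{t,w} \le r\}$ is equivalent to $\{G_w \ge -\log r / P_{t,w}\}$. Using independence across the coordinates, I then get
\[
\PB\bigl(w_t = w,\ U_{t,w} \le r\bigr) = \int_{-\log r/P_{t,w}}^{\infty} P_{t,w}\, \mathrm{e}^{-P_{t,w} g} \prod_{w' \ne w} \mathrm{e}^{-P_{t,w'} g}\, \mathrm{d}g.
\]

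The key algebraic simplification is to collapse the product using $\sum_{w'} P_{t,w'} = 1$, which turns the integrand into $P_{t,w} \mathrm{e}^{-g}$. Evaluating the integral gives $P_{t,w} \cdot \mathrm{e}^{\log r / P_{t,w}} = P_{t,w} \cdot r^{1/P_{t,w}}$. Summing over $w \in \Voca$ then yields the claimed formula
\[
\PB_{H_1}(Y^{\mathrm{ars}}_t \le r \mid \bP_t) = \sum_{w \in \Voca} P_{t,w}\, r^{1/P_{t,w}}.
\]
There is no genuine obstacle here; the only care needed is in handling the joint event ``argmin equals $w$ and exceeds a threshold,'' but this is resolved cleanly by the exponential-race calculation above, and the collapse $\sum_{w'} P_{t,w'} = 1$ is what makes the final expression so simple.
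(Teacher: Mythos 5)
Your proof is correct, and the overall structure matches the paper's: both proofs decompose $\PB_{H_1}(\Yars_t \le r)$ over the disjoint events $\{\token_t = \token\}$ and then evaluate the joint probability $\PB(\token_t = \token, U_{t,\token} \le r)$ coordinate by coordinate using independence of the $U_{t,j}$. Where you diverge is in the parametrization of that integral. The paper works directly in the uniform variables, writing the event $\{\token_t = \token\}$ as $\{U_j \le U_\token^{P_{t,j}/P_{t,\token}} \ \forall j \ne \token\}$, integrating out the $|\Voca|-1$ nuisance uniforms to get $\prod_{j\ne\token} u_\token^{P_{t,j}/P_{t,\token}} = u_\token^{1/P_{t,\token}-1}$, and then computing $\int_0^r u^{1/P_{t,\token}-1}\,\rd u = P_{t,\token}r^{1/P_{t,\token}}$. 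You instead push forward to $G_w = -\log U_w / P_w \sim \mathrm{Exp}(P_w)$, recast $\{\token_t=\token\}$ as the exponential race $\{G_\token = \min_{w'} G_{w'}\}$, and integrate the density of $G_\token$ against the product of the other survival functions. The two integrals are related by the monotone change of variable $g = -\log u/P_{t,\token}$, so mathematically the content is identical; the exponential-race framing, however, makes the collapse via $\sum_{w'} P_{t,w'} = 1$ transparent and ties the lemma directly to the standard interpretation of the Gumbel-max trick as an exponential clock race, which is a clean way to see \emph{why} the formula comes out as a Beta-mixture CDF. Either derivation would be acceptable; yours is essentially the paper's argument viewed under a reparametrization that tidies the bookkeeping.
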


\begin{figure}[!t]
\centering
\includegraphics[width=\textwidth]{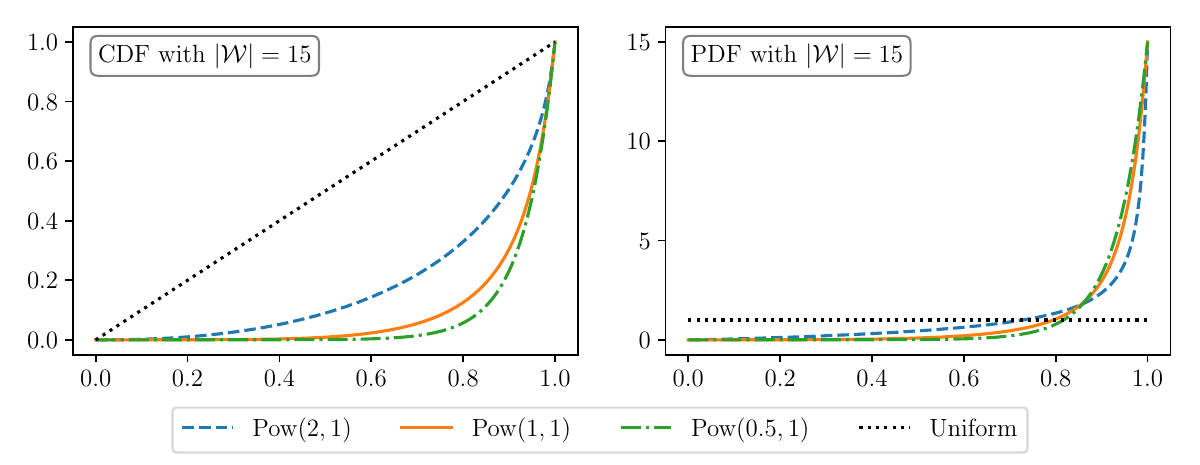}
\caption{Illustration of the distribution of $\Yars_t$.
We plot the CDF $F_{1,\bP_t}$ and the PDF $f_{\bP_t}$ by assuming $P_{t,\token} \propto (\token+b)^{-a}, \token \in \Voca$ with $|\Voca| = 15$ and different choices of $(a, b)$. The resulting distribution is denoted by $\mathrm{Pow}(a, b)$. $F_{1, \bP_t}$ deviates from the uniform distribution.
}
\label{fig:dist}
\end{figure}

The fact that this distribution is stochastically larger than $U(0,1)$ can be gleaned from the following inequality: $\sum_{\token\in \Voca } P_{t,\token} r^{1/P_{t,\token}} = r \sum_{\token\in \Voca } P_{t,\token} r^{1/P_{t,\token}-1} \le r \sum_{\token\in \Voca } P_{t,\token} = r.$
Also, see Figure~\ref{fig:dist} for an illustration: there is a higher accumulation of probability mass around one under $H_1$ compared to the distribution observed under $H_0$.
From this observation, any score function $h$ aware of the distributional differences could be used for detecting the Gumbel-max watermarks.
For example, \citep{scott2023watermarking} uses $\hars(r)=-\log(1-r)$, while \citep{kuditipudi2023robust,fernandez2023three} propose $\hlog(r)=\log r$. 
All of these score functions are increasing functions that assign large values to points near one.
While all these choices control the Type I error, a crucial question is on what basis we compare score functions and which is the optimal one.



To proceed under the framework, we consider the class-dependent efficiency with distribution class $\FPM$ for $\Delta \in [0, 1-|\Voca|^{-1}]$. As one of the main findings of this paper, we obtain the score function with the highest $\FPM$-efficiency rate. Its proof constitutes the subject of Section~\ref{sec:proofthm32}. Below, $\lfloor x\rfloor$ denotes the greatest integer that is less than or equal to $x$.


\begin{thm}\label{thm:optimal_score}

Let $\hoptars: [0,1] \to \RB$ be defined as 
\begin{equation}\label{eq:optimal-h-gum}
\hoptars(r)=\log\biggl(\biggl\lfloor\frac{1}{1-\Delta}\biggr\rfloor r^{\frac{\Delta}{1-\Delta}}+r^{\frac{\widetilde{\Delta}}{1-\widetilde{\Delta}}}\biggr)~~\text{with}~~\widetilde{\Delta}=(1-\Delta)\biggl\lfloor\frac{1}{1-\Delta}\biggr\rfloor.
\end{equation}
This function gives the optimal $\FPM$-efficiency rates in the sense that $R_{\FPM}(\hoptars)\ge R_{\FPM}(h)$
for any measurable function $h$. Moreover, $R_{\FPM}(\hoptars)$ is attained at the following least-favorable NTP distribution in $\FPM$:
\begin{equation}\label{eq:optimal-P}
\bP_{\Delta}^{\star} =\biggl(\underbrace{1-\Delta, \ldots, 1-\Delta}_{\floor{\frac{1}{1-\Delta}}\ \textnormal{times}}, 1-(1-\Delta)\cdot \left\lfloor\frac{1}{1-\Delta}\right\rfloor, 0, \ldots\biggr).
\end{equation}
\end{thm}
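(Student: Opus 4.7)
My plan is to exhibit the pair $(\hoptars, \bP_\Delta^\star)$ as a saddle point of the minimax program~\eqref{eq:minmax}, i.e., to verify
\[
L(\hoptars, \bP) \;\le\; L(\hoptars, \bP_\Delta^\star) \;\le\; L(h, \bP_\Delta^\star) \qquad \text{for every measurable } h \text{ and every } \bP \in \FPM,
\]
from which the optimality of $\hoptars$ and the identification of $\bP_\Delta^\star$ as the least-favorable distribution both follow. The right inequality is standard: for any fixed $\bP$, the variational characterization of the KL divergence gives $\inf_h L(h, \bP) = -D(\mu_0 \,\|\, \mu_{1,\bP})$, attained by the log-density ratio $h(r) = \log(f_{1,\bP}(r)/f_0(r))$ up to an irrelevant additive constant. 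Since $f_0 \equiv 1$ on $[0,1]$ and Lemma~\ref{lem:dis-r} expands $f_{1,\bP_\Delta^\star}(r) = m\, r^{\Delta/(1-\Delta)} + r^{\widetilde{\Delta}/(1-\widetilde{\Delta})}$ with $m := \lfloor 1/(1-\Delta)\rfloor$, the minimizer at $\bP = \bP_\Delta^\star$ matches exactly the $\hoptars$ of~\eqref{eq:optimal-h-gum}.

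For the left inequality, I substitute $\hoptars = \log f_{1,\bP_\Delta^\star}$ into $L$ to obtain
\[
L(\hoptars, \bP) \;=\; \int_0^1 \log f_{1,\bP_\Delta^\star}(r)\, dr \;+\; \log \sum_{\token \in \Voca} g(P_\token), \qquad g(p) \;:=\; \int_0^1 \frac{r^{1/p-1}}{f_{1,\bP_\Delta^\star}(r)}\, dr.
\]
The identity $\sum_\token g(P^\star_{\Delta, \token}) = \int_0^1 f_{1,\bP_\Delta^\star}/f_{1,\bP_\Delta^\star}\, dr = 1$ reduces the task to proving $\sum_\token g(P_\token) \le 1$ over all $\bP \in \FPM$. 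The feasible set $\FPM$ is a polytope whose extreme points I can enumerate explicitly: at an extreme point at most one coordinate lies strictly in $(0, 1-\Delta)$, and combined with the sum constraint $\sum P_\token = 1$ this forces the configuration to be a permutation of $\bP_\Delta^\star$. Since $\bP \mapsto \sum_\token g(P_\token)$ is symmetric and separable, the convexity of $g$ on $[0, 1-\Delta]$ lifts to convexity of this objective on $\FPM$, so the maximum of a convex function over a polytope is attained at an extreme point---yielding the value exactly $1$ at $\bP_\Delta^\star$.

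The final step---and the main obstacle---is to prove that $g$ is convex. Via the change of variables $r = e^{-t}$ and differentiating twice, the calculation should yield
\[
g''(p) \;=\; \frac{1}{p^2}\, \Cov_{\mu_p}\!\bigl(t,\; \phi(t)\bigr), \qquad \mu_p(dt) \,\propto\, t e^{-t/p}\, dt, \qquad \phi(t) \;:=\; \frac{1}{m e^{-at} + e^{-bt}},
\]
with $a = \Delta/(1-\Delta) \ge 0$ and $b = \widetilde{\Delta}/(1-\widetilde{\Delta}) \ge 0$; here $\mu_p$ is the Gamma$(2, p)$ density on $[0, \infty)$, whose mean $2p$ cancels the linear-in-$p$ cross term appearing after differentiation. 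A frontal attack is awkward because the raw second-derivative integrand changes sign on $(0,1)$; the covariance rewriting via the auxiliary Gamma measure $\mu_p$ is what isolates the monotonicity structure. Because $a, b \ge 0$ makes $\phi(t)$ monotonically nondecreasing in $t$, while the identity $t \mapsto t$ is trivially nondecreasing, the correlation inequality for co-monotone functions gives $\Cov_{\mu_p}(t, \phi(t)) \ge 0$, hence $g'' \ge 0$. Combined with the extreme-point reduction and the identity $\sum_\token g(P^\star_{\Delta, \token}) = 1$, this completes the proof.
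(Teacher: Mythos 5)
Your proof is correct and reaches the same saddle point, but it takes a genuinely different route through the key technical step. Both you and the paper use the Donsker--Varadhan representation for the inner minimization over $h$ (your right inequality), and both reduce the outer maximization over $\bP\in\FPM$ to the extreme points of the polytope (Lemma~\ref{lem:extremal-points}). Where you diverge is in establishing that $\bP\mapsto\phi_{\hoptars}(\bP)=\sum_{\token}g(P_\token)$ is convex. The paper proves a more general statement---its Convexity Lemma (Lemma~\ref{lem:convex-MGF}) shows that $\bP\mapsto \EB_{1,\bP}\mathrm{e}^{-h(\Yars)}$ is convex for \emph{every} non-decreasing $h$---by observing that $\nabla^2_{\bP}F_{1,\bP}(r)$ is a diagonal matrix with entries $r^{1/P_\token}\log^2 r/P_\token^3\ge 0$ (a one-line computation), and then writing $\phi_h(\bP)=\mathrm{e}^{-h(1)}+\int_0^1 F_{1,\bP}(r)\mathrm{e}^{-h(r)}h(\rd r)$ by integration by parts, which is a nonnegative mixture of convex functions. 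You instead attack the scalar function $g(p)=\int_0^1 r^{1/p-1}/f_{1,\bP_\Delta^\star}(r)\,\rd r$ directly---and as you correctly note, the raw integrand is not pointwise convex in $p$, so the natural differentiate-under-the-integral move fails; your fix, substituting $r=\mathrm{e}^{-t}$ and rewriting $g''(p)=p^{-2}\Cov_{\mu_p}(t,\phi(t))$ with $\mu_p$ the Gamma$(2,p)$ measure (whose mean $2p$ exactly cancels the cross term) and then invoking the co-monotone correlation inequality, is a legitimate and rather elegant workaround. The trade-off is that the paper's lemma is reusable infrastructure---it is precisely what underlies the evaluation of $R_{\FPM}(h)$ for the other score functions $\hars$, $\hlog$, $\hind$ in Theorem~\ref{thm:main-efficiency-gumbel-informal} and Figure~\ref{fig:inhomo-efficiency}---whereas your covariance trick is tailored to the single function $\hoptars$. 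For proving Theorem~\ref{thm:optimal_score} alone your route suffices, but the Hessian-of-the-CDF observation is both shorter and does strictly more work.
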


\begin{rem}
To clear up any confusion, the second conclusion amounts to saying that $(\hoptars, \bP_{\Delta}^{\star})$ is a saddle point solution to the minimax problem \eqref{eq:minmax}. Explicitly, $\bP$ maximizes $L(\hoptars, \bP)$ in \eqref{eq:minmax} if and only if $\bP$ results from rearranging $\bP_{\Delta}^{\star}$'s coordinates.

\end{rem}


\begin{figure}[!t]
\centering
\includegraphics[width=\textwidth]{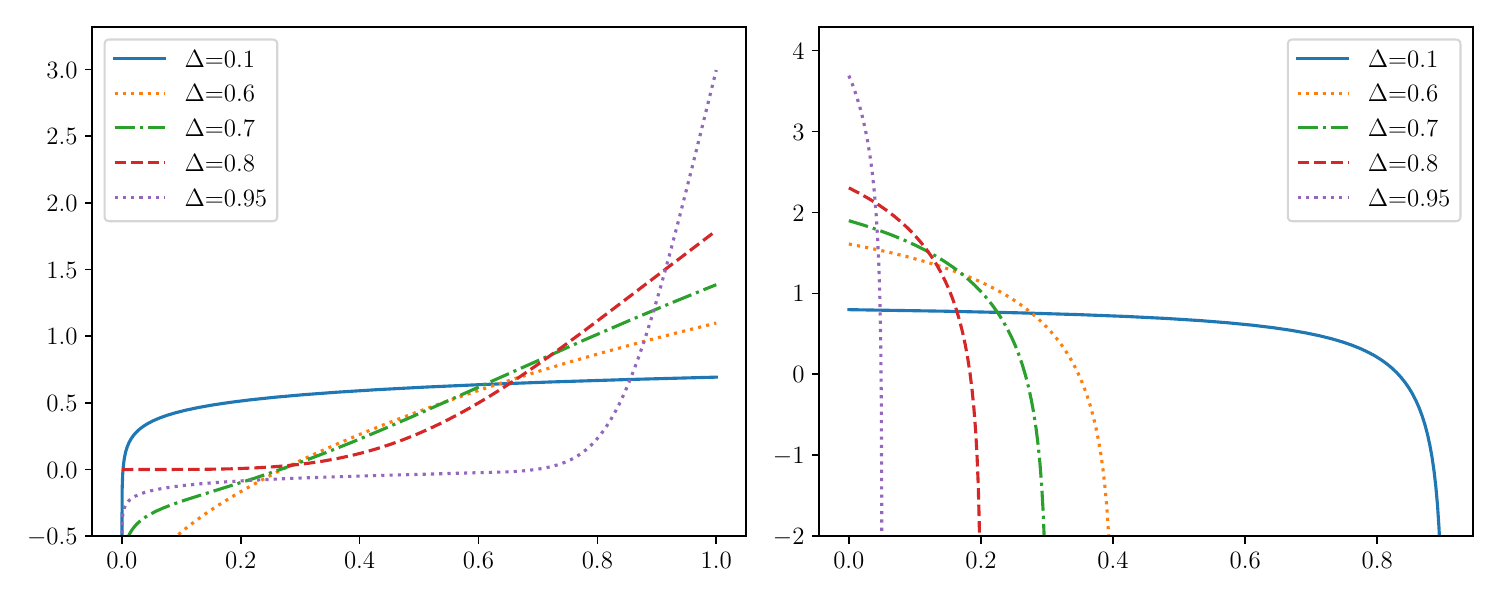}
\caption{Illustration of $\hoptars$ (left) and $\hoptdif$ (right) for different values of $\Delta$. 
}
\label{fig:optimal-score}
\end{figure}

The significance of Theorem~\ref{thm:optimal_score} lies partly in the closed-form nature of the optimal score function, thereby making it easy to use in detecting LLM-generated text. The left panel of Figure \ref{fig:optimal-score} shows $\hoptars$ for some values of $\Delta$. By the Neyman--Pearson lemma, the optimal score function and least-favorable distribution are related via $\hoptars = \log\bigl(\rd \mu_{1,\bP}/\rd \mu_0\bigr)\big|_{\bP=\bP_{\Delta}^{\star}}$. The left panel of Figure~\ref{fig:inhomo-efficiency} shows the efficiency $R_{\FPM}(\hoptars)$ as a function of $\Delta$. This function is not smooth when $\Delta = \frac{k}{k+1}$ for integer $k \ge 1$ because the support of $\bP^\star_{\Delta}$ jumps at these values.



As an interesting fact, the least-favorable distribution $\bP_{\Delta}^{\star}$ and its permuted counterparts form \textit{all} the vertices of $\FPM$ (see Lemma~\ref{lem:extremal-points} in Section~\ref{sec:proofthm32}). Moreover, the vertices are the closest NTP distributions in $\FPM$ to singular distributions. Roughly speaking, the closer to singular distributions, the more difficult it is to tease apart the two hypotheses in \eqref{eq:H-surrogate}. To see this, note that the testing problem becomes most difficult when the NTP distribution is singular, which makes the null and alternative distributions of the pivot indistinguishable.


\paragraph*{Comparison with other rules}
While it is now known that $\hoptars$ is optimal in the sense of $\FPM$-efficiency, it remains interesting to analyze the relative efficiency of other existing rules. In addition to the aforementioned $\hars$ and $\hlog$, we consider $\hind(r)=\1_{\{r \ge \delta\}}$ for a threshold $0 < \delta < 1$, say, $\delta = \re^{-1}$.\footnote{According to the $\Rlimit$-efficiency defined in \eqref{eqn:limit-measure}, $\delta=\re^{-1}$ achieves the largest $\Rlimit$-efficiency when $\Delta \to 1$. See Lemma \ref{lem:ind_optim} for details.} This indicator function is included as it is perhaps the simplest score function detecting the distributional differences between $H_0$ and $H_1$.


By making use of Theorem~\ref{thm:inhomo-type-II} within our framework, we compare these four detection rules in terms of $\FPM$-efficiency in the theorem below.


\begin{figure}[t!]
\centering
\includegraphics[width=\textwidth]{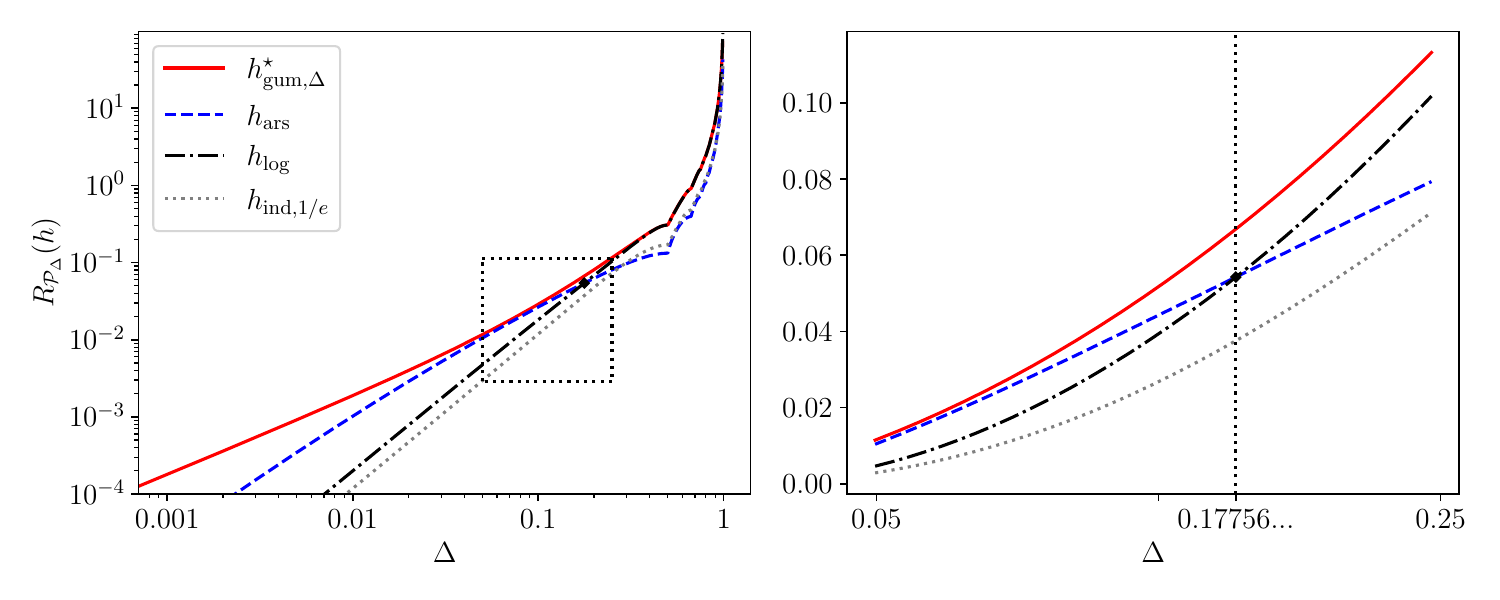}
\caption{
$\FPM$-efficiency rates of detection methods for Gumbel-max watermarks. 
Note that $R_{\FPM}(h)$ has non-smooth points when $\Delta = \frac12, \frac23, \frac34, \ldots$. 
}
\label{fig:inhomo-efficiency}
\end{figure}

\begin{thm}\label{thm:main-efficiency-gumbel-informal}
There exists an absolute constant $\Delta^\star \approx 0.17756$ such that the following two statements hold:
\begin{itemize}
\item[(a)] When $0.001 < \Delta < \Delta^\star$, $\hars$ has higher $\FPM$-efficiency than both $\hlog$ and $\hindo$, that is, 
\[
\max\left\{R_{\FPM}(\hlog), R_{\FPM}(\hindo)\right\} 
< R_{\FPM}(\hars) < R_{\FPM}(\hoptars).
\]

\item[(b)] When $\Delta^\star < \Delta < 0.99$, $\hlog$ has higher $\FPM$-efficiency than both $\hars$ and $\hindo$, that is,
\[
\max\left\{R_{\FPM}(\hars), R_{\FPM}(\hindo)\right\} 
< R_{\FPM}(\hlog) < R_{\FPM}(\hoptars).
\]
\end{itemize}
\end{thm}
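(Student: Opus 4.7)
The upper bound $R_{\FPM}(h) < R_{\FPM}(\hoptars)$ for each $h \in \{\hars,\hlog,\hindo\}$ is immediate from Theorem~\ref{thm:optimal_score}, which identifies $\hoptars$ as the rate-maximizing score function over $\FPM$. Strictness follows because none of the three candidates is an affine transformation of $\hoptars$ on $[0,1]$, and the large-deviation rate is invariant only under positive affine reparametrizations of the score. The substantive task is therefore to order $R_{\FPM}(\hars)$, $R_{\FPM}(\hlog)$, and $R_{\FPM}(\hindo)$ against each other and to locate the crossover $\Delta^\star$.

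The first step is to obtain tractable formulas for $R_{\FPM}(h)(\Delta)$ for each candidate. By Lemma~\ref{lem:dis-r} the MGF admits a token-wise decomposition
\[
\phi_{\bP,h}(\theta) = \sum_{w \in \Voca} \int_0^1 \re^{-\theta h(r)}\, r^{1/P_w - 1}\, \rd r,
\]
which evaluates to a sum of Beta functions $B(1/P_w,\theta+1)$ for $h=\hars$, to the rational sum $\sum_w P_w/(1+\theta P_w)$ for $h=\hlog$, and to an elementary expression in $\re^{-1/P_w}$ for $h=\hindo$. The null means $\EB_0 h(Y)$ are $1$, $-1$, and $1-\re^{-1}$, respectively.

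The main obstacle is the inner supremum $\sup_{\bP\in\FPM}\log\phi_{\bP,h}(\theta)$. The plan is to reduce it to $\bP=\bP_{\Delta}^{\star}$ by combining Lemma~\ref{lem:extremal-points} (which asserts that the vertices of $\FPM$ are exactly the coordinate permutations of $\bP_{\Delta}^{\star}$) with a convexity analysis of the single-token integrand $P\mapsto \int_0^1 \re^{-\theta h(r)}\, r^{1/P-1}\, \rd r$ on $[0,1-\Delta]$. For each of the three candidates the integrand is log-convex in the parameter $1/P$, so the token-wise summands are convex in $P_w$; the resulting sum is a convex function of $\bP$ on the polytope $\FPM$, hence its maximum is attained at an extreme point, and by symmetry equals the value at $\bP_{\Delta}^{\star}$. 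This reduces $R_{\FPM}(h)(\Delta)$ to a one-dimensional convex program in $\theta$, solvable in closed form for $\hlog$ and by a single scalar root-finding step for $\hars$ and $\hindo$.

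With these formulas, each map $\Delta\mapsto R_{\FPM}(h)(\Delta)$ is continuous on $(0,1)$. A small-$\Delta$ expansion (using that $\bP_{\Delta}^{\star}$ is nearly singular) shows $R_{\FPM}(\hars)(\Delta) > R_{\FPM}(\hlog)(\Delta)$: the heavy right-tail weighting of $\hars$ exploits the pile-up of the pivot $\Yars$ near $1$ caused by near-deterministic tokens, whereas $\hlog$ is insensitive there. A large-$\Delta$ expansion reverses the comparison, since as $\bP_{\Delta}^{\star}$ approaches uniform the likelihood ratio flattens near $1$ and the logarithmic singularity of $\hlog$ at $0$ dominates. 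Continuity plus strict monotonicity of $R_{\FPM}(\hars)(\Delta)-R_{\FPM}(\hlog)(\Delta)$, checked by differentiating the closed-form expressions, yields a unique crossover, and numerical evaluation pins it at $\Delta^\star\approx 0.17756$. Finally, dominance of $\hindo$ by $\max\{R_{\FPM}(\hars),R_{\FPM}(\hlog)\}$ throughout $(0.001,0.99)$ follows from a direct MGF comparison: $\hindo$ is a binary quantization of the pivot and loses the information encoded in the continuous ordering of $\Yars$, so a pointwise bound of its cumulant-generating function by that of either $\hars$ or $\hlog$ delivers the claim away from the degenerate endpoints.
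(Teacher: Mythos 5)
Your high-level plan---reduce the inner supremum to the vertex $\bP_{\Delta}^{\star}$, obtain one-dimensional formulas in $\theta$, and combine endpoint asymptotics with continuity to locate a crossover---matches the paper's proof in outline, and the upper-bound argument via Theorem~\ref{thm:optimal_score} is fine. But there are three concrete gaps.

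First, your convexity step is not quite what the paper proves, and is not obviously correct as written. You argue that the per-token integrand $P \mapsto \int_0^1 \re^{-\theta h(r)} r^{1/P-1}\,\rd r$ is ``log-convex in $1/P$,'' and infer convexity of the summand in $P$. Log-convexity of $\psi(q)=\int \re^{-\theta h} r^{q-1}\rd r$ in $q$ is true (it is an MGF), but it does not imply convexity of $\psi(1/P)$ in $P$: composing a log-convex function with the convex, non-monotone-in-the-relevant-direction map $P\mapsto 1/P$ needs care, and in fact a direct second-derivative calculation shows $\psi(1/P)$ need not be convex in $P$ summand-by-summand. What the paper actually proves (Lemma~\ref{lem:convex-MGF}) is convexity of the CDF $\bP \mapsto F_{1,\bP}(r)$ via a diagonal Hessian computation, then passes to $\phi_h(\bP)$ by integration by parts; Lemma~\ref{lem:worst-MGF} then delivers the reduction to $\bP_{\Delta}^{\star}$ for any non-decreasing $h$. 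You should invoke these lemmas directly rather than re-derive.

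Second, you assert strict monotonicity of $\Delta\mapsto R_{\FPM}(\hars)-R_{\FPM}(\hlog)$, ``checked by differentiating the closed-form expressions.'' That differentiation is never carried out, and it is nontrivial: the rates are of the form $-\inf_{\theta\ge 0} f(\theta,\Delta)$ with $\theta^*(\Delta)$ defined implicitly, and $\bP_\Delta^\star$ changes support at $\Delta = k/(k+1)$, making $R_{\FPM}(h)$ non-smooth at those points. The paper does not attempt this; its formal version proves the ordering only near the endpoints (by a Neyman--Pearson argument at $\Delta\to 1-|\Voca|^{-1}$ and a numerical evaluation at $\Delta=0.001$), obtaining two constants $\Delta_1^{\gum}\le\Delta_2^{\gum}$, and then verifies numerically that they coincide at $\approx 0.17756$. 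Presenting the uniqueness of the crossover as a proved fact without the monotonicity calculation is a gap.

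Third, your argument for the $\hindo$ comparison is wrong in principle. You reason that ``$\hindo$ is a binary quantization of the pivot and loses information,'' but the pivotal statistic $\Yars$ is the same for all three detection rules; only the score $h$ applied to it differs, so there is no data-processing inequality comparing $\hindo$ to $\hars$ or $\hlog$ specifically. Data processing only yields $R_{\FPM}(\hindo)\le R_{\FPM}(\hoptars)$. Indeed the paper explicitly notes that $\hindo$ slightly outperforms $\hars$ for large $\Delta$, so the ``pointwise bound of its cumulant-generating function by that of either $\hars$ or $\hlog$'' you gesture at cannot hold uniformly; the theorem only needs $\hindo < \hlog$ there, and the paper settles this numerically (``the other comparisons follow similarly''). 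You would need to either carry out the explicit one-dimensional comparisons or acknowledge, as the paper does, that these are computer-assisted verifications over a bounded $\Delta$-range.
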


\begin{rem}
The constant $\Delta^\star$ is the unique root of the equation $R_{\FPM}(\hars) 
= R_{\FPM}(\hlog)$ for $\Delta > 0.001$. Numerical evaluation shows $\Delta^\star = 0.17756080525\cdots$. A proof of Theorem~\ref{thm:main-efficiency-gumbel-informal} can be found in Appendix \ref{proof:comparison}.
\end{rem}

Figure~\ref{fig:inhomo-efficiency} illustrates Theorem~\ref{thm:main-efficiency-gumbel-informal} by presenting the comparisons under our statistical framework. Notably, for small values of $\Delta$, $\max_{\token \in \Voca} P_{t,\token}$ can be close to 1. In this regime, $\mu_{1, \bP_t}$ is close to $\mu_0$ in distribution, making the detection efficiency relatively low regardless of the rule used. Conversely, a large value of $\Delta$ implies all the token probabilities tend to be small, leading to a significant discrepancy between $\mu_{1, \bP_t}$ and $\mu_0$. Accordingly, all detection rules have efficiency rates tending to infinity. 


While $\hoptars$ achieves the highest $\FPM$-efficiency rate across the entire range of $\Delta$, the relative performance of $\hars$ and $\hlog$ depends on the value of $\Delta$. From an empirical perspective, $\hars$ is observed to outperform $\hlog$ in common settings of LLMs \citep{kuditipudi2023robust,fernandez2023three}. This is because the largest token probability in NTP distributions is, by and large, close to 1. To appreciate this fact, we experiment on ChatGPT-3.5-turbo with twenty prompts (see Appendix \ref{detail:empirical-delta} for details) and track the largest token probabilities across the generation of token sequences. The results are presented in Figure \ref{fig:empirical-delta}, which shows that, for example, 56.85\% of the NTP distributions have the largest token probability above $1 - \Delta^\star \approx 0.82$. This is the regime where $\hars$ is superior to $\hlog$, as predicted by our Theorem~\ref{thm:main-efficiency-gumbel-informal}.


Nevertheless, $\Delta$ can be large when the LLM generation is highly stochastic. 
In this regime, $\hindo$ slightly outperforms $\hars$, but the gap soon diminishes to zero as $\Delta$ tends to 1, as seen from Figure~\ref{fig:inhomo-efficiency}. Similarly, $\hlog$ has a diminishing suboptimality compared with the optimal $\hoptars$ in the sense that $\lim\limits_{\Delta \to 1} R_{\FPM}(\hlog)/R_{\FPM}(\hoptars) \to 1$. We prove the limiting behaviors of these score functions in Appendix \ref{appen:effiency-gap}.

\subsection{Proof of Theorem \ref{thm:optimal_score}}
\label{sec:proofthm32}





The proof of Theorem~\ref{thm:optimal_score} relies on the minimax formulation \eqref{eq:minmax} in our framework to solve for the optimal score function. As mentioned earlier, however, this task is generally difficult since $L(h, \bP) =
\EB_0 h(Y) + \log \EB_{1, \bP}\mathrm{e}^{-h(Y)}$ is not convex-concave. We circumvent this obstacle by introducing a simple yet remarkable result that characterizes the MGF \eqref{eq:class-dependent-MGF} associated with the Gumbel-max watermark.

\begin{lem}[Convexity Lemma]
\label{lem:convex-MGF}
For any non-decreasing function $h$, the functional\footnote{We consider a slightly different version of the MGF by setting $\theta = 1$ and taking as input $\bP$ in \eqref{eq:class-dependent-MGF}.} $  \bP \mapsto \phi_{h}(\bP) := \EB_{1,\bP} \re^{- h(\Yars)} 
$
is convex in $ \bP \in \Simplex(\Voca)$.
\end{lem}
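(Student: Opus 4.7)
My plan is to reduce the convexity of $\phi_h$ on the simplex to a pointwise-in-$r$ convexity statement about the CDF
\[
F(r;\bP) := \sum_{\token \in \Voca} P_\token\, r^{1/P_\token}, \qquad r \in [0,1],
\]
which Lemma~\ref{lem:dis-r} identifies as the distribution of $\Yars$ given $\bP$ under $H_1$. Since $h$ is non-decreasing, the function $r \mapsto e^{-h(r)}$ is non-increasing and of bounded variation on $[0,1]$, so $d\nu_h := -d e^{-h}$ is a non-negative Lebesgue--Stieltjes measure. A standard integration-by-parts argument then yields
\[
\phi_h(\bP) \;=\; \int_0^1 e^{-h(r)}\, dF(r;\bP) \;=\; e^{-h(1)} + \int_0^1 F(r;\bP)\, d\nu_h(r),
\]
where the boundary term at $r=0$ vanishes because $F(0;\bP)=0$. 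Convexity of $\phi_h$ in $\bP$ will then follow immediately once one shows $\bP \mapsto F(r;\bP)$ is convex for each fixed $r \in [0,1]$, since the integral of convex functions against a non-negative measure is convex, and the constant $e^{-h(1)}$ does not affect convexity.

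The key step is thus to verify pointwise convexity of $F(r;\bP) = \sum_\token \psi_r(P_\token)$, where $\psi_r(p) := p\, r^{1/p}$ (extended continuously by $\psi_r(0) := 0$). Because $F(r;\cdot)$ is a separable sum over the coordinates, it suffices to show each $\psi_r$ is convex on $[0,1]$. For $r=1$ we have $\psi_1(p) = p$ (linear), and for $r=0$ we have $\psi_0 \equiv 0$ on $(0,1]$; both are trivially convex. For $0 < r < 1$, writing $c := \ln r < 0$ and computing directly,
\[
\psi_r'(p) = e^{c/p}\Bigl(1 - \tfrac{c}{p}\Bigr), \qquad
\psi_r''(p) = \frac{c^2}{p^3}\, e^{c/p} \;=\; \frac{(\ln r)^2}{p^3}\, r^{1/p} \;\ge\; 0,
\]
which gives convexity of $\psi_r$ on $(0,1]$ and hence on $[0,1]$ by the boundary continuity at $0$. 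Consequently $\bP \mapsto F(r;\bP)$ is convex on $\Simplex(\Voca)$ for every $r \in [0,1]$, which completes the argument via the integration-by-parts representation.

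The only genuine subtlety is that $h$ is assumed merely non-decreasing, not differentiable, so the integration by parts must be interpreted in the Lebesgue--Stieltjes sense; but since $e^{-h}$ and $F(\cdot;\bP)$ are both of bounded variation on $[0,1]$ with no common jump points (the latter being continuous in $r$), this is standard and does not require any smoothing argument. If one prefers, the general case can be recovered by approximating $h$ by smooth non-decreasing $h_k \uparrow h$ and passing to the limit via dominated convergence, using $0 \le e^{-h_k(r)} \le e^{-h(0)}$.
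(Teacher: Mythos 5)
Your proof is correct and follows essentially the same route as the paper's: integrate $e^{-h}$ against $dF_{1,\bP}$, integrate by parts to express $\phi_h(\bP)$ as $e^{-h(1)}$ plus a nonnegative Stieltjes mixture of $F_{1,\bP}(r)$ over $r$, and then verify the pointwise convexity of $\bP \mapsto F_{1,\bP}(r)$ via the second-derivative/Hessian computation (the paper writes the diagonal Hessian with entries $r^{1/P_i}\log^2 r / P_i^3$, which is exactly your $\psi_r''$). Your remarks on the Lebesgue--Stieltjes interpretation for non-smooth $h$ and on the boundary term at $r=0$ are a slightly more careful accounting of the same argument.
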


This convexity lemma is the key to the proof of Theorem \ref{thm:optimal_score} and is a contribution of independent interest to future work on LLM watermarks. Roughly speaking, it helps reduce the ``max'' part in the minimax problem \eqref{eq:minmax} to the problem of identifying vertices of the distribution class. 
Technically speaking, the role of the convexity lemma in the proof of Theorem \ref{thm:optimal_score} is through the following lemma.


\begin{lem}\label{lem:worst-MGF}
For any non-decreasing function $h$, we have
\begin{equation}
\label{eqn:real-bound}
    \sup_{\bP \in \FPM}\phi_{h}(\bP) = \phi_{h}(\bP_{\Delta}^{\star}).
\end{equation}

\end{lem}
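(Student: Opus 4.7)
The plan is to combine the Convexity Lemma (Lemma~\ref{lem:convex-MGF}) with a characterization of the extreme points of the polytope $\FPM$, and then exploit the symmetry of $\phi_h$ under permutations of $\bP$. The three ingredients are: (i) convexity of the objective $\phi_h$ over $\Simplex(\Voca)$; (ii) Bauer's maximum principle, which guarantees that a continuous convex function on a nonempty compact convex set attains its maximum at an extreme point; and (iii) the fact that all vertices of $\FPM$ are permutations of the single distribution $\bP_{\Delta}^{\star}$ in \eqref{eq:optimal-P}.

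First I would verify the setup. Note that $\FPM = \Simplex(\Voca) \cap \{\bP: P_\token \le 1-\Delta \ \text{for all}\ \token\}$ is a compact convex polytope, being the intersection of the probability simplex with finitely many halfspaces. By Lemma~\ref{lem:convex-MGF}, the functional $\bP \mapsto \phi_h(\bP)$ is convex on $\Simplex(\Voca)$, and in particular on $\FPM$. Since $\phi_h$ is also continuous in $\bP$ (the map $\bP\mapsto F_{1,\bP}$ is continuous and $\re^{-h}$ is bounded on $[0,1]$ because $h$ is non-decreasing and hence bounded below on $[0,1]$), Bauer's maximum principle yields $\sup_{\bP\in\FPM}\phi_h(\bP) = \max_{\bP\in\mathrm{ext}(\FPM)}\phi_h(\bP)$, where $\mathrm{ext}(\FPM)$ denotes the set of extreme points.

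Next I would invoke Lemma~\ref{lem:extremal-points}, which identifies $\mathrm{ext}(\FPM)$ as exactly the set of permutations of $\bP_{\Delta}^{\star}$. To close the proof, I would observe that $\phi_h$ is a symmetric function of the coordinates of $\bP$: from Lemma~\ref{lem:dis-r}, the CDF $F_{1,\bP}(r) = \sum_{\token} P_\token r^{1/P_\token}$ depends on $\bP$ only through the multiset $\{P_\token : \token\in\Voca\}$, so $\phi_h(\bP) = \int_0^1 \re^{-h(r)}\rd F_{1,\bP}(r)$ is invariant under permutations of the coordinates of $\bP$. Hence $\phi_h$ takes the same value on every extreme point, giving the claimed identity.

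The main obstacle is not in Step (iii), which is a straightforward convex-geometric fact about the polytope $\FPM$ (to be settled by Lemma~\ref{lem:extremal-points}), but rather in being sure we can apply Bauer's principle cleanly. Continuity and compactness are both routine, so this is mostly bookkeeping; the real work was already done in Lemma~\ref{lem:convex-MGF}, where convexity of $\phi_h$ in $\bP$ had to be established despite the fact that $\bP$ appears in the exponent $1/P_\token$ of $F_{1,\bP}$. Granted that lemma and the vertex characterization, the present result is essentially a corollary obtained by combining convexity, extremality, and the permutation symmetry of $F_{1,\bP}$.
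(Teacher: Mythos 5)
Your argument matches the paper's proof: both reduce the supremum over $\FPM$ to the vertices via convexity of $\phi_h$ (the Convexity Lemma), identify the vertices as the permutations of $\bP_\Delta^\star$ (Lemma~\ref{lem:extremal-points}), and conclude by the permutation invariance of $\phi_h$. The paper invokes the extreme-point principle directly (citing Hiriart-Urruty--Lemar\'echal) without separately checking continuity; for a polytope this can even be bypassed entirely, since every point of $\FPM$ is a finite convex combination of vertices and Jensen's inequality for the convex functional $\phi_h$ already gives the bound.
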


\begin{rem}
A direct consequence of Lemma~\ref{lem:worst-MGF} is that $ R_{\FPM}(h) = -\inf_{\theta\ge 0}L(\theta h,\bP_{\Delta}^{\star}).$
\end{rem}

Now we prove Lemmas~\ref{lem:convex-MGF} and \ref{lem:worst-MGF} in order.





\begin{proof}[Proof of Lemma \ref{lem:convex-MGF}]

By Lemma~\ref{lem:dis-r}, the CDF of $\Yars$ with NTP distribution $\bP$ takes the form: $F_{1,\bP}(r) = \sum_{\token\in \Voca } P_{\token} r^{1/P_{\token}}$ for $r \in [0,1]$.
This function relates to $\phi_{h}(\bP)$ since
\[
\phi_{h}(\bP) = \EB_{1,\bP} \re^{- h(\Yars)} = \int_0^1 \re^{- h(r)}  F_{1, \bP}(\rd r).
\]

First, we show that $\bP \mapsto F_{1,\bP}(r)$ is a convex function for any given $r \in [0, 1]$. This is demonstrated by showing that the Hessian matrix of $\bP \mapsto F_{1,\bP}(r)$ is positive semidefinite for any $\bP$ within the interior of its domain $\Simplex(\Voca)$:  
\begin{equation*}
    \nabla_{\bP}^2 F_{1,\bP}(r) = 
    \begin{bmatrix}
        r^{1/P_{1}} \frac{\log^2 r}{P_{1}^3} & 0 & \ldots & 0 \\
        0 & r^{1/P_{2}} \frac{\log^2 r}{P_{2}^3} & \ldots & 0 \\
        \ldots & \ldots & \ldots & \ldots \\
        0 & 0 & \ldots & r^{1/P_{|\Voca|}} \frac{\log^2 r}{P_{|\Voca|}^3} 
    \end{bmatrix}
        \succeq 0.
\end{equation*}
Note the Hessian is diagonal with all non-negative entries, confirming the convexity of $\bP \mapsto F_{1,\bP}(r)$.

Second, we examine the functional form of $\phi_{h}(\bP)$ through integration by parts, which yields:
\begin{align*}
\begin{split}
\phi_{h}(\bP)&=F_{1,\bP}(r) \re^{- h(r)}\bigg|_0^1+\int_0^1 F_{1,\bP}(r) \re^{-h(r)}h(\rd r) 
=\re^{- h(1)}+\int_0^1F_{1,\bP}(r)\re^{-h(r)}h(\rd r).
\end{split}
\end{align*}
This equation expresses $\phi_{h}(\bP)$ as a nonnegative weighted sum of $F_{1, \bP}(r)$ evaluated over $r \in [0, 1]$. 

Given the established convexity of $\bP \mapsto F_{1,\bP}(r)$, and considering that a nonnegative weighted sum of convex functions remains convex, it follows directly that $\bP \mapsto \phi_{h}(\bP)$ is convex. This completes the proof of Lemma~\ref{lem:convex-MGF}.
\end{proof}


\begin{proof}[Proof of Lemma \ref{lem:worst-MGF}]
Our proof is based on a fundamental principle in convex analysis: the supremum 
of a convex function over a compact convex set in a Euclidean space is necessarily attained at an extreme point of the convex set \citep{HiriartJeLe96}.

Fix a non-decreasing function $h$. The mapping $\bP\mapsto \phi_{h}(\bP)$ is shown to be convex in $\bP$ by Lemma~\ref{lem:worst-MGF}. Additionally,  $\FPM$ is convex. Thus, the supremum of the convex function $\bP \mapsto \phi_{h}(\bP)$ must be necessarily attained at an extreme point of the convex constraint set $\FPM$. By Lemma \ref{lem:extremal-points}, any extreme point of the set $\FPM$ must be a permuted $\bP_{\Delta}^{\star}$.

\begin{lem}\label{lem:extremal-points}
The set of extreme points of $\FPM$, denoted by $\Ext(\FPM)$, is given by 
\begin{equation*}
    \Ext(\FPM) = \left\{\pi(\bP_{\Delta}^{\star}): \pi~\text{is a permutation on $\{1, 2, \ldots, |\Voca|\}$} \right\},
\end{equation*}
where $\pi(\bP)$ denote the permuted NTP distribution whose $i$th coordinate is $P_{\pi(i)}$.
\end{lem}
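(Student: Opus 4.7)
The plan is to characterize $\Ext(\FPM)$ directly via standard polytope theory. The set $\FPM \subset \RB^{|\Voca|}$ is cut out by a single linear equality $\sum_{\token \in \Voca} P_\token = 1$ together with the box inequalities $0 \le P_\token \le 1-\Delta$ for each $\token$. Its affine hull has dimension $|\Voca|-1$, so any extreme point must activate at least $|\Voca|-1$ inequality constraints with linearly independent gradients. Because the two constraints $P_\token = 0$ and $P_\token = 1-\Delta$ at a single coordinate are incompatible, at most one can be tight per coordinate, and active constraints at distinct coordinates are automatically linearly independent.

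From this, I would conclude that at any extreme point $\bP$, all but at most one coordinate lies in $\{0, 1-\Delta\}$. Let $k$ denote the number of coordinates equal to $1-\Delta$, and let $p \in [0, 1-\Delta)$ be the value of the remaining coordinate (take $p=0$ if every coordinate is in $\{0, 1-\Delta\}$). The equality constraint forces $p = 1 - k(1-\Delta)$, and feasibility $0 \le p < 1-\Delta$ translates to $\tfrac{\Delta}{1-\Delta} < k \le \tfrac{1}{1-\Delta}$. Since this is a half-open interval of length $1$, it contains a unique integer, namely $k = \lfloor 1/(1-\Delta)\rfloor$; in turn $p = 1 - \widetilde{\Delta}$. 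Hence $\bP$ must be a permutation of $\bP_{\Delta}^{\star}$, establishing $\Ext(\FPM) \subseteq \{\pi(\bP_{\Delta}^{\star}) : \pi \text{ a permutation}\}$.

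For the reverse inclusion, I would verify directly that every such permutation is extreme. If $\pi(\bP_{\Delta}^{\star}) = \lambda \bm{Q} + (1-\lambda)\bm{R}$ for some $\bm{Q}, \bm{R} \in \FPM$ and $\lambda \in (0,1)$, then at each coordinate $\token$ where $[\pi(\bP_{\Delta}^{\star})]_\token = 1-\Delta$ the upper box constraint forces $Q_\token = R_\token = 1-\Delta$, and at each coordinate where $[\pi(\bP_{\Delta}^{\star})]_\token = 0$ nonnegativity forces $Q_\token = R_\token = 0$. The at-most-one remaining coordinate is then pinned down by $\sum_\token Q_\token = \sum_\token R_\token = 1$, so $\bm{Q} = \bm{R} = \pi(\bP_{\Delta}^{\star})$.

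The only subtle point is the boundary case where $1/(1-\Delta)$ is itself an integer $m$: then $k=m$, $p=0$, and the extreme point has exactly $m$ coordinates at $1-\Delta$ with the rest zero, rather than one coordinate strictly inside $(0, 1-\Delta)$. The formula \eqref{eq:optimal-P} handles this uniformly since $1-(1-\Delta)\lfloor 1/(1-\Delta)\rfloor$ equals $0$ in that case, so no separate case analysis is required. Beyond this minor bookkeeping, the argument is a routine polytope-vertex computation and no serious obstacle is anticipated.
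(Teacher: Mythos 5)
Your argument is correct, and it takes a genuinely different route from the paper's. The paper proves $\Ext(\FPM)\subseteq\Pi(\bP_\Delta^{\star})$ by showing that every $\bP\in\FPM$ lies in the convex hull of $\Pi(\bP_\Delta^{\star})$, via an induction on the number of coordinates equal to $1-\Delta$: repeatedly splitting two non-maximal coordinates into a pair of feasible points until each piece lies in $\Pi(\bP_\Delta^{\star})$. You instead invoke the standard active-constraint criterion for vertices of a polytope described by the equality $\sum_\token P_\token = 1$ and box constraints $0\le P_\token\le 1-\Delta$: a vertex must have at least $|\Voca|-1$ tight, linearly independent inequality constraints, hence at most one coordinate strictly inside $(0,1-\Delta)$, and the sum constraint plus the feasibility window $\Delta/(1-\Delta) < k \le 1/(1-\Delta)$ — a half-open interval of length one — pin $k=\lfloor 1/(1-\Delta)\rfloor$ and the remaining coordinate uniquely. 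This is cleaner and more local than the paper's induction; it also makes the forward inclusion immediate, since at a permutation of $\bP_\Delta^{\star}$ every coordinate but at most one is pinned by the corresponding tight box constraint (the paper's contradiction argument for this direction is terser and implicitly relies on the same pinning, but only states it for the top-$\lfloor 1/(1-\Delta)\rfloor$ coordinates). The paper's convex-hull argument buys something your approach does not: as a byproduct it establishes $\FPM = \mathrm{Conv}(\Pi(\bP_\Delta^{\star}))$ constructively, which is stronger than the Krein--Milman consequence you would need to cite to recover it, though for the lemma as stated both proofs are complete.
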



Note there is a permutation invariance in the objective: $\phi_{h}(\bP) = \phi_{h}(\pi(\bP))$ holds for every permutation $\pi$ on $\{1, 2, \ldots, |\Voca|\}$ and every vector $\bP$. As a consequence, we can explicitly compute the supremum of $\phi_{h}(\bP)$ on the convex set $\FPM$ as follows: 
\begin{equation*}
    \sup_{\bP \in \FPM}\phi_{h}(\bP)
        = \sup_{\bP \in \Ext(\FPM)}\phi_{h}(\bP)
        = \phi_{h}(\bP_{\Delta}^{\star}).
\end{equation*}
This proves equation~\eqref{eqn:real-bound}. The remaining statement in Lemma~\ref{lem:worst-MGF} then follows.
\end{proof}

We conclude this section by proving Theorem~\ref{thm:optimal_score}.

\begin{proof}[Proof of Theorem \ref{thm:optimal_score}]

The second part of this theorem follows directly from the convexity lemma (Lemma \ref{lem:convex-MGF}) and Lemma~\ref{lem:extremal-points}.

To prove the first part, we work on the minimax formulation \eqref{eq:minmax} and begin by recognizing that
\begin{equation}\label{eq:gum_proof}
\begin{aligned}
\min_{h}\max_{\bP \in \FPM} L(h, \bP) &=
\min_{h}\max_{\bP \in \FPM} \left( \EB_0 h(\Yars) + \log \phi_{h}(\bP) \right)\\
& \ge \min_{h} \left( \EB_0 h(\Yars) + \log \phi_{h}(\bP_\Delta^{\star}) \right) = -\KL(\mu_0, \mu_{1, \bP_\Delta^{\star}}),
\end{aligned}
\end{equation}
where the final equality is due to the Donsker--Varadhan representation~\citep{donsker1983asymptotic} and $\KL$ denotes the Kullback--Leibler divergence. Note that Lemma~\ref{lem:worst-MGF} can not be applied here because the minimization function $h$ is not necessarily non-decreasing. 

Due to the uniqueness of the Donsker--Varadhan representation, $\EB_0 h(\Yars) + \log \phi_{h}(\bP_\Delta^{\star})$ is strictly larger than $-\KL(\mu_0, \mu_{1, \bP_\Delta^{\star}})$ unless we take the log-likelihood ratio 
\begin{equation*}
    \hoptars(r) = \log \frac{\rd\mu_{1, \bP_\Delta^{\star}}}{\rd\mu_0}(r) = \log \biggl(\biggl\lfloor\frac{1}{1-\Delta}\biggr\rfloor r^{\frac{\Delta}{1-\Delta}}+r^{\frac{\widetilde{\Delta}}{1-\widetilde{\Delta}}}\biggr),
\end{equation*}
which is non-decreasing in $r$. In this case, we get
\begin{equation}\label{eq:gum_proof2}
\begin{aligned}
\max_{\bP \in \FPM} L(\hoptars, \bP) &= \max_{\bP \in \FPM} \left( \EB_0 \hoptars(\Yars) + \log \phi_{\hoptars}(\bP) \right)\\
& = \EB_0 \hoptars(\Yars) + \log \phi_{\hoptars}(\bP_\Delta^{\star})  = -\KL(\mu_0, \mu_{1, \bP_\Delta^{\star}}),
\end{aligned}
\end{equation}
where the final equality follows from Lemma~\ref{lem:worst-MGF} and the third equality is ensured by, again, the Donsker--Varadhan representation.

Taken together, \eqref{eq:gum_proof} and \eqref{eq:gum_proof2} show that $\hoptars$ is the unique score function that solves the minimax problem \eqref{eq:minmax} with $\bP_\Delta^{\star}$.\qedhere

\end{proof}

\section{Application to the Inverse Transform Watermark}
\label{sec:inverse}

In this section, we apply the framework to the 
inverse transform watermark \citep{kuditipudi2023robust}. Without loss of generality, below we take $\Voca = \{1, \ldots, |\Voca|\}$.
Recall that its decoder is defined as 
\begin{equation*}
\token_t = \SMinv(\bP_t, \xi_t) :=\pi_t^{-1}(F^{-1}(U_t; \pi_t)),
\end{equation*}
where $\zeta_t = (\pi_t, U_t)$ with $U_t \sim U(0, 1)$ and $\pi_t$ being sampled uniformly at random from all permutations on $\Voca$. Following the strategy in \cite{piet2023mark,kuditipudi2023robust}, $(\pi_t, U_t)$'s are jointly independent across the token sequence. For a given permutation $\pi$, $F(\cdot, \pi)$ above denotes the CDF under permutation: $F(x; \pi) = \sum_{\token' \in \Voca} P_{\token'} \cdot {\mathbf{1}}_{\{\pi({\token'})\le x\}}.$
By construction, the inverse transform watermark is unbiased for sampling from the NTP distribution $\bP$. To see the unbiasedness directly, for any token $w$, note that $\SMinv(\bP, \xi) = \token$ if and only if 
\begin{equation}\label{eq:inv_dependence}
\sum_{\token' \in \Voca} P_{\token'} \cdot {\mathbf{1}}_{\{\pi({\token'}) < \pi({\token})\}} \le U \le \sum_{\token' \in \Voca} P_{\token'} \cdot
{\mathbf{1}}_{\{\pi({\token'})\le\pi({\token})\}},
\end{equation}
which occurs with probability $P_\token$ since the interval above has length $P_\token$.

\subsection{Main Results}
\begin{figure}[t!]
\centering
\includegraphics[width=\textwidth]{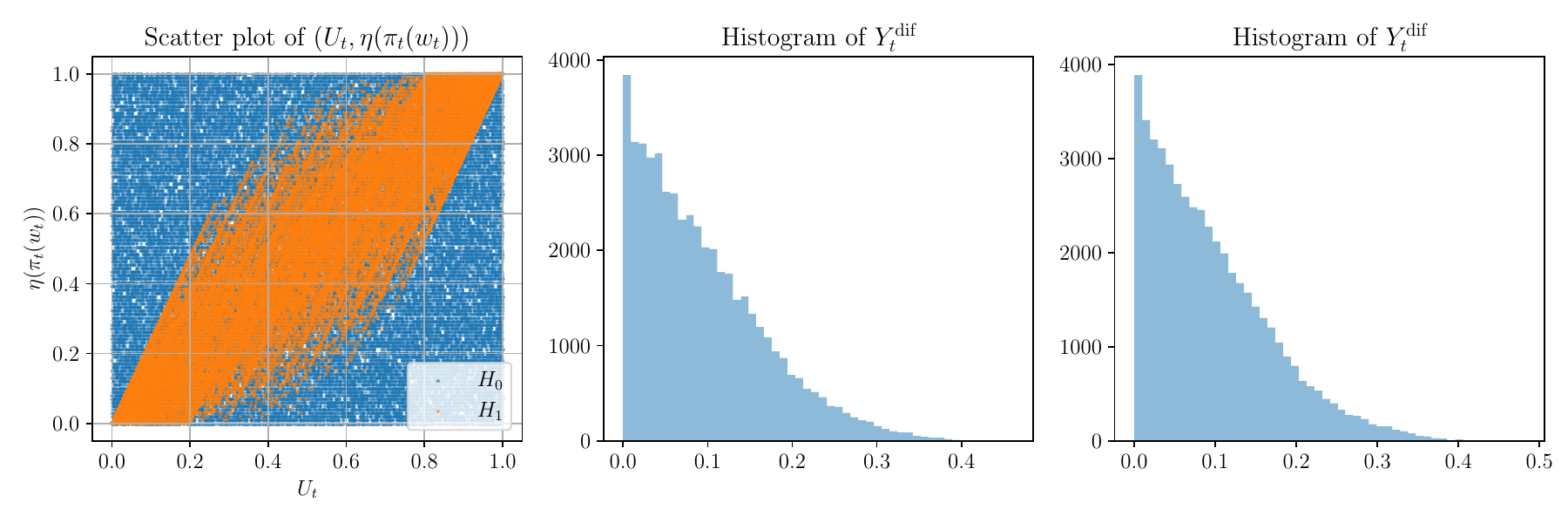}
\caption{
Left: the scatter plot of $5\times 10^4$ i.i.d.~observations from $(U_t, \eta(\pi_{t}(\token_t)))$ when $|\Voca| = 100$ and the top five probabilities in $\bP_t$ are fixed as $(0.2, 0.2, 0.1, 0.05, 0.05)$ and the rest probabilities are uniformly distributed.
Middle: histogram of $10^4$ i.i.d. observations of $\Ydif_t$ when the underlying NTP distribution is $\bP_t$.
Right: the same setting as the middle panel except that the probabilities except for the top five of $\bP_t$ follow normalized $\mathrm{Pow}(0.5, 1)$ values.
}
\label{fig:inverse-distribution}
\end{figure}

To utilize the framework in Section \ref{sec:overview} for the inverse transform watermark, we need to construct a pivotal statistic $Y_t=Y(U_t,\pi_t)$ such that its distribution is known under the null, and meanwhile should capture the dependence between $U_t$ and $\pi_t$ under the alternative for any NTP distribution. 
Under $H_0$, $U_t$ is independent of the human-written token $\token_t$, and $\pi_{t}(\token_t)$ is uniformly distributed over the vocabulary $\Voca$. Under $H_1$, in contrast, a larger $U_t$ value suggests a larger value of $\pi_{t}(\token_t)$ in distribution for the watermarked text, which can be gleaned from \eqref{eq:inv_dependence}. See the left panel of Figure \ref{fig:inverse-distribution} for a simulation that illustrates the distinct behaviors of $(U_t, \eta(\pi_t(\token_t))$ under $H_0$ and $H_1$. 

This motivates the use of the following statistic for watermark detection~\citep{kuditipudi2023robust,piet2023mark}:
\begin{equation*}
\Ydif_t = |U_t-\eta(\pi_t(\token_t))|,~~~\text{$\eta(i) := \frac{i-1}{|\Voca|-1}$},
\end{equation*}
where $\eta$ normalizes an integer token to $[0,1]$ for direct comparison with $U \in [0, 1]$. As is clear, this statistic is a pivot for our framework.

\paragraph*{Technical challenge}
To continue under our framework, we are faced with a difficulty in characterizing the distribution of the $\Ydif_t$ under the alternative. Although it is clear from Figure \ref{fig:inverse-distribution} that $\Ydif_t$ tends to be small under the alternative, the complex dependence of $\Ydif_t$'s distribution on the unknown NTP $\bP_t$ under $H_1$ renders the evaluation of class-dependent efficiency in the framework generally intractable.

Formally, this technical challenge can be elucidated by Lemma~\ref{lem:exact-CDF-dif} and its following remark.

\begin{lem}
\label{lem:exact-CDF-dif}
Under $H_0$, the CDF of $Y_t^{\dif}$ is
\[
F_{0}^\dif(r):= \PB_{H_0}(Y_t^{\dif} \le r)= \frac{1}{|\Voca|}\sum_{i=1}^{|\Voca|} \left[ 
\min\{\eta(i)+r, 1\}-\max\{\eta(i)-r, 0\}
\right].
\]
Under $H_1$, the CDF of $Y_t^{\dif}$ is 
\[
F_{1, \bP_t}^\dif(r) := \PB_{H_1}(Y_t^{\dif}\le r|\bP_t)=\frac{1}{|\Voca|!}\sum_{\pi \in \Pi} \sum_{i=1}^{|\Voca|} |(a_{\pi,i-1}, a_{\pi,i}]\cap B(\eta(i), r)|,
\]
where $\Pi$ collects all permutations on $\Voca$, $a_{\pi,i}=\sum_{j=1}^{i}P_{t,\pi(j)}$, $B(v, r)=\{x \in [0, 1]: |x-v| \le r\}$ and $|\cdot|$ represents the length of an interval. 
\end{lem}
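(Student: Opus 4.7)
The plan is to handle the two CDFs separately since they have very different structures: under $H_0$ the pivot factors cleanly across independent pieces, while under $H_1$ the joint distribution of $(U_t, \pi_t(\token_t))$ is coupled through the inverse-CDF decoder.

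For the $H_0$ case, I would first invoke Working Hypotheses \ref{hypo:cryptography} and \ref{hypo:human} to conclude that $U_t$, $\pi_t$ and $\token_t$ are mutually independent, with $U_t \sim U(0,1)$ and $\pi_t$ uniform over all permutations on $\Voca$. Since $\pi_t$ is a uniform random permutation independent of $\token_t$, the composition $\pi_t(\token_t)$ is uniform over $\{1,\ldots,|\Voca|\}$ and still independent of $U_t$. Conditioning on $\pi_t(\token_t) = i$, the event $|U_t - \eta(i)| \le r$ amounts to $U_t \in [\eta(i)-r, \eta(i)+r] \cap [0,1]$, whose length (and hence $U(0,1)$-probability) is exactly $\min\{\eta(i)+r, 1\} - \max\{\eta(i)-r, 0\}$. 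Averaging over the $|\Voca|$ equally likely values of $i$ yields $F_0^{\dif}(r)$.

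For the $H_1$ case, the starting point is the characterization \eqref{eq:inv_dependence}: for every permutation $\pi$ and every token $w$, $\SMinv(\bP_t, (\pi, U)) = w$ if and only if $U$ lies in the half-open interval $(a_{\pi, \pi(w)-1}, a_{\pi,\pi(w)}]$, where $a_{\pi,i}$ are the cumulative probabilities under the ordering induced by $\pi$. I would condition on $\pi_t = \pi$ and, since $U_t \sim U(0,1)$ independently, rewrite
\[
\PB_{H_1}(Y_t^{\dif} \le r \mid \bP_t, \pi_t = \pi) = \sum_{w \in \Voca} \PB\bigl(U_t \in (a_{\pi, \pi(w)-1}, a_{\pi,\pi(w)}],\; |U_t - \eta(\pi(w))| \le r\bigr).
\]
Reindexing the sum by $i = \pi(w)$ (a bijection of $\Voca$ onto $\{1,\ldots,|\Voca|\}$) replaces $\eta(\pi(w))$ with $\eta(i)$ and the decoding interval with $(a_{\pi, i-1}, a_{\pi, i}]$; because $U_t$ is uniform on $[0,1]$, each summand equals the Lebesgue length of $(a_{\pi, i-1}, a_{\pi, i}] \cap B(\eta(i), r)$. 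Taking the expectation over the uniform law of $\pi_t$ (using Working Hypothesis \ref{hypo:cryptography}) then produces the stated $\frac{1}{|\Voca|!}\sum_{\pi}$ average.

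The argument is essentially book-keeping, so the main obstacle is not analytic but notational: one has to be careful that the partition of $[0,1]$ induced by $\pi$ is indexed by ranks $i$ rather than by tokens $w$, and that summing over $w$ for fixed $\pi$ is the same as summing over $i$ since $\pi$ is a bijection. The boundary conventions (half-open intervals vs.\ closed balls $B(\eta(i), r)$) do not affect the probability because $U_t$ is continuous, so the expressions involving $|(a_{\pi,i-1}, a_{\pi,i}]\cap B(\eta(i), r)|$ in the statement are unambiguous. Once these identifications are made, both CDFs follow by a single pass through the definitions and the law of total probability.
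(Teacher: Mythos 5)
Your proof is correct and arguably cleaner than the paper's for the $H_1$ part, but you should be careful with one notational point. The paper first derives a joint distribution for $(U_t,\pi_t(\token_t))$ (Lemma~\ref{lem:inverse_dis_shift}), conditioning on $\pi_t(\token_t)=\token$ and establishing along the way that $\pi_t\perp\token_t$ under $H_1$; your route instead conditions directly on $\pi_t=\pi$ and invokes the deterministic decoder characterization \eqref{eq:inv_dependence}, which avoids the nested Bayes/total-probability bookkeeping and is more transparent. The subtlety: the decode interval for $w$ given $\pi$ is $\bigl(\sum_{j<\pi(w)}P_{t,\pi^{-1}(j)},\ \sum_{j\le\pi(w)}P_{t,\pi^{-1}(j)}\bigr]$, i.e.\ the cumulative sums of the $\pi^{-1}$-reordered probabilities, whereas the lemma's $a_{\pi,i}$ is defined as $\sum_{j=1}^{i}P_{t,\pi(j)}$ (with $\pi$, not $\pi^{-1}$). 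So after reindexing by $i=\pi(w)$ you actually obtain $\sum_i |(a_{\pi^{-1},i-1},a_{\pi^{-1},i}]\cap B(\eta(i),r)|$, and you still need the observation that $\pi\mapsto\pi^{-1}$ is a bijection of $\Pi$ to rewrite $\frac{1}{|\Voca|!}\sum_\pi(\cdots a_{\pi^{-1},\cdot}\cdots)$ as $\frac{1}{|\Voca|!}\sum_\pi(\cdots a_{\pi,\cdot}\cdots)$ — exactly the step the paper performs explicitly in the proof of Lemma~\ref{lem:inverse_dis_shift}. With that one line added your argument is complete; the $H_0$ part is the same as the paper's.
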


In Lemma~\ref{lem:exact-CDF-dif}, the first part in particular shows that $Y_t^{\dif}$ is a pivotal statistic under $H_0$. The second part provides the explicit CDF of $Y_t^{\dif}$ under $H_1$ conditional on $\bP_t$. 

Nonetheless, Lemma~\ref{lem:exact-CDF-dif} also shows that the distribution formula for $\Ydif_t$ is by no means simple under the alternative hypothesis $H_1$: the dependence on $\bP_t$ is intricate. The intricacy arises due to the nature of $\SM$, whose definition involves all permutations $\Pi$ of $\Voca$, which also complicates the distribution formula for $ \Ydif_t = |U_t-\eta(\pi_{t}(\token_t))|$ where $\token_t = \SM(P_t, (U_t, \pi_t))$. Such complexity in the distribution formula poses significant technical challenges in evaluating the effectiveness of any test procedure under the alternative hypotheses $H_1$.

Considering these intricacies, our goal of deriving an optimal score function $h$ based on $\Ydif_t$ seems challenging at first glance. 

\paragraph*{Asymptotic distributions}

Fortunately, we discover a surprising result that simplifies the understanding of the distribution of $\Ydif_t$ under the alternative $H_1$, which facilitates later derivation of the optimal score function $h$. Roughly speaking, the conditional distribution of $\Ydif_t$ given $\bP_t$ under $H_1$  primarily depends on the largest values among $\bP_t$'s coordinates, especially when the vocabulary size $|\Voca|$ is large.  
For illustration, histograms in Figure~\ref{fig:inverse-distribution} compare $\Ydif_t$ under $H_1$ for different $\bP_t$ and $\bP_t'$ scenarios where $|\Voca| = 100$. Despite differences in $\bP_t$ and $\bP_t'$, each sharing the same top five coordinates but differing elsewhere, the conditional distribution of $\Ydif_t$ remains remarkably similar.

We now mathematically formalize this empirical observation under a special case where there is one token that takes predominantly high probability in $\bP_t$. This scenario, being the most basic case in theory, offers practical insights as empirically, many LLMs often concentrate most of $\bP_t$'s probability mass on a single token~\citep{gehrmann2019gltr}. Through our analysis, we derive the limiting distribution of $\Ydif_t$ given $\bP_t$ when the vocabulary size $|\Voca| \to \infty$. Formally, this means that we are effectively examining a sequence of $\bP_t$, which is indexed by $|\Voca|$, and investigate the limiting conditional distribution of $\Ydif_t$ given $\bP_t$ as we move along that sequence. Yet when the context makes it clear, we suppress the dependence of $\bP_t$ on the index $|\Voca|$ for notation simplicity. 

In below, we always use $P_{t,(i)}$ to refer to the $i$-th largest coordinates of $\bP_t$ for every vector $\bP_t$ and integer $i$ between $1$ and $|\Voca|$.

\begin{thm}
\label{thm:asymptotic-dif}
Under $H_0$, 
\begin{equation*}
\lim_{|\Voca| \to \infty}  \PB_{H_0}(\Ydif_t \le r)=1-(1-r)^2
~~\text{for any~~$r \in [0, 1]$}.
\end{equation*} 
Under $H_1$, assuming that $\lim\limits_{|\Voca| \to \infty} P_{t,(1)} = 1-\Delta$ and $\lim\limits_{|\Voca| \to \infty} \log|\Voca| \cdot P_{t, (2)} = 0$ hold, then 
\begin{equation*}
\lim_{|\Voca| \to \infty} \PB_{H_1}( \Ydif_t \le r \mid \bP_t) =1-\left(1 - \frac{r}{1-\Delta}\right)^2~~\text{for any~~$r \in [0, 1-\Delta]$}.
\end{equation*}
\end{thm}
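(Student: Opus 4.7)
For the $H_0$ statement, I would apply the first formula in Lemma~\ref{lem:exact-CDF-dif} and recognize that $F_0^\dif(r) = |\Voca|^{-1}\sum_{i=1}^{|\Voca|}[\min\{\eta(i)+r, 1\} - \max\{\eta(i)-r, 0\}]$ is a Riemann sum for the integral $\int_0^1[\min(v+r,1) - \max(v-r,0)]\,\mathrm{d}v$. A straightforward piecewise evaluation yields $2r - r^2 = 1 - (1-r)^2$.

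For the $H_1$ statement, I would prove the stronger claim that $\Ydif_t$ converges in distribution to $(1-\Delta)|V-W|$ with $V, W$ independent $U(0,1)$ variables; the desired CDF then follows from $\PB(|V-W|\le z) = 1-(1-z)^2$ on $[0,1]$. Let $\tau$ denote the position in $\pi_t$ of the top token of $\bP_t$, which is uniform on $\{1, \ldots, |\Voca|\}$, and set $I_t := \pi_t(\token_t)$. Unbiasedness gives $\PB(I_t = \tau) = P_{t,(1)} \to 1-\Delta$, and conditional on $\{I_t \ne \tau\}$ the pair $(I_t, \tau)$ is uniform on $\{(i,j): i \ne j\}$. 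The quantitative backbone is a concentration estimate: conditional on $\tau$, the partial sum $a_{\pi_t, i-1}$ (a sum of $i-1$ non-top probabilities sampled without replacement, plus the top probability when $i > \tau$) has mean $\Delta \eta(i) + (1-\Delta)\mathbf{1}_{i > \tau} + O(|\Voca|^{-1})$ and variance bounded by $P_{t,(2)}\Delta$, since the sample variance of the non-top probabilities is at most $P_{t,(2)}\Delta/|\Voca|$. Chebyshev's inequality together with the assumption $P_{t,(2)}\log|\Voca| \to 0$ then yields $a_{\pi_t, i-1} = \Delta\eta(i) + (1-\Delta)\mathbf{1}_{i > \tau} + o_{\mathrm{P}}(1)$ uniformly in $i$.

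Using this concentration, I would handle the two cases. On $\{I_t = \tau\}$, I write $U_t = a_{\pi_t, \tau-1} + P_{t,(1)} V$ with $V \sim U(0,1)$ independent, giving $\Ydif_t = (1-\Delta)|V - \eta(\tau)| + o_{\mathrm{P}}(1)$; since $\eta(\tau) \to W \sim U(0,1)$ independently of $V$, the conditional limit is $(1-\Delta)|V-W|$. On $\{I_t \ne \tau\}$, the interval containing $U_t$ has length at most $P_{t,(2)} \to 0$, so $U_t \to \Delta\eta(I_t) + (1-\Delta)\mathbf{1}_{I_t > \tau}$, and hence $\Ydif_t/(1-\Delta) \to \eta(I_t)\mathbf{1}_{I_t < \tau} + (1 - \eta(I_t))\mathbf{1}_{I_t > \tau}$, with $(\eta(I_t), \eta(\tau))$ converging to a pair of independent $U(0,1)$ variables. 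A direct CDF calculation (each of the two subcases has conditional probability $1/2$) shows this limiting random variable has distribution $1 - (1-z)^2$ on $[0,1]$, matching that of $|V-W|$. Both conditional limits therefore coincide, giving unconditionally $\Ydif_t \to (1-\Delta)|V-W|$, whose CDF is exactly the claimed expression.

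The main obstacle is converting per-position concentration of $a_{\pi_t, i-1}$ into a statement about $\Ydif_t$ when $I_t$ is itself random and we condition on events such as $I_t < \tau$. The assumption $P_{t,(2)}\log|\Voca| \to 0$ is precisely what enables the union bound over the $|\Voca|$ positions needed for uniform-in-$i$ control; without it, fluctuations of $a_{\pi_t, i-1}$ near the boundary $\eta(i) = r/(1-\Delta)$ could prevent the pointwise limits from aggregating to the claimed CDF.
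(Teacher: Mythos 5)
Your $H_0$ argument is the same as the paper's: recognize $F_0^\dif(r)$ from Lemma~\ref{lem:exact-CDF-dif} as a Riemann sum for $\int_0^1[\min(v+r,1)-\max(v-r,0)]\,\rd v$ and evaluate the integral.

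Your $H_1$ argument is a genuinely different route. The paper proves the considerably stronger Lemma~\ref{lem:J}, a uniform-in-$(\bP,\Delta)$ limit of $\EB_{1,\bP}\,J(U_t,\eta(\pi_t(\token_t)))$ for Lipschitz $J$, via an explicit decomposition of the average over permutations and an exponential-tail concentration inequality for permuted sums (Lemma~\ref{lem:inhomo-help1}); Theorem~\ref{thm:asymptotic-dif} then follows by specializing $J$ to a mollified indicator and passing to the limit (Corollary~\ref{cor:J-expectation}). Your decomposition---conditioning on $\{I_t=\tau\}$ and its complement, identifying the same limit $(1-\Delta)\,|V-W|$ in both branches, and combining---is more elementary and makes the limiting law transparent as a scaled absolute difference of two independent uniforms. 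The paper's heavier machinery pays off later: Lemmas~\ref{lemma:super-technical-exchange-order} and~\ref{lem:closed-form-bar-L} genuinely need the \emph{uniform} convergence over the auxiliary class, not just the pointwise statement you establish here.

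The one place your write-up is not tight is the concentration step. You claim that Chebyshev's inequality together with $P_{t,(2)}\log|\Voca|\to 0$ yields $a_{\pi_t,i-1}=\Delta\eta(i)+(1-\Delta)\1_{\{i>\tau\}}+o_{\mathrm{P}}(1)$ \emph{uniformly} in $i$. With your variance bound $P_{t,(2)}\Delta$, Chebyshev at each $i$ followed by a union bound over the $|\Voca|$ positions gives a tail of order $|\Voca|\,P_{t,(2)}\Delta/\varepsilon^2$, so you would need $|\Voca|\,P_{t,(2)}\to 0$, which is strictly stronger than the stated hypothesis. To survive the union bound under only $\log|\Voca|\,P_{t,(2)}\to 0$ you must upgrade to an exponential-tail concentration for permuted sums, which is exactly what the paper's Lemma~\ref{lem:inhomo-help1} invokes. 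Alternatively---and more in the spirit of your argument---note that you do not actually need a sup over all $i$: $a_{\pi_t,\,\cdot-1}$ is only ever evaluated at the two random positions $\tau$ and $I_t$. A per-$(i,j)$ Chebyshev estimate conditioned on $(I_t,\tau)=(i,j)$, where the conditional law of the remaining permutation is tilted by the probability of the token placed at $i$ but this tilt perturbs the conditional mean of $a_{\pi_t,i-1}$ only by $O(P_{t,(2)})$, followed by averaging over the law of $(I_t,\tau)$ (no union bound), delivers the needed $o_{\mathrm{P}}(1)$ at the random positions under the weaker condition $P_{t,(2)}\to 0$. Either fix the justification by upgrading Chebyshev to an exponential bound, or drop the uniform-in-$i$ claim and average the per-position bound over $(I_t,\tau)$; as currently written the step does not follow.
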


Theorem~\ref{thm:asymptotic-dif} formalizes this observed phenomenon, providing a simplified distribution characterization of the statistics $\Ydif_t$ under $H_1$, assuming that the token probability $\bP_t$ is highly concentrated at a single token, under the large vocabulary size limit $|\Voca| \to \infty$. The limiting distribution is determined by a single scalar $1-\Delta$ that corresponds to the top token probability. 

Notably, in this limit $|\Voca| \to \infty$, the statistics $\Ydif_t$ has a different support under $H_0$ and under $H_1$. 
By exploiting this distinction, we proceed to identify the optimal score $h$ under our framework, which achieves infinite power in the limit $|\Voca| \to \infty$ in distinguishing the hypotheses $H_0$ and $H_1$.

\paragraph*{Optimal score function} 
Following our previous discussions, we focus on scenarios where an LLM's probability distribution, $\bP_t$, is primarily concentrated on a single token. 

We model $\bP_t$ using a belief class $\SPM$, a subset of the $\Delta$-regular class $\FPM$. Unlike $\FPM$, which only restricts the highest token probability to $1-\Delta$, the belief class $\SPM$ also sets a limit on the second highest token probability, using a threshold $\eps_{|\Voca|}$:\footnote{
For simplicity, we omit the notation indicating the dependence of $\SPM$ on $\eps_{|\Voca|}$ when the context makes this clear.
}
\begin{equation}
\label{eq:PM-inv}
\SPM= \left\{ \bP: P_{(1)} \le 1- \Delta,~ \Psecond \le \eps_{|\Voca|} \right\}.
\end{equation}
In our subsequent asymptotic analysis $|\Voca| \to \infty$, we assume that $\log |\Voca|\cdot \eps_{|\Voca|} \to 0$, from which we can utilize Theorem~\ref{thm:asymptotic-dif} to simplify our distributional characterization of $\Ydif_t$.

This leads to the definition of a limit efficiency measure, $\Rlimit$, whose definition is based on our efficiency measure $R_{\SPM}$ under our framework in Section~\ref{sec:overview}: 
\begin{equation} 
\label{eqn:limit-measure}
\Rlimit(h) = \liminf  R_{\SPM}(h).
\end{equation}
where $\liminf$ denotes the large vocabulary size limit where $|\Voca| \to \infty$ and $\log |\Voca|\cdot \eps_{|\Voca|} \to 0$. In plain words, $\Rlimit(h)$ quantifies the efficiency of any score function $h$ in the limit of large vocabulary size $|\Voca| \to \infty$, assuming that the distribution of the underlying LLM predominantly focuses on a single token. 

We are ready to describe the optimal score function $h$ that maximizes this efficiency measure $\Rlimit(h)$. 
This optimal procedure is formally stated in Theorem~\ref{thm:optimal-score-inv-dif}.

\begin{thm}
\label{thm:optimal-score-inv-dif}
Fix $\Delta \in (0, 1]$. Let $\hoptdif: [0, 1]\to \RB$ denote 
\begin{equation*}
\hoptdif(r) = \log \frac{f_{\dif, \Delta}(r)}{f_{\dif, 0}(r)}
~~\text{where}~~f_{\dif, \Delta}(r) = \frac{2}{1-\Delta}\cdot \max \left\{1 - \frac{r}{1-\Delta}, 0\right\} 
\end{equation*}
Then, $\lim_{M \to \infty} \Rlimit\left([\hoptdif]_{[-M, M]}\right) = \infty$ where $[x]_{[-M, M]} := \min\{\max\{x, -M\}, M\}$ denotes the clipping operator that restricts $x$ to the interval $[-M, M]$.
\end{thm}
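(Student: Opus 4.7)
The plan is to derive a lower bound on $R_{\SPM}([\hoptdif]_{[-M, M]})$ by specializing the infimum in~\eqref{eq:efficiency-exponent} to the single choice $\theta = 1$, and then exploit the fact that, by Theorem~\ref{thm:asymptotic-dif}, the limiting alternative density $f_{\dif, \Delta}$ has support $[0, 1-\Delta]$ that is strictly contained in the limiting null support $[0, 1]$. Since $\hoptdif$ is precisely the log density ratio $\log(f_{\dif, \Delta}/f_{\dif, 0})$, the $\theta = 1$ Chernoff exponent at $h = [\hoptdif]_{[-M, M]}$ amounts to a truncated Donsker--Varadhan estimator of $\KL(\mu_0, \mu_{1, \Delta})$, which is infinite precisely because of the support mismatch. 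Removing the truncation will drive the efficiency to $+\infty$.

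Concretely, I first take $\theta = 1$ in~\eqref{eq:efficiency-exponent} to obtain
\begin{equation*}
R_{\SPM}(h) \ge -\EB_0 h - \log \sup_{\bP \in \SPM} \EB_{1, \bP} \re^{-h}.
\end{equation*}
Next I pass to the large-vocabulary limit $|\Voca| \to \infty$ with $\log|\Voca| \cdot \eps_{|\Voca|} \to 0$. Since $h = [\hoptdif]_{[-M, M]}$ is bounded, Theorem~\ref{thm:asymptotic-dif} yields $\EB_0 h \to \int_0^1 h(r)\cdot 2(1-r)\,\rd r$ on the null side; likewise, for any sequence $\bP \in \SPM$ with $P_{(1)} \to 1-\Delta'$ (so $\Delta' \ge \Delta$), the conditional alternative expectation converges to $\EB_{1, \Delta'} \re^{-h}$. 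Because $\hoptdif$ is strictly decreasing on $[0, 1-\Delta)$ and diverges to $-\infty$ at $r = 1-\Delta$, the function $-h$ is non-decreasing in $r$, so $\EB_{1, \Delta'} \re^{-h}$ is maximized on the largest admissible support, namely at $\Delta' = \Delta$.

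Finally, I let $M \to \infty$. The truncation satisfies $[\hoptdif]_{[-M, M]}(r) = -M$ on $(r_M, 1]$ with $r_M \nearrow 1-\Delta$, so a direct computation gives $\EB_0 [\hoptdif]_{[-M, M]} = -M\Delta^2 + O(1)$ since the limiting null places mass exactly $\Delta^2$ on $(1-\Delta, 1]$. On the alternative side, $\re^{-\hoptdif} = f_{\dif, 0}/f_{\dif, \Delta}$ on $(0, 1-\Delta)$, and the $\re^M$ blow-up on the truncated region is killed because $f_{\dif, \Delta}(r) = O(1 - \Delta - r)$ forces $\mu_{1, \Delta}$ to place only $O(\re^{-2M})$ mass near $r = 1 - \Delta$; consequently
\begin{equation*}
\EB_{1, \Delta} \re^{-[\hoptdif]_{[-M, M]}} \longrightarrow \int_0^{1-\Delta} f_{\dif, 0}(r)\,\rd r = 1 - \Delta^2
\end{equation*}
as $M \to \infty$. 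Putting the pieces together yields $\Rlimit([\hoptdif]_{[-M, M]}) \ge M \Delta^2 - \log(1 - \Delta^2) + O(1)$, which diverges to $+\infty$.

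The main obstacle I anticipate is justifying the interchange of the supremum over $\bP \in \SPM$ with the large-vocabulary limit in the second step, since Theorem~\ref{thm:asymptotic-dif} provides only pointwise convergence along sequences with controlled top two coordinates rather than a uniform statement over the whole family $\SPM$. Boundedness of $h = [\hoptdif]_{[-M, M]}$ makes each $\EB_{1, \bP}\re^{-h}$ uniformly bounded, so the task reduces to a tightness argument: using the explicit formula for $F^{\dif}_{1, \bP}$ in Lemma~\ref{lem:exact-CDF-dif}, one can argue that $\bP \mapsto F^{\dif}_{1, \bP}$ varies continuously in an appropriate sense, extract an almost-maximizing sequence from $\SPM$, and identify its limit via Theorem~\ref{thm:asymptotic-dif}.
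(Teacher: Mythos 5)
Your proposal reconstructs the paper's own proof scaffolding almost step for step: fix $\theta=1$ to lower bound $R_{\SPM}(h)$, pass to the $|\Voca|\to\infty$ limit to collapse $\sup_{\bP\in\SPM}$ into a supremum over the scalar $\Delta'\ge\Delta$, pin the worst case at $\Delta'=\Delta$ via the monotonicity of $\Delta'\mapsto\int\re^{-h}f_{\dif,\Delta'}\,\rd r$ (this is the paper's Lemma~\ref{lem:monotone-bar-L}; your observation that $-[\hoptdif]_{[-M,M]}$ is non-decreasing is exactly the hypothesis needed there), and finally send $M\to\infty$. The one genuine variation is in the last step: you compute the truncated integrals explicitly to get the rate $\Rlimit(h_M)\ge M\Delta^2-\log(1-\Delta^2)+o(1)$, whereas the paper identifies the limiting bound $G(\hoptdif)$ as the Donsker--Varadhan representation of $\KL(\mu_{\dif,0},\mu_{\dif,\Delta})=\infty$ and closes with dominated convergence and Fatou. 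Both are correct; yours has the small merit of exhibiting the linear divergence rate in $M$.

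The obstacle you flag at the end --- justifying the exchange of $\sup_{\bP\in\SPM}$ with the large-vocabulary limit --- is indeed where essentially all the technical weight of the paper's Lemma~\ref{lem:key-prop-inverse} lives: it is handled by Lemmas~\ref{lemma:technical-lemma-exchange-limit}, \ref{lemma:super-technical-exchange-order} and \ref{lem:closed-form-bar-L}, which rest on the uniform weak-convergence Lemma~\ref{lem:J} established via a concentration inequality for randomly permuted sums. Your proposed repair is a legitimate alternative, but the phrasing ``$\bP\mapsto F^{\dif}_{1,\bP}$ varies continuously'' is not quite the right ingredient, because the ambient simplex $\Simplex(\Voca)$ changes dimension with $|\Voca|$ and there is no fixed domain on which to speak of continuity. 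What actually works is the compactness argument you gesture at: take an almost-maximizing $\bP_{|\Voca|}\in\SPM$ for each $|\Voca|$, pass to a subsequence along which $P_{(1)}\to 1-\Delta'$ for some $\Delta'\in[\Delta,1]$ (compactness of $[0,1]$), note that $\log|\Voca|\cdot P_{(2)}\to 0$ is automatic from the definition of $\SPM$, apply the pointwise Theorem~\ref{thm:asymptotic-dif} along that subsequence with $\Delta$ replaced by $\Delta'$, and finish with the monotonicity in $\Delta'$. Since you only need pointwise convergence along a subsequence rather than convergence uniform over $\SPM$, boundedness of $h_M$ does suffice here (the paper's Lipschitz hypothesis in Lemma~\ref{lem:key-prop-inverse} is what buys uniformity in Lemma~\ref{lem:J}, and your route can bypass it). So the idea is right and completable; it is simply more delicate than ``tightness plus continuity'' suggests, and it is the part of the proof that actually needs Lemma~\ref{lem:exact-CDF-dif} and the permutation concentration machinery rather than just Theorem~\ref{thm:asymptotic-dif} quoted as a black box.
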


Theorem~\ref{thm:optimal-score-inv-dif} identifies the score function $\hoptdif$  that achieves infinite power in distinguishing $H_1$ from $H_0$ in the large vocabulary limit $|\Voca| \to\infty$, assuming that the NTP distribution $\bP_t$ focuses on a single token, formally described by $\SPM$ with $\log|\Voca| \cdot \eps_{|\Voca|} \to 0$. The right panel of Figure \ref{fig:optimal-score} shows $\hoptdif$ for different values of $\Delta$. 

Our derivation of $\hoptdif$ is based on calculating a log-likelihood ratio between the hardest alternative within $\SPM$ and the null, using the asymptotic distribution formula described in Theorem~\ref{thm:asymptotic-dif}. The log-likelihood ratio $\hoptdif$ achieves infinite power due to $\Ydif$ having distinct supports under $H_0$ and $H_1$ in the limit, as demonstrated in Theorem~\ref{thm:asymptotic-dif}. 

Finally, the truncation in $[\hoptdif]_{[-M, M]}$ is mainly for technical reasons, stemming from a proof artifact.

\subsection{Proof of Theorem \ref{thm:optimal-score-inv-dif}}

Fix $\Delta \in (0, 1]$. 
To prove Theorem~\ref{thm:optimal-score-inv-dif}, 
our goal is to establish the limit: $\lim_{M \to \infty} \Rlimit\left([\hoptdif]_{[-M, M]}\right) = \infty.$
Our strategy involves establishing an effective lower bound for $\Rlimit(h)$, as detailed
in Lemma~\ref{lem:key-prop-inverse}. This involves introducing a function 
$F_{\dif, \Delta}$ for each $\Delta \in [0, 1]$, defined as: $F_{\dif, \Delta} (r) = 
    1-\max\left\{1 - \frac{r}{1-\Delta}, 0\right\}^2$ for any $r \in [0, 1]$.
By Theorem~\ref{thm:asymptotic-dif}, this corresponds to the limiting 
CDF of $\Ydif$ given $\bP_t$, assuming that $\bP_t \in \SPM$. 

\begin{lem} 
\label{lem:key-prop-inverse}
For any function $h$ that is non-increasing, Lipschitz-continuous on $[0, 1]$, there is the lower bound: 
\begin{equation}
\label{eqn:key-lower-bound-Rlimit}
\Rlimit(h) \ge 
- \left[  \int_{0}^1 h(r) F_{\dif, 0} (\rd r) + 
\log \int_{0}^1 \re^{- h(r)} F_{\dif, \Delta} (\rd r) \right].
\end{equation}
\end{lem}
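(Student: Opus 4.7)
The plan is to reduce the lemma to an asymptotic upper bound on a one-parameter family of integrals and then exploit the limiting distribution from Theorem~\ref{thm:asymptotic-dif}. Starting from the definition $R_{\SPM}(h) = -\inf_{\theta\ge 0}\{\theta\EB_0 h(Y^\dif) + \sup_{\bP\in\SPM}\log\phi_{\bP,h}(\theta)\}$, evaluating the infimum at the canonical choice $\theta=1$ gives the pointwise lower bound $R_{\SPM}(h)\ge -\EB_0 h(Y^\dif) - \log\sup_{\bP\in\SPM}\phi_{\bP,h}(1)$. Taking $\liminf$ on both sides as $|\Voca|\to\infty$ with $\log|\Voca|\cdot\eps_{|\Voca|}\to 0$ reduces the whole lemma to establishing the asymptotic inequality $\limsup\sup_{\bP\in\SPM}\phi_{\bP,h}(1)\le\int_0^1 e^{-h(r)}F_{\dif,\Delta}(\rd r)$.

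To prove this inequality, I would parametrize each $\bP\in\SPM$ by $\Delta_{\bP}:=1-P_{(1)}\in[\Delta,1]$. The constraint $P_{(2)}\le\eps_{|\Voca|}$ with $\log|\Voca|\cdot\eps_{|\Voca|}\to 0$ enforces the hypotheses of Theorem~\ref{thm:asymptotic-dif} along any admissible sequence, so that the conditional CDF $F^\dif_{1,\bP}$ converges pointwise to $F_{\dif,\Delta_{\bP}}$. Since the Lipschitz hypothesis on $h$ makes $e^{-h}$ bounded and continuous on $[0,1]$, the portmanteau theorem allows passing the limit through the expectation, yielding $\phi_{\bP,h}(1)\to\int_0^1 e^{-h(r)}F_{\dif,\Delta_{\bP}}(\rd r)$.

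I would then establish that the one-parameter functional $\Delta'\mapsto\int_0^1 e^{-h(r)}F_{\dif,\Delta'}(\rd r)$ attains its supremum over $\Delta'\in[\Delta,1]$ at the boundary $\Delta' = \Delta$. The change of variables $r=(1-\Delta')u$ rewrites this integral as $\int_0^1 e^{-h((1-\Delta')u)}\cdot 2(1-u)\,\rd u$; the stochastic-ordering nesting of the supports $[0,1-\Delta']\subseteq[0,1-\Delta]$, together with the monotonicity of $h$ in the direction appropriate to this pivot, then yields the claimed monotone dependence on $\Delta'$ and isolates $\Delta'=\Delta$ as the extremal configuration.

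The main obstacle is obtaining this convergence \emph{uniformly} over $\bP\in\SPM$, since Theorem~\ref{thm:asymptotic-dif} only applies cleanly to sequences along which the second-largest coordinate decays faster than $1/\log|\Voca|$. For borderline sequences of $\bP$ whose $P_{(2)}$ approaches $\eps_{|\Voca|}$, the plan is to fall back on the explicit formula in Lemma~\ref{lem:exact-CDF-dif} and verify that the discrepancy between $F^\dif_{1,\bP}$ and $F_{\dif,\Delta_{\bP}}$ contributes an error of order $O(\log|\Voca|\cdot\eps_{|\Voca|})\to 0$ to $\phi_{\bP,h}(1)$. Once this uniform control is in place, interchanging the supremum and the limit completes the reduction and gives the bound~\eqref{eqn:key-lower-bound-Rlimit}.
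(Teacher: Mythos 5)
Your plan follows the paper's own route essentially step for step: fix $\theta = 1$ to discard the infimum, decompose $\SPM$ according to the value $\Delta' := 1 - P_{(1)}$, pass to the limiting CDFs via Theorem~\ref{thm:asymptotic-dif}, and optimize the resulting one-parameter functional over $\Delta' \ge \Delta$. These are precisely the steps the paper packages into Lemmas~\ref{lemma:technical-lemma-exchange-limit}, \ref{lem:closed-form-bar-L}, and \ref{lem:monotone-bar-L}, and the uniform convergence you plan to handle by ``falling back'' on Lemma~\ref{lem:exact-CDF-dif} is exactly what the paper isolates as Lemma~\ref{lem:J}.

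Two points deserve to be stated rather than elided. First, the direction of monotonicity is the single place this argument can break, and ``in the direction appropriate to this pivot'' hides it: after your change of variables, $\Delta' \mapsto \int_0^1 e^{-h((1-\Delta')u)} \cdot 2(1-u)\,\rd u$ is non-increasing in $\Delta'$ precisely when $h$ is \emph{non-increasing}, which is what places the supremum at $\Delta' = \Delta$. If $h$ were non-decreasing, as the lemma's statement literally reads --- a sign slip, since Lemma~\ref{lem:monotone-bar-L} requires non-increasing $h$ and the downstream $\hoptdif$ is non-increasing --- the supremum would move to the opposite endpoint, with $\int_0^1 e^{-h(r)} F_{\dif, \Delta'}(\rd r) \to e^{-h(0)}$ as $\Delta' \to 1$, and the bound~\eqref{eqn:key-lower-bound-Rlimit} would fail. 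Second, the uniformity over $\bP \in \SPM$ is the genuine technical content of this lemma, not a fallback: the paper's Lemma~\ref{lem:J} establishes it via a concentration inequality for randomly permuted sums (Lemma~\ref{lem:inhomo-help1}) and obtains an error of order $1/|\Voca| + \sqrt{P_{(2)}\log|\Voca|}$, not the $O(\log|\Voca| \cdot \eps_{|\Voca|})$ you assert. Both vanish under $\log|\Voca| \cdot \eps_{|\Voca|} \to 0$, so your conclusion survives, but the claimed rate is unsupported, and a rigorous version of that step would essentially have to reproduce the paper's permutation-concentration argument rather than read it off Lemma~\ref{lem:exact-CDF-dif} directly.
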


The proof of Lemma~\ref{lem:key-prop-inverse} is deferred to 
Section~\ref{sec:proof-propp-key-prop-inverse}.

Now we finish the proof of Theorem~\ref{thm:optimal-score-inv-dif}. Let $G(h)$ denote the RHS of inequality~\eqref{eqn:key-lower-bound-Rlimit}. 
Also, let $\mu_{\dif, \Delta}$ denote the probability measure associated 
with the CDF $F_{\dif, \Delta}$.
By Donsker--Varadhan representation \citep{donsker1983asymptotic}, we deduce:  
\begin{equation*}
\sup_h G(h) = \sup_h  \left[-\int_{0}^1 h(r) F_{\dif, 0} (\rd r) - 
\log \int_{0}^1 \re^{- h(r)} F_{\dif, \Delta} (\rd r)\right]
= D_{{\rm KL}}(\mu_{\dif, 0}, \mu_{\dif, \Delta}) = \infty
\end{equation*}
where the last identity holds because 
the two probability measures $\mu_{\dif, 0}$ and $\mu_{\dif, \Delta}$ have different support, with $\mu_{\dif, 0}$ supported on $[0, 1]$ and $\mu_{\dif, \Delta}$ on $[0, 1-\Delta]$ for $\Delta > 0$.
This supremum $\sup_h G(h)$ is obtained for $h$ as the log-likelihood ratio: 
\begin{equation*}
\hoptdif = \log \frac{\rd\mu_{\dif, \Delta}}{\rd\mu_{\dif, 0}}
= \log \frac{ f_{\dif, \Delta}}{f_{\dif, 0}}.
\end{equation*}
Despite $G(\hoptdif) = \infty$, we are not able to use Lemma~\ref{lem:key-prop-inverse} 
to conclude that $\Rlimit(\hoptdif) = \infty$. This is because the function $\hoptdif$,
despite being non-increasing, is neither uniformly bounded nor Lipschitz-continuous on $\R$,
thus not meeting the requirements of Lemma~\ref{lem:key-prop-inverse}.

Nonetheless, after truncating $\hoptdif$ to $h_M = [\hoptdif]_{[-M, M]}$, the function $h_M$ is 
non-decreasing, Lipschitz-continuous, and uniformly bounded on $\R$ for any 
$M < \infty$. This allows us to use Lemma~\ref{lem:key-prop-inverse} 
to conclude that for every $M < \infty$: $\Rlimit(h_M) \ge G(h_M). $
Applying the limit $M \to \infty$ on both sides, and 
leveraging Lebesgue's dominated convergence theorem along with Fatou's lemma, we deduce: 
\begin{equation*}
\lim_{M \to \infty} \Rlimit\left([\hoptdif]_{[-M, M]}\right) = 
\lim_{M \to \infty} \Rlimit(h_M) \ge \lim_{M \to \infty} G(h_M) 
\ge G(\hoptdif) = \infty.
\end{equation*}
This yields Theorem \ref{thm:optimal-score-inv-dif} as desired.

\subsubsection{Proof of Lemma~\ref{lem:key-prop-inverse}}
\label{sec:proof-propp-key-prop-inverse}
\renewcommand{\TPMo}{\mathcal{Q}_{\Delta'}}
\renewcommand{\TPM}{\mathcal{Q}_{\Delta}}

We are interested in lower bounding $\Rlimit(h)= \liminf R_{\SPM}(h) $. Recall that its definition is given by: 
\begin{equation}
\label{eqn:redo-definition-of-Rlimith}
\begin{split}
\Rlimit(h) 
= -\limsup \inf_{\theta \ge 0}
\sup_{\bP \in \SPM}\left[ \theta \EB_0 h(\Ydif) + \log \EB_{1,\bP}[ \re^{- \theta h(\Ydif)}] \right].
\end{split}
\end{equation}
Our lower bound strategy relies on first connecting $\Rlimit(h)$ to an intermediate quantity that
\begin{equation*}
\overline{L}(h, \Delta'):= \limsup \sup_{\bP \in \TPMo}\left[ \EB_0 h(\Ydif) + \log \EB_{1,\bP}[ \re^{-h(\Ydif)}] \right]
\end{equation*}
where $\TPMo := \left\{ \bP : \Ptop = 1-\Delta', \log |\Voca|\cdot \Psecond \le \varepsilon_{|\Voca|} \right\}$. This connection is given in Lemma~\ref{lemma:technical-lemma-exchange-limit}.

\begin{lem}
\label{lemma:technical-lemma-exchange-limit}
For every function $h$ Lipschitz-continuous on $[0, 1]$, we have
\begin{equation*}
\Rlimit(h) \ge -
\sup_{\Delta' \ge \Delta}\overline{L}(h, \Delta').
\end{equation*}
\end{lem}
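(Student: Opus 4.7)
The plan is to derive the bound directly from the definition of $\Rlimit(h)$ in \eqref{eqn:redo-definition-of-Rlimith}. First, specialize the inner infimum at $\theta=1$, yielding the cleaner bound
\begin{equation*}
-\Rlimit(h) \;\le\; \limsup_{|\Voca|\to\infty}\sup_{\bP\in\SPM} f_{|\Voca|}(\bP),\qquad f_{|\Voca|}(\bP) := \EB_0 h(\Ydif) + \log\EB_{1,\bP}\re^{-h(\Ydif)}.
\end{equation*}
It thus suffices to show that the right-hand side is at most $\sup_{\Delta'\ge\Delta}\overline{L}(h,\Delta')$.

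Next I would stratify $\SPM$ by its top coordinate. Every $\bP\in\SPM$ has $\Ptop\le 1-\Delta$, so setting $\Delta'(\bP):=1-\Ptop\in[\Delta,1]$ parametrizes this set. For each $|\Voca|$, pick a near-maximizer $\bP^\star\in\SPM$ with $f_{|\Voca|}(\bP^\star)\ge \sup_{\bP\in\SPM}f_{|\Voca|}(\bP)-1/|\Voca|$, and let $\Delta'_{|\Voca|}:=1-P^\star_{(1)}\in[\Delta,1]$. By compactness of $[\Delta,1]$, I extract a subsequence along which $\Delta'_{|\Voca|}\to \Delta'_\infty\in[\Delta,1]$ and simultaneously $\sup_{\bP\in\SPM}f_{|\Voca|}(\bP)\to\limsup_{|\Voca|\to\infty}\sup_{\bP\in\SPM}f_{|\Voca|}(\bP)$.

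The crux invokes Theorem~\ref{thm:asymptotic-dif} along this subsequence. Since $P^\star_{(1)}=1-\Delta'_{|\Voca|}\to 1-\Delta'_\infty$ and $\log|\Voca|\cdot P^\star_{(2)}\le\log|\Voca|\cdot\eps_{|\Voca|}\to 0$, the theorem gives weak convergence of $\Ydif\mid\bP^\star$ under $H_1$ to the distribution with CDF $F_{\dif,\Delta'_\infty}$. Lipschitz continuity of $h$ on $[0,1]$ makes $\re^{-h(\cdot)}$ bounded and continuous on $[0,1]$, so by the Portmanteau theorem
\[
\EB_{1,\bP^\star}\re^{-h(\Ydif)}\;\longrightarrow\;\int_0^1\re^{-h(r)}F_{\dif,\Delta'_\infty}(\rd r).
\]
Hence $f_{|\Voca|}(\bP^\star)$ converges along the subsequence to a limit depending only on $\Delta'_\infty$.

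To close, I would compare $\bP^\star$ with an auxiliary sequence $\bP'_{|\Voca|}\in \mathcal{Q}_{\Delta'_\infty}$ built by placing mass $1-\Delta'_\infty$ on one token and spreading the rest uniformly over the remaining $|\Voca|-1$ tokens. This sequence lies in $\mathcal{Q}_{\Delta'_\infty}$ for every large $|\Voca|$ and satisfies $\log|\Voca|\cdot P'_{|\Voca|,(2)}\to 0$, so the same application of Theorem~\ref{thm:asymptotic-dif} and Portmanteau produces the identical limit for $f_{|\Voca|}(\bP'_{|\Voca|})$. Consequently
\begin{equation*}
\lim f_{|\Voca|}(\bP^\star) \;=\; \lim f_{|\Voca|}(\bP'_{|\Voca|}) \;\le\; \limsup_{|\Voca|\to\infty}\sup_{\bP\in\mathcal{Q}_{\Delta'_\infty}}f_{|\Voca|}(\bP) \;=\; \overline{L}(h,\Delta'_\infty) \;\le\; \sup_{\Delta'\ge\Delta}\overline{L}(h,\Delta').
\end{equation*}
The main obstacle is this very interchange of $\limsup_{|\Voca|}$ and $\sup_{\bP\in\SPM}$, which would fail in general but is rescued here because Theorem~\ref{thm:asymptotic-dif} shows that in the regime $\log|\Voca|\cdot P_{(2)}\to 0$ the law of $\Ydif$ under $H_1$ depends on $\bP$ only through $\Ptop$. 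The Lipschitz assumption on $h$ is precisely the minimal regularity that lets the MGF integral pass through this weak convergence, reducing the supremum over the large set $\SPM$ to the one-parameter family indexed by $\Delta'\in[\Delta,1]$.
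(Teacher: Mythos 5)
Your proof is correct, but it takes a genuinely different route from the paper's. The paper also starts by specializing to $\theta=1$ and stratifying $\SPM$ over $\Delta'\ge\Delta$, but then invokes a standalone swap lemma (Lemma~\ref{lemma:super-technical-exchange-order}) to interchange $\limsup_{|\Voca|}$ with $\sup_{\Delta'}$; that swap lemma is itself proved using the \emph{uniform} convergence over $\mathcal{Q}_{\Delta'}$ established in Lemma~\ref{lem:J}. You replace this with a compactness argument: extract a near-maximizing sequence $\bP^\star$, pass to a subsequence along which $\Delta'_{|\Voca|}\to\Delta'_\infty$, and apply only the \emph{pointwise} weak-convergence statement of Theorem~\ref{thm:asymptotic-dif} plus Portmanteau. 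The comparison to the auxiliary uniform-tail sequence $\bP'_{|\Voca|}\in\mathcal{Q}_{\Delta'_\infty}$ is the right device for passing from your limit $L$ to $\overline{L}(h,\Delta'_\infty)$ without needing the closed-form expression of Lemma~\ref{lem:closed-form-bar-L} — you only use it to certify $L\le\overline{L}(h,\Delta'_\infty)$, which is all the lemma requires. What each approach buys: the paper's swap lemma yields an equality and feeds cleanly into the subsequent explicit evaluation of $\overline{L}$, but is heavier because it needs uniform convergence over the whole belief class; your subsequence argument only needs pointwise convergence of CDFs and is therefore more elementary, at the cost of producing only the one-sided inequality (which is, however, all this lemma asserts). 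One detail worth making explicit in your write-up: the Portmanteau step requires the limiting CDFs $F_{\dif,0}$ and $F_{\dif,\Delta'_\infty}$ to be continuous (so that pointwise CDF convergence implies weak convergence), and the $\log$ step requires $\int_0^1\re^{-h}F_{\dif,\Delta'_\infty}(\rd r)>0$, both of which hold since $h$ is Lipschitz (hence bounded) on $[0,1]$ and the limit CDFs are polynomials.
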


Lemma \ref{lemma:technical-lemma-exchange-limit} implies that maximizing $\Rlimit(h)$ is related to the minimax optimization: $\inf_{h } \sup_{\Delta' \ge \Delta} \overline{L}(h, \Delta').
$
This minimax formulation is closely related to our original minimax formulation~\eqref{eq:minmax}, yet it replaces the $|\Voca|$-dimensional variable $\bP$ with a scalar $\Delta'$ after taking asymptotics (Theorem \ref{thm:asymptotic-dif}). 

\begin{proof}[Proof of Lemma \ref{lemma:technical-lemma-exchange-limit}]

By definition, we have
\begin{equation*}
\inf_{\theta \ge 0}
\sup_{\bP \in \SPM}\left[ \theta \EB_0 h(\Ydif) + \log\EB_{1,\bP}[ \re^{-\theta h(\Ydif)}]  \right]
\le \sup_{\bP \in \SPM}\left[ \EB_0 h(\Ydif) + \log \EB_{1,\bP}[ \re^{-h(\Ydif)}] \right].
\end{equation*}
After we substitute this inequality into~\eqref{eqn:redo-definition-of-Rlimith}, 
we obtain a lower bound
\begin{align}
\Rlimit(h) 
&\ge -\limsup
\sup_{\bP \in \SPM}\left[ \EB_0 h(\Ydif) + \log  \EB_{1,\bP}[ \re^{-h(\Ydif)}] \right] \nonumber\\
&= -  \limsup \sup_{\Delta' \ge \Delta}
\sup_{\bP \in \TPMo}\left[ \EB_0 h(\Ydif) + \log  \EB_{1,\bP}[ \re^{-h(\Ydif)}] \right], \label{eq:lower-bound-Rlimit}
\end{align}
where the second identity holds because $\SPM = \cup_{\Delta' \ge \Delta} \TPMo$ by definition.

We wish to swap the order of $\sup_{\Delta' \ge \Delta}$ and $\limsup$ in~\eqref{eq:lower-bound-Rlimit} to reach the conclusion of Lemma~\ref{lemma:technical-lemma-exchange-limit}. Lemma~\ref{lemma:super-technical-exchange-order} justifies such a swap as valid. 

\begin{lem}
\label{lemma:super-technical-exchange-order}
For any Lipschitz-continuous function $h$ on $[0,1]$, we have
\begin{equation*}
\begin{split} 
&\limsup \sup_{\Delta' \ge \Delta}
\sup_{\bP \in \TPMo}\left[ \EB_0 h(\Ydif) + \log  \EB_{1,\bP}[ \re^{-h(\Ydif)}] \right] \\
=&\sup_{\Delta' \ge \Delta} \limsup 
\sup_{\bP \in \TPMo}\left[ \EB_0 h(\Ydif) + \log  \EB_{1,\bP}[ \re^{-h(\Ydif)}] \right].
\end{split}
\end{equation*}
\end{lem}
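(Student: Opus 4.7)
Write $G_{|\Voca|}(\Delta') := \sup_{\bP \in \TPMo}\bigl[ \EB_0 h(\Ydif) + \log \EB_{1,\bP}[\re^{-h(\Ydif)}]\bigr]$ and let $G^{\ast}(\Delta') := \EB_{\mu_{\dif,0}}[h] + \log \int_0^1 \re^{-h(r)}\,\rd\mu_{\dif,\Delta'}(r)$ be the candidate pointwise limit suggested by Theorem~\ref{thm:asymptotic-dif}. The plan is to prove the two inequalities in the claimed identity separately. The direction $\sup_{\Delta' \ge \Delta}\limsup G_{|\Voca|}(\Delta') \le \limsup \sup_{\Delta' \ge \Delta} G_{|\Voca|}(\Delta')$ is automatic: for every fixed $\Delta'$ the inequality $G_{|\Voca|}(\Delta') \le \sup_{\tilde\Delta' \ge \Delta} G_{|\Voca|}(\tilde\Delta')$ holds, and applying $\limsup$ followed by $\sup_{\Delta'}$ gives the bound.

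For the reverse inequality, I plan to establish two ingredients: (a) $G_{|\Voca|} \to G^{\ast}$ uniformly in $\Delta'\in[\Delta, 1]$ as $|\Voca|\to\infty$ with $\log|\Voca|\cdot \eps_{|\Voca|}\to 0$, and (b) $\Delta'\mapsto G^{\ast}(\Delta')$ is continuous on $[\Delta, 1]$. Granting both, continuity on the compact interval $[\Delta,1]$ together with uniform convergence implies $\sup_{\Delta'} G_{|\Voca|}(\Delta') \to \sup_{\Delta'} G^{\ast}(\Delta')$, while pointwise $\limsup G_{|\Voca|}(\Delta') = G^{\ast}(\Delta')$; combining them yields $\limsup \sup_{\Delta'} G_{|\Voca|}(\Delta') = \sup_{\Delta'} G^{\ast}(\Delta') = \sup_{\Delta'} \limsup G_{|\Voca|}(\Delta')$, which is the desired equality. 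Ingredient (b) follows from the explicit form $f_{\dif,\Delta'}(r) = \frac{2}{1-\Delta'}\max\{1 - r/(1-\Delta'),0\}$, which varies continuously with $\Delta'\in[\Delta, 1)$; combined with Lipschitz continuity of $h$ and dominated convergence, this gives continuity of $G^{\ast}$ on $[\Delta, 1)$, with the boundary $\Delta'\to 1$ handled by a direct check showing $\mu_{\dif,\Delta'}$ concentrates at $0$.

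Ingredient (a) is where the main work lies. I would revisit the proof of Theorem~\ref{thm:asymptotic-dif} to extract a quantitative error estimate: the exact CDF formula of Lemma~\ref{lem:exact-CDF-dif} should yield that, for $\bP\in\TPMo$, the deviation between $F^{\dif}_{1,\bP}$ and its limiting CDF is controlled by a vanishing function of $\log|\Voca|\cdot P_{(2)}\le \eps_{|\Voca|}$ which is independent of $\Delta' = 1-P_{(1)}$. Because $h$ is Lipschitz on $[0,1]$ and $\re^{-h}$ is bounded there, an integration-by-parts argument transfers this uniform CDF control into uniform convergence of $\log\EB_{1,\bP}[\re^{-h(\Ydif)}]$, jointly in $\bP\in\TPMo$ and $\Delta'\in[\Delta,1]$; the supremum over $\bP$ preserves uniformity in $\Delta'$. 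The principal obstacle is the bookkeeping required to verify that this error bound genuinely depends only on $(|\Voca|, P_{(2)})$ rather than on $\Delta'$, together with a degeneracy check near $\Delta'\to 1$, where the limiting density concentrates at $0$; I would resolve the latter by splitting $[\Delta, 1] = [\Delta, 1-\delta]\cup[1-\delta, 1]$, applying the quantitative bound on the first piece and a crude $\sup$-norm estimate of $|h|$ on the second, and then letting $\delta\to 0$.
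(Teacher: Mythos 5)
Your proposal is essentially the same approach the paper takes. The trivial inequality ($\sup\limsup \le \limsup\sup$) is handled identically, and the substantive reverse inequality in both cases rests on uniform (in $\Delta'$ and $\bP$) convergence of the inner quantity; the paper extracts this directly from its already-established Lemma~\ref{lem:J}, which provides a convergence rate depending only on $|\Voca|$ and $P_{(2)}$ and uniform over $\Delta\in[0,1-|\Voca|^{-1}]$, whereas you propose to re-derive the same quantitative estimate from the exact CDF formula and the permutation concentration bound. Your ingredient (b), continuity of the limit $G^{\ast}$, is superfluous: uniform convergence alone gives $\sup_{\Delta'}G_{|\Voca|}\to\sup_{\Delta'}G^{\ast}$ via $|\sup f_n - \sup f|\le\sup|f_n-f|$, without any appeal to continuity or compactness, so the splitting near $\Delta'\to 1$ to patch up continuity is unnecessary work. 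Apart from that detour the argument is correct and matches the paper's.
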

By swapping $\sup_{\Delta' \ge \Delta}$ and $\limsup$ in \eqref{eq:lower-bound-Rlimit}, we obtain 
\begin{equation*}
\Rlimit(h) \ge - \sup_{\Delta' \ge \Delta}\limsup 
\sup_{\bP \in \TPMo}\left[ \EB_0 h(\Ydif) + \log  \EB_{1,\bP}[ \re^{-h(\Ydif)}] \right] = -\sup_{\Delta' \ge \Delta}\overline{L}(h, \Delta').
\end{equation*}
This completes the proof of Lemma \ref{lemma:technical-lemma-exchange-limit}.
\end{proof}

Lemma~\ref{lemma:technical-lemma-exchange-limit} reduces the problem of finding a lower bound on $\Rlimit(h)$ to evaluating $\sup_{\Delta' \ge \Delta}\overline{L}(h, \Delta')$. To do so, we first derive an explicit expression for $\overline{L}(h, \Delta')$.

\begin{lem}
\label{lem:closed-form-bar-L}
For any Lipschitz-continuous $h$ on $[0,1]$, we have
\begin{equation*}
\overline{L}(h, \Delta')= \int_{0}^1  h(r) F_{\dif, 0} (\rd r) + 
\log \int_{0}^1 \re^{- h(r)} F_{\dif, \Delta'} (\rd r).
\end{equation*}
\end{lem}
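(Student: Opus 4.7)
The plan is to peel the functional $\overline L(h,\Delta')$ into its null and alternative parts and pass each to the large-vocabulary limit via Theorem~\ref{thm:asymptotic-dif}. Since $\EB_0 h(\Ydif)$ does not depend on $\bP$, I first pull it out of the supremum, and since $\log$ is monotone I also pull it through the supremum, to rewrite
\begin{equation*}
\overline L(h,\Delta') = \limsup_{|\Voca|\to\infty}\left[\EB_0 h(\Ydif) + \log \sup_{\bP\in\TPMo}\EB_{1,\bP}[\re^{-h(\Ydif)}]\right].
\end{equation*}
A Lipschitz function on the compact interval $[0,1]$ is bounded, so $\re^{-h}$ is a bounded continuous test function that is also bounded below by a positive constant, which keeps the logarithm and the subsequent limit manipulations safe.

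The null term is the easy part: the first half of Theorem~\ref{thm:asymptotic-dif} asserts weak convergence of $\Ydif$ under $H_0$ to the law with CDF $F_{\dif,0}$, and since $h$ is bounded continuous, the Portmanteau theorem gives $\EB_0 h(\Ydif) \to \int_0^1 h(r)\,F_{\dif,0}(\rd r)$. The alternative term requires matching upper and lower bounds on $\limsup \sup_{\bP\in\TPMo}\EB_{1,\bP}[\re^{-h(\Ydif)}]$. For the lower bound, I would exhibit any single concrete sequence $\bP^{(|\Voca|)}\in\TPMo$ (for example, the one placing mass $1-\Delta'$ on the top token and splitting the remaining $\Delta'$ uniformly among the other $|\Voca|-1$ tokens, which lies in $\TPMo$ once $|\Voca|$ is large enough that $\Delta'/(|\Voca|-1)\le \varepsilon_{|\Voca|}/\log|\Voca|$) and apply the second half of Theorem~\ref{thm:asymptotic-dif} together with boundedness of $\re^{-h}$ to obtain $\EB_{1,\bP^{(|\Voca|)}}[\re^{-h(\Ydif)}] \to \int_0^1 \re^{-h(r)}F_{\dif,\Delta'}(\rd r)$.

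The main obstacle is the matching upper bound, because it demands control over the entire family $\TPMo$, not just one tame sequence. My plan is a selected near-maximizer argument: for each $|\Voca|$, pick $\bP^{(|\Voca|)}\in\TPMo$ with
\begin{equation*}
\EB_{1,\bP^{(|\Voca|)}}[\re^{-h(\Ydif)}] \ge \sup_{\bP\in\TPMo}\EB_{1,\bP}[\re^{-h(\Ydif)}] - 1/|\Voca|.
\end{equation*}
The crucial observation is that the defining constraints of $\TPMo$ already pin down $P_{(1)}^{(|\Voca|)}=1-\Delta'$ exactly and force $\log|\Voca|\cdot P_{(2)}^{(|\Voca|)}\le\varepsilon_{|\Voca|}\to 0$, so the hypotheses of Theorem~\ref{thm:asymptotic-dif} hold along this chosen sequence \emph{regardless} of how the remaining coordinates are arranged. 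Theorem~\ref{thm:asymptotic-dif} then delivers $\EB_{1,\bP^{(|\Voca|)}}[\re^{-h(\Ydif)}]\to\int_0^1\re^{-h(r)}F_{\dif,\Delta'}(\rd r)$, which yields the matching upper bound. Combining the two bounds, applying continuity of $\log$ on a set bounded away from zero, and using that the limsup of a sum with one summand convergent equals the sum of the limits, gives the stated identity.
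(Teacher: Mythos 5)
Your proof is correct but takes a genuinely different route from the paper's. The paper first invokes (from the proof of Lemma~\ref{lemma:super-technical-exchange-order}) that the $\limsup$ can be replaced by $\lim$, then handles the alternative term by integration by parts to express $\phi_{\bP,h}(1)$ through the CDF $F^{\dif}_{1,\bP}$, and pushes the limit inside using the fact that weak convergence to a continuous limit CDF implies \emph{uniform} CDF convergence (a Polya-type theorem), before undoing the integration by parts. You instead use a near-maximizer diagonal argument: choose, for each $|\Voca|$, a $\bP^{(|\Voca|)}\in\TPMo$ within $1/|\Voca|$ of the supremum, observe that the defining constraints of $\TPMo$ force $P_{(1)}^{(|\Voca|)}=1-\Delta'$ and $\log|\Voca|\cdot P_{(2)}^{(|\Voca|)}\to0$ \emph{regardless} of how the near-maximizer is chosen, apply Theorem~\ref{thm:asymptotic-dif} along that sequence, and conclude via Portmanteau with $\re^{-h}$ bounded and continuous; the lower bound comes from one concrete sequence. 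What each buys: your route is more self-contained in that it proves convergence of the sup as a byproduct of matching upper/lower bounds and avoids explicit appeal to uniform CDF convergence over $\bP$, whereas the paper's route makes the uniform convergence explicit (via Lemma~\ref{lem:J} and Lemma~\ref{lem:uniform-convergence}) and thereby reuses that machinery both here and in Lemma~\ref{lemma:super-technical-exchange-order}. Your argument implicitly relies on Theorem~\ref{thm:asymptotic-dif} applying to arbitrary (including adaptively chosen, $h$-dependent) sequences of $\bP$, which is indeed what its proof delivers because it rests on the uniform statement in Lemma~\ref{lem:J}; it is worth flagging that dependency so the reader sees the hypothesis is satisfied. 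One minor point: your concrete lower-bound sequence only lies in $\TPMo$ once $|\Voca|$ is large enough that $\Delta'/(|\Voca|-1)\le\eps_{|\Voca|}/\log|\Voca|$, which is also the minimal condition for $\TPMo$ to be nonempty, so this is harmless but should be stated.
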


The proof of Lemma~\ref{lem:closed-form-bar-L} is deferred to Appendix \ref{proof:optimal-score-dif}. Here we give some intuition behind its proof. Theorem~\ref{thm:asymptotic-dif} states that as $|\Voca| \to \infty$, for any sequence of $\bP \in \TPMo$ indexed by $\Voca$, the CDF of $\Ydif$ under $H_0$ converges to $F_{\dif, 0}$, while its CDF under $H_1$ given $\bP$ converges to $F_{\dif, \Delta'}$. This results in a convergence statement that holds for any sequence of $\bP \in \TPMo$, and for any continuous function $h$ uniformly bounded on $\R$: 
\begin{equation*}
\lim \left[ \EB_0 h(\Ydif) + \log \EB_{1,\bP}[\re^{-h(\Ydif)}] \right]
=\int_{0}^1  h(r) F_{\dif, 0} (\rd r) + 
\log \int_{0}^1 \re^{- h(r)} F_{\dif, \Delta'} (\rd r).
\end{equation*}
Lemma~\ref{lem:closed-form-bar-L} strengthens this by ensuring such convergence can be made uniform across the sequence:
\begin{equation*}
\begin{split}
\overline{L}(h, \Delta') &= 
\limsup \sup_{\bP \in \TPMo}
\left[ \EB_0 h(\Ydif) + \log \EB_{1,\bP}[ \re^{-h(\Ydif)}] \right] \\
&=\int_{0}^1  h(r) F_{\dif, 0} (\rd r) + 
\log \int_{0}^1 \re^{- h(r)} F_{\dif, \Delta'} (\rd r).
\end{split}
\end{equation*}
The caveat is that the above uniform limit applies only to functions $h$ that are Lipschitz-continuous and uniformly bounded on $\R$, as required in Lemma~\ref{lem:closed-form-bar-L}.

Finally, with Lemma~\ref{lem:closed-form-bar-L} we are able to derive an explicit 
expression of the supremum $\sup_{\Delta' \ge \Delta}\overline{L}(h, \Delta')$. 

\begin{lem}\label{lem:monotone-bar-L}
For any non-increasing and Lipschitz-continuous $h$ on $[0,1]$,
\[
\sup_{\Delta' \ge \Delta}\overline{L}(h, \Delta') = \overline{L}(h, \Delta)
= \int_{0}^1  h(r) F_{\dif, 0} (\rd r) + 
\log \int_{0}^1 \re^{- h(r)} F_{\dif, \Delta} (\rd r).
\]
\end{lem}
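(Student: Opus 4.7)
The plan is to reduce the claim to showing that $\Delta'\mapsto\overline{L}(h,\Delta')$ is non-increasing on $[\Delta,1]$, after which the supremum is obviously attained at $\Delta'=\Delta$. Lemma~\ref{lem:closed-form-bar-L} already gives a closed-form expression for $\overline{L}(h,\Delta')$ under our hypothesis that $h$ is Lipschitz-continuous on $[0,1]$. Since the first summand $\int_{0}^{1}h(r)\,F_{\dif,0}(\rd r)$ does not depend on $\Delta'$, monotonicity of $\overline{L}(h,\Delta')$ in $\Delta'$ is equivalent to monotonicity of the quantity
\[
I(\Delta'):=\int_{0}^{1}\re^{-h(r)}\,F_{\dif,\Delta'}(\rd r).
\]

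The next step is to exploit the scaling structure of $F_{\dif,\Delta'}$. Its density (supported on $[0,1-\Delta']$) equals $f_{\dif,\Delta'}(r)=\frac{2}{1-\Delta'}\bigl(1-\frac{r}{1-\Delta'}\bigr)$, so the substitution $r=(1-\Delta')s$ converts the measure $F_{\dif,\Delta'}(\rd r)$ into the $\Delta'$-free measure $2(1-s)\,\rd s$ on $[0,1]$:
\[
I(\Delta')=\int_{0}^{1}\re^{-h((1-\Delta')s)}\cdot 2(1-s)\,\rd s.
\]
Now the dependence on $\Delta'$ is only through the argument $(1-\Delta')s$ inside $h$. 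Since $s\ge 0$, the map $\Delta'\mapsto(1-\Delta')s$ is non-increasing, and because $h$ is non-increasing, $\Delta'\mapsto h((1-\Delta')s)$ is non-decreasing, so $\Delta'\mapsto\re^{-h((1-\Delta')s)}$ is non-increasing pointwise in $s\in[0,1]$. Integrating against the nonnegative weight $2(1-s)$ preserves this, so $I(\Delta')$ is non-increasing on $[\Delta,1]$, and hence so is $\log I(\Delta')$ and $\overline{L}(h,\Delta')$.

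Consequently, $\sup_{\Delta'\ge\Delta}\overline{L}(h,\Delta')=\overline{L}(h,\Delta)$, and plugging the closed form from Lemma~\ref{lem:closed-form-bar-L} at $\Delta'=\Delta$ yields the asserted identity. There is no real obstacle here: the only subtlety is recognizing that the scaling symmetry of $F_{\dif,\Delta'}$ lets one factor out all the $\Delta'$-dependence into the argument of $h$, after which monotonicity of $h$ alone delivers the result. The hypotheses on $h$ (non-increasing and Lipschitz on $[0,1]$) are used exactly twice: Lipschitz-continuity is needed to invoke Lemma~\ref{lem:closed-form-bar-L}, and monotonicity is needed to obtain the pointwise inequality on $\re^{-h((1-\Delta')s)}$.
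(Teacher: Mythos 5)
Your proof is correct and reaches the same conclusion as the paper, but via a slightly different (and arguably cleaner) route. The paper's proof uses integration by parts on $\int_0^1 \re^{-h(r)} F_{\dif,\Delta'}(\rd r)$ to convert the $\Delta'$-dependence into the pointwise statement ``$F_{\dif,\Delta'}(r)$ is non-decreasing in $\Delta'$,'' and then combines this with $-h$ being non-decreasing (so $(-h)(\rd r)\ge 0$) to get the needed monotonicity. You instead exploit the explicit scale-family structure $\mu_{\dif,\Delta'} \stackrel{d}{=} (1-\Delta')\cdot\mu_{\dif,0}$ via the substitution $r=(1-\Delta')s$, which collapses the $\Delta'$-dependence into the argument of $h$ and gives the monotonicity directly from $h$ being non-increasing, with no integration by parts. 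Both are fundamentally stochastic-dominance arguments — the measures $\{\mu_{\dif,\Delta'}\}$ are stochastically decreasing in $\Delta'$ and $\re^{-h}$ is non-decreasing — but yours makes this coupling explicit and is more elementary. Incidentally, the paper's proof text contains a sign typo (``$\overline{L}(h,\Delta')$ is non-decreasing'' should read ``non-increasing'' for the stated conclusion $\sup_{\Delta'\ge\Delta}\overline{L}(h,\Delta')=\overline{L}(h,\Delta)$ to follow), whereas your write-up states the monotonicity correctly throughout.
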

\begin{proof}[Proof of Lemma \ref{lem:monotone-bar-L}]
Using integration by parts, we first obtain the equation:
\[
\int_{0}^1 \re^{- h(r)} F_{\dif, \Delta'}(\rd r)
=\re^{- h(1)} - \int_0^1 F_{\dif, \Delta'}(r) \re^{- h(r)} (-h)(\rd r).
\]
As a consequence, the above integral's value is non-increasing as $\Delta'$ increases because $F_{\dif, \Delta'}(r)$ is non-decreasing in $\Delta'$ for any $r$ within the interval $[0, 1]$.

By Lemma~\ref{lem:closed-form-bar-L}, this further implies that $\overline{L}(h, \Delta')$ is 
non-decreasing in $\Delta'$. Thus, $\sup_{\Delta' \ge \Delta}\overline{L}(h, \Delta') 
= \overline{L}(h, \Delta)$. The conclusion of Lemma~\ref{lem:monotone-bar-L} then follows. 
\end{proof}

We now finish the proof of Lemma~\ref{lem:key-prop-inverse}. By Lemma~\ref{lemma:technical-lemma-exchange-limit} and Lemma~\ref{lem:monotone-bar-L}, we obtain 
\begin{equation*}
\Rlimit(h) \ge -
\sup_{\Delta' \ge \Delta}\overline{L}(h, \Delta') 
= -  \int_{0}^1  h(r) F_{\dif, 0} (\rd r) -
\log \int_{0}^1 \re^{- h(r)} F_{\dif, \Delta} (\rd r).
\end{equation*}
This yields the lower bound as desired.

\section{Experiments}
This section highlights our framework's effectiveness through synthetic and real-data experiments, showcasing the practical utility of our proposed methods for detecting watermarks.\footnote{The experiment code is collected in the GitHub repository \url{https://github.com/lx10077/WatermarkFramework}.}

\subsection{Synthetic Studies}
\label{sec:simulation}

In our simulation, we generate a vocabulary $\Voca$ of size $1,000$ and assess Type I and II errors in watermark detection methods for generated token sequences. These methods include  $\hars(\Yars)$, $\hlog(\Yars)$, $\hindo(\Yars)$, $\hoptars(\Yars)$ for Gumbel-max watermark and $\hneg(\Ydif)$, $\hoptdif(\Ydif)$ for the inverse transform watermark.

\begin{figure}[t!]
\centering
\includegraphics[width=1.0\textwidth]{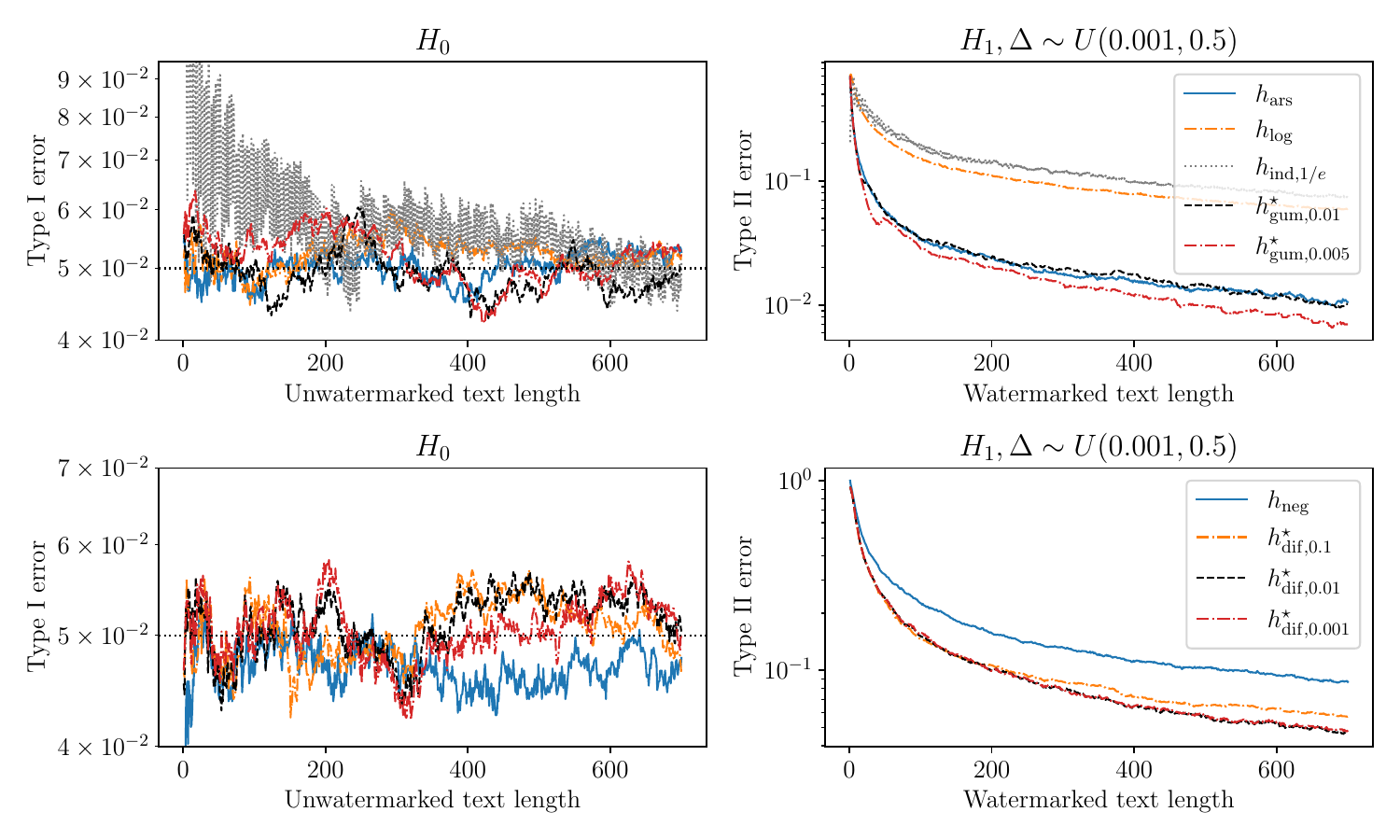}
\caption{
Average Type I and II errors (on a log scale) vs. text length on synthetic datasets for Gumbel-max (top) and inverse transform watermarks (bottom).
}
\label{fig:inhomo-simu}
\end{figure}

Our initial investigation is on Type I error control for finite token sequences, recognizing that 
the guarantees for Type I error control follow from the central limit theorem and hold asymptotically when the sequence length $T \to \infty$. For a given text length $T$, we generate $5,000$ samples of \emph{unwatermarked} word tokens sequences. We uniformly sample each unwatermarked token from the vocabulary $\Voca$ in our experiments. Although different sampling strategies could be employed, due to the pivotal property of our test statistics, we anticipate that they would lead to similar Type I error rates. 

Throughout $5,000$ repeated experiments, we compute the average Type I error, with the findings presented in the leftmost column of Figure~\ref{fig:inhomo-simu}. These results reveal that empirically, the Type I errors generally align with the nominal level of $0.05$, fluctuating between $0.04$ and $0.06$. This performance confirms our theoretical expectations regarding Type I error control across most scenarios. The only exception is the score function $\hind(\Yars)$, achieving close to the nominal $0.05$ level only for token sequences over 300 in length, where asymptotic effects emerge.

Next, we examine the effectiveness of these methods in controlling Type II errors on \emph{watermarked} sequences. Performing this examination involves specifying how each $\bP_t$ is generated under $H_1$.
In our simulation, we model LLM's NTP ($\bP_t$) distributions as spike distributions: we set its largest probability as $P_{t, (1)} = 1-\Delta_t$, where $\Delta_t > 0$ is i.i.d. sampled from $[a,b]$ with $a=10^{-3}$,\footnote{This small number $10^{-3}$ is used to avoid $H_1$ merging with $H_0$.} and $b=0.5$, and uniformly distribute the remaining probabilities so that $P_{t, (k)} = \Delta_t/(|\Voca|-1)$ for $k > 1$. The choice of this setup is based on our theoretical development. We also tested alternative $b$ values of $0.1$, $0.3$, and $0.7$, but given the results are similar, we will include those details in the appendix.

 Throughout $5,000$ repeated experiments, we compute the average Type II error and present the results in the rightmost column of Figure~\ref{fig:inhomo-simu}. 
Designed using the optimality criterion through our framework, both $\hoptars(\Yars)$ for Gumbel-max and $\hoptdif(\Ydif)$ for inverse transform outperform other watermark detection techniques in reducing Type II errors.
 Additionally, empirical performance rankings are consistent with our theoretical prediction in Theorem \ref{thm:main-efficiency-gumbel-informal}: $\hars(\Yars)$ has lower Type II error, followed by $\hlog(\Yars)$, with $\hindo(\Yars)$ having the highest Type II error.

\subsection{Real-World Examples}

\begin{figure}[t!]
\centering
\includegraphics[width=1.0\textwidth]{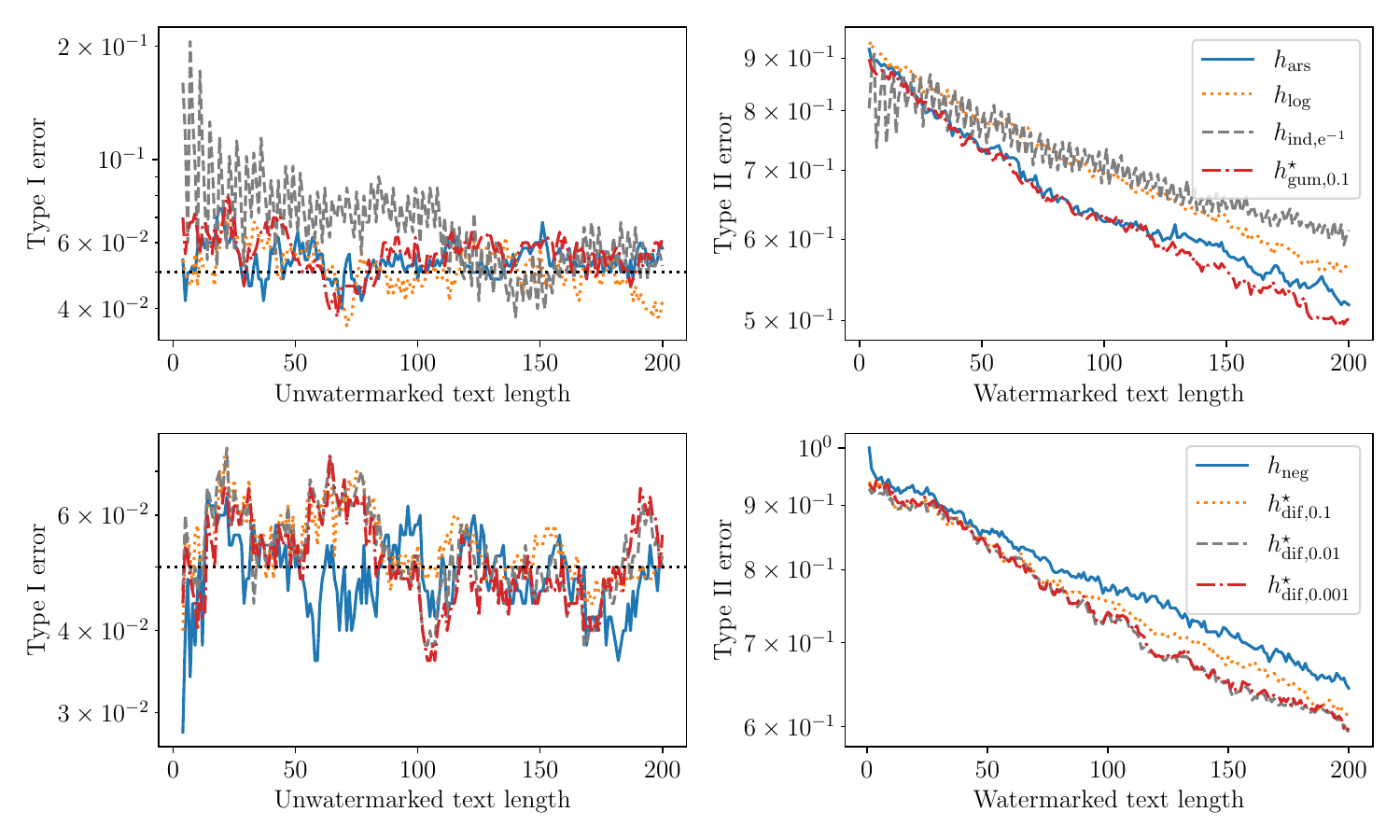}
\caption{
Average Type I and II errors (on a log scale) vs. text length on C4 datasets \citep{raffel2020exploring} and the OPT-1.3B model for Gumbel-max (top) and inverse transform watermarks (bottom).
}
\label{fig:c4-1.3B}
\end{figure}

In this section, we conduct an empirical analysis of watermark detection methods for text sequences from large token generation models such as OPT-1.3B \citep{zhang2022opt} and Sheared-LLaMA-2.7B \citep{xia2023sheared}. Specifically, we evaluate Type I errors on the \emph{unwatermarked} texts directly from large token generation models and Type II errors with texts from the same models, but with watermarks incorporated. This experimental setup essentially follows the approach described in the existing literature \citep{kirchenbauer2023watermark} and is further detailed in Appendix \ref{sec:real-data-detail} for completeness.

We present our numerical results with the OPT-1.3B models in Figure \ref{fig:c4-1.3B} (similar results for Sheared-LLaMA-2.7B are documented in Appendix \ref{appen:extened-results}). Notably, both the average Type I error (leftmost column of Figure \ref{fig:c4-1.3B}) and Type II error (rightmost column of Figure \ref{fig:c4-1.3B}) curves are computed based on $500$ repeated experiments. Though the setup (mainly the way each $\bP_t$ is generated) is slightly different from our simulation studies, the conclusions are quite similar. 

First, in most scenarios, all the detection methods maintain Type I errors within $0.04$ to $0.06$, aligning well with the expected $0.05$ level, particularly in texts over $100$ tokens long, as observed in our numerical experiments. Second, our methods, $\hoptars(\Yars)$ for Gumbel-max and $\hoptdif(\Ydif)$ for inverse transform, excel in minimizing Type II errors over other score functions. For Gumbel-max watermarks, the performance rankings follow our Theorem \ref{thm:main-efficiency-gumbel-informal}'s predictions: $\hars(\Yars)$ records a lower Type II error, then $\hlog(\Yars)$, with $\hindo(\Yars)$ the highest type II error.

\section{Discussion}\label{sec:discussion}

In this paper, we have introduced a statistical framework for reasoning about watermarks for LLMs from a hypothesis testing viewpoint. Our framework offers techniques and concepts for analyzing the asymptotic test efficiency of watermark detection by evaluating Type I and Type II errors of the problem of watermark detection. Specifically, to provide a formal approach to comparing different detection rules, we have introduced class-dependent efficiency, which further reduces the comparison problem to a minimax optimization problem. We have applied this framework to two representative unbiased watermarks, namely, the Gumbel-max watermark and the inverse transform watermark. Our results include the derivation of optimal detection rules, which are shown to have competitive or superior performance compared to existing detection rules.

Our work opens up several research directions in this emerging line of statistical research on watermarks for LLMs. Perhaps the most immediate question is how to choose $\Delta$ for the distribution class $\FPM$ when the knowledge of the underlying token distributions is limited. 
\new{For instance, it would be valuable to explore methods for enhancing detection power when $\Delta$ is known in a distributional sense (see the discussion in Appendix \ref{appen:Delta-selection}).
}
Another approach is to extend Proposition~\ref{thm:inhomo-type-II-general} to multiple distribution classes in recognition of the heterogeneity of NTP distributions across the token sequence. This can be done, for example, by adaptively clustering the empirical distributions of $\bP_t$ into several clusters and using these to represent the distribution classes. Moreover, it would be a welcome advance to offer guidelines on how to choose pivotal statistics, which perhaps would allow the framework to maximize its utility. In this paper, we fix the pivotal statistics from the beginning.
\new{
However, we show that it is fundamentally difficult to identify optimal pivotal statistics that maximize our efficiency notion. The challenge lies in the \emph{nonconvex} nature of the set of pivotal statistics, making the selection problem a \emph{nonconvex} optimization problem that is currently beyond our reach  (see Appendix \ref{appen:difficulties}). 
} 
Finally, given that an unbiased watermark is probabilistically similar to sampling from a multinomial distribution, it would be of interest to consider other sampling schemes such as the alias method \cite{walker1974new, walker1977efficient} to develop new categories of watermarks.

More broadly, we wish to remark on several possible extensions of our framework for improving watermark detection and design. Our framework would be enhanced if different watermarks with their own detection rules could be compared on a fair and comprehensive basis. Different watermarks might work best in different regimes and, therefore, directly comparing their class-dependent efficiency rates would be very sensitive to the choice of the distribution class. Another direction to extend our framework is to take distribution classes that are adaptive to NTP distributions in the real-world deployment of LLMs. This adaptivity has the potential to make the minimax formulation \eqref{eq:minmax} better capture the empirical performance. A possible approach is to model the empirical distribution of the NTP distributions $\bP_t$ using Zipf's law \citep{zipf2016human} or other laws. A challenge here, however, might arise from the fact that the distribution class is no longer permutation invariant. Finally, an interesting extension is to have varying score functions across the token sequence, as opposed to having the same $h$ in the test statistic \eqref{eq:Th}. This flexibility might enhance the power of watermark detection by taking into account the heterogeneity of the NTP distributions from token to token in generation.

{\small
\section*{Acknowledgments}
We would like to thank two anonymous referees for their constructive comments that helped improve the presentation of the paper.
This work was supported in part by NIH grants, RF1AG063481 and U01CA274576, NSF DMS-2310679, a Meta Faculty Research Award, and Wharton AI for Business. The content is solely the responsibility of the authors and does not necessarily represent the official views of the NIH.

\bibliographystyle{abbrv}
\bibliography{bib/chatgpt,bib/privacy,bib/stat}

}

\appendix

\begin{appendix}
\onecolumn
\begin{center}
{\huge {Supplementary Material}}
\end{center}

This Supplementary Material contains the remaining proofs and technical details.
The proof \new{that supports the general framework} is collected in Section \ref{proof:main}.
The proofs about the Gumbel-max watermark are presented in Section \ref{append:gumbel}.
Section \ref{append:inverse} includes the proofs of results for the inverse transform watermark.
Section \ref{append:experiments} contains experiment details.

\new{
\section{Proof Supporting the General Framework}
\label{proof:main}}

\subsection{Proof of Theorem \ref{thm:inhomo-type-II}}

\begin{proof}[Proof of Theorem \ref{thm:inhomo-type-II}]
We will use the Chernoff bound to analyze the type II error of $T_h$.
Recall that the test introduced by $h$ is
\[
T_h(Y_{1:n}) = 
\left\{ \begin{array}{ll}
1 &~\text{if}~ \sum_{t=1}^n   h(Y_t) \ge \gamma_{n, \alpha }\\
0 &~\text{if}~ \sum_{t=1}^n   h(Y_t) < \gamma_{n, \alpha }.\\
\end{array} \right. 
\]
By the strong law of large numbers, we know that \new{as long as $\alpha \in (0, 1)$},
\new{
\begin{equation}
\label{eq:limit-critical-value}
\limsup_{n \to \infty} \frac{\gamma_{n, \alpha }}{n} = \EB_0 h(Y).
\end{equation}
}
We will prove this equation \eqref{eq:limit-critical-value} at the end of the proof.

Let $P_t$ denote the probability outputted by $\MM$ at iteration $t$.
By the condition that $h$ is regular, we have that for any feasible $\theta \ge 0$, 
\begin{align*}
\EB_{1, \bP_t} \exp(-\theta h(Y_t))
= \exp( \log \phi_{P_t, h}(\theta) )
\le \exp(\log \phi_{\PM, h}(\theta)).
\end{align*}
Using the last inequality, we have that for any feasible $\theta \ge 0$, 
\begin{align}
1 - \EB_{1} T_h(Y_{1:n})
&= \PB_{1}\left( \sum_{t=1}^n -h(Y_t) \ge - \gamma_{n, \alpha}  \right) \nonumber \\ 
&\le \exp(\gamma_{n, \alpha}  \theta)  \cdot \EB_{H_1}\exp\left( \sum_{t=1}^n -\theta h(Y_t)\right) \label{eq:help-inhomo-type-II-1} \\ 
&\le \exp(\gamma_{n, \alpha}  \theta)  \cdot \exp(n \cdot \log \phi_{\PM, h}(\theta)). \nonumber
\end{align}
Therefore,
\begin{align*}
\limsup_{n \to \infty} \left[1 - \EB_{1} T_h(Y_{1:n})\right]^{1/n}
&\le \limsup_{n \to \infty} \inf_{\theta \ge 0}
\exp(\theta\gamma_{n, \alpha}/n )  \cdot \exp( \log \phi_{\PM, h}(\theta)) \\
&\le  \inf_{\theta \ge 0} \limsup_{n \to \infty} \exp(\theta\gamma_{n, \alpha}/n )  \cdot \exp( \log \phi_{\PM, h}(\theta)) \\
&=  \inf_{\theta \ge 0} 
\exp(\theta \EB_0 h(Y)  +\log \phi_{\PM, h}(\theta)).
\end{align*}
\end{proof}

\new{
\begin{proof}[Proof of \eqref{eq:limit-critical-value}]
Under the null hypothesis, $Y_t$ are i.i.d. copies of $\mu_0$. By the strong law of large numbers, it follows that
\[
\frac{1}{n_k}\sum_{t=1}^{n_k} h(Y_t) \to \EB_0 h(Y)
\]
for any diverging subsequence $\{n_k\}_{k \geq 1}$ satisfying $\lim_{k \to \infty} n_k = \infty$. 

By definition, $\gamma_{n, \alpha}$ is the theoretical critical value that makes the detection rule $T_h$ have a given Type I error of $\alpha$, that is,
\[
\PB_{H_0}\left(\sum_{t=1}^n h(Y_t) \ge \gamma_{n, \alpha}\right) = \alpha.
\]

Assume, for contradiction, that $\limsup_{n \to \infty} \frac{\gamma_{n, \alpha}}{n} > \mathbb{E}_0[h(Y)]$. Then we could find a small positive number $\epsilon > 0$ and a diverging subsequence $\{n_k\}_{k \geq 1}$ such that $\frac{\gamma_{n_k, \alpha}}{n_k} \geq \mathbb{E}_0[h(Y)] + \eps$ for sufficiently large $k$. This leads to a contradiction, as follows:
\begin{align*}
\alpha 
&= \lim_{k \to \infty}\PB_{H_0}\left(\sum_{t=1}^{n_k} h(Y_t) \ge \gamma_{n_k, \alpha}\right) \\
&=\lim_{k \to \infty}\PB_{H_0}\left(\frac{1}{n_k}\sum_{t=1}^{n_k} h(Y_t) \ge \frac{\gamma_{n_k, \alpha}}{n_k}\right) \\
&\le \lim_{k \to \infty}\PB_{H_0}\left(\frac{1}{n_k}\sum_{t=1}^{n_k} h(Y_t) \ge \EB_0 h(Y) + \eps\right) \\
&= \PB_{H_0}\left(\EB_0 h(Y) \ge \EB_0 h(Y) + \eps\right)  = 0.
\end{align*}

Similarly, if $\liminf_{n \to \infty} \frac{\gamma_{n, \alpha}}{n} < \mathbb{E}_0[h(Y)]$, we could find a small positive number $\epsilon > 0$ and a diverging subsequence $\{n_k\}_{k \geq 1}$ such that $\frac{\gamma_{n_k, \alpha }}{n_k} \le \EB_0 h (Y) - \eps$ for sufficiently large $k$.
This also leads to a contradiction:
\begin{align*}
\alpha 
&= \lim_{k \to \infty}\PB_{H_0}\left(\sum_{t=1}^{n_k} h(Y_t) \ge \gamma_{n_k, \alpha}\right) \\
&=\lim_{k \to \infty}\PB_{H_0}\left(\frac{1}{n_k}\sum_{t=1}^{n_k} h(Y_t) \ge \frac{\gamma_{n_k, \alpha}}{n_k}\right) \\
&\ge \lim_{k \to \infty}\PB_{H_0}\left(\frac{1}{n_k}\sum_{t=1}^{n_k} h(Y_t) \ge \EB_0 h(Y) - \eps\right) \\
&= \PB_{H_0}\left(\EB_0 h(Y) \ge \EB_0 h(Y) - \eps\right) = 1.
\end{align*}
\end{proof}
}

\begin{rem}
We detail the tightness mentioned in Remark \ref{rem:tight} in the following lemma.

\begin{lem}
\label{lem:tight}
If there exists some $\bP$ in the closure of $\PM$ which maximizes $\phi_{\bP, h}(\theta)$ simultaneously for all $\theta \ge 0$, then \eqref{eq:inequality_Eff} is tight in the sense that there exists $\bP^\star$ in $\PM$ such that, if $\bP_t = \bP^\star$ for all $t$, then
\[
\PB_{H_1}(T_h(Y_{1:n}) = 0) \ge  \mathrm{e}^{-(R_{\PM}(h) + \varepsilon)n}
\]
for any positive $\varepsilon$ and sufficiently large $n$.
\end{lem}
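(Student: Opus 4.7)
The plan is to establish the matching lower bound on the Type II error via Cramér's theorem from large deviation theory, which complements the Chernoff-style upper bound in Theorem~\ref{thm:inhomo-type-II}. The key observation is that when every $\bP_t$ equals a common $\bP^\star$, the variables $h(Y_1),\ldots,h(Y_n)$ become i.i.d.~under $H_1$, placing us in the classical i.i.d.~large-deviation regime where Cramér delivers a sharp two-sided rate rather than only an upper bound.

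First I would reduce the Type II error to a lower-tail large deviation. Writing $\bar{h}_n = \frac{1}{n}\sum_{t=1}^n h(Y_t)$, we have $\PB_{H_1}(T_h(Y_{1:n}) = 0) = \PB_{H_1}(\bar{h}_n < \gamma_{n,\alpha}/n)$, and equation~\eqref{eq:limit-critical-value} gives $\gamma_{n,\alpha}/n \to \EB_0 h(Y)$. Hence for any $\delta>0$ and all large $n$,
\[
\PB_{H_1}(T_h(Y_{1:n}) = 0) \ge \PB_{H_1}\bigl(\bar{h}_n < \EB_0 h(Y) - \delta\bigr).
\]
Cramér's theorem then yields
\[
\liminf_{n\to\infty} \frac{1}{n}\log \PB_{H_1}\bigl(\bar{h}_n < \EB_0 h(Y) - \delta\bigr) \ge -\inf_{\theta \ge 0}\bigl\{\theta(\EB_0 h(Y)-\delta) + \log \phi_{\bP^\star,h}(\theta)\bigr\}.
\]
Sending $\delta \downarrow 0$ and invoking continuity of the Legendre transform of $\log \phi_{\bP^\star,h}$ near the null mean (which lies strictly below the alternative mean $\EB_{1,\bP^\star} h(Y)$, so the rate function is finite and continuous there), the right-hand side converges to $-R(\bP^\star)$, where $R(\bP^\star) := -\inf_{\theta\ge 0}\{\theta\EB_0 h(Y) + \log\phi_{\bP^\star,h}(\theta)\}$.

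Second I would transfer the closure hypothesis to a bona fide point in $\PM$. Let $\bP^{\star\star}\in\overline{\PM}$ be the universal maximizer guaranteed by the assumption; direct substitution into the definition of $R_{\PM}(h)$ gives $R(\bP^{\star\star}) = R_{\PM}(h)$, and for any $\bP\in\PM$ the reverse inequality $R(\bP) \ge R_{\PM}(h)$ holds because restricting the inner supremum defining $R_{\PM}(h)$ to the singleton $\{\bP\}$ cannot increase it. Continuity of $\bP\mapsto\log\phi_{\bP,h}(\theta)$ together with a coercivity argument (the term $\theta\EB_0 h(Y)$ effectively confines the infimum to a bounded $\theta$-interval, so pointwise continuity in $\bP$ passes to continuity of the Legendre transform) allows me to select $\bP^\star\in\PM$ close enough to $\bP^{\star\star}$ that $R(\bP^\star) \le R_{\PM}(h) + \eps/2$. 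Combining this with the Cramér bound and choosing $\delta$ sufficiently small, I obtain
\[
\liminf_{n\to\infty}\frac{1}{n}\log\PB_{H_1}(T_h(Y_{1:n}) = 0) \ge -R_{\PM}(h) - \eps,
\]
which rearranges to the desired bound $\PB_{H_1}(T_h(Y_{1:n}) = 0) \ge \mathrm{e}^{-(R_{\PM}(h)+\eps)n}$ for all sufficiently large $n$.

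The main obstacle I anticipate is the coercivity-and-continuity argument underlying the approximation step: one must verify that the infimum over $\theta\ge 0$ defining $R(\bP)$ is effectively attained on a compact set uniformly in $\bP$ in a neighborhood of $\bP^{\star\star}$, so that pointwise continuity of $\log\phi_{\bP,h}(\theta)$ in $\bP$ transfers to continuity of the Legendre transform. For the Gumbel-max watermark this follows cleanly from the smoothness of $\phi_{\bP,h}(\theta)$ established in Lemma~\ref{lem:convex-MGF}; for the inverse transform watermark, the boundedness of the truncated score $[\hoptdif]_{[-M,M]}$ makes the verification immediate. A secondary technicality is the finite log-MGF condition required by Cramér's theorem, which is automatic for bounded $h$ but demands a short moment check when $h$ is unbounded.
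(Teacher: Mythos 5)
Your proposal is essentially the paper's own argument, just phrased without the intermediate ``exchangeability'' abstraction. The paper first proves, as a standalone lemma, that a simultaneous maximizer $\bP^{\star}\in\mathrm{cl}(\PM)$ forces $\inf_{\theta}\sup_{\bP}Q(\theta,\bP)=\sup_{\bP}\inf_{\theta}Q(\theta,\bP)$, then extracts a sequence $\bP_k\in\PM$ with $R_{\bP_k}(h)\to R_{\PM}(h)$ and invokes the classical two-sided i.i.d.\ large-deviation rate (Cram\'er) for each fixed $\bP_k$; you skip the explicit minimax-equals-maximin statement and substitute $\bP^{\star\star}$ directly into the definition of $R_{\PM}(h)$ to get $R(\bP^{\star\star})=R_{\PM}(h)$, which is the same observation unpacked, and then use continuity of $\bP\mapsto R(\bP)$ to pass into $\PM$. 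Both approaches rely on the same unstated continuity/density step to move from the closure to $\PM$, and both rely on Cram\'er for the lower-tail rate once $\bP_t\equiv\bP^{\star}$; your write-up is arguably slightly more explicit about where the technicalities lie (the coercivity that keeps the infimizing $\theta$ bounded, the finite-MGF requirement).

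One detail to fix: the sign in your Cram\'er display is off. With $\phi_{\bP,h}(\theta)=\EB_{1,\bP}\re^{-\theta h}$, the lower-tail Cram\'er bound reads
\[
\liminf_{n\to\infty}\tfrac{1}{n}\log\PB_{H_1}\bigl(\bar h_n<a\bigr)\ \ge\ \inf_{\theta\ge 0}\bigl\{\theta a+\log\phi_{\bP^{\star},h}(\theta)\bigr\},
\]
with no leading minus sign; this quantity is $\le 0$ and, as $a\uparrow\EB_0 h(Y)$, tends to $-R(\bP^{\star})$, which is what the rest of your argument correctly uses. As written, the displayed bound has an RHS that is $\ge 0$ and contradicts your own follow-up sentence that the RHS converges to $-R(\bP^{\star})$, so this is a transcription slip rather than a gap in the reasoning.
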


This condition regularizes $\PM, h$ and $\mu_{1, \bP}$ simultaneously.
As we already see, it is satisfied by both the Gumbel-max and inverse transform watermarks (see Lemma \ref{lem:worst-MGF} and \ref{lem:monotone-bar-L} for the details).
It implies our rates are tight in the least-favorable case.

\begin{proof}[Proof of Lemma \ref{lem:tight}]
To demonstrate the tightness of our results, we resort to the following lemma.

\begin{lem}[Exchangeability]
\label{lem:exchangeability}
Let $\mathrm{cl}(\PM)$ denote the closure of $\PM$.
For any $\theta \ge 0$ and $\bP \in \mathrm{cl}(\PM)$, we define
\[
Q(\theta, \bP) = \theta \EB_0 h(Y) + \log \EB_{1, \bP} \re^{-\theta h(Y)}.
\]
If there exists some $\bP^* \in \mathrm{cl}(\PM)$ such that $Q(\theta, \bP^\star) =\sup_{\bP \in \mathrm{cl}(\PM)}  Q(\theta, \bP)$ for any $\theta \ge 0$, then
\[
\sup_{\bP \in \mathrm{cl}(\PM)} \inf_{\theta \ge 0} Q(\theta, \bP)
=\inf_{\theta \ge 0} \sup_{\bP \in \mathrm{cl}(\PM)}  Q(\theta, \bP).
\]
\end{lem}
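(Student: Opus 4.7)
The plan is short because the strong hypothesis on $\bP^\star$ essentially does all the work: it provides a common maximizer of the inner problem for every dual variable $\theta\ge 0$, which trivializes the minimax exchange. I will combine weak duality with this uniform-maximizer property.

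First I would record the trivial direction (weak max–min inequality), namely $\sup_{\bP\in\mathrm{cl}(\PM)}\inf_{\theta\ge 0}Q(\theta,\bP)\le \inf_{\theta\ge 0}\sup_{\bP\in\mathrm{cl}(\PM)}Q(\theta,\bP)$, which holds without any assumption on $Q$: for any fixed $\bP'$ and $\theta'$, $\inf_{\theta\ge 0}Q(\theta,\bP')\le Q(\theta',\bP')\le \sup_{\bP}Q(\theta',\bP)$, then take $\sup_{\bP'}$ on the left and $\inf_{\theta'}$ on the right.

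Next I would use the hypothesis. By assumption there exists $\bP^\star\in\mathrm{cl}(\PM)$ such that for every $\theta\ge 0$,
\[
\sup_{\bP\in\mathrm{cl}(\PM)}Q(\theta,\bP)=Q(\theta,\bP^\star).
\]
Taking the infimum over $\theta\ge 0$ on both sides gives
\[
\inf_{\theta\ge 0}\sup_{\bP\in\mathrm{cl}(\PM)}Q(\theta,\bP)=\inf_{\theta\ge 0}Q(\theta,\bP^\star)\le \sup_{\bP\in\mathrm{cl}(\PM)}\inf_{\theta\ge 0}Q(\theta,\bP),
\]
where the inequality comes from evaluating the outer supremum on the right-hand side at the particular point $\bP^\star$. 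Combined with the weak duality of the first step, this yields the claimed equality.

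There is essentially no technical obstacle here; the only subtle point is checking that the uniform-maximizer hypothesis may really be invoked with the supremum taken over $\mathrm{cl}(\PM)$ rather than $\PM$ itself, which is automatic since that is how the lemma is stated. In the applications (Lemma~\ref{lem:worst-MGF} for Gumbel-max and Lemma~\ref{lem:monotone-bar-L} for inverse transform) the uniform maximizer $\bP^\star$ was already identified (it is $\bP_\Delta^\star$ for any $\theta\ge 0$, and $\Delta'=\Delta$ in the limiting inverse-transform problem, respectively), so this lemma can be applied directly to convert the computed $\inf_\theta\sup_{\bP}$ bound into the $\sup_{\bP}\inf_\theta$ form needed for the tightness statement in Remark~\ref{rem:tight}.
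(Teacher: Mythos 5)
Your proof is correct and is essentially identical to the paper's: both rely on weak duality for one inequality, and for the reverse direction both observe that the uniform maximizer $\bP^\star$ makes $\inf_{\theta}\sup_{\bP}Q(\theta,\bP)=\inf_{\theta}Q(\theta,\bP^\star)\le\sup_{\bP}\inf_{\theta}Q(\theta,\bP)$.
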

\begin{proof}[Proof of Lemma \ref{lem:exchangeability}]
First of all, by definition, we have
\[
\sup_{\bP \in \mathrm{cl}(\PM)} \inf_{\theta \ge 0} Q(\theta, \bP)
\le \inf_{\theta \ge 0} \sup_{\bP \in \mathrm{cl}(\PM)}  Q(\theta, \bP).
\]
We then focus on the other direction.
Because there exists some $\bP^* \in \mathrm{cl}(\PM)$ such that $Q(\theta, \bP^\star) =\sup_{\bP \in \mathrm{cl}(\PM)}  Q(\theta, \bP)$ for any $\theta \ge 0$, 
\begin{align*}
\sup_{\bP \in \mathrm{cl}(\PM)} \inf_{\theta \ge 0} Q(\theta, \bP)
\ge \inf_{\theta \ge 0} Q(\theta, \bP^\star) = \inf_{\theta \ge 0} \sup_{\bP \in \mathrm{cl}(\PM)}  Q(\theta, \bP).
\end{align*}
\end{proof}

By Lemma \ref{lem:exchangeability}, given that 
\[
R_{\PM}(h) = -\inf_{\theta \ge 0} \sup_{\bP \in \mathrm{cl}(\PM)}  Q(\theta, \bP),
\]
we know that $R_{\PM}(h)$ can also be written as
\[
R_{\PM}(h)=-\sup_{\bP \in \mathrm{cl}(\PM)} \inf_{\theta \ge 0} Q(\theta, \bP).
\]
As a result, we can find a sequence of $\bP_k \subset \PM$ such that 
\[
\inf_{\theta \ge 0} Q(\theta, \bP_k) \to \inf_{\theta \ge 0} \sup_{\bP \in \mathrm{cl}(\PM)}  Q(\theta, \bP)
\]
as $k \to \infty$; that is, $R_{\bP_k}(h) \to R_{\PM}(h)$ as $k \to \infty$.

By standard i.i.d. results from the large deviation theory \citep{dembo2009large,van2000asymptotic}, if we set all the NTP distribution as $\bP_k$, then
\begin{equation}
\label{eq:limit_lower}
\lim_{n\to\infty}\PB_{H_1}(T_h(Y_{1:n}) = 0)^{1/n}=\mathrm{e}^{-R_{\bP_k}(h)}
\ge\mathrm{e}^{-(R_{\PM}(h)+\varepsilon_k^{(1)})},
\end{equation}
where $\varepsilon_k^{(1)} \to 0$ as $k \to \infty$.
It then follows that
\[
\PB_{H_1}(T_h(Y_{1:n}) = 0)\ge\mathrm{e}^{-n(R_{\PM}(h)+\varepsilon_k^{(1)}+\varepsilon_{n}^{(2)})},
\]
where $\varepsilon_{n}^{(2)}$ denotes a sequence of positive numbers approaching to zero as $n\to\infty$ to ensure \eqref{eq:limit_lower} to hold. We conclude that the lower bound holds by choosing a sufficiently large $k$.
\end{proof}

\end{rem}

\subsection{Proof of Proposition~\ref{thm:inhomo-type-II-general}}

In this subsection, we present the proof of Proposition~\ref{thm:inhomo-type-II-general} and establish the equation that $R_{\Simplex(\Voca)}(h)=0$.

\begin{proof}[Proof of Proposition~\ref{thm:inhomo-type-II-general}]
We start from the inequality \eqref{eq:help-inhomo-type-II-1}.
Since $\gamma$-fraction of $\bP_1, \ldots, \bP_n$ belongs to $\PM_1$ while the rest belongs to $\PM_2$, it follows that
\[
1 - \EB_{1} T_h(Y_{1:n})
\le \exp(\gamma_{n, \alpha}  \theta)  \cdot 
\exp(\gamma n \cdot \log \phi_{\PM_1, h}(\theta))+
(1-\gamma) n \cdot \log \phi_{\PM_2, h}(\theta)).
\]
Hence,
\begin{align*}
  \limsup_{n \to \infty} \left[1 - \EB_{1} T_h(Y_{1:n})\right]^{1/n}
&\le  \inf_{\theta \in [0, \theta_{\PM, h})} 
\exp(\theta \EB_0 h(Y)  +\gamma \log \phi_{\PM_1, h}(\theta) + (1-\gamma)\log \phi_{\PM_2, h}(\theta) )  \\
&\le \exp(-[\gamma R_{\PM_1, h} +(1-\gamma) R_{\PM_2, h}]).
\end{align*}
\end{proof}

\begin{lem}
\label{lem:no-efficiency}
If $\mu_0 \in \{\mu_{1, P}: P \in \Simplex(\Voca) \}$,
\[
R_{\Simplex(\Voca)}(h)=0.
\]
\end{lem}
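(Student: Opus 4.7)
The plan is to exploit the fact that the hypothesis $\mu_0 \in \{\mu_{1,\bP} : \bP \in \Simplex(\Voca)\}$ gives us a "worst case" alternative distribution that exactly matches the null, so the supremum over $\bP \in \Simplex(\Voca)$ inside the definition of $R_{\Simplex(\Voca)}(h)$ is lower bounded in a way that forces the overall quantity to be zero. Concretely, let $\bP^\star \in \Simplex(\Voca)$ be such that $\mu_{1,\bP^\star} = \mu_0$ (for example, any singular distribution in the Gumbel-max setting yields $Y \sim U(0,1)$ under $H_1$, matching the null distribution). Then for every $\theta \ge 0$ we have the identity
\[
\phi_{\bP^\star,h}(\theta) = \EB_{1,\bP^\star} \re^{-\theta h(Y)} = \EB_0 \re^{-\theta h(Y)}.
\]

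First, I would observe that at $\theta = 0$ the inner supremum evaluates trivially to $0$, so
\[
\inf_{\theta \ge 0}\,\sup_{\bP \in \Simplex(\Voca)} \bigl(\theta \EB_0 h(Y) + \log \phi_{\bP,h}(\theta)\bigr) \le 0,
\]
which gives $R_{\Simplex(\Voca)}(h) \ge 0$. For the matching lower bound on the inner optimum, I would plug in $\bP = \bP^\star$ and apply Jensen's inequality to the convex function $x \mapsto \re^{-\theta x}$:
\[
\log \phi_{\bP^\star,h}(\theta) = \log \EB_0 \re^{-\theta h(Y)} \;\ge\; -\theta\, \EB_0 h(Y).
\]
This yields, for every $\theta \ge 0$,
\[
\sup_{\bP \in \Simplex(\Voca)} \bigl(\theta \EB_0 h(Y) + \log \phi_{\bP,h}(\theta)\bigr) \;\ge\; \theta \EB_0 h(Y) + \log \phi_{\bP^\star, h}(\theta) \;\ge\; 0.
\]
Taking the infimum over $\theta \ge 0$ then gives $\inf_{\theta \ge 0} \sup_{\bP} (\cdot) \ge 0$, so $R_{\Simplex(\Voca)}(h) \le 0$.

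Combining the two bounds yields $R_{\Simplex(\Voca)}(h) = 0$, completing the proof. There is essentially no technical obstacle here: the argument uses only Jensen's inequality plus the existence of a single $\bP^\star$ making the alternative distribution of the pivot coincide with the null. The only subtlety worth flagging is that $\EB_0 h(Y)$ must be well defined (which is guaranteed by the standing assumption $\EB_0 |h| < \infty$ from Theorem~\ref{thm:inhomo-type-II}), so both sides of Jensen's inequality are finite and the infimum is legitimately attained at $\theta = 0$.
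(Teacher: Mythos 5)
Your proof is correct and follows essentially the same two-sided argument as the paper's: the paper obtains $R_{\PM}(h)\ge 0$ from $f(0)=0$ (equivalent to your evaluation at $\theta=0$), and obtains $R_{\Simplex(\Voca)}(h)\le 0$ by restricting to the singleton $\{\bP_0\}$ with $\mu_{1,\bP_0}=\mu_0$ and using monotonicity of $R$ in $\PM$ together with $R_{\bP_0}(h)=0$, which is exactly your Jensen computation. Your write-up makes the Jensen step explicit where the paper leaves it implicit, but the route is the same.
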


\begin{proof}[Proof of Lemma \ref{lem:no-efficiency}]

On one hand, we always have $R_{\PM}(h) \ge 0$ for any $\PM \subset \Simplex(\Voca)$.
The reason is given as follows.
Recall that $R_{\PM}(h) = -\inf\limits_{\theta \ge 0} f(\theta)$ where $f(\theta) := \theta  \EB_0 h(Y) +  \log  \phi_{\PM, h}(\theta)$.
Clearly, $f(\theta)$ is convex in $\theta$ with $f(0)=0$.
Hence, we always have
\[
R_{\PM, h} \ge -f(0) = 0.
\]
On the other hand, we have $R_{ \Simplex(\Voca), h} \le 0$.
The reason is given as follows.
Note that $R_{\PM, h}$ is monotone in $\PM$: if $\PM_1 \subset \PM_2$, then $R_{\PM_2, h} \le R_{\PM_1, h}$.
If $\mu_0 \in \{\mu_{1, P}: P \in \Simplex(\Voca) \}$, without loss of generality, we assume $\mu_0 = \mu_{1, \bP_0}$.
Since we must have $\{\bP_0\} \subset \Simplex(\Voca)$, it follows that
\[
R_{ \Simplex(\Voca), h} \le R_{\bP_0, h} = 0.
\]
\end{proof}

\subsection{Difficulties of Finding the Optimal Pivot Statistic}
\label{appen:difficulties}

\new{
At the end of this section, we illustrate that searching for the optimal pivotal statistic is not generally a convex problem.
Overcoming these challenges would be significant and could be addressed in future research endeavors.

First and foremost, from an optimization perspective, the problem of finding the optimal pivotal statistic is in general \emph{non-convex}. We will illustrate this \emph{non-convexity} through concrete examples.

To set up the stage, we introduce some notation. 
For a given pivotal statistic $Y$, we first denote $J(Y) = \sup_{h} R_{\PM}(h)$ to be the $\PM$-efficiency for that pivotal statistic. 
We consider the simplest case where $\PM$ contains only a single element (so that we can exchange the order of $\sup_{\bP \in \PM}$ and $\inf_h$).
We deduce that
\begin{align*}
J(Y) = \sup_{h} R_{\PM}(h)  
&= -\inf_{h} \sup_{\bP \in \PM}\left(\EB_0 h(Y) +  \log \EB_{1, \bP}\mathrm{e}^{-h(Y)} \right)\\
&=- \sup_{\bP \in \PM}\inf_{h}\left(  \EB_0 h(Y) + \log \EB_{1, \bP}\mathrm{e}^{-h(Y)} \right)\\
&= \inf_{\bP \in \PM}\sup_{h}\left(-\theta  \EB_0 h(Y) - \log \EB_{1, \bP}\mathrm{e}^{-h(Y)} \right) \\
&=\inf_{\bP \in \PM} \KL(\mu_{0}(Y)\|\mu_{1, \bP}(Y)).
\end{align*}
The problem of finding the optimal pivotal statistic can be formally written down as: 
\begin{equation}
\label{eqn:optimizing-the-pivotal-statistics}
    {\rm maximize}~~ J(Y)~~~\text{subject to}~~~\text{$Y = Y(\token, \xi)$ is a pivotal statistics}.
\end{equation}
Here, we recall the notation that $\token$ refers to the token, and $\xi$ refers to the pseudorandom numbers. We give examples to demonstrate that (i) the functional $J$ is neither convex nor concave, and (ii) the constraint set consisting of all $Y$ that form a pivotal statistic is not convex.

Consider the Gumbel-max watermark with two tokens, say, $\Voca = \{1, 2\}$ (our examples extend to the general case $|\Voca| \ge 2$ by restricting the support of $\bP$ to two).
We denote the pseudorandom number $\zeta = (U_1, U_2)$ where $U_1, U_2 \sim \UM(0, 1)$. Our examples take place in the simplest setting where $\PM = \{\bP\}$ is a singleton. In this setting, $J(Y)$ satisfies:  
\[
J(Y) = \KL(\mu_{0}(Y)\|\mu_{1, \bP}(Y)).
\]

Our first proposition states that \( J(Y) \) is not necessarily convex or concave. 
\begin{prop}[The objective function is neither convex nor concave]
\label{prop:nonconvex}
There is a singleton set $\PM = \{\bP\}$ with \( Y_1, Y_2, Y_3, Y_4 \), each of which is a function of $(\token, \zeta)$, such that
\begin{gather*}
J\left( \frac{Y_1+Y_2}{2}\right) < \frac{J(Y_1)+J(Y_2)}{2}
~~\text{and}~~
J\left( \frac{Y_3+Y_4}{2}\right) > \frac{J(Y_3)+J(Y_4)}{2}.
\end{gather*}
\end{prop}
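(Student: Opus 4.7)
The plan is to work in the two-token Gumbel-max setting with $\Voca=\{1,2\}$, take the singleton $\PM=\{\bP\}$ with $\bP=(1/2,1/2)$, and restrict every candidate statistic to have the form $Y_i=f_i(U_w)$ for a measurable $f_i\colon\RB\to\RB$. Any such $Y_i$ is automatically a pivotal statistic because $U_w\sim U(0,1)$ under $H_0$ regardless of $\bP$. By Lemma~\ref{lem:dis-r} applied with $\bP=(1/2,1/2)$, the CDF of $U_w$ under $H_1$ is $r^2$ on $[0,1]$, i.e.\ $U_w$ has density $2r$. Thus $J(Y_i)$ reduces to the KL divergence between the pushforward of Lebesgue measure under $f_i$ and the pushforward of the density $2r$ under $f_i$, and the whole problem reduces to a concrete calculation with explicit one-dimensional measures.

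For the failure-of-concavity half, I would take $Y_1=U_w$ and $Y_2=-U_w$. Their pointwise average $(Y_1+Y_2)/2\equiv 0$ is a constant, so its pushforward distributions under $H_0$ and $H_1$ both collapse to a point mass at $0$, yielding $J((Y_1+Y_2)/2)=0$. A one-line computation gives $J(Y_1)=\int_0^1\log(1/(2r))\,\rd r=1-\log 2>0$, and the invariance of KL under the measurable bijection $r\mapsto -r$ gives $J(Y_2)=J(Y_1)$, so $(J(Y_1)+J(Y_2))/2>0=J((Y_1+Y_2)/2)$.

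For the failure-of-convexity half, the idea is to pick two indicator functions of overlapping intervals so that their average induces a strictly finer partition of $[0,1]$ than either alone. Concretely, I would set $Y_3=\mathbf{1}_{[0,1/2]}(U_w)$ and $Y_4=\mathbf{1}_{[1/4,3/4]}(U_w)$, so that $(Y_3+Y_4)/2$ takes three values $\{0,1/2,1\}$ corresponding to the three intervals $[0,1/4)$, $[1/4,3/4]$, $(3/4,1]$. Each of $Y_3,Y_4$ is a two-point statistic, while $(Y_3+Y_4)/2$ is a three-point statistic. Computing the pushforward masses under both $U(0,1)$ and the density $2r$ is direct, and plugging them into the discrete KL formula yields $J((Y_3+Y_4)/2)>(J(Y_3)+J(Y_4))/2$.

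The only genuinely delicate step is verifying the strict inequality in the non-convexity case numerically. Heuristically, the data-processing inequality makes it believable that the finer three-bin statistic carries strictly more information than either coarse two-bin summary, but the strict inequality has to be confirmed by an explicit arithmetic check on six masses; this is the only place any real calculation is required. Everything else (pivotality, the density $2r$ under $H_1$, and the simple non-concavity computation) follows immediately from Lemma~\ref{lem:dis-r} and the invariance and nonnegativity of the KL divergence.
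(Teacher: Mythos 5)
Your route is correct and genuinely different from the paper's. Both arguments work in the two-token Gumbel-max setting with $\bP=(1/2,1/2)$, but the paper builds its four statistics out of \emph{both} $U_\token$ and $U_1$, choosing $Y_3=U_\token+U_1$ and $Y_4=|U_\token-U_1|$ precisely so that $(Y_3+Y_4)/2=\max\{U_\token,U_1\}$ has a CDF it can read off in closed form, with $J(Y_3)=J((Y_1+Y_2)/2)$ by scale-invariance of KL and $J(Y_4)=0$ because $|U_\token-U_1|$ has the same law under both hypotheses. You instead restrict to functions of $U_\token$ alone, which keeps every calculation one-dimensional. Your non-concavity half is in fact cleaner than the paper's: the pointwise average of $U_\token$ and $-U_\token$ is literally the constant $0$, giving $J((Y_1+Y_2)/2)=0$ with no computation at all, whereas the paper's pair requires numerically evaluating the KL of $(U_\token+U_1)/2$. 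Your non-convexity half, by contrast, is more delicate than the paper's, for the reasons below.

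Two things need fixing in that second half. First, your interval-to-value correspondence is scrambled: with $Y_3=\mathbf{1}_{[0,1/2]}(U_\token)$ and $Y_4=\mathbf{1}_{[1/4,3/4]}(U_\token)$, the level sets of $Z:=(Y_3+Y_4)/2$ are $\{Z=1\}=[1/4,1/2]$, $\{Z=1/2\}=[0,1/4)\cup(1/2,3/4]$, and $\{Z=0\}=(3/4,1]$, not the three contiguous intervals you list. Second, the data-processing heuristic you invoke does not actually apply: $Z$ is not a refinement of $Y_3$ (the cell $[0,1/4)\cup(1/2,3/4]$ straddles both halves of $Y_3$'s partition), so you cannot conclude $J(Z)\ge J(Y_3)$ for free, and the explicit arithmetic you flagged as the ``only delicate step'' really is load-bearing.

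That arithmetic does close, though only narrowly. Under $H_1$ the density of $U_\token$ is $2r$, so the three cells of $Z$ carry null/alternative masses $(1/4,3/16)$, $(1/2,3/8)$, $(1/4,7/16)$, giving
\[
J\!\left(\tfrac{Y_3+Y_4}{2}\right)=\tfrac34\log\tfrac43+\tfrac14\log\tfrac47,\qquad
\tfrac{J(Y_3)+J(Y_4)}{2}=\tfrac12\cdot\tfrac12\log\tfrac43=\tfrac14\log\tfrac43,
\]
and the difference is $\tfrac12\log\tfrac43+\tfrac14\log\tfrac47=\tfrac14\log\tfrac{64}{63}>0$. So your example works, but the margin is about $0.004$; the paper's choice of $Y_3,Y_4$ gives a much larger gap ($\approx 0.10$ versus $\approx 0.02$) and avoids any worry about near-cancellation.
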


Our second proposition demonstrates that the set of pivot statistics is not a convex set.
\begin{prop}[The constraint is not convex]
\label{prop:nonconvex-constraint}
There exist two pivotal statistics, \( Y_1 \) and \( Y_2 \), such that \( \frac{Y_1 + Y_2}{2} \) is not a pivotal statistic.
\end{prop}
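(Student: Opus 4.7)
\textbf{Proof proposal for Proposition \ref{prop:nonconvex-constraint}.} The plan is to exhibit two explicit pivotal statistics whose average is not pivotal, working in the simplest nontrivial setting: the Gumbel-max watermark with $|\Voca|=2$. Under $H_0$, by Working Hypotheses~\ref{hypo:cryptography} and \ref{hypo:human}, the token $\token \in \{1,2\}$ has distribution $\bP = (P_1, P_2)$ and is independent of the pseudorandom variable $\xi = (U_1, U_2)$ with $U_1, U_2 \stackrel{\text{i.i.d.}}{\sim} \UM(0,1)$. Recall that a statistic $Y = Y(\token, \xi)$ is \emph{pivotal} when its law under $H_0$ does not depend on $\bP$.

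First I would define
\[
Y_1(\token, \xi) := U_\token, \qquad Y_2(\token, \xi) := U_1.
\]
The statistic $Y_1$ is exactly the Gumbel-max pivot from Section~\ref{sec:gumbel}; under $H_0$ its CDF is $\PB(Y_1 \le y) = P_1 y + P_2 y = y$ for $y \in [0,1]$, so $Y_1 \sim \UM(0,1)$ regardless of $\bP$. The statistic $Y_2$ trivially ignores $\token$ and is marginally $\UM(0,1)$. Thus both $Y_1$ and $Y_2$ are pivotal.

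Next I would compute the law of $Z := (Y_1 + Y_2)/2$ under $H_0$ by conditioning on $\token$: on $\{\token = 1\}$, $Z = U_1 \sim \UM(0,1)$; on $\{\token = 2\}$, $Z = (U_1 + U_2)/2$, which follows the triangular law on $[0,1]$ with density $f_{\mathrm{Tri}}(z) = 4z$ on $[0,1/2]$ and $4(1-z)$ on $[1/2,1]$. Therefore $Z$ has mixture CDF
\[
\PB_{H_0}(Z \le y) = P_1 \cdot y + P_2 \cdot F_{\mathrm{Tri}}(y),
\]
where $F_{\mathrm{Tri}}(y) \ne y$ for $y \in (0,1) \setminus \{1/2\}$. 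To rule out pivotality rigorously, I would evaluate at $y = 1/4$: this gives $P_1/4 + P_2/8$, which equals $1/4$ when $\bP = (1,0)$ but $3/16$ when $\bP = (1/2, 1/2)$. Hence the law of $Z$ genuinely depends on $\bP$, so $Z$ is not pivotal.

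The argument contains no difficult calculation. The only ``hard part'' is recognizing the right construction: pivotality constrains only the \emph{marginal} distributions of $Y_i(\token, \xi)$ after averaging over $\token \sim \bP$, but places no constraint on the joint behavior $(Y_1, Y_2)$ across the same $\token$. By letting $Y_1$ couple to $\token$ through the index of the $U_w$'s while $Y_2$ selects a fixed coordinate, the sum $Y_1 + Y_2$ inherits $\token$-dependent conditional laws that do not cancel in the $\bP$-mixture, which is what destroys pivotality of the average.
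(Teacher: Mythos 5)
Your proposal is correct and matches the paper's own proof essentially verbatim: the paper also takes $Y_1 = U_\token$ and $Y_2 = U_1$ in the two-token Gumbel-max setting, conditions on $\token$ to write $\PB_{H_0}(Z \le r) = P_1 r + P_2 F_{\mathrm{Tri}}(r)$, and concludes from the $\bP$-dependence of this mixture. Your explicit numerical check at $y=1/4$ is a small but welcome concretization that the paper leaves implicit.
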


These two factors contribute to the significant challenges in solving the underlying optimization problem.
We provide the proofs for the above two propositions in the following.

\begin{proof}[Proof of Proposition \ref{prop:nonconvex}]
We introduce the following functions and parameters:
\begin{gather*}
Y_1 = U_{\token}, Y_2 = U_1, Y_3 = U_{\token} + U_1, Y_4 = |U_{\token}-U_1|,~\text{and}~P_1=P_2=1/2.
\end{gather*}
Table \ref{tab:densities} presents some important calculation results that complete the proof.
\end{proof}

\begin{table}[h!]
\centering
\resizebox{\columnwidth}{!}{
 \begin{tabular}{c| c c c} 
 \hline
 Function $Y$ & $F_0(r)$ & $F_1(r)$ & $J(Y) = \KL(F_0\|F_1) $ \\ [0.5ex] 
 \hline\hline
 $Y_1 = U_{\token}$ & $r$ & $r^2$ & $1-\log 2 \approx 0.306$ \\ [0.5ex] 
 \hline
 $Y_2 = U_1$ & $r$ & $r$ & 0 \\[0.5ex] 
 \hline
 $\frac{Y_1+Y_2}{2}= \frac{U_{\token} + U_1}{2} $ & \multirow{2}{*}{$\begin{cases}
     \frac{r}{2} + r^2~\text{if}~r \in [0, 1/2] \\
    \frac{5r-1}{2} - r^2~\text{if}~ r \in [1/2, 1]
 \end{cases}$} & 
\multirow{2}{*}{$\begin{cases}
     3r~\text{if}~r \in [0, 1/2] \\
   2-r~\text{if}~ r \in [1/2, 1]
 \end{cases}$} &  
 \multirow{2}{*}{$\approx 0.0450295$} \\[0.5ex] 
 or $Y_3 = U_{\token} + U_1$ &  &  & 
  \\[0.5ex] 
 \hline
 $Y_4 = |U_{\token}-U_1|$ & $1-\frac{(1-r)^2}{2}$ & $1-\frac{(1-r)^2}{2}$ & 0 \\[0.5ex] 
 \hline
  $\frac{Y_3+Y_4}{2} = \max\{U_{\token}, U_1\}$ & $\frac{r+r^2}{2}$ & $r^2$ & $\approx 0.0996444$ \\ [0.5ex] 
 \hline
 \end{tabular}}
 \caption{CDFs under different hypotheses and the efficiency rates of the considered functions. \( F_i \) represents the CDF under hypothesis \( H_i \). The domain for all the CDFs is \( r \in [0, 1] \). Here $\token=1$ if and only if $\frac{\log U_1}{P_1} \ge \frac{\log U_2}{P_2}$. Recall that we use $P_1=P_2=1/2$ in the proof of Proposition \ref{prop:nonconvex}.}
 \label{tab:densities}
\end{table}

\begin{proof}[Proof of Proposition \ref{prop:nonconvex-constraint}]
Define $Y_1 = U_{\token}$ and $Y_2 = U_1$.
It is clear that $Y_1$ and $Y_2$ are pivotal statistics, as their distributions are independent of the distribution of $\token$ under the null hypothesis $H_0$. In the following, we aim to compute the null CDF for $\frac{Y_1+Y_2}{2}$.
Under $H_0$, we have $w \sim \bP = (P_1, P_2)$ and $w \perp (U_1, U_2)$.
As a result, it follows that for any $r \in [0, 1]$, 
\begin{align*}
\PB_{H_0}\left(\frac{Y_1+Y_2}{2} \le r\right)
&= \PB_{H_0}\left(\frac{U_w+U_1}{2} \le r\right)\\
&= P_1 \cdot \PB(U_1 \le r) + P_2 \cdot \PB(U_1+U_2 \le 2r)\\
&= P_1\cdot r + P_2 \cdot \begin{cases}
2r^2~\text{if}~ r \in [0, 1/2],\\
1-2(1-r)^2~\text{if}~ r \in [1/2, 1].
\end{cases}
\end{align*}
We conclude that $\frac{Y_1+Y_2}{2}$ is not a pivotal statistic because its null CDF depends on the distribution of $\token$, that is, $\bP=(P_1, P_2)$. 
\end{proof}
}

\section{Proof for Gumbel-max Watermarks}
\label{append:gumbel}
\subsection{Analysis for the pivotal statistic \texorpdfstring{$\Yars$}{Yars}
}
\label{append:gumbel-pivot}
The distribution of $\Yars_t$ under the alternative hypothesis $H_1$ has been studied by some existing works \citep{piet2023mark,fernandez2023three}. 
We include a proof for completeness.

\begin{proof}[Proof of Lemma~\ref{lem:dis-r}]
We assume $\xi = (U_\token)_{\token \in \Voca}$ with each $U_{\token}$ i.i.d. from $\UM(0, 1)$.
We split the probability $\PB(U_{\token_t} \le r)$ into the sum of probability of $|\Voca|$ disjoint events as follows:
\[
\PB(U_{\token_t} \le r) = \sum_{\token \in \Voca}\PB(U_{\token_t} \le r,\token_t = \token)=\sum_{\token \in \Voca}\PB(U_{\token} \le r,\token_t = \token).
\]
By definition in~\eqref{eq:it}, we have that 
\begin{align*}
\PB(U_{\token} \le r,\token_t = \token)
&=\PB(U_{\token} \le r, \token =\arg\max_{j \in \Voca}  U_{j}^{1/P_{t,j}} ) \\
&=\PB(U_{\token} \le r,  U_{j}\le  U_{\token}^{P_{t,j}/P_{t,\token}}~\text{for any}~j \neq \token ) \\
&= \int_0^r \rd u_{\token} \int_{[0, 1]^{|\Voca|-1}} \1_{\{ u_{j} \le u_{\token}^{P_{t,j}/P_{t,\token}}~\text{for any}~j \neq \token \}} \rd u_1 \ldots \rd u_{\token-1} \rd u_{\token+1} \ldots \rd u_{|\Voca|} \\
&=\int_0^r  \prod_{j \neq \token} u_{\token}^{P_{t,j}/P_{t,\token}} \rd u_\token  \\
&=\int_0^r  u_{\token}^{1/P_{t,\token}-1} \rd u_k = P_{t,\token} \cdot r^{1/P_{t,\token}},
\end{align*}
where the third equality follows from the independence of $U_{1}, \ldots, U_{|\Voca|}$.
\end{proof}

\new{At the end of this subsection, we argue that the pivotal statistic $ \Yars_t = Y(\token_t, \xi_t)=U_{t, \token_t}$ in our manuscript is a good pivotal statistic. Our justification is that it
approximates well the most optimistic likelihood ratio test statistic for an independent hypothesis test designed for watermark detection.

Suppose we have one generated token ($n=1$). Fix $t$ (and we drop the notation dependence on $t$). Consider the independence test regarding the token $\token$ and the pseudorandom variable $\xi$. Under $H_0$, $ \token$ and $\xi $ are independent. Under $H_1$, $ \token$ and $\xi $ are related by the decoder function $\SM$. More precisely, we have the hypothesis testing problem:
\[
H_0:~\token \perp \xi
~~\text{versus}~~
H_1:~\token = \SM(\bP, \xi)~\text{for some $\bP$}.
\]
This independent test is designed for watermark detection. 
We now compute the likelihood ratio test statistic. Let $\rho$ denote the probability density of $\xi$. 
\begin{itemize}
\item Under $H_0$, the joint probability density of $(\token, \xi)$ is given by $ P_{\token} \cdot \rho(\xi) $. 
\item Under $H_1$, the joint probability density of $(\token, \xi)$ is given by $\1_{\token = \SM(\bP, \xi)} \rho(\xi)$. 
\end{itemize}
The most optimistic log-likelihood ratio test statistic for this independent test is given by:
\[
\Lambda(\token, \xi) = \sup_{\bP} \log \frac{\1_{\token = \SM(\bP, \xi)} \rho(\xi)}{P_{\token} \cdot \rho(\xi)} =\sup_{\bP} \log \frac{\1_{\token = \SM(\bP, \xi)}}{P_{\token}}.
\]
We refer to \(\Lambda(\token, \xi)\) as the \textit{most optimistic} log-likelihood ratio test statistic because we take the supremum over all possible \(\bP\) to eliminate its dependence. Generally, we would reject \(H_0\) if the log-likelihood ratio is sufficiently large. By using this most optimistic log-likelihood ratio, we adopt a more liberal approach when deciding to reject the null hypothesis.

We compute $\Lambda(\token, \xi)$ for the Gumbel-max watermark. Recall the notation $\zeta = (U_\token)_{\token \in \Voca}$. We note that $\token = \SMmax(\bP, \xi)$ holds if and only if:
\[
\frac{\log \frac{1}{U_\token}}{P_{\token}} \le \frac{\log \frac{1}{U_k}}{P_{k}} \quad \text{for any} \quad k \in \Voca.
\]

As a result, if $\token = \SMmax(\bP, \xi)$ holds, then:
\[
1 = \sum_{k \in \Voca} P_k \le \sum_{k \in \Voca} \frac{\log \frac{1}{U_k}}{\log \frac{1}{U_\token}} P_\token \implies P_{\token} \ge \frac{\log \frac{1}{U_\token}}{\sum_{k \in \Voca} \log \frac{1}{U_k}}.
\]
Using this relation, we deduce that 
\[
\Lambda(\token, \xi) = \log \frac{\sum_{k \in \Voca} \log \frac{1}{U_k}}{\log \frac{1}{U_{\token}}}.
\]
We argue that in the regime where $|\Voca|$ is large, this likelihood ratio statistic can be effectively approximated by
\[
\Lambda(\token, \xi) \approx \log \frac{|\Voca|}{\log \frac{1}{U_\token}} 
\]
as there is the law of large numbers where $\frac{1}{|\Voca|} \sum_{k \in \Voca} \log \frac{1}{U_k} \to 1$ for i.i.d. $U_i \sim U(0, 1)$.
Note, on the RHS, $\log \frac{|\Voca|}{\log \frac{1}{U_\token}}$ is an increasing function of $U_{\token}$, which is exactly the current pivotal statistic for the Gumbel-max watermark.

To conclude, we have shown that the pivotal statistic $\Yars_t$ in our manuscript, up to a monotone transform, approximates well the most optimistic likelihood ratio test statistic for the independent hypothesis test.  This supports the effectiveness of $\Yars_t$ as a pivotal statistic for watermark detection. Thus, one contribution of our work is identifying the optimal transform $h$ based on $\Yars_t$ that achieves the highest possible statistical efficiency. 
}

\subsection{Proof of Theorem~\ref{prop:unique}}
\label{proof:unique}
\begin{proof}[Proof of Theorem~\ref{prop:unique}]
For every $P\in [0, 1]$, let us define  
\begin{equation*}
G_P(t) = \PB(V(P, U) \le t).
\end{equation*}
Since $\max\{V(P_1, U_1), V(P_2, U_2)\} 
\stackrel{d}{=} V(P_1+P_2, U)$ holds, this then implies 
for every $t \in \R$: 
\begin{equation}\label{eq:mono}
\begin{split}
G_{P_1+P_2}(t) &= \PB(V(P_1+P_2, U) \le t) \\
&=  \PB(\max\{V(P_1, U_1), V(P_2, U_2)\} \le t) = G_{P_1}(t) G_{P_2}(t)
\end{split}
\end{equation}
must hold for every non-negative $P_1, P_2$ with $P_1 + P_2 \le 1$.

Hence, if we define $h_t(P) = -\log G_P(t)$, then $h_t$ is a solution to 
the following variant of Cauchy's functional equation 
\begin{equation*}
f(P_1+ P_2) = f(P_1) + f(P_2)~~\forall P_1, P_2 \ge 0, P_1 + P_2 \le 1.
\end{equation*}
Furthermore, by \eqref{eq:mono}, for any $0\le P_1<P_2\le 1$, 
\[
G_{P_2}(t)=G_{P_1}(t)G_{P_2-P_1}(t)\ge G_{P_1}(t),
\]
and thus $h_t(P)$ is monotone in $P$.
Fix $t > 0$. 
By Lemma~\ref{lem:monotone} and using the fact that $h_t$ is measurable, we obtain that for every $P > 0$
\begin{equation*}
h_t(P) = \nu(t) P
\end{equation*}
must hold for some $\nu(t) \in \overline{\R}$. This means 
for every $P \in (0, 1]$ and $t \in \R$ 
\begin{equation*}
\PB(V(P, U) \le t)= G_P(t) = \re^{\nu(t) P}.
\end{equation*}
Since $t \mapsto G_P(t)$ is monotonically increasing and right continuous, 
$t \mapsto \nu(t)$ must be increasing and right continuous. Let 
us introduce 
\begin{equation*}
\nu^{-1}(t) = \sup\{y: \nu(y) < t\}.
\end{equation*}
Then $\nu^{-1}$ is increasing. Using the fact that
$\nu^{-1} (t) \le x$ is equivalent to $t \le \nu(x)$, the random variable
$\nu^{-1}(\log U/P)$
has its distribution function given by $G_P$, meaning that for $P \in (0, 1]$
\begin{equation*}
V(P, U) \stackrel{d}{=} \nu^{-1}(\log U/P).
\end{equation*}

Lemma~\ref{lem:strict} shows that the function $\nu^{-1}$ must be strictly increasing in order for
\begin{equation*}
\PB(\nu^{-1}(\log U_1/P_1) \le \nu^{-1}(\log U_2/P_2)) = P_1/(P_1+P_2)
\end{equation*}
to hold. Note also $V(0, U) = -\infty$ must hold almost surely. 
The conclusion is, by picking $g$ with $g(z) = \nu^{-1}(z)$ when
$z > -\infty$ and $g(-\infty) = -\infty$, 
$V(P, U) \stackrel{d}{=} g(\log U/P)$ holds for every $P\in [0, 1]$.
\
\end{proof}

\begin{lem}
\label{lem:monotone}
If a monotone function $f: [0, 1] \to \R$ satisfies that $f(P_1+P_2)=f(P_1)+f(P_2)$ for any $P_1,P_2\ge0$, $P_1+P_2\le1$, then for any $P\in [0, 1]$,
\[
f(P) = Pf(1).
\]
\end{lem}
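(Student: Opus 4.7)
The plan is to prove this restricted version of Cauchy's functional equation by first handling rationals via the additivity relation, and then extending to all of $[0,1]$ via monotonicity. The monotonicity assumption plays the role that measurability or continuity would play in the classical Cauchy equation, ruling out the wild (non-measurable) solutions that exist on $\mathbb{R}$.

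First I would establish the identity $f(0) = 0$ by taking $P_1 = P_2 = 0$ in the functional equation, which gives $f(0) = 2f(0)$. Then, by a simple induction on $n$, the additivity extends to finite sums: for any positive integer $n$ and any $P \ge 0$ with $nP \le 1$, we have $f(nP) = n f(P)$. Specializing to $P = 1/n$ gives $f(1/n) = f(1)/n$, and hence $f(m/n) = m f(1/n) = (m/n) f(1)$ for every rational $m/n \in [0,1]$ with $0 \le m \le n$. This completes the argument on rationals without using monotonicity.

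Next I would pass from rationals to arbitrary $P \in [0,1]$ using monotonicity. Assume without loss of generality that $f$ is non-decreasing (the non-increasing case is symmetric). For any $P \in [0,1]$ and any rationals $q_1, q_2 \in [0,1] \cap \mathbb{Q}$ with $q_1 \le P \le q_2$, monotonicity gives $q_1 f(1) = f(q_1) \le f(P) \le f(q_2) = q_2 f(1)$. Letting $q_1 \uparrow P$ and $q_2 \downarrow P$ along rational sequences forces $f(P) = P f(1)$, provided $f(1) \ge 0$; if instead $f$ is non-increasing, the same sandwich with inequalities reversed yields the same conclusion (with $f(1) \le 0$).

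I do not expect any of these steps to be a genuine obstacle; the argument is entirely standard. The only mild subtlety is recording that monotonicity is precisely the regularity hypothesis needed to rule out pathological additive solutions, and that the conclusion $f(P) = P f(1)$ covers both the non-decreasing and non-increasing cases uniformly, since the sign of $f(1)$ is automatically determined by the direction of monotonicity via $f(1) = f(1) - f(0)$ and the comparison of $f$ at $0$ and $1$.
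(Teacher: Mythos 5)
Your proof is correct and follows essentially the same two-step strategy as the paper: deduce $f(q) = q f(1)$ for rationals $q \in [0,1]$ from additivity, then invoke monotonicity and density of the rationals to extend to all of $[0,1]$. Your closing step differs only in form --- you sandwich $f(P)$ between $q_1 f(1)$ and $q_2 f(1)$ for rationals $q_1 \le P \le q_2$ and pass to the limit, whereas the paper argues by contradiction --- but the squeeze is arguably the cleaner route, since the paper's construction of a rational $r_2$ with $r_0 < r_2 < f(r_0)/f(1)$ tacitly presumes $f(1) > 0$, while your version handles the sign of $f(1)$ uniformly (and correctly notes that it is pinned down by the direction of monotonicity together with $f(0)=0$).
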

\begin{proof}[Proof of Lemma~\ref{lem:monotone}]
For any $P_1,P_2\ge0$, $P_1+P_2\le1$, $f(P_1+P_2)=f(P_1)+f(P_2)$ implies that
\[
f(0)=0,\quad f\biggl(\frac{j}{k}\biggr)=\frac{j}{k}f(1)
\]
for any $j,k\in\mathbb{N}$ and $j\le k$. This gives that $f(p)=pf(1)$ for any rational number $p\in[0,1]$.  Suppose that $f(r_0)=r_0 f(1)$ is not true for some irrational $r_0 \in [0, 1]$. Then we have either $f(r_0)>\eta f(1)$ or $f(r_0)<r_0 f(1)$. In the former case we can find rational numbers $r_1, r_2$ such that $r_1<r_0<r_2<f(r_0)/f(1)$ and consequently
\[
f(r_1)=r_1f(1)<r_0 f(1)<f(r_0)\quad\text{and}\quad f(r_2)=r_2f(1)<f(r_0).
\]
It immediately follows that $\{f(r_0)-f(r_1)\}\{f(r_0)-f(r_2)\}>0$, which is, however, contradictory to the monotonicity of $f$. The latter case of $f(r_0)<r_0 f(1)$ can be proved using similar arguments. We then conclude that $f(r_0)=r_0 f(1)$ holds for all irrational number $r_0 \in [0, 1]$, and hence $f(p)=pf(1)$ for all $p\in[0,1]$. 
\end{proof}

\begin{lem}
\label{lem:strict}
Assume $\nu: \R \to \overline{\R}$ is non-decreasing, right continuous, and satisfies $\PB(\nu^{-1}(\log U_1/P_1) \le \nu^{-1}(\log U_2/P_2)) = P_1/(P_1+P_2)$ for $U_1, U_2 \overset{i.i.d.}{\sim} \UM(0, 1)$ and any $ P_1, P_2 > 2$.
Then $\nu^{-1}$ is strictly increasing, that is, we have $\nu^{-1}(t_1)<\nu^{-1}(t_2)$ for any $t_1 < t_2$.
\end{lem}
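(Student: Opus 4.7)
The plan is to argue by contradiction, extracting a symmetry from the special case $P_1=P_2$ of the hypothesis and showing that any flat piece of $\nu^{-1}$ would manufacture a point mass that this symmetry forbids.

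First, I would distill an atomless condition on $\nu^{-1}(T)$ from the hypothesis. Setting $P_1=P_2=P\in(0,1/2]$ and writing $T_i:=\log U_i/P$, the hypothesis yields $\PB(\nu^{-1}(T_1)\le\nu^{-1}(T_2))=1/2$. Since $T_1,T_2$ are i.i.d., swapping their roles also gives $\PB(\nu^{-1}(T_1)\ge\nu^{-1}(T_2))=1/2$. Adding the two and using the elementary identity $\PB(A\le B)+\PB(A\ge B)=1+\PB(A=B)$ forces
\[
\PB\bigl(\nu^{-1}(T_1)=\nu^{-1}(T_2)\bigr)=0.
\]
By independence of $T_1$ and $T_2$, $\PB(\nu^{-1}(T_1)=\nu^{-1}(T_2))\ge\sum_c\PB(\nu^{-1}(T)=c)^2$, where the sum runs over the atoms of the law of $\nu^{-1}(T)$, so that law must be atomless.

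Next, I would show that a failure of strict monotonicity of $\nu^{-1}$ would create exactly such a forbidden atom. Suppose $\nu^{-1}(t_1)=\nu^{-1}(t_2)=:c$ for some $t_1<t_2$; since $\nu^{-1}$ is non-decreasing, it is constantly equal to $c$ on the whole interval $[t_1,t_2]$. Because $T$ has density $P\re^{Pt}$ on $(-\infty,0)$, a one-line computation gives $\PB(T\in[t_1,t_2])=\re^{Pt_2}-\re^{Pt_1}>0$ whenever $[t_1,t_2]$ intersects $(-\infty,0)$ in positive length. Hence $\PB(\nu^{-1}(T)=c)>0$, which contradicts atomlessness and completes the proof.

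The main obstacle is essentially bookkeeping about the effective domain: since $\log U/P$ only takes values in $(-\infty,0)$, the hypothesis genuinely constrains $\nu^{-1}$ only on this half-line, which is also the only range used downstream in Theorem~\ref{prop:unique}. If the putative flat interval $[t_1,t_2]$ lies wholly in $[0,\infty)$ the argument does not reach a contradiction, but the conclusion is also not needed there; if $[t_1,t_2]$ straddles $0$, one passes to the subinterval $[t_1,\min(t_2,0)-\varepsilon]$ on which $\nu^{-1}$ is still constant, and repeats the argument. Beyond this minor reduction to the effective domain, the proof reduces to the two symmetry-plus-independence lines above.
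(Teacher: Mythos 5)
Your proof is correct and takes a genuinely different route from the paper's. The paper keeps $P_1$ and $P_2$ general: writing $T_i:=\log U_i/P_i$, it decomposes $\PB(\nu^{-1}(T_1)\le\nu^{-1}(T_2))$ into $\PB(T_1\le T_2)$ (the part monotonicity of $\nu^{-1}$ already guarantees) plus $\PB(\nu^{-1}(T_1)\le\nu^{-1}(T_2),\,T_1>T_2)$, lower-bounds the second term by $\PB(t_2\ge T_1>T_2\ge t_1)>0$ on the supposed flat piece, and concludes the total strictly exceeds the hypothesized probability. You instead specialize to $P_1=P_2$, use exchangeability of the i.i.d.\ pair $(T_1,T_2)$ and the identity $\PB(A\le B)+\PB(A\ge B)=1+\PB(A=B)$ to deduce that the law of $\nu^{-1}(T)$ is atomless, and then observe that a flat stretch of $\nu^{-1}$ would manufacture an atom. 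Your route buys two things. First, it invokes the hypothesis only at $P_1=P_2$, so it is insensitive to the precise orientation of the hypothesized equality; this sidesteps a labeling slip in the paper, where strict monotonicity of $\nu^{-1}$ together with the exponential-tail calculation would actually give $\PB(\nu^{-1}(T_1)\le\nu^{-1}(T_2))=P_2/(P_1+P_2)$ rather than $P_1/(P_1+P_2)$, but both sides are $1/2$ in your symmetric case. Second, you explicitly surface the effective-domain caveat that $T$ is supported on $(-\infty,0)$, so the hypothesis constrains $\nu^{-1}$ only there; the paper's proof also relies on this tacitly (otherwise $\PB(t_2\ge T_1>T_2\ge t_1)$ could be zero) but leaves it unstated. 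Your reduction to a subinterval of $(-\infty,0)$ handles it cleanly, and you are right that Theorem~\ref{prop:unique} only ever evaluates $\nu^{-1}$ on that half-line.
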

\begin{proof}[Proof of Lemma~\ref{lem:strict}]
If $\nu^{-1}$ is not strictly increasing, then there exists some $t_1 < t_2$ such that $\nu^{-1}(t_1) = \nu^{-1}(t_2)$.
Hence, we have that
\begin{align*}
&\PB\left(\nu^{-1}\left( \frac{\log U_1}{P_1} \right) \le \nu^{-1}
\left( \frac{\log U_2}{P_2} \right) \right) \\
&\quad=\PB\left( \frac{\log U_1}{P_1} \le \frac{\log U_2}{P_2} \right)+ \PB\left(\nu^{-1}\left( \frac{\log U_1}{P_1} \right) \le \nu^{-1}
\left( \frac{\log U_2}{P_2} \right), \frac{\log U_1}{P_1} > \frac{\log U_2}{P_2}\right) \\
&\quad\ge \PB\left( \frac{\log U_1}{P_1} \le \frac{\log U_2}{P_2} \right)
+ \PB\left( t_2\ge \frac{\log U_1}{P_1} > \frac{\log U_2}{P_2} \ge t_1 \right)> \frac{P_1}{P_1+P_2},
\end{align*}
which is contradictory to the condition that $\PB\left(\nu^{-1}\left( \frac{\log U_1}{P_1} \right) \le \nu^{-1}
\left( \frac{\log U_2}{P_2} \right) \right) = \frac{P_1}{P_1+P_2}$.
\end{proof}

\subsection{Analysis of Different Score Functions}
In this subsection, we analyze different score functions and provide all the necessary information for computing the vanilla efficiency rate $R_{\bP}(h)$.
We consider four score functions: the log-likelihood ratio $\hllr$, Aaronson's score function $\hars$, log function $\hlog$, and the indicator function $(\delta=0.5)$.
Recall that from Lemma \ref{lem:dis-r}, $f_{\bP}$ is the PDF of $\Yars_t$ if the underlying NTP distribution is $\bP_t = \bP$.

\begin{rem}
\label{rem:hlog}
$\hllr$ is the log-likelihood ratio and $\hlog$ can be viewed as a relaxed log-likelihood ratio.
This is because $\hlog$ can be derived by applying Jensen's inequality to lower bound $\hllr$. 
Indeed, it follows that for any $r \in [0, 1]$,
\[
\log f_{\bP}(r) = \log \sum_{\token\in \Voca} r^{1/P_{\token}-1} \ge \sum_{\token\in \Voca } P_{\token} \log \left( \frac{r^{1/P_{\token}-1}}{P_{\token}} \right)= (|\Voca|-1) \log r + \Ent(\bP).
\]
If we replace $\log f_{1,\bP}(r)$ with $(|\Voca|-1) \log r + \Ent(\bP)$ and ensure the resulting test is of $\alpha$-level, we obtain the test introduced by $\hlog$.
Hence, $\hlog$ is the probability-agnostic relaxation of $\hllr$.
\end{rem}

\paragraph*{A simple case study}

To visualize the differences among the tests induced by these $h$'s, we examine a two-sample two-token case (that is, $n=2$ and $|\Voca|=2$).
The left panel in Figure~\ref{fig:efficiency} depicts the $\alpha$-level rejection region for different $h$'s.
The rejection region defined by $\hllr, \hlog, \hind$ exhibits a more concentrated pattern around the corner $(1, 1)$, in contrast to the elongated and narrow shape defined by $\hars$.
Furthermore, the rejection regions of $\hllr$ and $\hlog$ share almost the same shape.

\begin{figure}[t!]
\centering
\includegraphics[width=1.0\textwidth]{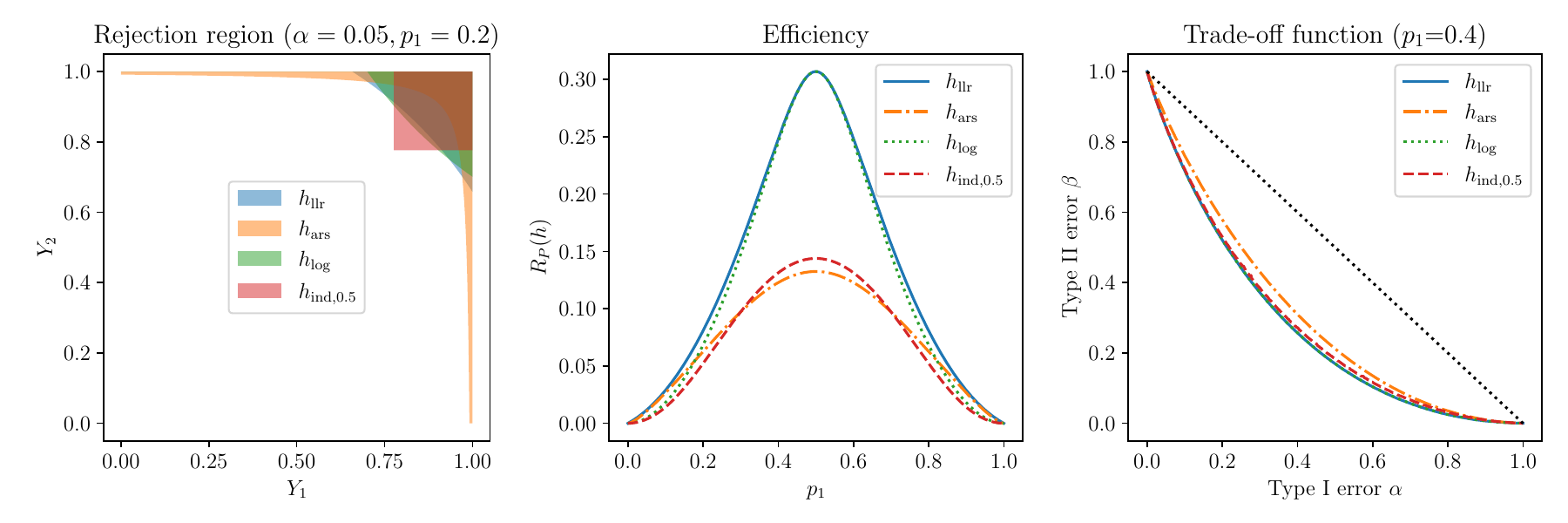}
\caption{The rejection region (left), test efficiency (middle), and trade-off function (right) of different score functions when $|\Voca|=2$ and $n=2$.
}
\label{fig:efficiency}
\end{figure}

\begin{table}[t!]
\centering
\begin{tabularx}{\columnwidth}{c c c  c  } 
\toprule
Score function $h$& $\EB_0 h(Y)$&  $\phi_{\bP, h}(\theta)$& $R_{\bP}(h)$ \\ [0.5ex] 
\midrule
$\hllr(y) = \log f_{\bP}(y)$ & $-\KL(\mu_0, \mu_{1,\bP})$   &  $\int_0^1 f_{\bP}^{1-\theta}(y) \rd y $ & $\KL(\mu_0, \mu_{1,\bP})$ \\
$\hars(y) = -\log(1-y)$ & 1   &  $ \sum_{\token \in \Voca} B(1/P_{\token}, \theta+1)$ & $-\inf\limits_{\theta \ge 0} \left[ \theta + \log \phi_{\bP,\hars}(\theta) \right]$ \\
$\hlog(y) = \log y$ & $-1$   & 
$\sum_{\token \in \Voca} \frac{P_{\token}}{1-P_{\token} \theta}$
& $\sup\limits_{\theta \in [0, \frac{1}{P_{(1)}} )} [\theta - \log \phi_{\bP,\hlog}(\theta)]$ \\
$\hind(y) = \1_{\{y \ge \delta\}}$ & $1-\delta$  &  $F(\delta) + \re^{-\theta}(1-F(\delta))$ & $ \delta \log \frac{\delta}{F(\delta)} + (1-\delta) \log \frac{1-\delta}{1-F(\delta)}$ \\
\bottomrule
\end{tabularx}
\caption{The efficiency component $R_{\bP}(h)$ of different function $h$'s. Here $B(x, y) := \int_0^1 r^{x-1}(1-r)^{y-1} \rd r$ is the Beta function.}
\label{tab:homo-efficiency}
\end{table}

\paragraph*{Formula to compute MGFs}
The following proposition provides all necessary information for computing $R_{\bP}(h)$ which we summarize in Table \ref{tab:homo-efficiency}.
Note that they are all non-decreasing, so Lemma \ref{lem:worst-MGF} could apply.

\begin{prop}
\label{prop:efficiency}
\begin{itemize}
\item For $\hllr(y) = \log f_{\bP}(y)$, $-\EB_0 \hllr(Y) = R_{\bP}(\hllr) = \KL(\mu_0, \mu_{1, \bP})$.
\item For $\hars(y) = - \log(1-y)$, $\EB_0 \hars(Y) = 1$ and $\phi_{\bP, \hars}(\theta) = \sum_{\token \in \Voca} B(1/P_{\token}, \theta+1)$ where $B(a, b)=\int_0^1 y^{a-1}(1-y)^{b-1} \rd y$ is the beta function. $\phi_{\bP, \hars}(\theta)$ is well-defined on $[0, \infty)$ and is strictly decreasing with $\phi_{\bP, \hars}(0) = \infty$ and $\phi_{\bP}(\hars)(\infty) = 0$.
Hence, $R_{\bP}(\hars) =-\theta_1^{\star} - \log \phi_{\hars}(\theta_1^{\star} )$ where $\theta_1^{\star} $ is the unique solution of the equation $\phi_{\bP,\hars}(\theta)  + \phi_{\hars}'(\theta) =0$.
\item For $\hlog(y) = \log y$, $\EB_0 \hlog(Y) = -1$ and $\phi_{\bP, \hlog}(\theta) = \sum_{\token \in \Voca} \frac{P_{\token}}{1-P_{\token} \theta}$ which is well-defined on $[0, 1/P_{(1)})$. 	Hence, $R_{\bP}(\hlog) = \theta_2^{\star} - \log \phi_{\bP, \hlog}(\theta_2^{\star} )$ where $\theta_2^{\star} $ is the unique solution of the equation $\phi_{\bP,\hlog}(\theta)  = \phi_{\bP,\hlog}'(\theta) $.
\item For $\hind(y) = \1_{\{y \ge \delta\}}$ for some $\delta \in (0, 1)$, $\EB_0 \hind(Y) = 1-\delta$ and $\phi_{\bP, \hind}(\theta) = F(\delta) + \re^{-\theta}(1-F(\delta))$ which is well-defined on $[0, \infty)$.
Hence, $R_{P}(\hind) = \delta \log \frac{\delta}{F(\delta)} + (1-\delta) \log \frac{1-\delta}{1-F(\delta)}$.
\end{itemize}
\end{prop}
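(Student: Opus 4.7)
My plan is to treat all four score functions under a single recipe. By Lemma~\ref{lem:dis-r}, the pivot $\Yars$ has density $1$ on $[0,1]$ under $H_0$ and density $f_{\bP}(y) = \sum_{\token \in \Voca} y^{1/P_{\token} - 1}$ under $H_1$. With these explicit densities, I would (i) compute $\EB_0 h(\Yars)$ as a direct integral, (ii) expand $\phi_{\bP,h}(\theta) = \int_0^1 \re^{-\theta h(y)} f_{\bP}(y)\,\rd y$ by term-by-term integration of the series for $f_{\bP}$, (iii) read off the natural domain of $\phi_{\bP,h}$ from convergence of these integrals, and (iv) obtain $R_{\bP}(h) = -\inf_{\theta \ge 0}\{\theta \EB_0 h(\Yars) + \log\phi_{\bP,h}(\theta)\}$ by invoking the first-order optimality condition, using that $\log\phi_{\bP,h}$ is convex so that the stationary point, when interior, is the unique minimizer.

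For $h = \hllr = \log f_{\bP}$, I would compute $\EB_0 \hllr(\Yars) = -\KL(\mu_0,\mu_{1,\bP})$ from the definition of KL divergence, and then evaluate the objective at $\theta = 1$: there $\phi_{\bP,\hllr}(1) = \int_0^1 f_{\bP}^{0}\,\rd y = 1$, so the value is exactly $-\KL(\mu_0,\mu_{1,\bP})$. The first-order condition at $\theta = 1$ reduces to $\EB_0 \hllr + \phi_{\bP,\hllr}'(1) = 0$, which is immediate, confirming $R_{\bP}(\hllr) = \KL(\mu_0,\mu_{1,\bP})$. This is simply the Donsker--Varadhan identity specialized to the log-likelihood ratio.

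For $h = \hars(y) = -\log(1-y)$, the substitution $u = 1-y$ gives $\EB_0 \hars = 1$, and since $\re^{-\theta \hars(y)} = (1-y)^{\theta}$, term-by-term integration of $\int_0^1 y^{1/P_{\token}-1}(1-y)^{\theta}\,\rd y$ yields the beta-function formula $\phi_{\bP,\hars}(\theta) = \sum_{\token \in \Voca} B(1/P_{\token}, \theta+1)$, which is finite and strictly decreasing on $[0,\infty)$ with $\phi_{\bP,\hars}(\infty) = 0$. The first-order condition $\phi_{\bP,\hars} + \phi_{\bP,\hars}' = 0$ has a unique interior root $\theta_1^\star$ by convexity, together with the fact that the derivative of $\theta + \log\phi_{\bP,\hars}$ is negative at $\theta = 0$ (since $\EB_{1,\bP}\hars > \EB_0 \hars = 1$ by stochastic dominance of $\mu_{1,\bP}$ over $\mu_0$) and approaches $1$ as $\theta \to \infty$. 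For $h = \hlog(y) = \log y$, an analogous computation gives $\EB_0 \hlog = -1$ and $\phi_{\bP,\hlog}(\theta) = \sum_{\token} P_{\token}/(1 - P_{\token}\theta)$, with domain $[0,1/P_{(1)})$ determined by the worst singularity of the integrand near $y = 0$; the first-order condition becomes $\phi_{\bP,\hlog} = \phi_{\bP,\hlog}'$, uniquely characterizing the minimizer $\theta_2^\star$.

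For $h = \hind$, since the score is two-valued, $\EB_0 \hind = 1-\delta$ and $\phi_{\bP,\hind}(\theta) = F(\delta) + \re^{-\theta}(1 - F(\delta))$ by splitting the expectation on $\{\Yars < \delta\}$ versus $\{\Yars \ge \delta\}$. The first-order condition can be solved in closed form, giving $\re^{-\theta^\star} = (1-\delta)F(\delta)/[\delta(1-F(\delta))]$, which is at most $1$ (so $\theta^\star \ge 0$) because stochastic dominance forces $F(\delta) \le \delta$. Plugging this back simplifies $\phi_{\bP,\hind}(\theta^\star)$ to $F(\delta)/\delta$, after which elementary algebra collapses $(1-\delta)\theta^\star + \log\phi_{\bP,\hind}(\theta^\star)$ into the Bernoulli KL form $\delta \log(\delta/F(\delta)) + (1-\delta)\log((1-\delta)/(1-F(\delta)))$. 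I do not anticipate a substantive obstacle: all four cases are explicit computations, and the only conceptual step is recognizing the Donsker--Varadhan identity in the $\hllr$ case and the Bernoulli KL divergence structure hidden in the $\hind$ case.
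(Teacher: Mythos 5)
Your proof is correct, and it is the natural argument: compute everything explicitly from the null density $1$ and alternative density $f_{\bP}(y)=\sum_{\token}y^{1/P_{\token}-1}$ (Lemma~\ref{lem:dis-r}), verify the beta-integral and geometric-series evaluations of $\phi_{\bP,h}$, and solve the convex first-order condition, with the Donsker--Varadhan identity handling $\hllr$ and the Bernoulli KL algebra handling $\hind$. The paper in fact gives no proof of this proposition at all --- it is stated as a computational summary (with Table~\ref{tab:homo-efficiency}) --- so there is nothing to compare against, but your derivations reproduce all the entries correctly.

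One thing worth flagging, since it bears on the statement rather than your proof: the proposition asserts $\phi_{\bP,\hars}(0)=\infty$, which is impossible --- $\phi_{\bP,\hars}(0)=\EB_{1,\bP}\re^{0}=1$ always, and this is consistent with the beta-function formula since $\sum_{\token}B(1/P_{\token},1)=\sum_{\token}P_{\token}=1$. You correctly write that $\phi_{\bP,\hars}$ is finite on all of $[0,\infty)$, and your existence/uniqueness argument for $\theta_1^{\star}$ (derivative of $\theta+\log\phi$ negative at $0$ because $\EB_{1,\bP}\hars>1$, tending to $1$ at infinity, combined with convexity) is the right one and does not rely on the paper's erroneous endpoint value. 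The paper's claim is presumably a typo --- perhaps intended as $\phi_{\bP,\hars}(-1)=\infty$ on the natural domain $(-1,\infty)$, since $B(a,0)=\infty$ --- but as written it contradicts ``well-defined on $[0,\infty)$'' in the same sentence.
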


\subsection{Proof of Lemma~\ref{lem:extremal-points}}
This lemma can be proved by directly applying Lemma \ref{lem:convex-combination} using the concept of majorization. Below, we provide a constructive proof as an alternative.

\begin{proof}[Proof of Lemma~\ref{lem:extremal-points}]
The set of extremal points of $\FPM$ is denoted by $\Ext(\FPM)$ where, by definition, an element does not lie between any two distinct points in $\FPM$.
Recall that $\bP_{\Delta}^{\star}$ is defined as:
\[
\bP_{\Delta}^{\star} =\biggl(\underbrace{1-\Delta, \ldots, 1-\Delta}_{\floor{\frac{1}{1-\Delta}}\ \textnormal{times}}, 1-(1-\Delta)\cdot \left\lfloor\frac{1}{1-\Delta}\right\rfloor, 0, \ldots\biggr).
\]
We define $\Pi(\bP_{\Delta}^{\star})$ as the set of all NTP distributions that can be obtained by permuting $\bP_{\Delta}^{\star}$, where:
\[
\Pi(\bP_{\Delta}^{\star}) := \left\{\pi(\bP_{\Delta}^{\star}): \pi~\text{is a permutation on $\{1, 2, \ldots, |\Voca|\}$} \right\},
\]
and $\pi(\bP)$ denotes the permuted NTP distribution with the $i$th coordinate given by $P_{\pi(i)}$. Our goal is to prove that 
\[
\Ext(\FPM) = \Pi(\bP_{\Delta}^{\star}).
\]

To start, we show that $\Pi(\bP_{\Delta}^{\star}) \subseteq \Ext(\FPM)$, meaning every permuted NTP distribution of $\bP_{\Delta}^{\star}$ is an extremal point. Suppose the contrary, that there exists $\bP_{\Delta}^{\star} = \theta \bP_1 + (1-\theta) \bP_2$ for some distinct $\bP_1, \bP_2 \in \FPM$ and $\theta \in (0, 1)$. Given the first $\floor{\frac{1}{1-\Delta}}$ coordinates of $\bP_{\Delta}^{\star}$ are maximal, both $\bP_1$ and $\bP_2$ must reach the maximum in these coordinates, leading to a contradiction with the assumption that $\bP_1 \neq \bP_2$.

Next, we prove $\Ext(\FPM) \subseteq \Pi(\bP_{\Delta}^{\star})$, indicating every extremal point is obtained by permuting $\bP_{\Delta}^{\star}$. It suffices to demonstrate that any point in $\FPM$ is in the convex hull of $\Pi(\bP_{\Delta}^{\star})$, denoted by $\mathrm{Conv}(\Pi(\bP_{\Delta}^{\star}))$. 
Without loss of generality, we pick up any $\bP \in \FPM$ with non-increasing order $P_1 \ge P_2 \ge \cdots \ge P_{|\Voca|}$ and $m$ coordinates equal to $1-\Delta$.

In the following, we use induction to show:
\begin{center}
\textit{Any NTP distribution in $\FPM$ with exactly $m$ coordinates equal to $1-\Delta$ is in $\mathrm{Conv}(\Pi(\bP_{\Delta}^{\star}))$.}
\end{center}

\begin{itemize}
\item If $m = \floor{\frac{1}{1-\Delta}}$, then $\bP$ must assign the remaining probability mass (which is $1-m(1-\Delta)$) on some of the rest coordinates.
We assume these coordinates are contained in the set $\IM$ satisfying $\sum_{i \in \IM} P_i = 1-m(1-\Delta)$.
It is easy to find that $\bP$ is the convex interpolation of $|\IM|$ points in $\Pi(\bP_{\Delta}^{\star})$ whose first $N$ coordinates are the same as $1-\Delta$ and the $i$th coordinate is $1-N (1-\Delta)$.

\item Suppose that we already show the statement holds for $m \le \floor{\frac{1}{1-\Delta}}$, we now need to show that it also holds for $m-1$.
For notation simplicity, we let $\PM_m \subset \FPM$ denote the set of NTP distributions which having exact $m$ coordinates equal to $1-\Delta$.
Let $\bP \in \PM_{m-1}$ be the NTP distribution under consideration. 
Given its coordinates are decreasingly ranked, we must have $1-\Delta > P_{m} \ge P_{m+1} \ge \cdots \ge P_{|\Voca|}$.

To proceed with the proof, we discuss two cases.
\begin{itemize}
\item [(a)] If $P_{m}+P_{m+1} > 1-\Delta$, we assert we must have $\bP \in \mathrm{Conv}(\PM_{m})$. To see this, we construct two NTP distributions $\bP_1 \neq \bP_2 \in \PM_m$ such that $\bP$ lies in the segment between the two points.
These two points differ from $\bP$ in exactly two coordinates:
\begin{gather*}
   P_{1,i} = P_{2,i} = P_i ~~\text{for}~~i \notin \{m, m+1\},\\
   P_{1,m} = 1-\Delta ~~\text{and}~~P_{1,m+1} = P_{m}+P_{m+1} - (1-\Delta),\\
   P_{2,m} = P_{1,m-1}, P_{2, m+1} = P_{1,m}.
\end{gather*}
By the hypothesis, we already have $\PM_m \subset \mathrm{Conv}(\Pi(\bP_{\Delta}^{\star}))$ so that $\bP \in \mathrm{Conv}(\Pi(\bP_{\Delta}^{\star}))$.

\item [(b)] If $P_{m}+P_{m+1} \le 1-\Delta$, we similarly construct two NTP distributions $\bP_1 \neq \bP_2 \in \PM_{m-1}$ such that $\bP$ lies in the segment between the two points.
These two points differ from $\bP$ in exactly two coordinates:
\begin{gather*}
P_{1,i} = P_{2,i} = P_i ~~\text{for}~~i \notin \{m, m+1\},\\
P_{1,m} = P_{m}+P_{m+1} ~~\text{and}~~P_{1,m+1} = 0,\\
P_{2,m} =0, P_{2, m+1} = P_{m}+P_{m+1}.
\end{gather*}
In this case, though $\bP_1$ and $\bP_2$ are still in $\PM_{m-1}$, their support decreases by one when compared with $\bP$.
We then discuss the remaining coordinates of $\bP_1$ and $\bP_2$.
Take $\bP_1$ for example.
We want to determine whether the sum of the largest two non-$(1-\Delta)$ probabilities in $\bP_1$ is larger than $1-\Delta$ or not.
If this is true, we arrive at Case (a) again but with a new instance, namely $\bP_1$, which has already been addressed.
If this is not true, we can further decrease the support of $\bP_1$  by treating it as a new instance in Case (b) and combining these two largest non-$(1-\Delta)$ coordinates.
Because $\bP_1$ has a finite number of non-$(1-\Delta)$ coordinates, there is only a finite number of cases where we have to deal with Case (b).
As a result, by tracing the way we decompose $\bP_1$, we always have that $\bP_1 \in \mathrm{Conv}(\Pi(\bP_{\Delta}^{\star}))$ and similarly $\bP_2 \in \mathrm{Conv}(\Pi(\bP_{\Delta}^{\star}))$.
\end{itemize}
Combining Case (a) and Case (b), we prove the correctness of the statement.
\end{itemize}
\end{proof}

\new{
\subsection{Results for a General Probability Set}

In the following, we consider a new, general distribution class $\PM_{\Delta_1, \Delta_2}$.
For two real numbers $0 \le \Delta_2 < \Delta_1 < 1$, we define the following probability class:
\begin{equation}
\label{eq:linear-set}
\PM_{\Delta_1, \Delta_2} = \left\{\bP \in \Simplex(\Voca) : P_{(1)} \le 1-\Delta_1, P_{(2)} \le 1-\Delta_2
\right\},
\end{equation}
where $P_{(i)}$ is the $i$-th largest probability in the NTP distribution $\bP=(P_1, \cdots, P_{|\Voca|})$.
Notably, $\PM_{\Delta_1, \Delta_2}$ is invariant under permutation, meaning that for any permutation $\pi$, if $\bP \in \PM_{\Delta_1, \Delta_2}$, then $\pi(\bP):= (P_{\pi(1)}, \ldots, P_{\pi(|\Voca|)})$ also belongs to $\PM_{\Delta_1, \Delta_2}$. Furthermore, $\PM_{\Delta_1, \Delta_2}$ is a convex polytope, defined as a compact convex set with a finite number of extreme points. The permutation invariance allows us to precisely capture its set of extreme points.

Specifically, we establish the following lemma:
\begin{lem}
\label{lem:ext-point-linear-set}
For $0 \le \Delta_2 < \Delta_1 < 1$, the set of extreme points of $\PM_{\Delta_1, \Delta_2}$ is
\[
\mathrm{Ext}(\PM_{\Delta_1, \Delta_2}) = \Pi(\bP_{\Delta_1, \Delta_2}^{\star})
\]
where $\Pi(\bP_{\Delta_1, \Delta_2}^{\star}) := \left\{\pi(\bP_{\Delta_1, \Delta_2}^{\star}): \pi~\text{is a permutation on $\{1, 2, \ldots, |\Voca|\}$} \right\}$ and
$\bP_{\Delta_1, \Delta_2}^{\star}$ is the least-favorable NTP distribution in $\PM_{\Delta_1, \Delta_2}$ defined by
\[
\bP_{\Delta_1, \Delta_2}^{\star} = \biggl( 1-\Delta_1, \underbrace{1-\Delta_2, \ldots, 1-\Delta_2}_{\floor{\frac{\Delta_1}{1-\Delta_2}}~\text{times}}, \Delta_1 - (1-\Delta_2)\cdot \left\lfloor\frac{\Delta_1}{1-\Delta_2} \right\rfloor, 0,\ldots   \biggl).
\]
\end{lem}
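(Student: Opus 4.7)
My plan is to establish both inclusions in $\mathrm{Ext}(\PM_{\Delta_1, \Delta_2}) = \Pi(\bP_{\Delta_1, \Delta_2}^\star)$, following the structural template of Lemma~\ref{lem:extremal-points} but adapted to the two-tier bound structure. By the permutation invariance of $\PM_{\Delta_1, \Delta_2}$, it suffices to prove that $\bP^\star := \bP_{\Delta_1, \Delta_2}^\star$ is extreme and that every $\bP \in \PM_{\Delta_1, \Delta_2}$ which, after non-increasing sorting, differs from $\bP^\star$ fails to be extreme.

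For extremality of $\bP^\star$, I would suppose $\bP^\star = \theta \bP_1 + (1-\theta)\bP_2$ with $\bP_1, \bP_2 \in \PM_{\Delta_1, \Delta_2}$ and $\theta \in (0,1)$, then pin down the coordinates one at a time. The top constraint $P_{(1)} \le 1-\Delta_1$ bounds every coordinate of $\bP_1, \bP_2$ by $1-\Delta_1$, so the saturated entry $P_1^\star = 1-\Delta_1$ forces $P_{1,1} = P_{2,1} = 1-\Delta_1$; this makes the first coordinate the maximum in both vectors, so $P_{(2)} \le 1-\Delta_2$ then applies to all later coordinates. Thus the saturated entries $P_i^\star = 1-\Delta_2$ for $i = 2, \ldots, \floor{\Delta_1/(1-\Delta_2)}+1$ pin their counterparts, non-negativity propagates the zero tail of $\bP^\star$ to $\bP_1, \bP_2$, and the normalization $\sum P_i = 1$ fixes the remainder coordinate. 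Hence $\bP_1 = \bP_2 = \bP^\star$.

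For the reverse inclusion, I will show non-extremality of any sorted $\bP \in \PM_{\Delta_1, \Delta_2}$ with $\bP \ne \bP^\star$ by exhibiting a nonzero direction $v = e_i - e_j$ such that $\bP \pm \epsilon v \in \PM_{\Delta_1, \Delta_2}$ for all sufficiently small $\epsilon > 0$, which certifies non-extremality via $\bP = \tfrac{1}{2}(\bP+\epsilon v) + \tfrac{1}{2}(\bP-\epsilon v)$. I would split into three cases: (i) $P_1 < 1-\Delta_1$, where I pick $j \ge 2$ with $P_j \in (0, 1-\Delta_2)$ and use $v = e_1 - e_j$; (ii) $P_1 = 1-\Delta_1$ and $P_2 < 1-\Delta_2$, where I pick $j \ge 3$ with $P_j > 0$ and use $v = e_2 - e_j$; (iii) $P_1 = 1-\Delta_1$ and $P_2 = \cdots = P_{m+1} = 1-\Delta_2$ for some $m \ge 1$, handled either by perturbing an unfilled $c$-slot when $m < \floor{\Delta_1/(1-\Delta_2)}$, or by an internal tail swap between two positive coordinates when $m = \floor{\Delta_1/(1-\Delta_2)}$ and the tail differs from $(r, 0, \ldots, 0)$. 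In each case, the required index $j$ must exist, because its absence would force $\bP = \bP^\star$.

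The principal technical obstacle will be verifying constraint preservation, since $P_{(1)}$ and $P_{(2)}$ are nonlinear order statistics of $\bP$: the identity of the second-largest coordinate can shift under perturbation, and one must ensure the new second-largest still lies below $1-\Delta_2$. The key insight is that in each case the perturbation keeps the max coordinate's position intact (or safely bounded by $1-\Delta_1$), so only the second-max needs to be tracked, and this is controlled by selecting $P_j$ strictly below the $1-\Delta_2$ bound. The most delicate step is ruling out, in case (i), the pathological configuration where $P_1 < 1-\Delta_1$ yet every positive $P_j$ with $j \ge 2$ equals $1-\Delta_2$; a short arithmetic argument shows that such a configuration would require $\Delta_1/(1-\Delta_2) < m \le \Delta_2/(1-\Delta_2)$ for some integer $m$, which is infeasible under the assumption $\Delta_2 < \Delta_1$, so a suitable $j$ always exists.
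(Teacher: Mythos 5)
Your proposal takes a genuinely different route than the paper's proof (the paper establishes the reverse inclusion in one stroke by invoking the majorization lemma of Kim~\citep{kim2019convexification}: $\bP_{\Delta_1,\Delta_2}^\star$ majorizes every point of the constraint set, hence every such point is a convex combination of permutations of $\bP_{\Delta_1,\Delta_2}^\star$; your proposal instead does an explicit case-by-case perturbation argument). Unfortunately, the perturbation argument as set up has a genuine gap, and it also reveals a latent sign inconsistency that you did not flag.

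First, the internal inconsistency: in Part~1 you argue that once $P_{1,1}=P_{2,1}=1-\Delta_1$ is pinned, ``$P_{(2)}\le 1-\Delta_2$ then applies to all later coordinates'' and pins the entries $P_i^\star = 1-\Delta_2$. For that pinning to be meaningful you need $1-\Delta_2 \le 1-\Delta_1$, i.e.\ $\Delta_1 \le \Delta_2$; yet in Part~2, case~(i), you explicitly invoke ``the assumption $\Delta_2 < \Delta_1$'' to rule out the pathological configuration. These two steps use opposite sign conventions. Under the stated $\Delta_2 < \Delta_1$, the entries $1-\Delta_2$ of $\bP_{\Delta_1,\Delta_2}^\star$ strictly exceed the cap $1-\Delta_1$ on $P_{(1)}$, so $\bP_{\Delta_1,\Delta_2}^\star$ itself can leave the set (e.g.\ $\Delta_1=0.7,\Delta_2=0.1$ gives $\bP^\star=(0.3,0.7,0,\ldots)$ with $P_{(1)}=0.7>0.3$), and your Part~1 has nothing to work with. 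Second, if one flips to $\Delta_1<\Delta_2$ so that $\bP_{\Delta_1,\Delta_2}^\star$ belongs to the set, then the order-statistic constraint $\{P_{(2)}\le 1-\Delta_2\}$ is no longer redundant, and the set is not convex (take $\Delta_1=0.1,\Delta_2=0.8$: both $(0.9,0.1,0,\ldots)$ and $(0.1,0.9,0,\ldots)$ satisfy the constraints but their midpoint $(0.5,0.5,0,\ldots)$ does not). On a non-convex set, your two-sided criterion ``$\bP\pm\epsilon v$ both feasible'' is no longer equivalent to failing extremality, and indeed case~(i) concretely breaks: with $\Delta_1=0.1,\Delta_2=0.8$ and $\bP=(0.8,0.2,0,\ldots)$, we have $P_1=0.8<0.9=1-\Delta_1$, the only positive $P_j$ with $j\ge 2$ equals $1-\Delta_2=0.2$, so no admissible index $j$ exists; yet $\bP$ is a nontrivial convex combination of $(0.9,0.1,0,\ldots)$ and $(0.1,0.9,0,\ldots)$, so it is not extreme and must be disposed of. The root cause is that the natural convex body here is the permutohedron of $\bP_{\Delta_1,\Delta_2}^\star$, described by the cumulative bounds $P_{(1)}\le 1-\Delta_1$ and $P_{(1)}+P_{(2)}\le(1-\Delta_1)+(1-\Delta_2)$, not by the pointwise bound on $P_{(2)}$ alone; the paper's majorization route implicitly works with exactly that convex body, while your perturbation bookkeeping is tied to the literal $P_{(2)}$ constraint. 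To salvage the perturbation approach you would need to either restate the constraint set in cumulative form (making it a genuine convex polytope) or verify directly that the two descriptions coincide, and then redo cases~(i)--(iii) tracking the cumulative constraints $P_{(1)}$ and $P_{(1)}+P_{(2)}$ rather than $P_{(2)}$ alone.
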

Following a similar line of reasoning as in our main analysis, we could show that
\[
h_{\mathrm{gum}, \Delta_1,\Delta_2}(r) = \log \frac{\rd\mu_{1, \bP_{\Delta_1, \Delta_2}^{\star}}}{\rd\mu_0}(r)
\]
is the optimal score function for the Gumbel-max watermark when using $\PM_{\Delta_1, \Delta_2}$ as the prior probability set.  

However, this probability set is less practical because it requires specifying two real numbers $\Delta_1, \Delta_2$. Therefore, we include this discussion as an extension, illustrating how our analysis and method can be extended to other permutation-invariant sets $\PM$.
In the main text, we focus on the $\Delta$-regular prior set $\FPM$, which represents a special case of $\PM_{\Delta_1, \Delta_2}$. This case occurs when we set $\Delta_1=\Delta$ and $\Delta_2=0$, making it perhaps the simplest form of this class.

\begin{proof}[Proof of Lemma \ref{lem:ext-point-linear-set}]

The proof strategy closely follows the approach used in Lemma \ref{lem:extremal-points}, with a few key distinctions that we detail here. 

First, it is straightforward to see that $\bP_{\Delta_1, \Delta_2}^{\star}$ and its permutations are all extreme points since they reach the boundary conditions $P_{(1)} \le 1-\Delta_1$ and $P_{(2)} \le 1-\Delta_2$. 
This result can be easily verified by using the definition of extreme points so we omit them for simplicity. Consequently, we have $ \Pi(\bP_{\Delta_1, \Delta_2}^{\star}) \subseteq \mathrm{Ext}(\PM_{\Delta_1, \Delta_2})$.

To prove the reverse inclusion, $\mathrm{Ext}(\PM_{\Delta_1, \Delta_2}) \subseteq \Pi(\bP_{\Delta_1, \Delta_2}^{\star})$, it suffices to demonstrate that any point in $\PM_{\Delta_1, \Delta_2}$ lies within the convex hull of $\Pi(\bP_{\Delta_1, \Delta_2}^{\star})$. This can be shown by applying the following lemma. By the terminology introduced in Lemma \ref{lem:convex-combination}, we know that $\bP_{\Delta_1, \Delta_2}^{\star}$ majorizes any point in $\PM_{\Delta_1, \Delta_2}$, implying that any point in $\PM_{\Delta_1, \Delta_2}$ can be expressed as a convex combination of $\bP_{\Delta_1, \Delta_2}^{\star}$ and its permutations. Thus, the proof is complete.

\begin{lem}[Lemma 2.2 in \citep{kim2019convexification}]
\label{lem:convex-combination}
Given two vectors $\vx, \vy \in \RB^d$, we say $\vx$ majorizes $\vy$, a property we denote by $\vx \ge_m \vy$, if $\sum_{i=1}^j x_{(i)} \ge \sum_{i=1}^j y_{(i)}$ for all $j=1, \ldots, d$ and $\sum_{i=1}^d x_{(i)} = \sum_{i=1}^d y_{(i)}$. 
Here $x_{(i)}$ is the $i$-th largest component of $\vx$.
If $\vx \ge_{m} \vy$, then $\vy$ is a convex combination of $\vx$ and its permutations.
\end{lem}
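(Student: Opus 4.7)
The plan is to reduce Lemma \ref{lem:convex-combination} to two classical results: the Hardy--Littlewood--P\'olya--Rado theorem from majorization theory, and Birkhoff's theorem on doubly stochastic matrices. Neither invocation is novel, but composing them yields the desired convex-combination statement in a clean two-step argument.

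First, I would invoke the Hardy--Littlewood--P\'olya--Rado theorem: the majorization condition $\vx \ge_m \vy$ is equivalent to the existence of a doubly stochastic matrix $D \in \RB^{d \times d}$ such that $\vy = D \vx$. This step converts the combinatorial majorization hypothesis into a linear-algebraic identity, which is the pivotal reduction. Second, by Birkhoff's theorem, every doubly stochastic matrix is a convex combination of permutation matrices, so $D = \sum_{k=1}^{K} \lambda_k P_k$ with $\lambda_k \ge 0$, $\sum_k \lambda_k = 1$, and each $P_k$ a permutation matrix. Substituting gives
\[
\vy \;=\; D \vx \;=\; \sum_{k=1}^{K} \lambda_k (P_k \vx),
\]
which exhibits $\vy$ as a convex combination of permutations of $\vx$, as required.

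The main obstacle is the first step, since Birkhoff's theorem is a standard off-the-shelf result. A constructive proof of Hardy--Littlewood--P\'olya--Rado proceeds via \emph{T-transforms} (Pigou--Dalton or Robin--Hood transfers). Concretely, if $\vx \ge_m \vy$ but $\vx$ is not a permutation of $\vy$, one first sorts both vectors in decreasing order (absorbing a permutation matrix along the way), then locates indices $i < j$ with $x_i > y_i$ and $x_j < y_j$ and transfers an amount $\epsilon = \min\{x_i - y_i,\, y_j - x_j\}$ from coordinate $i$ to coordinate $j$. This transfer is realized by left-multiplication with the doubly stochastic matrix $\alpha I + (1-\alpha) Q_{ij}$, where $Q_{ij}$ is the transposition of coordinates $i$ and $j$ and $\alpha \in [0,1]$ is chosen to produce the target increment $\epsilon$. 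The key invariant is that each transfer preserves the majorization relation with $\vy$ while strictly reducing the number of coordinates at which the current iterate disagrees with $\vy$, since at least one of the two modified coordinates is brought into agreement. The process therefore terminates in at most $d$ steps; composing the resulting T-transforms (together with the initial sorting permutation) yields the doubly stochastic matrix $D$ needed in Step 1, and closes the argument.
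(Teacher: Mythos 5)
The paper does not give its own proof of this lemma; it cites it verbatim as Lemma 2.2 of \citep{kim2019convexification} and uses it as a black box inside the proof of Lemma \ref{lem:ext-point-linear-set}. So you are supplying a proof of a classical result, and your high-level route---the Hardy--Littlewood--P\'olya equivalence between majorization and the action of a doubly stochastic matrix, followed by Birkhoff's decomposition of a doubly stochastic matrix into permutation matrices---is exactly the textbook argument and is correct as a plan.

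The gap is in your constructive sketch of the Hardy--Littlewood--P\'olya step. After sorting, you propose to ``locate indices $i < j$ with $x_i > y_i$ and $x_j < y_j$,'' transfer $\epsilon = \min\{x_i - y_i,\, y_j - x_j\}$, and you assert that ``each transfer preserves the majorization relation with $\vy$.'' That invariant is false for an arbitrary choice of such a pair. Take $\vx = (4, 1, 1, 0)$ and $\vy = (2.5, 2.5, 0.5, 0.5)$, both already sorted decreasingly, with $\vx \ge_m \vy$ (partial sums $4 \ge 2.5$, $5 \ge 5$, $6 \ge 5.5$, $6 = 6$). Choosing $i = 1$, $j = 4$ gives $\epsilon = \min\{1.5,\, 0.5\} = 0.5$ and produces $\vz = (3.5, 1, 1, 0.5)$, whose two largest entries sum to $4.5 < 5 = y_{(1)} + y_{(2)}$: the iterate no longer majorizes $\vy$, so the induction collapses. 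The standard argument goes through because it fixes a specific choice of indices: take $j$ to be the \emph{smallest} index with $x_j < y_j$ and $i$ the \emph{largest} index below $j$ with $x_i > y_i$ (so that $x_\ell = y_\ell$ for $i < \ell < j$). With that choice, for every $i \le k < j$ the partial-sum slack $\sum_{\ell=1}^k (x_\ell - y_\ell)$ consists only of nonnegative terms and contains $x_i - y_i \ge \epsilon$, which is exactly the amount the transfer removes from the top-$k$ sum, so majorization is preserved. Your sketch omits this selection rule, and without it the claimed invariant does not hold; you should also note that the iterate may fall out of sorted order after a transfer, so a fresh sorting permutation must be absorbed at each step, not just once at the outset.
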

\end{proof}
}

\subsection{Proof of Theorem \ref{thm:main-efficiency-gumbel-informal}}
\label{proof:comparison}
\begin{thm}[Formal version of Theorem \ref{thm:main-efficiency-gumbel-informal}]
\label{thm:main-efficiency-gumbel}
There exist two positive constants $ 0.001 <\Delta_1^{\gum}\le\Delta_2^{\gum} < 1$ such that
\begin{itemize}
\item When $10^{-3} < \Delta <\Delta_1^{\gum}$,
\[
\max\left\{R_{\FPM}(\hlog), R_{\FPM}(\hindo)\right\} 
< R_{\FPM}(\hars).
\]
\item When $\Delta_2^{\gum} < \Delta <1$, 
\[
\max\left\{R_{\FPM}(\hars), R_{\FPM}(\hindo) \right\} 
< R_{\FPM}(\hlog).
\]
\item From Figure \ref{fig:inhomo-efficiency}, we observe that $\Delta_1^{\gum}=\Delta_2^{\gum} \approx 0.17756080525215662$.
\end{itemize}
\end{thm}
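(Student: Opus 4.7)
The plan is to leverage Lemma \ref{lem:worst-MGF} to reduce the problem of evaluating $R_{\FPM}(h)$ for each of the three score functions $\hars, \hlog, \hindo$ to computing the single-distribution efficiency rate $R_{\bP_{\Delta}^{\star}}(h)$ at the least-favorable distribution $\bP_{\Delta}^{\star}$. Since all three functions are non-decreasing, Lemma \ref{lem:worst-MGF} applies and gives $R_{\FPM}(h) = -\inf_{\theta \ge 0} L(\theta h, \bP_{\Delta}^{\star})$. Combined with the closed-form expressions for $\EB_0 h(Y)$ and $\phi_{\bP,h}(\theta)$ collected in Proposition~\ref{prop:efficiency}, this yields three one-dimensional optimization problems parametrized by $\Delta$. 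For $\Delta \in (0, 1/2)$, the least-favorable distribution simplifies to $\bP_{\Delta}^{\star} = (1-\Delta, \Delta, 0, \ldots)$, while for $\Delta \in (1/2, 2/3)$ it becomes $(1-\Delta, 1-\Delta, 2\Delta-1, 0, \ldots)$, and so on, allowing piecewise-explicit analysis across intervals separated by the non-smooth points $\Delta = k/(k+1)$.

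Next, I would carry out asymptotic analyses at both endpoints. As $\Delta \to 0^+$, $\bP_{\Delta}^{\star}$ approaches a singular distribution, and all three efficiency rates tend to zero. The key is to compare their \emph{rates of vanishing}: a Taylor expansion of the MGFs in $\Delta$ around $\Delta = 0$ should reveal that $R_{\bP_{\Delta}^{\star}}(\hars)$ dominates $R_{\bP_{\Delta}^{\star}}(\hlog)$ to leading order, reflecting the fact that $\hars$ concentrates more mass near $r = 1$, which is where the alternative distribution in Lemma~\ref{lem:dis-r} differs most from uniform when the NTP has a dominant token. A similar comparison with $\hindo$ (whose efficiency has a standard binomial KL form) establishes $R_{\bP_{\Delta}^{\star}}(\hars) > R_{\bP_{\Delta}^{\star}}(\hindo)$ in this regime. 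As $\Delta \to 1^-$, the NTP becomes increasingly flat with many tokens of probability $\sim (1-\Delta)$; in this regime one shows $R_{\bP_{\Delta}^{\star}}(\hlog)/R_{\bP_{\Delta}^{\star}}(\hopt) \to 1$ (this is also stated for the effiency gap appendix) while $R_{\bP_{\Delta}^{\star}}(\hars)$ lags behind, since the MGF of $\hlog$ captures the entropy of $\bP_{\Delta}^{\star}$ more accurately when all probabilities are small.

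Once the two extreme regimes are settled, the existence of crossover constants $\Delta_1^{\gum}$ and $\Delta_2^{\gum}$ follows from continuity of $\Delta \mapsto R_{\bP_{\Delta}^{\star}}(h)$, which is inherited from continuity of the implicit minimizers $\theta_1^{\star}(\Delta), \theta_2^{\star}(\Delta)$ characterized in Proposition~\ref{prop:efficiency} (via the implicit function theorem applied to the first-order conditions $\phi_{\bP,\hars}(\theta) + \phi_{\hars}'(\theta) = 0$ and $\phi_{\bP,\hlog}(\theta) = \phi'_{\bP,\hlog}(\theta)$). The intermediate value theorem then produces the two thresholds, with $\Delta_1^{\gum} \le \Delta_2^{\gum}$ arising from taking the infimum and supremum over the set where $R_{\bP_{\Delta}^{\star}}(\hars) = R_{\bP_{\Delta}^{\star}}(\hlog)$. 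The numerical coincidence $\Delta_1^{\gum} = \Delta_2^{\gum} \approx 0.17756$ would be verified by a separate monotonicity argument or simply by plotting both efficiency curves.

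The principal obstacle is the \emph{explicit} rate comparison in the small-$\Delta$ regime, since $\theta_1^{\star}(\Delta)$ (for $\hars$) and $\theta_2^{\star}(\Delta)$ (for $\hlog$) are implicitly defined by transcendental equations involving incomplete Beta functions and rational functions respectively, and they diverge at different speeds as $\Delta \to 0$. The comparison cannot be settled by a single monotone substitution; instead one must expand each implicit minimizer to the right order in $\Delta$ and then reinsert into $L(\theta^{\star} h, \bP_{\Delta}^{\star})$. A secondary technical nuisance is handling the non-smooth transitions of $\bP_{\Delta}^{\star}$ at $\Delta = k/(k+1)$, but continuity of the supremum is preserved across these points, so the existence of $\Delta_1^{\gum}, \Delta_2^{\gum}$ is not compromised.
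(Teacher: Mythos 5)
Your high-level reduction is correct and matches the paper: all three score functions are non-decreasing, so by Lemma~\ref{lem:worst-MGF} the class efficiency $R_{\FPM}(h)$ collapses to the single-distribution rate at $\bP_{\Delta}^{\star}$, which reduces the comparison to one-dimensional optimizations over $\theta$ with the MGFs from Proposition~\ref{prop:efficiency}. From there, however, your route diverges from the paper's, and I see one genuine gap plus a significant unnecessary complication.

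The gap is in your treatment of the large-$\Delta$ regime. You propose to use the efficiency-gap result $R_{\FPM}(\hlog)/R_{\FPM}(\hoptars)\to 1$ as $\Delta\to 1^-$ and conclude that $\hars$ ``lags behind.'' But both $R_{\FPM}(\hars)$ and $R_{\FPM}(\hlog)$ diverge to infinity in this limit, so knowing the ratio of $\hlog$ to the optimum approaches $1$ does not, by itself, yield $R_{\FPM}(\hars) < R_{\FPM}(\hlog)$ for all large $\Delta$---you would additionally need $\limsup_{\Delta\to 1} R_{\FPM}(\hars)/R_{\FPM}(\hoptars) < 1$, which your sketch neither states nor proves. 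The paper avoids this entirely with a crisper \emph{exact} observation: at the fixed boundary value $\Delta = (|\Voca|-1)/|\Voca|$, the least-favorable distribution $\bP_{\Delta}^{\star}$ is the uniform distribution, and one can check directly from the formula \eqref{eq:optimal-h-gum} that in this case $\hoptars$ is an affine transformation of $\hlog$; hence $R_{\FPM}(\hlog)$ \emph{equals} the Neyman--Pearson upper bound $\KL(\mu_0,\mu_{1,\bP_{\Delta}^{\star}})$, while $R_{\FPM}(\hars)$ is strictly below it. Continuity in $\Delta$ then hands you the threshold $\Delta_2^{\gum}$ without any limit analysis.

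For the small-$\Delta$ regime, your plan of Taylor-expanding the implicitly defined minimizers $\theta_1^{\star}(\Delta)$, $\theta_2^{\star}(\Delta)$ as $\Delta\to 0^+$ is more ambitious than what the theorem actually requires. The statement only asserts the inequality for $\Delta > 10^{-3}$, so a numerical evaluation of $R_{\FPM}(\hars)$ and $R_{\FPM}(\hlog)$ at the single point $\Delta = 0.001$ (the paper reports roughly $2.5\times 10^{-5}$ versus $3\times 10^{-6}$) together with continuity already produces $\Delta_1^{\gum}$. You are right that the rate-of-vanishing argument is delicate---implicit minimizers defined by transcendental equations in incomplete Beta functions do diverge at different speeds---but that difficulty is one you are imposing on yourself; it is not forced by the statement. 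If you did want a numerics-free proof near $\Delta = 0$, your expansion plan would need to come with quantitative remainder bounds extending at least to $\Delta = 10^{-3}$, and you would also need to verify the $\hindo$ comparisons explicitly, which both you and the paper leave as ``follows similarly.''
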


\begin{proof}[Proof of Theorem \ref{thm:main-efficiency-gumbel}]
By Lemma \ref{lem:worst-MGF}, for any feasible $\Delta\in[0, \frac{|\Voca|-1}{|\Voca|}]$, the efficiency exponents $R_{\FPM}(h)$ are functions of $\Delta$, with the functional forms being
\begin{equation*}
\begin{aligned}
R_{\FPM}(\hars)&=-\inf\limits_{\theta\ge 0} \left[\theta+\log \phi_{\bP^{\star}_\Delta,\hars}(\theta) \right]:=R_{ \hars}(\Delta),\\
R_{\FPM}(\hlog)&=\sup\limits_{\theta \in [0, \frac{1}{1-\Delta} )} [\theta - \log \phi_{\bP^{\star}_\Delta,\hlog}(\theta)]:=R_{\hlog}(\Delta),\\
R_{\FPM}(\hind)&=\delta \log \frac{\delta}{F_{1, \bP_{\Delta}^{\star}}(\delta)} + (1-\delta) \log \frac{1-\delta}{1-F_{1, \bP_{\Delta}^{\star}}(\delta)}:=R_{\hind}(\Delta),
\end{aligned}
\end{equation*}
where definitions of $\phi_{\bP^{\star}_\Delta,\hars}$, $\phi_{\bP^{\star}_\Delta,\hlog}$, and $\phi_{\bP^{\star}_\Delta,\hind}$ can be found in Table \ref{tab:homo-efficiency} and $\bP_{\Delta}^{\star} $ is the least-favorable NTP distribution defined in \eqref{eq:optimal-P}.
Here, we slightly abuse the notion and rewrite each $R_{\FPM}(h)$ as a function of $\Delta$ for simplicity.

We take the comparison between $R_{\hars}(\Delta)$ and $R_{\hlog}(\Delta)$ as an example since the other comparisons follow similarly.
When setting $\Delta=\frac{|\Voca|-1}{|\Voca|}$, we note that $\bP_\Delta^{\star}$ is the uniform distribution over $\Voca$. 
In this case,
\begin{align*}
R_{\hars}\biggl(\frac{|\Voca|-1}{|\Voca|}\biggr)
&<D_{\rm KL}(\mu_0, \mu_{1, \bP_{\Delta}^{\star}})=R_{\hlog}\biggl(\frac{|\Voca|-1}{|\Voca|}\biggr)<\infty,
\end{align*}
where the inequality follows from the Neyman-Pearson lemma and the equation uses the observation that $\hopt$ is reduced to $\hlog$ if $\Delta=\frac{|\Voca|-1}{|\Voca|}$.
Note that $R_{\hars}$ and $R_{\hlog}$ are continuous with respect to $\Delta$.
By the continuity, we conclude that there exists a constant $ 0 <\Delta_2^{\gum} \le \frac{|\Voca|-1}{|\Voca|}$ such that $R_{\hars}(\Delta) <  R_{\hlog}(\Delta) < \infty$ holds for all $\frac{|\Voca|-1}{|\Voca|} \ge\Delta \ge \Delta_2^{\gum}$. 

For the other direction, when setting $\Delta=0.01$, we numerically find that $R_{\hlog}(0.001)\approx 3 \time 10^{-6}$ and $R_{\hars}(0.001) \approx 2.5 \times 10^{-5}$. By the continuity, there exists a positive constant  $10^{-3} < \Delta_1^{\gum}$ such that $R_{\hlog} (\Delta)
< R_{\hars}(\Delta)$ for all $10^{-3} < \Delta < \Delta_1^{\gum}$.

\begin{rem}
The constant $10^{-3}$ used in the above proof is not essential. 
For a more accurate characterization, we use the python \textsf{SciPy} package to solve the root of $R_{\FPM}(\hars) = R_{\FPM}(\hlog)$, the numerical result suggests that there is only one root whose value is $0.17756080525215662$ when $\Delta \ge 0.001$.
\end{rem}
\end{proof}

\subsection{Analysis of Efficiency Gap}
\label{appen:effiency-gap}

\begin{lem}
\label{lem:efficiency-gap}
For the probability vector $\bP$, let $|\mathrm{supp}(\bP)|$ denote the size of the support of $\bP$ where $\mathrm{supp}(\bP):= \{ \token \in \Voca: P_{\token} \neq 0\}$.
If $P_{(1)} < 1$, then
\[
1 - \min \left\{  \frac{P_{(1)}}{1-P_{(1)}} \log \frac{1}{P_{(1)}},\frac{ |\mathrm{supp}(\bP)|-\frac{1}{P_{(1)}}}{|\mathrm{supp}(\bP)| - 1 - \log |\mathrm{supp}(\bP)|}  
\right\}\le \frac{R_{\bP}(\hlog)}{R_{\bP}(\hllr)} \le 1.
\]
\end{lem}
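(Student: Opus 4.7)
The upper bound $R_{\bP}(\hlog)/R_{\bP}(\hllr) \le 1$ is immediate from the Neyman--Pearson lemma applied to the singleton class $\{\bP\}$: the log-likelihood ratio $\hllr$ maximizes the efficiency rate over all score functions, and Table~\ref{tab:homo-efficiency} gives $R_{\bP}(\hllr) = \KL(\mu_0, \mu_{1,\bP})$. So only the lower bound requires work.

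For the lower bound, the plan is to exploit the variational characterization
\[
R_{\bP}(\hlog) = \sup_{\theta \in [0,\,1/P_{(1)})} g(\theta),\qquad g(\theta) := \theta - \log \phi_{\bP,\hlog}(\theta),\qquad \phi_{\bP,\hlog}(\theta) = \sum_{w \in \mathrm{supp}(\bP)} \frac{P_w}{1 - P_w \theta},
\]
by evaluating $g$ at two carefully chosen $\theta$'s. Since the bound in the lemma is a $\min$, it suffices to prove the two inequalities $R_{\bP}(\hlog)/R_{\bP}(\hllr) \ge 1 - a$ and $R_{\bP}(\hlog)/R_{\bP}(\hllr) \ge 1 - b$ separately, where $a$ and $b$ are the first and second terms in the $\min$; each inequality is vacuous in the regime where the corresponding term already exceeds $1$, so attention can be restricted to the non-trivial regime in each case.

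For the second term $b = (m - 1/P_{(1)})/(m - 1 - \log m)$ with $m = |\mathrm{supp}(\bP)|$, the natural candidate is $\theta = m - 1$, motivated by Remark~\ref{rem:hlog}: the Jensen inequality relating $\hlog$ to $\hllr$ becomes tight on the uniform distribution $\bP_u$ on any $m$-element support, and direct computation gives $g(m-1) = m - 1 - \log m = \KL(\mu_0, \mu_{1,\bP_u})$ exactly, so that the ratio equals one at the uniform point. For general $\bP$ with $P_{(1)} < 1/(m-1)$, I will expand $\phi_{\bP,\hlog}(m-1)$ around the uniform point and combine this with the pointwise Jensen gap $\log f_{\bP}(r) - [(m-1)\log r + \Ent(\bP)] \ge 0$ recalled in Remark~\ref{rem:hlog} to extract the claimed correction factor.

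For the first term $a = (P_{(1)}/(1 - P_{(1)}))\log(1/P_{(1)})$, the plan is to pick $\theta = (1 - \epsilon)/P_{(1)}$ just below $1/P_{(1)}$, with $\epsilon \in (0, 1)$ optimized in closed form as a function of $P_{(1)}$. This choice isolates the dominant summand $P_{(1)}/\epsilon$ of $\phi_{\bP,\hlog}(\theta)$, while the remaining summands $\sum_{w \ne (1)} P_w/(1 - P_w\theta)$ are controlled uniformly using $P_w \le P_{(1)}$ together with $\sum_{w \ne (1)} P_w = 1 - P_{(1)}$; the optimization in $\epsilon$ then extracts the logarithmic factor $\log(1/P_{(1)})$ and the prefactor $P_{(1)}/(1 - P_{(1)})$. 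The main obstacle I expect is the intermediate regime in which $\theta = m-1$ is no longer admissible (e.g.\ when $1/(m-1) \le P_{(1)} < 1/(1+\log m)$ for $m \ge 4$), which will require a slight modification of the $\theta$ used in the second argument, together with control of the subleading terms in $\phi_{\bP,\hlog}$ sharp enough to recover the exact constants rather than weaker multiples.
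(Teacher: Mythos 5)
Your proposal takes a genuinely different route from the paper's, and the route has gaps that would be difficult to close without effectively reconstructing the paper's argument. The paper obtains a \emph{single} lower bound on $R_\bP(\hlog)$, namely
$R_\bP(\hlog) \ge N := \tfrac{1}{P_{(1)}} - 1 - \log\tfrac{1}{P_{(1)}}$,
by bounding the MGF pointwise, $\sum_\token \tfrac{P_\token}{1-P_\token\theta} \le \tfrac{1}{1-P_{(1)}\theta}$ for all $\theta \in [0,1/P_{(1)})$ (valid since every $P_\token \le P_{(1)}$), and then optimizing the surrogate $\theta \mapsto \theta+\log(1-P_{(1)}\theta)$ at $\theta = 1/P_{(1)}-1$. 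The $\min$ in the statement does not arise from two $\theta$-choices at all: it comes from two upper bounds on $R_\bP(\hllr)=\KL(\mu_0,\mu_{1,\bP})$, namely $\KL \le \tfrac{1}{P_{(1)}}-1$ (retaining only the $r^{1/P_{(1)}-1}$ term in $f_\bP$) and $\KL \le |\mathrm{supp}(\bP)|-1-\log|\mathrm{supp}(\bP)|$ (Lemma~\ref{lem:K} via Remark~\ref{rem:hlog}). Dividing, the first quotient is \emph{exactly} $1-a$, and the second is $\ge 1-b$ because $P_{(1)}\ge 1/|\mathrm{supp}(\bP)|$.

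Your plan of constructing two separate lower bounds on $R_\bP(\hlog)$ via two $\theta$'s has two concrete problems. First, lower bounds on $R_\bP(\hlog)$ alone do not control the \emph{ratio}; you still need an upper bound on $\KL$ for each branch, and your proposal never says where such a bound would come from. Second, the $\theta=m-1$ branch is infeasible on a nonempty nontrivial regime: $\theta=m-1$ is admissible only when $P_{(1)}<1/(m-1)$, while the target $1-b$ is positive for $P_{(1)}<1/(1+\log m)$, and for $m\ge4$ one has $1/(m-1)<1/(1+\log m)$, so the interval $[1/(m-1),\,1/(1+\log m))$ is left uncovered; you flag this but the needed ``modification'' would in practice collapse to the paper's $m$-free surrogate. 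Your first-term $\epsilon$-optimization is, in contrast, essentially right: with $\theta=(1-\epsilon)/P_{(1)}$ and $P_w\le P_{(1)}$ the whole sum is bounded by $1/\epsilon$, and optimizing gives $\epsilon=P_{(1)}$ (i.e.\ $\theta=1/P_{(1)}-1$), recovering exactly the paper's $N$ --- but that still requires pairing with $\KL\le\tfrac{1}{P_{(1)}}-1$ to yield $1-a$, and once you have $N$ the second term follows for free from $\KL\le m-1-\log m$ and $P_{(1)}\ge 1/m$, making the $\theta=m-1$ detour unnecessary.
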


\begin{cor}[Ignorable suboptimality]
\label{cor:ignorable-suboptimality}
\[
\lim_{\Delta \to 1} \frac{R_{\FPM}(\hlog)}{R_{\FPM}(\hoptars)} \to 1.
\]
\end{cor}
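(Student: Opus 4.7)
My plan is to reduce the ratio $R_{\FPM}(\hlog)/R_{\FPM}(\hoptars)$ to a ratio of the form $R_{\bP}(\hlog)/R_{\bP}(\hllr)$ evaluated at the least-favorable distribution $\bP = \bP_{\Delta}^{\star}$, and then apply Lemma \ref{lem:efficiency-gap} to sandwich this ratio between two quantities both tending to $1$ as $\Delta \to 1$.

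First, I would invoke Lemma \ref{lem:worst-MGF} to obtain the identities
\begin{equation*}
R_{\FPM}(\hlog) = R_{\bP_{\Delta}^{\star}}(\hlog),
\qquad
R_{\FPM}(\hoptars) = R_{\bP_{\Delta}^{\star}}(\hoptars),
\end{equation*}
which are valid because both $\hlog$ and $\hoptars$ are non-decreasing on $[0,1]$. Next, by the construction of $\hoptars$ in Theorem \ref{thm:optimal_score} (namely that $\hoptars = \log(\rd\mu_{1,\bP_{\Delta}^{\star}}/\rd\mu_0)$) together with the Donsker--Varadhan representation used in \eqref{eq:gum_proof2}, I would identify
\begin{equation*}
R_{\bP_{\Delta}^{\star}}(\hoptars) = \KL\bigl(\mu_0, \mu_{1, \bP_{\Delta}^{\star}}\bigr) = R_{\bP_{\Delta}^{\star}}(\hllr),
\end{equation*}
so that the ratio of interest is exactly $R_{\bP_{\Delta}^{\star}}(\hlog)/R_{\bP_{\Delta}^{\star}}(\hllr)$.

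Now I would apply Lemma \ref{lem:efficiency-gap} at $\bP = \bP_{\Delta}^{\star}$. Since $P_{(1)} = 1-\Delta$ for this distribution, the first quantity appearing in the lemma becomes
\begin{equation*}
\frac{P_{(1)}}{1-P_{(1)}}\,\log \frac{1}{P_{(1)}} = \frac{1-\Delta}{\Delta}\,\log \frac{1}{1-\Delta},
\end{equation*}
and substituting $x = 1-\Delta \to 0^+$ gives $\tfrac{x}{1-x}\log(1/x) \to 0$ since $x\log(1/x)\to 0$. Consequently $\min\{\cdot,\cdot\} \to 0$ inside the lower bound of Lemma \ref{lem:efficiency-gap}, yielding
\begin{equation*}
1 \ge \frac{R_{\FPM}(\hlog)}{R_{\FPM}(\hoptars)} \ge 1 - \min\Biggl\{\frac{1-\Delta}{\Delta}\log\frac{1}{1-\Delta},\ \frac{|\mathrm{supp}(\bP_{\Delta}^{\star})| - \frac{1}{1-\Delta}}{|\mathrm{supp}(\bP_{\Delta}^{\star})| - 1 - \log|\mathrm{supp}(\bP_{\Delta}^{\star})|}\Biggr\} \longrightarrow 1.
\end{equation*}
The squeeze then concludes the proof.

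The only subtlety I anticipate is ensuring that the proof is not circular: Lemma \ref{lem:efficiency-gap} compares $\hlog$ to the log-likelihood ratio $\hllr$ at a given $\bP$, so the key observation is that the ``oracle'' log-likelihood ratio at $\bP_{\Delta}^{\star}$ \emph{coincides} with $\hoptars$ up to efficiency-preserving operations, a fact that follows directly from Theorem \ref{thm:optimal_score}. This identification is the bridge between the $\PM$-dependent efficiency studied in this section and the single-distribution efficiency framework underlying Lemma \ref{lem:efficiency-gap}, and I do not expect any further obstacle beyond verifying this bridge carefully.
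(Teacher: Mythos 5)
Your proposal is correct and follows essentially the same route as the paper's: both invoke Lemma \ref{lem:worst-MGF} to reduce $R_{\FPM}(\cdot)$ to $R_{\bP_{\Delta}^{\star}}(\cdot)$, then apply Lemma \ref{lem:efficiency-gap} at $\bP_{\Delta}^{\star}$ where $P_{(1)} = 1-\Delta \to 0$. The only difference is that you spell out the identification $R_{\bP_{\Delta}^{\star}}(\hoptars) = \KL(\mu_0, \mu_{1,\bP_{\Delta}^{\star}}) = R_{\bP_{\Delta}^{\star}}(\hllr)$ explicitly (which the paper leaves implicit), making the bridge to Lemma \ref{lem:efficiency-gap} clearer.
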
 

\begin{proof}[Proof of Corollary \ref{cor:ignorable-suboptimality}]
By Lemma \ref{lem:worst-MGF}, we have that for non-decreasing function $h$,
\[
R_{\FPM}(h) = R_{\bP_{\Delta}^{\star}}(h)
\]
where $\bP_{\Delta}^{\star}$ is the least-favorable NTP distribution defined in \eqref{eq:optimal-P}.
This corollary directly follows from Lemma \ref{lem:efficiency-gap} by noting the largest probability in  $\bP_{\Delta}^{\star}$ is $1-\Delta$ which converges to zero if $\Delta \to 1$. 
\end{proof}

At the end, we provide the proof of Lemma~\ref{lem:efficiency-gap}.
\begin{proof}[Proof of Lemma~\ref{lem:efficiency-gap}]
Let $P_{(1)}$ denote the largest probability in $\bP$.
By definition, we have $R_{\bP}(h) \le \KL(\mu_0, \mu_{1,\bP})$ for any score function $h$.
We then focus on the other direction.
On one hand, it follows that
\begin{align*}
    R_{\bP}(\hlog)
    &=\sup_{\theta \in [0, 1/P_{(1)})} \left( \theta - \log \sum_{\token \in \Voca} \frac{P_{\token}}{1-P_{\token}\theta} \right)\\
    &\ge \sup_{\theta \in [0, 1/P_{(1)})} \left( \theta  + \log(1-P_{(1)}\theta)  \right)\\
    &= \frac{1}{P_{(1)}} - 1-\log \frac{1}{P_{(1)}}.
\end{align*}
The last equality uses the condition $P_{(1)}< 1$ so that the supreme is obtained within $[0, 1/P_{(1)})$.
On the other hand, note that $\lim\limits_{p \to 0}r^{1/p-1} = 0$ for any $r \in (0,1)$ and thus
\begin{align*}
    \KL(\mu_0, \mu_{1,\bP})
    &= -\int_0^1\log \sum_{\token \in \Voca} r^{1/P_{\token}-1}\rd r\\
    &\le \min \left\{ -\int_0^1\log r^{1/P_{(1)}-1}\rd r, -\int_0^1(|\mathrm{supp}(\bP)|-1) \log r \rd r - \log |\mathrm{supp}(\bP)| \right\}\\
    &= \min\left\{ 1/P_{(1)}-1, |\mathrm{supp}(\bP)|-1 - \log |\mathrm{supp}(\bP)| \right\}.
\end{align*}
The particular inequality uses the following lemma.
This lemma can be proven by computing the derivative of $\KL(\mu_0, \mu_1)$ with respect to the probability vector $P = (p_1, \cdots, P_{\token})$.
\begin{lem}
    \label{lem:K}
    Let $|\mathrm{supp}(\bP)|$ denote the cardinality of the support of $\bP$, then
    \[
    0 \le 
    \KL(\mu_0, \mu_{1,\bP}) = - \int_0^1 \log \left( \sum_{\token \in \Voca} r^{1/P_{\token}-1} \right) \rd r \le|\mathrm{supp}(\bP)|-1 - \log|\mathrm{supp}(\bP)|.
    \]
    The left inequality is achieved when $P_{\token}=1$ for some $\token \in \Voca$ and the right inequality is achieved when $P_{\token} \equiv 1/|\mathrm{supp}(\bP)|$ for $\token \in \mathrm{supp}(\bP)$.
\end{lem}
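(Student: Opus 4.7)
The proof of Lemma~\ref{lem:K} decomposes into two inequalities. By Lemma~\ref{lem:dis-r}, the density of $\mu_{1, \bP}$ is $f_{1, \bP}(r) = \sum_{\token \in \Voca} r^{1/P_\token - 1}$, so $\KL(\mu_0, \mu_{1, \bP}) = -\int_0^1 \log f_{1, \bP}(r) \rd r$. The lower bound $\KL \ge 0$ is Gibbs' inequality, with equality iff $f_{1, \bP} \equiv 1$; since each summand $r^{1/P_\token - 1}$ has nonnegative exponent and is constant in $r$ only when $1/P_\token - 1 = 0$, equality pins down exactly one token with $P_\token = 1$, i.e., $\bP$ concentrates on a single token. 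In this singular case $|\mathrm{supp}(\bP)| = 1$ and the upper bound also evaluates to $0$, matching.

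For the upper bound, set $K = |\mathrm{supp}(\bP)|$, $S = \mathrm{supp}(\bP)$, and consider $g(\bP) := \KL(\mu_0, \mu_{1, \bP})$ on the compact simplex face $\overline{\Delta}_S = \{\bP : \mathrm{supp}(\bP) \subseteq S, \sum_\token P_\token = 1\}$. Direct computation at the uniform distribution $\bP^* = (1/K)_{\token \in S}$ yields $f_{1, \bP^*}(r) = K r^{K-1}$, hence $g(\bP^*) = -\log K + (K-1) = K - 1 - \log K$, giving the desired value. The plan is to prove $g(\bP) \le g(\bP^*)$ for all $\bP \in \overline{\Delta}_S$, combined with induction on $K$ to handle smaller supports: on the relative boundary of $\overline{\Delta}_S$ some $P_\token = 0$, effectively reducing support to size $K' < K$, and by induction $g \le K' - 1 - \log K' < K - 1 - \log K$ since $x \mapsto x - 1 - \log x$ is strictly increasing for $x \ge 1$. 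For interior critical points, differentiating under the integral sign yields
\begin{equation*}
\partial_{P_j} g(\bP) = -\frac{1}{P_j^2}\int_0^1 \frac{r^{1/P_j - 1} \log r}{\sum_{\token \in S} r^{1/P_\token - 1}} \rd r,
\end{equation*}
and by symmetry the Lagrange condition $\partial_{P_j} g = \lambda$ is satisfied at $\bP^*$ with common value $\lambda = K$, confirming $\bP^*$ as an interior critical point.

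The main obstacle is to certify that $\bP^*$ is the global interior maximum rather than merely a critical point. My plan is a pairwise averaging argument: show that for any $i, j \in S$, replacing $(P_i, P_j)$ with $((P_i + P_j)/2, (P_i + P_j)/2)$ weakly increases $g$. Iterating this doubly-stochastic operation converges to $\bP^*$, so by continuity $g(\bP) \le g(\bP^*)$. This reduces the problem to the inequality
\begin{equation*}
\int_0^1 \log(r^a + r^b + R(r)) \rd r \ge \int_0^1 \log\bigl(2 r^{2/(P_i + P_j) - 1} + R(r)\bigr) \rd r,
\end{equation*}
where $a = 1/P_i - 1$, $b = 1/P_j - 1$, and $R(r) = \sum_{\token \in S \setminus \{i,j\}} r^{1/P_\token - 1} \ge 0$. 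The inequality does not hold pointwise --- I can check that the integrand changes sign over $r \in (0,1)$ (e.g., it is negative near $r = 1$ where first-order Taylor expansion favors the symmetrized side since $2/(P_i+P_j) \le (1/P_i + 1/P_j)/2$ by AM--HM) --- so the proof must exploit cancellation through integration. I expect the cleanest route is via the substitution $u = -\log r$, which converts both sides to expectations over $U \sim \mathrm{Exp}(1)$ of log-sum-exp expressions in $U \cdot \alpha_\token$, allowing appeal to the joint convexity of log-sum-exp and the linear constraint $\EB[U] = 1$ to recover the required inequality after integration.
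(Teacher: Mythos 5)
Your argument for the left inequality is correct. The right inequality, however, is not established: the pairwise-averaging step carries the whole burden, and the route you flag as your plan --- joint convexity of log-sum-exp (write $\mathrm{lse}(x)=\log\sum_\token e^{x_\token}$) after $u=-\log r$ --- does not close it. You would need $\EB_U[\mathrm{lse}(-U\alpha)] \ge \EB_U[\mathrm{lse}(-U\alpha')]$, where $\alpha'$ replaces the pair $(\alpha_i,\alpha_j)=(1/P_i-1,\,1/P_j-1)$ by two copies of $c:=2/(P_i+P_j)-1$. The obstruction is that $c$ lies \emph{strictly below} the $\alpha$-midpoint whenever $P_i\ne P_j$ (AM--HM: $2/(P_i+P_j)\le(1/P_i+1/P_j)/2$). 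Convexity of $\mathrm{lse}$ controls only the true midpoint, $\mathrm{lse}(-U\alpha)\ge\mathrm{lse}(-U\alpha_{\mathrm{mid}})$ with $\alpha_{\mathrm{mid}}$ carrying $(\alpha_i+\alpha_j)/2$ in both slots; but $\mathrm{lse}(-U\,\cdot)$ is coordinatewise \emph{decreasing} for $U>0$, so continuing to shrink those two slots from $\alpha_{\mathrm{mid}}$ down to $c$ gives $\mathrm{lse}(-U\alpha_{\mathrm{mid}})\le\mathrm{lse}(-U\alpha')$ --- exactly the wrong sense. The two moves fight each other and convexity alone cannot resolve them; the constraint $\EB[U]=1$ is irrelevant because it constrains the integration measure, not the exponents. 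In short, the reparametrization $\alpha_\token=1/P_\token-1$ that makes $\mathrm{lse}$-convexity available is precisely what destroys the linear simplex structure your averaging needs; this is why (as you correctly note) the pointwise inequality fails and the required cancellation is genuinely integral. A complete argument would have to work directly on the simplex, e.g.\ by verifying Schur-concavity of $\bP\mapsto\KL(\mu_0,\mu_{1,\bP})$ via the sign of $(P_i-P_j)\big(\partial_{P_i}-\partial_{P_j}\big)\KL$, which involves exactly the sign-changing integrand you identified, together with your boundary induction.

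For comparison, the paper's proof is very terse and structurally different: it cites the pointwise Jensen bound from Remark~\ref{rem:hlog}, $\log\sum_\token r^{1/P_\token-1}\ge(K-1)\log r+\Ent(\bP)$ with $K=|\mathrm{supp}(\bP)|$, and integrates against $\mu_0$. It is worth flagging, though, that this yields $\KL(\mu_0,\mu_{1,\bP})\le K-1-\Ent(\bP)$, and since $\Ent(\bP)\le\log K$ this is a \emph{weaker} upper bound than the stated $K-1-\log K$ (indeed $K-1-\Ent(\bP)\ge K-1-\log K$). So the paper's one-line citation of Jensen, taken at face value, also leaves a step unaccounted for --- the stated bound requires proving that the uniform $\bP$ globally maximizes $\KL(\mu_0,\mu_{1,\bP})$ over the simplex, which neither the Jensen line nor your $\mathrm{lse}$-convexity sketch delivers. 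Your route and the paper's stop short at different, but comparably nontrivial, places.
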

\begin{proof}[Proof of Lemma~\ref{lem:K}]
The left inequality is obvious due to the non-negativeness of  KL divergence.
The right inequality follows from Jensen’s inequality. See Remark~\ref{rem:hlog} for the details.
\end{proof}
We complete the proof by noting
\begin{align*}
    \frac{\frac{1}{P_{(1)}} - 1-\log \frac{1}{P_{(1)}}}{|\mathrm{supp}(\bP)|-1 - \log |\mathrm{supp}(\bP)|} 
    \ge 
    1- \frac{ |\mathrm{supp}(\bP)|-\frac{1}{P_{(1)}}}{|\mathrm{supp}(\bP)|-1 - \log |\mathrm{supp}(\bP)|}.
\end{align*}	
\end{proof}

\subsection{
\texorpdfstring{$\FPM$}{P}-Efficiency for the Baby Watermark}
\label{proof:baby-watermark}
\begin{lem}\label{lem:baby_wtm}
For the baby watermark in \eqref{eq:baby_wmk},  the summary function $Y(\token_t,\xi_t)=(2\token_t-1)(2\xi_t-1)$ and the score function $\hid(y)=y$, the efficiency exponent for the belief class $\FPM=\{\bP=(P_0,P_1):\Delta\le\min(P_0,P_1)\le1/2\}$ is
\[
R_{\FPM}(\hid)=-\inf_{\theta\ge0}\log\biggl[\frac{1}{\theta}\biggl\{\frac{\re^{\theta(1-2\Delta)}+\re^{-\theta(1-2\Delta)}}{2}-\re^{-\theta}\biggr\}\biggr].
\]
\end{lem}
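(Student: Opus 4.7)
The plan is a direct computation that instantiates the general efficiency formula \eqref{eq:efficiency-exponent} for the baby watermark, so most of the work is reduced to evaluating a one-dimensional moment-generating function and optimizing it over the one-parameter class $\FPM$. I will proceed in four short steps.

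First, I would verify that $Y_t = (2\token_t-1)(2\xi_t - 1)$ is indeed pivotal under $H_0$ and compute $\EB_0 \hid(Y)$. Under $H_0$, Working Hypothesis \ref{hypo:human} gives $\token_t \perp \xi_t$ with $\xi_t \sim U(0,1)$, so $2\xi_t - 1 \sim U(-1,1)$ is symmetric about $0$ and $Y_t \stackrel{d}{=} 2\xi_t - 1 \sim U(-1,1)$ regardless of the distribution of $\token_t$. In particular $\EB_0 \hid(Y) = \EB_0 Y = 0$, so the linear term in \eqref{eq:efficiency-exponent} vanishes.

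Second, I would compute the MGF $\phi_{\bP,\hid}(\theta) = \EB_{1,\bP}\,\re^{-\theta Y}$ under $H_1$ by splitting on the decoder rule \eqref{eq:baby_wmk}. When $\xi_t \le P_0$ we have $\token_t = 0$ and $Y_t = 1 - 2\xi_t$; when $\xi_t > P_0$ we have $\token_t = 1$ and $Y_t = 2\xi_t - 1$. Integrating $\re^{-\theta Y_t}$ against the uniform density of $\xi_t$ on each piece gives
\begin{equation*}
\phi_{\bP,\hid}(\theta) \;=\; \int_0^{P_0} \re^{-\theta(1-2\xi)}\,\rd\xi + \int_{P_0}^1 \re^{-\theta(2\xi-1)}\,\rd\xi \;=\; \frac{1}{2\theta}\Bigl[\re^{\theta(2P_0-1)} + \re^{\theta(1-2P_0)} - 2\re^{-\theta}\Bigr].
\end{equation*}

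Third, I would maximize $\phi_{\bP,\hid}(\theta)$ over $\bP \in \FPM$ for fixed $\theta \ge 0$. Writing $u := 2P_0 - 1$, the $P_0$-dependence reduces to $\re^{\theta u} + \re^{-\theta u} = 2\cosh(\theta u)$, which is convex and even in $u$. The constraint $\bP \in \FPM$ forces $P_0 \in [\Delta, 1-\Delta]$, i.e.\ $|u| \le 1 - 2\Delta$, so the supremum is attained at the boundary $|u| = 1 - 2\Delta$ and equals $\re^{\theta(1-2\Delta)} + \re^{-\theta(1-2\Delta)}$. This yields
\begin{equation*}
\sup_{\bP \in \FPM} \phi_{\bP,\hid}(\theta) \;=\; \frac{1}{\theta}\Bigl[\tfrac{1}{2}\bigl(\re^{\theta(1-2\Delta)} + \re^{-\theta(1-2\Delta)}\bigr) - \re^{-\theta}\Bigr].
\end{equation*}

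Finally, I would plug these computations into the definition $R_{\FPM}(\hid) = -\inf_{\theta\ge 0}\bigl(\theta\,\EB_0 \hid(Y) + \log \sup_{\bP \in \FPM}\phi_{\bP,\hid}(\theta)\bigr)$ from \eqref{eq:efficiency-exponent}; since the first term vanishes by Step 1, the claimed formula follows immediately. The one step that requires a little care is interpreting $\phi_{\bP,\hid}(\theta)$ at $\theta = 0$ (a removable singularity, equal to $1$ by l'H\^opital), but this does not affect the infimum and can be handled by a brief remark. Conceptually, the only substantive observation is the convexity/evenness of $\cosh(\theta u)$ that pins the worst case at $P_0 \in \{\Delta, 1-\Delta\}$; the remainder is routine calculus.
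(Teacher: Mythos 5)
Your proposal is correct and follows the same overall strategy as the paper: evaluate the MGF of $\hid(Y)$ under $H_1$, find its worst case over $\FPM$, and substitute into \eqref{eq:efficiency-exponent} (with $\EB_0 Y=0$ killing the linear term). The one place you diverge is the MGF computation: the paper first derives the CDF of $Y$ under $H_1$ via Bayes' rule (Lemma~\ref{lem:dis_baby}) in terms of $P_{\min}=\min\{P_0,P_1\}$ and then integrates against it, whereas you split the expectation directly on the two branches of the decoder \eqref{eq:baby_wmk} and integrate $\re^{-\theta Y}$ against the uniform density of $\xi$. Your route is shorter and bypasses the auxiliary CDF lemma; the resulting expression $\frac{1}{2\theta}\bigl[\re^{\theta(2P_0-1)}+\re^{\theta(1-2P_0)}-2\re^{-\theta}\bigr]$ is the same as the paper's after substituting $P_{\min}$, and your evenness-plus-convexity argument (via $\cosh(\theta u)$ with $u=2P_0-1$) is the same worst-case identification the paper makes by invoking convexity in $\bP$. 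In short: same conceptual proof, slightly more elementary MGF derivation.
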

\begin{proof}[Proof of Lemma \ref{lem:baby_wtm}]
With the choice of $(Y,h)$, we obtain the detection rule as in \eqref{eq:Th}. By Proposition \ref{thm:inhomo-type-II}, the efficiency of this detection rule is measured by $R_{\PM,h}$ defined in \eqref{eq:efficiency-exponent}. Note that by the independence of $\token_t$ and $\xi_t$ under the null, we have $\EB_0h(Y)=0$. Thus, the efficiency exponent can be written as
\[
R_{\PM}(hid)=-\inf_{\theta\ge0}\sup_{\bP\in\PM}\log\EB_{1,\bP}\left(\re^{-\theta Y(\token,\xi)}\right).
\]
To calculate $R_{\PM,h}$, we need first to obtain the distribution of $Y(\token,\xi)$.
\begin{lem}\label{lem:dis_baby}
Under $H_1$, the distribution of $Y(\token,\xi)$ is
\[
\PB_{H_1}(Y\le y)=\left\{\begin{array}{cc}
0&y\le2P_{\min}-1,\\
(y+1)/2-P_{\min}&2P_{\min}-1<y\le1-2P_{\min},\\
y&1-2P_{\min}<y\le1,\\
1&y>1,
\end{array}\right.
\]
where $P_{\min}=\min\{P_0,P_1\}$, $P_1=\PB(\token=1)$ and $P_0=1-P_1$.
\end{lem}
\begin{proof}[Proof of Lemma \ref{lem:dis_baby}]
By the Bayes rule, we have
\begin{align*}
\PB(Y\le y)&=\PB(Y\le y\mid\token=1)\PB(\token=1)+\PB(Y\le y\mid\token=0)\PB(\token=1)\\
&=\PB\biggl(\xi\le\frac{y+1}{2}\bigg|\token=1\biggr)(1-P_0)+\PB\biggl(\xi\ge\frac{1-y}{2}\bigg|\token=0\biggr)P_0.
\end{align*}
Note that for $y<-1$, $(y+1)/2<0$ and $(1-y)/2>1$ and for $y>1$, $(y+1)/2>1$ and $(1-y)/2<0$. Thus, $\PB(Y\le y)=0$ for $y<-1$ and $\PB(Y\le y)=1$ for $y>1$. For $-1\le y\le1$, it suffices to calculate $\PB(\xi\le a\mid\token=1)$ and $\PB(\xi\le a\mid\token=0)$ for any $-1\le a\le1$. By the Bayes rule,
\begin{align*}
\PB(\xi\le a\mid\token=1)=\frac{\PB(\token=1\mid\xi\le a)\PB(\xi\le a)}{\PB(\token=1)}=\frac{\max(0,a-P_0)}{a}\frac{a}{1-P_0}=\max\biggl(0,\frac{a-P_0}{1-P_0}\biggr).
\end{align*}
Similarly, 
\[
\PB(\xi\le a\mid\token=0)=\min\biggl(1,\frac{a}{P_0}\biggr).
\]
Combining these pieces, we obtain
\[
\PB_{H_1}(Y\le y)=\max\biggl(0,\frac{y+1}{2}-P_0\biggr)+\max\biggl(0,\frac{y-1}{2}+P_0\biggr).
\]
If $P_0\le1/2$, then the distribution can be further simplified as
\begin{equation}\label{eq:P_0}
\PB_{H_1}(Y\le y)=y \cdot \1_{\{1\ge y>1-2P_0\}}y+\biggl(\frac{y+1}{2}-P_0\biggr) \cdot \1_{\{1-2P_0\ge y\ge2P_0-1]\}}
\end{equation}
for $-1\le y\le1$. If $P_0\ge1/2$, \eqref{eq:P_0} holds with $P_0$ being replaced by $P_1$. We then complete the proof.
\end{proof}
By Lemma \ref{lem:dis_baby}, when $\Delta \le 0.5$, the MGF is given by
\[
\phi_{\bP,h}(\theta)=\EB_{1, \bP}\re^{-\theta Y}=\frac{1}{\theta}\biggl\{\frac{\re^{\theta(1-2P_{\min})}+\re^{-\theta(1-2P_{\min})}}{2}-\re^{-\theta}\biggr\}.
\]
Note that $\phi_{\bP,h}(\theta)$ is convex with respect to $\bP$ for any $\theta \ge 0$. Moreover, for $\FPM=\{\bP=(P_0,P_1):\Delta\le\min(P_0,P_1)\le1/2\}$, it can be easily verified that $\phi_{\bP,h}(\theta)$ attains its maximum as a function of  $P_{\min}$ when $P_{\min}=\Delta$, in which case
\begin{align*}
R_{\FPM}(\hid)&=-\inf_{\theta\ge0}\log\biggl[\frac{1}{\theta}\biggl\{\frac{\re^{\theta(1-2\Delta)}+\re^{-\theta(1-2\Delta)}}{2}-\re^{-\theta}\biggr\}\biggr].
\end{align*}
\end{proof}

\section{Proof for Inverse Transform Watermarks}
\label{append:inverse}
\subsection{Distribution Characterization}

The first step in applying the framework of class-dependent efficiency is to characterize the distributions of $\Ydif_t$ under both null and alternative hypotheses.
Note that it is a bivariate function of $(U_t, \pi_t(\token_t))$.
We first focus on the joint distribution of $(U_t, \pi_t(\token_t))$ instead.

\begin{lem}\label{lem:inverse_dis_shift}
Let $r\in[0, 1]$ and $\token \in \Voca$.
Under $H_0$, the joint CDF of $(U_t, \pi_t(\token_t))$ is:
\[
\PB(U_t \le r, \pi_t(\token_t) = \token |H_0)
= \frac{r}{|\Voca|}.
\]
Under $H_1$, the joint CDF of $(U_t, \eta(\pi_t(\token_t)))$ conditioning on $\bP_t$ is:
\begin{align*}
\PB(U_t \le r, \pi_t(\token_t) =\token|\bP_t, H_1)
&= \frac{1}{|\Voca|!}\sum_{\pi \in\Pi}\PB\left(U_t\in \left(
a_{\pi, \token-1}, a_{\pi, \token}  \right]  \cap [0, r]\right),
\end{align*}
where $a_{\pi,\token}=\sum_{j=1}^{\token}P_{t,\pi(j)}$ is the sum of the first $\token$ probabilities of $\bP_t$ under the permutation $\pi$.
\end{lem}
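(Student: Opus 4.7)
The plan is to treat the two hypotheses separately, reducing each to a short unpacking of the working hypotheses together with the definition of the inverse transform decoder.

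Under $H_0$, I would invoke Working Hypothesis \ref{hypo:cryptography} to conclude that $\xi_t = (\pi_t, U_t)$ is statistically independent of $\token_{1:(t-1)}$, with $\pi_t$ uniform over the symmetric group on $\Voca$ and $U_t \sim U(0,1)$ independent of $\pi_t$. Working Hypothesis \ref{hypo:human} then makes the human-generated token $\token_t$ independent of $(\pi_t, U_t)$ (conditional on the prior tokens and pseudorandomness, hence also unconditionally after marginalizing). By the symmetry of the uniform distribution on the symmetric group, for any fixed token $\token^\star$ the image $\pi_t(\token^\star)$ is uniform on $\{1, \ldots, |\Voca|\}$; hence $\pi_t(\token_t)$ is unconditionally uniform on $\Voca$ and independent of $U_t$, yielding $\PB(U_t \le r, \pi_t(\token_t) = \token \mid H_0) = r / |\Voca|$.

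Under $H_1$, the key observation is that, once $\bP_t$ is fixed, $\pi_t(\token_t)$ is a deterministic function of $(\pi_t, U_t)$: indeed $\token_t = \pi_t^{-1}(F^{-1}(U_t; \pi_t))$, so $\pi_t(\token_t) = F^{-1}(U_t; \pi_t)$. Using the characterization already recorded in equation~\eqref{eq:inv_dependence}, or equivalently reading off the cumulative sums in the definition of the generalized inverse CDF, one has $F^{-1}(U; \pi) = \token$ if and only if $U \in (a_{\pi, \token-1}, a_{\pi, \token}]$, with $a_{\pi, k}$ defined as in Lemma~\ref{lem:exact-CDF-dif}. Conditioning on $\bP_t$ and averaging over the independent uniform variables $\pi_t$ (uniform on permutations) and $U_t \sim U(0,1)$ then yields the stated sum over permutations by a direct calculation, after intersecting with the event $\{U_t \le r\}$.

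The only subtlety is notational: one must align the convention for $\pi$ (as a map from tokens to ranks versus as an element of the symmetric group indexed by rank) so that the shorthand $a_{\pi, \token}$ unambiguously matches the relevant cumulative sum inside the definition of $F^{-1}(\cdot;\pi)$. With that bookkeeping in place, both parts follow by combining the independence guaranteed by Working Hypotheses~\ref{hypo:cryptography}--\ref{hypo:human} with a direct conditioning argument; no combinatorial or analytic work beyond this is required, and the marginal CDF of $\Ydif_t$ in Lemma~\ref{lem:exact-CDF-dif} will then be obtained by summing over $\token$ after restricting $U_t$ to the ball $B(\eta(\token), r)$.
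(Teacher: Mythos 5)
Your $H_0$ argument matches the paper's (independence via the two working hypotheses, uniformity of $\pi_t(\token_t)$, then factorize). For $H_1$, however, you take a genuinely different and more economical route. The paper's proof goes the long way: it first decomposes $\PB_1(U_t\le r,\,\pi_t(\token_t)=\token)$ as $\PB_1(U_t\le r\mid\pi_t(\token_t)=\token)\PB_1(\pi_t(\token_t)=\token)$, then has to separately establish via a Bayes argument that $\pi_t\perp\token_t$ under $H_1$ (so that $\PB_1(\pi_t(\token_t)=\token)=1/|\Voca|$), then applies the law of total probability over the generated token $\ell$, invoking a further auxiliary lemma (the paper's Lemma~\ref{lem:p1its}) to obtain the conditional law of $U_t$ given $\token_t=\ell,\pi_t=\pi$, and finally re-indexes by $\pi\mapsto\pi^{-1}$. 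You short-circuit all of this by observing that, under $H_1$, $\pi_t(\token_t)=F^{-1}(U_t;\pi_t)$ is a deterministic function of the pseudorandomness $(\pi_t,U_t)$ alone, so one can simply condition on $\pi_t$ (which is uniform on $\Pi$ and independent of $U_t$ by Working Hypothesis~\ref{hypo:cryptography}), use the fact that $F^{-1}(U;\pi)=\token$ iff $U$ lies in the $\token$-th cumulative-sum interval, and average over permutations. This avoids the need to prove $\pi_t\perp\token_t$, avoids Lemma~\ref{lem:p1its}, and avoids the two-stage conditioning on $\token_t$; the only residue is the $\pi$ versus $\pi^{-1}$ bookkeeping you flag, which dissolves because $\pi\mapsto\pi^{-1}$ is a bijection on $\Pi$ (exactly the re-indexing the paper performs at the end of its derivation). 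Both yield the stated formula, but your version is shorter and makes it plainer why the sum over permutations appears; the paper's route has the side benefit of explicitly exhibiting the conditional law $\PB_1(U_t\in S\mid\token_t,\pi_t)$, which it reuses elsewhere, but that is not needed for this lemma per se.
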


Lemma~\ref{lem:inverse_dis_shift} provides the explicit formulation for the distribution of $(U_t, \eta(\pi_t(\token_t)))$.
Using Lemma \ref{lem:inverse_dis_shift}, one can apply the change of variables to compute the exact distribution for $\Ydif_t$ and thus prove Lemma \ref{lem:exact-CDF-dif}.

\begin{proof}[Proof of Lemma \ref{lem:exact-CDF-dif}]

It follows from Lemma \ref{lem:inverse_dis_shift} that
\begin{align*}
\PB(Y_t^{\dif} \le r|H_0)&= \sum_{\token \in \Voca} \PB(|U_t -\eta(\token)| \le r, \pi_t(\token_t) =\token|H_0)\\
&=\sum_{\token \in \Voca} \PB( U_t \in [\eta(\token)-r, \eta(\token)+r] \bigcap [0, 1], \pi_t(\token_t) =\token|H_0)\\
&=\frac{1}{|\Voca|}\sum_{\token \in \Voca} \left[ 
[\eta(\token)+r]_{[0, 1]}-[\eta(\token)-r]_{[0, 1]} \right].
\end{align*}
On the other hand,
\begin{align*}
\PB(Y_t^{\dif}\le r|\bP_t, H_1)
&=\sum_{\token \in \Voca} \PB(|U_t -\eta(\token)| \le r, \pi_t(\token_t) =\token|\bP_t, H_1)\\
&=\frac{1}{|\Voca|!}\sum_{\pi \in \Pi} \sum_{\token \in \Voca} \lambda((a_{\pi,\token-1}, a_{\pi,\token}]\cap B(\eta(\token), r)),
\end{align*}
where $a_{\pi,\token}=\sum_{j=1}^{\token}P_{t,\pi(j)}$, $B(v, r)=\{x \in [0, 1]: |x-v| \le r\}$ and $\lambda$ is uniform measure on $[0, 1]$. 
\end{proof}

At the end, we provide the proof of Lemma \ref{lem:inverse_dis_shift}. 

\begin{proof}[Proof of Lemma \ref{lem:inverse_dis_shift}]
In the following, we denote the probability measure conditioning on $\bP_t$ under $H_1$ by $\PB_1(\cdot) = \PB(\cdot|\bP_t, H_1)$ for simplicity and $\PB_0(\cdot)$ denotes the probability under $H_0$.
For any $r \in \R$ and $\token \in \Voca$, by the law of total probability we have that for $i \in \{0, 1\}$,
\begin{align}
\PB_i(U_t \le r, \pi_t(\token_t) =\token)
&=\PB_i(U_t\le r \mid\pi_t(\token_t)=\token)\PB_i(\pi_t(\token_t)=\token).\label{eq:ex_yt1}
\end{align}

Under $H_0$, the construction of the watermark implies that the random variables $\pi_t$, $\token_t$, and $U_t$ are independent of each other. 
The independence result has two consequences.
\begin{itemize}
\item First, $U_t$ is independent with $\pi_t(\token_t)$ so that $\PB_0(U_t\le r \mid\pi_t(\token_t)=\token)=\PB_0(U_t\le r ) = r_1$.
\item Second, $\pi_t(\token_t)$ is uniformly distributed on $\Voca$ due to
\[
\PB_0(\pi_t(\token_t)=\token) = \sum_{\ell \in \Voca} \PB_0(\pi_t(\token_t)=\token,\token_t=\ell) = \sum_{\ell \in \Voca} \PB_0(\pi_t(\ell)=\token) \PB_0(\token_t=\ell) = \frac{1}{|\Voca|}.
\]
\end{itemize}
As a result,
\[
\PB_0(U_t \le r, \pi_t(\token_t) =\token)=  \frac{r}{|\Voca|}.
\]

Under $H_1$, imagine that we fix $\pi_t$ to be a given permutation $\pi$ (so that $\pi_t$ is not random at all). In this case, $\token_t$ is also generated by the inverse transform sampling and thus its distribution is still $\bP_t$. 
This remains true when $\pi_t$ is allowed to be uniformly random; $\token_t$ continues to follow the distribution $\bP_t$. Consequently, one can show, using Bayes’ formula, that $\pi_t$ is independent of $\token_t$. In particular, we have
\[
\PB_1(\pi_t=\pi\mid\token_t=\token)=\frac{\PB_1(\token_t=\token\mid\pi_t=\pi)\PB_1(\pi_t=\pi)}{\PB_1(\token_t=\token)}=\PB_1(\pi_t=\pi).
\]
Thus, $\pi_t$ is independent of $\token_t$ under $H_1$.
Using this independence,  we obtain that $\PB_1(\pi_t(\token_t)=\token)=|\Voca|^{-1}$. 
It remains to evaluate the distribution of $U_t$ conditional on $\pi_t(\token_t)=\token$. 
First, by the law of total probability and Lemma \ref{lem:p1its}, we have for any measurable set $S$,
\begin{align*}
\PB_1(U_t\in S\mid\token_t=\ell,\pi_t(\ell)=\token)&=\sum_{\pi\in\Pi:\pi(\ell)=k}\PB_1(U_t\in S,\pi_t=\pi\mid\token_t=\ell,\pi_t(\ell)=\token)\\
&=\sum_{\pi\in\Pi:\pi(\ell)=k}\PB_1(U_t\in S\mid\token_t=\ell,\pi_t=\pi)\PB(\pi_t=\pi\mid\token_t=\ell,\pi_t(\ell)=\token)\\
&=\frac{1}{(|\Voca|-1)!}\sum_{\pi\in\Pi:\pi(\ell)=k} \frac{1}{P_{t,\ell}}\PB(U_t\in S_t(\ell,\pi) \cap S),
\end{align*}
where $S_t(\ell, \pi)$ is defined by
\begin{equation}
\label{eq:\token_t}
S_t(\ell,\pi)=\biggl( \sum_{\token \in \Voca}P_{t,\token}\1_{\{\pi(\token)<\pi(\ell)\}},\sum_{\token \in \Voca}P_{t,\token}\1_{\{\pi(\token)\le\pi(\ell)\}}\biggr].
\end{equation}
Again by the law of total probability and the fact that $\token_t$ and $\pi_t$ are independent, we have for any measurable $S \subset [0, 1]$,
\begin{align}
\label{eq:condition-U}
\PB_1(U_t \in S \mid \pi_t(\token_t)=\token)
&=\sum_{\ell \in \Voca}\PB_1(U_t\in S,\token_t=\ell\mid \pi_t(\token_t)=\token) \notag \\
&=\sum_{\ell \in \Voca}\PB_1(U_t\in S\mid\token_t=\ell,\pi_t(\ell)=\token)\PB_1(\token_t=\ell\mid\pi_t(\token_t)=\token) \notag\\
&=\sum_{\ell \in \Voca}\PB_1(U_t\in S\mid\token_t=\ell,\pi_t(\ell)=\token)P_{t,\ell} \notag \\
&=\frac{1}{(|\Voca|-1)!}\sum_{\ell \in \Voca}\sum_{\pi\in\Pi:\pi(\ell)=\token}\PB(U_t\in S_t(\ell,\pi) \cap S)\notag\\
&=\frac{1}{(|\Voca|-1)!}\sum_{\ell \in \Voca}\sum_{\pi\in\Pi}\PB(U_t\in S_t(\ell,\pi) \cap S) \cdot \1_{ \{\pi(\ell) = \token\} }\notag\\
&=\frac{1}{(|\Voca|-1)!}\sum_{\pi\in\Pi}\PB(U_t\in S_t(\pi^{-1}(\token),\pi) \cap S)\notag\\
&=\frac{1}{(|\Voca|-1)!}\sum_{\pi\in\Pi}\PB\left(U_t\in \left(
a_{\pi^{-1}, \token-1}, a_{\pi^{-1}, \token}  \right]  \cap S\right) \notag \\
&=\frac{1}{(|\Voca|-1)!}\sum_{\pi \in\Pi}\PB\left(U_t\in \left(
a_{\pi,\token-1}, a_{\pi,\token}  \right]  \cap S\right),
\end{align}
where $a_{\pi^{-1}, \token} = \sum_{j \le \token} P_{t, \pi^{-1}(j)}$ is the endpoint of $S_t(\pi^{-1}(\token),\pi)$ by definition.
The last equation uses $\pi^{-1}\in\Pi$ is equivalent to $\pi \in \Pi$.

Plugging \eqref{eq:condition-U} into \eqref{eq:ex_yt1}, we have
\begin{align*}
\PB_1(U_t \le r, \pi_t(\token_t) = \token)
&=\frac{1}{|\Voca|} \PB_1(U_t\le r \mid\pi_t(\token_t)=\token)\\
&= \frac{1}{|\Voca|!}\sum_{\pi \in\Pi}\PB\left(U_t\in \left(
a_{\pi,\token-1}, a_{\pi,\token}  \right]  \cap [0, r]\right).
\end{align*}
\end{proof}

\begin{lem}\label{lem:p1its}
Under $H_1$, the probability distribution of $U_t\mid\token_t=\token,\pi_t=\pi$ is
\[
\PB_1(U_t \in S \mid\token_t=\token,\pi_t=\pi)=
\frac{1}{P_{t,\token}}\PB(U_t\in S_t(\token,\pi) \cap S),
\]
where $S$ is any measurable set in the range of $U_t$ and $\token_t$ is defined in~\eqref{eq:\token_t}.
\end{lem}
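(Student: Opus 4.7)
The plan is to compute the conditional probability via Bayes' rule, and then use two simple observations about the construction of the inverse transform watermark: first, under $H_1$ the pseudorandom pieces $U_t$ and $\pi_t$ are jointly independent (Working Hypothesis \ref{hypo:cryptography}); second, once the permutation $\pi_t = \pi$ is fixed, the decoder $\SMinv$ forces a deterministic set-valued equivalence between $\{\token_t = \token\}$ and $\{U_t \in S_t(\token,\pi)\}$. This equivalence is exactly \eqref{eq:inv_dependence}, which says $\SMinv(\bP_t, \xi_t) = \token$ iff $U_t$ lies in the half-open interval of length $P_{t,\token}$ carved out by the CDF under the permutation $\pi$.

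First, I would write
\[
\PB_1(U_t \in S \mid \token_t = \token, \pi_t = \pi) = \frac{\PB_1(U_t \in S,\, \token_t = \token \mid \pi_t = \pi)}{\PB_1(\token_t = \token \mid \pi_t = \pi)}.
\]
Next, conditioning on $\pi_t = \pi$, by independence of $U_t$ and $\pi_t$ the conditional law of $U_t$ is still $U(0,1)$, and the event $\{\token_t = \token\}$ becomes the deterministic event $\{U_t \in S_t(\token,\pi)\}$ in view of \eqref{eq:inv_dependence}. Therefore
\[
\PB_1(U_t \in S,\, \token_t = \token \mid \pi_t = \pi) = \PB\bigl(U_t \in S \cap S_t(\token,\pi)\bigr),
\]
and similarly
\[
\PB_1(\token_t = \token \mid \pi_t = \pi) = \PB\bigl(U_t \in S_t(\token,\pi)\bigr) = P_{t,\token},
\]
the final equality being just the length of the interval $S_t(\token,\pi)$. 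Dividing these two expressions yields the claimed formula.

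There is essentially no main obstacle here; this lemma is purely a bookkeeping statement that unpacks the definition of $\SMinv$ and uses independence of the pseudorandom components. The only subtlety to state carefully is that $U_t$ remains distributed as $U(0,1)$ after conditioning on $\pi_t$, which follows from Working Hypothesis \ref{hypo:cryptography} (the pseudorandom variable $\xi_t = (\pi_t, U_t)$ behaves as a product of independent draws). Once that is articulated, the identity $\{U_t \in S_t(\token,\pi)\} = \{\SMinv(\bP_t,(U_t,\pi)) = \token\}$ from \eqref{eq:inv_dependence} delivers the lemma in a couple of lines.
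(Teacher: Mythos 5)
Your proof is correct and reaches the same conclusion, but it organizes the conditional-probability bookkeeping differently and a little more cleanly than the paper does. The paper applies Bayes' rule conditioning on $\{U_t \in S\}$, then peels off the joint probability $\PB_1(\pi_t=\pi,\token_t=\token\mid U_t\in S)$ step by step, and in the denominator invokes the independence of $\pi_t$ and $\token_t$ (established separately in the proof of Lemma~\ref{lem:inverse_dis_shift}) to write $\PB_1(\pi_t=\pi,\token_t=\token)=P_{t,\token}\,\PB_1(\pi_t=\pi)$. Your route conditions once on $\{\pi_t=\pi\}$ and then simply divides the joint by the marginal of $\{\token_t=\token\}$; both the numerator and denominator then reduce in one step via the two facts you identify (that $U_t$ remains $U(0,1)$ given $\pi_t=\pi$, and that given $\pi_t=\pi$ the event $\{\token_t=\token\}$ is literally $\{U_t\in S_t(\token,\pi)\}$ by \eqref{eq:inv_dependence}). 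What this buys you is that you never need the independence of $\pi_t$ and $\token_t$ as a separate ingredient: $\PB_1(\token_t=\token\mid\pi_t=\pi)=P_{t,\token}$ drops out directly from $U_t\perp\pi_t$ and the interval length, which is a modest but genuine simplification of the paper's argument.
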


\begin{proof}[Proof of Lemma \ref{lem:p1its}]
For any measurable set $S\subset[0,1]$,
\begin{align*}
\PB_1(U_t\in S\mid\pi_t=\pi,\token_t=\token)
&=\frac{\PB_1(\pi_t=\pi,\token_t=\token\mid U_t\in S)\PB(U_t\in S)}{\PB_1(\pi_t=\pi,\token_t=\token)}.
\end{align*}
By the definition of $\token_t$, $\token_t=\token$ if and only if
\[
U_t\in\biggl(\sum_{j \in \Voca}P_{t,j}\1_{\{\pi_t(j)<\pi_t(\token)\}},\sum_{j \in \Voca}P_{t,j}\1_{\{\pi_t(j)\le\pi_t(\token)\}}\biggr] =S_t(\token, \pi_t).
\]
Thus, using the independence between $U_t$ and $\pi_t$, 
\begin{align*}
\PB_1(\pi_t=\pi,\token_t=\token\mid U_t\in S)
&=\PB_1(\token_t=\token\mid\pi_t=\pi,U_t\in S)\PB_1(\pi_t=\pi)\\
&=\PB_1(U_t\in S_t(\token,\pi_t)\mid\pi_t=\pi,U_t\in S)\PB_1(\pi_t=\pi)\\
&=\PB_1(U_t\in S_t(\token,\pi)\mid\pi_t=\pi,U_t\in S)\PB_1(\pi_t=\pi)\\
&=\PB_1(U_t\in S_t(\token,\pi)\mid U_t\in S)\PB_1(\pi_t=\pi)\\
&=\PB(U_t\in S_t(\token,\pi) \cap S)/\PB(U_t\in S) \cdot \PB_1(\pi_t=\pi).
\end{align*}
Moreover, note that $\pi_t$ is independent with $\token_t$ under $H_1$ so that $\PB_1(\pi_t=\pi,\token_t=\token)=\PB_1(\pi_t=\pi)\PB_1(\token_t=\token)=P_{t,\token}\PB_1(\pi_t=\pi)$.
Combining these pieces completes the proof.
\end{proof}

\subsection{Key Lemma for Asymptotic Analysis}
\label{proof:J}
\newcommand{\tpi}{{\tilde{\pi}}}
\newcommand{\tk}{{\tilde{\token}}}
\newcommand{\tell}{{\tilde{\ell}}}

At the center of our asymptotic analysis is Lemma \ref{lem:J}.

\begin{lem}
\label{lem:J}
Let $\mathcal{J}$ collect any $1$-Lipschitz-continuous function on $[0, 1]^2$.
For any $J \in \mathcal{J}$, $J(U_t, \eta(\pi_t(\token_t)))$ is a bounded random variable where $\eta(\token)=\frac{\token-1}{|\Voca|-1}$.
Let $\bP_t$ be the NTP distribution that $\token_t$ follows. 
We introduce a new belief class, denoted by $\TPM$ with the following definition
\begin{equation*}
\TPM = \left\{ \bP : P_{(1)} = 1-\Delta, \log |\Voca|\cdot \Psecond \le \varepsilon_{|\Voca|} \right\}.
\end{equation*}
The following equation holds uniformly for $\bP_t \in \TPM$ and $\Delta \in [0, 1-\frac{1}{|\Voca|}]$:
\begin{align}
\begin{split}
\label{eq:J-expectation}
\lim_{|\Voca| \to \infty} \EB_{1, \bP_t}  J(U_t, \eta(\pi_t(\token_t))) 
&= \Delta \int_0^1(1-x) 
J(\Delta x, x)
\rd x
+\Delta \int_0^1x 
J(\Delta x+1-\Delta, x)
\rd x\\
&\qquad +\int_0^1 \rd y
\int_{\Delta y}^{\Delta y + 1-\Delta}  J(x, y)  \rd x.
\end{split}
\end{align}
The uniform convergence holds in the sense that
\[
\lim_{|\Voca| \to \infty} \sup_{J \in \mathcal{J}}\sup_{\Delta \in [0, 1-\frac{1}{|\Voca|}]}\sup_{\bP_t \in \TPM}\left| \EB_{1, \bP_t}  J(U_t, \eta(\pi_t(\token_t)))  - \text{right hand side of \eqref{eq:J-expectation}} \right| = 0.
\]
\end{lem}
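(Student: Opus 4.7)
The plan is to decompose $\EB_{1,\bP_t} J(U_t, \eta(\pi_t(\token_t)))$ into three pieces matched to the three integrals on the RHS of \eqref{eq:J-expectation}. Let $\token^\ast$ be the (possibly non-unique; pick any) token with $P_{t,\token^\ast} = 1-\Delta$, let $K = \pi_t(\token^\ast) \in \{1,\ldots,|\Voca|\}$ (uniform), and let $\sigma$ be the resulting random permutation of $\Voca \setminus \{\token^\ast\}$. Define partial sums $S_j = \sum_{i=1}^j P_{t,\sigma^{-1}(i)}$ for $j = 0, 1, \ldots, |\Voca|-1$, so that $\EB_\sigma S_j = j\Delta/(|\Voca|-1)$ and $S_{|\Voca|-1} = \Delta$. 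The interval assigned to $\token^\ast$ by $F(\cdot;\pi_t)$ is $(S_{K-1}, S_{K-1}+1-\Delta]$. Consequently the events $U_t \in (S_{K-1}, S_{K-1}+1-\Delta]$, $U_t \le S_{K-1}$, and $U_t > S_{K-1}+1-\Delta$ split the expectation into cases~A, B, and C, which I will show converge to the third, first, and second terms on the RHS, respectively.

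The central probabilistic input is the variance bound for sampling without replacement,
\[
\Var_\sigma(S_j) \;\le\; \frac{j(|\Voca|-1-j)}{(|\Voca|-1)(|\Voca|-2)}\cdot P_{(2)}\Delta \;\le\; \tfrac{1}{2}\,P_{(2)}\Delta,
\]
which implies $\EB_\sigma|S_j - j\Delta/(|\Voca|-1)| = O(\sqrt{P_{(2)}\Delta})$. For case~A, conditioning on $(K,\sigma)$ yields $\int_{S_{K-1}}^{S_{K-1}+1-\Delta} J(u,\eta(K))\,du$; the 1-Lipschitz property of $J$ lets me replace $S_{K-1}$ by its mean $\Delta\eta(K)$ at $L^1$ cost $O(\sqrt{P_{(2)}\Delta})$, after which averaging over uniform $K$ produces a Riemann sum with mesh $1/|\Voca|$ converging to $\int_0^1 dv\int_{\Delta v}^{\Delta v+1-\Delta} J(u,v)\,du$. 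For case~B, I would swap the order of integration (Fubini in $u$) and, for each fixed $u\in[0,\Delta]$, introduce the rank $\tau(u)$ defined by $S_{\tau(u)-1}<u\le S_{\tau(u)}$; independence of $K$ from $\sigma$ gives $\PB(\tau(u)<K\mid\sigma)=(|\Voca|-\tau(u))/|\Voca|$, and the same concentration yields $\EB_\sigma|\eta(\tau(u))-u/\Delta|=O(\sqrt{P_{(2)}/\Delta})$. Integrating over $u\in[0,\Delta]$ produces total error $O(\sqrt{\Delta P_{(2)}})$, and the substitution $x=u/\Delta$ turns the limit into $\Delta\int_0^1(1-x)J(\Delta x,x)\,dx$. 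Case~C is handled identically after shifting $u\mapsto u-(1-\Delta)$, yielding $\Delta\int_0^1 x\,J(\Delta x+1-\Delta,x)\,dx$.

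All error bounds depend on $J$ only through its Lipschitz constant and on $\bP_t$ only through $\Delta$ and $P_{(2)}$. Since $\TPM$ enforces $P_{(2)}\le\varepsilon_{|\Voca|}/\log|\Voca|\to 0$, these errors are $o(1)$ uniformly over $J\in\mathcal{J}$ and $\bP_t\in\TPM$. The main obstacle I anticipate is uniformity in $\Delta\in[0,1-1/|\Voca|]$: the pointwise error $\sqrt{P_{(2)}/\Delta}$ in cases~B and C degenerates as $\Delta\to 0$, but this is compensated by the factor $\Delta$ from the length of the integration domain, keeping the integrated error at $\sqrt{\Delta P_{(2)}}$; at the other extreme $\Delta$ near $1-1/|\Voca|$, the case~A interval shrinks but the Riemann-sum error $O(1/|\Voca|)$ remains negligible. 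Combined with continuity of the limiting RHS in $\Delta$, this should deliver the claimed uniform convergence on $[0,1-1/|\Voca|]$.
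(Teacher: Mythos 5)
Your three-way split is in fact the paper's own decomposition in disguise: the paper conditions on the position $\ell$ of the max-probability token in the permutation and splits the sum over selected positions $k$ into $k<\ell$, $k>\ell$, $k=\ell$ (their $\TM_1,\TM_2,\TM_3$), which are exactly your Cases B, C, A. Where you genuinely diverge is in the bookkeeping and, more importantly, in the concentration input. The paper Taylor-expands each short interval $(a_{\pi,k-1},a_{\pi,k}]$, passes to a weighted sum over $k$, performs an Abel summation to factor out $\sup_k|a_{\bar\pi,k}-k\Delta/(|\Voca|-1)|$, and therefore must invoke a \emph{sup-norm} concentration inequality for randomly permuted sums (their Lemma B.7, built on the Bernstein-type bound of Albert, 2019, plus a union bound over $k$ -- whence the $\log|\Voca|$ factor). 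You instead keep the $U$-integral intact, introduce the rank $\tau(u)$, and apply Fubini, so the only probabilistic input is the pointwise second moment of $S_j$ -- the classical sampling-without-replacement variance formula, which you derive directly. If carried through, this is a real simplification: it replaces a nontrivial permutation-concentration lemma with Chebyshev, and it even sheds the $\log|\Voca|$ factor from the error, so the convergence holds under the slightly weaker requirement $P_{(2)}\to 0$ rather than $P_{(2)}\log|\Voca|\to 0$.

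The one step you should not wave through is ``the same concentration yields $\EB_\sigma|\eta(\tau(u))-u/\Delta|=O(\sqrt{P_{(2)}/\Delta})$.'' The rank $\tau(u)$ is not a function of any single $S_j$, so the variance bound on $S_j$ does not transfer in one line. It does transfer with one extra move, and you have two clean options: (i) use monotonicity of $j\mapsto S_j$ so that $\{\tau(u)>j^\ast+m\}=\{S_{j^\ast+m-1}<u\}$ with $j^\ast=\lfloor uN/\Delta\rfloor$ and $N=|\Voca|-1$, apply Chebyshev to each such event, and sum the resulting tail $\min\{1,\,c_0P_{(2)}N^2/(\Delta m^2)\}$ over $m$ to get $\EB|\tau(u)-j^\ast|=O\bigl(N\sqrt{P_{(2)}/\Delta}\bigr)$; or (ii) skip the pointwise bound altogether and note that $v\mapsto(\tau(\Delta v)-1)/N$ is the generalized inverse of $j/N\mapsto S_j/\Delta$, so the $L^1$ distance of each from the diagonal agrees, converting $\int_0^\Delta\EB|\eta(\tau(u))-u/\Delta|\,\rd u$ directly into $N^{-1}\sum_j\EB|S_j-j\Delta/N|+O(\Delta/N)=O(\sqrt{P_{(2)}\Delta}+\Delta/N)$. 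Either way you land on the claimed integrated error $O(\sqrt{\Delta P_{(2)}})$, which vanishes uniformly over $\bP_t\in\TPM$ and $\Delta\in[0,1-1/|\Voca|]$. You should also note explicitly that the difference you are bounding is invariant under adding a constant to $J$ (since the three limit integrals integrate a constant to that constant), so ``$1$-Lipschitz on $[0,1]^2$'' gives the boundedness you use for free after recentering $J(0,0)=0$. With those two points made precise, the argument is sound and your treatment of the $\Delta$-uniformity is correct.
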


Given the flexibility in the choice of the Lipschitz-continuous function $J$, Lemma \ref{lem:J} demonstrates that this random vector $(U_t, \eta(\pi_t(\token_t)))$ weakly converges to a joint bivariate random vector, which we denoted by $(Z_1, Z_2)$. This convergence is substantiated by Theorem 3.9.1 in \citep{durrett2013probability}, which provides the theoretical foundation. Moreover, the expected value of \(J(Z_1, Z_2)\) is explicitly provided by \eqref{eq:J-expectation}, showcasing a straightforward formulation.

By relaxing the requirement for uniform convergence, we can alleviate the condition of the Lipschitz continuity imposed on \(J\). This relaxation leads to the derivation of a corollary that broadens the applicability of the expectation formula in \eqref{eq:J-expectation}.

\begin{cor}
\label{cor:J-expectation}
For any bounded and measurable $J: [0, 1]^2 \to \R$, \eqref{eq:J-expectation} also holds.
\end{cor}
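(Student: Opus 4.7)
The plan is to extend Lemma~\ref{lem:J} from Lipschitz test functions to bounded measurable test functions via approximation, trading uniformity in $J$, $\Delta$, and $\bP_t$ for this broader class of admissible $J$.

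First, by rescaling, Lemma~\ref{lem:J} immediately applies to any Lipschitz-continuous $J$ on $[0,1]^2$. Since Lipschitz functions are uniformly dense in $C([0,1]^2)$ by Stone--Weierstrass, a triangle-inequality argument (approximate $J$ by a Lipschitz $J_\epsilon$ with $\|J - J_\epsilon\|_\infty < \epsilon$, apply Lemma~\ref{lem:J} to $J_\epsilon$, and control both the expectation and the right-hand side of \eqref{eq:J-expectation} by $\epsilon$) extends \eqref{eq:J-expectation} to every bounded continuous $J$. Equivalently, the joint law of $(U_t, \eta(\pi_t(\token_t)))$ under $\PB_{1,\bP_t}$ converges weakly, as $|\Voca| \to \infty$, to the probability measure $\nu$ on $[0,1]^2$ identified by the right-hand side of \eqref{eq:J-expectation}.

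Next, to pass from bounded continuous to bounded measurable $J$, we exploit the explicit form of the joint law furnished by Lemma~\ref{lem:inverse_dis_shift}, which represents the distribution of $(U_t, \eta(\pi_t(\token_t)))$ as a permutation-average of products of uniform distributions on intervals $(a_{\pi, k-1}, a_{\pi, k}]$ with point masses at $\eta(k)$. Combined with the decay assumption $\log|\Voca| \cdot \Psecond \to 0$ built into $\TPM$, a direct calculation on rectangles upgrades weak convergence to convergence on a generating algebra of Borel sets. Applying Lusin's theorem to approximate a general bounded measurable $J$ by continuous functions up to $\nu$-null sets, and controlling the resulting small-set contributions through the explicit distributional formula, then yields $\EB_{1,\bP_t} J(U_t, \eta(\pi_t(\token_t))) \to \int J \, d\nu$ for arbitrary bounded measurable $J$, which is exactly the content of \eqref{eq:J-expectation}.

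The main obstacle is controlling the Lusin-approximation error as $|\Voca| \to \infty$, because $\nu$ contains two one-dimensional singular components supported on the lines $\{(\Delta x, x) : x \in [0,1]\}$ and $\{(\Delta x + 1 - \Delta, x) : x \in [0,1]\}$, arising from permutations placing the dominant token first or last. A continuous approximant $J_\epsilon$ agreeing with $J$ off a $\nu$-set of measure $\epsilon$ may nevertheless differ from $J$ on a set carrying non-trivial mass under the finite-$|\Voca|$ law, so we must verify that these singular-line neighborhoods attract the intended asymptotic mass at the rate dictated by $\nu$; the condition $\log|\Voca| \cdot \Psecond \to 0$ is precisely what ensures the off-dominant coordinates of $\bP_t$ do not collectively blur this concentration, making the Lusin step work.
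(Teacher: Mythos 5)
Your Stone--Weierstrass reduction from Lipschitz to bounded continuous test functions is fine, and it parallels what the paper does; the paper's version is more targeted, approximating the rectangle indicators $\1_{\{x\le r_1,\, y\le r_2\}}$ by piecewise-linear ramps $f_{\eps,r}$ and then taking a monotone limit in $\eps$, but either route lands at weak convergence of the law of $(U_t, \eta(\pi_t(\token_t)))$ to the measure $\nu$ defined by the right-hand side of \eqref{eq:J-expectation}.

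The Lusin step does not go through, and the obstruction is structural rather than technical. For every finite $|\Voca|$, the second coordinate $\eta(\pi_t(\token_t))$ takes values in the discrete set $\{(k-1)/(|\Voca|-1): 1\le k\le |\Voca|\}$, so the finite-$|\Voca|$ joint law is supported on $|\Voca|$ horizontal lines and is mutually singular with respect to $\nu$, whose absolutely continuous part fills a parallelogram and whose singular components live on two \emph{slanted} lines. Lusin's theorem only controls the set where a continuous approximant disagrees with $J$ in $\nu$-measure, not in $\mu_{|\Voca|}$-measure, and mutual singularity makes that control worthless. Concretely, take $J(x,y) = \1_{\{y\ \text{rational}\}}$: then $\EB_{1,\bP_t} J(U_t, \eta(\pi_t(\token_t))) = 1$ for every finite $|\Voca|$ while $\int J\,\rd\nu = 0$, so \eqref{eq:J-expectation} fails, and the decay condition on $\Psecond$ does nothing to repair this. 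Your intermediate claim that convergence on a generating algebra of rectangles upgrades to convergence on all Borel sets likewise requires something like total-variation convergence, which singularity rules out. In truth the corollary is stated too broadly: what the paper's proof (and your Stone--Weierstrass step) actually delivers is weak convergence, equivalently \eqref{eq:J-expectation} for bounded $J$ whose discontinuity set has $\nu$-measure zero, and that is exactly the form needed downstream when $J(x,y) = \1_{\{|x-y|\le r\}}$ is plugged in.
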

\begin{proof}[Proof of Corollary \ref{cor:J-expectation}]
It suffices to show \eqref{eq:J-expectation} holds for any indicator functions, that is, $J(x, y) = \1_{\{x \le r_1, y \le r_2\}}$ for a given pair $(r_1, r_2) \in [0, 1]^2$.
For the indicator function $x \mapsto \1_{\{x \le r\}}$, we introduce its Lipschitz variant $f_{\eps, r}(x)$ for a small number $\eps > 0$:
\[
f_{\eps, r}(x) := \begin{cases}
1 & ~\text{if}~0 \le x \le r, \\
1-\frac{x-r}{\eps}& ~\text{if}~ r \le x \le r + \eps, \\
0  & ~\text{if} ~r + \eps \le x \le 1. \\
\end{cases}
\]
Utilizing Lemma \ref{lem:J} with the specific function $J_{\eps}(x, y) = f_{\eps, r_1}(x)f_{\eps, r_2}(y)$, we establish the validity of \eqref{eq:J-expectation} for $J_{\eps}$. By progressively reducing $\eps$ to zero and invoking the monotone convergence theorem, we affirm that \eqref{eq:J-expectation} remains applicable for this particular $J$.

\end{proof}
\begin{rem}
This above proof yields an insightful result. Employing $J(x, y) = \1_{\{x \le r_1, y \le r_2\}}$ within \eqref{eq:J-expectation} reveals the asymptotic CDF of $(U_t, \eta(\pi_t(\token_t)))$ to be free of discontinuities across the square $[0, 1]^2$. This continuity is a direct consequence of the right-hand side of \eqref{eq:J-expectation} (substituting $J(x, y) = \1_{\{x \le r_1, y \le r_2\}}$) being continuous in $(r_1, r_2)$.
\end{rem}

\subsubsection{Proof of Lemma \ref{lem:J}}

At the end of this section, we provide the proof of Lemma \ref{lem:J}.

\begin{proof}[Proof of Lemma~\ref{lem:J}]
Since all the following probability is given conditioning on $\bP_t$, for simplicity, we omit the dependence on $\bP_t$ and use $\PB_1(\cdot)$ to denote the conditional probability.
For any $J \in \mathcal{J}$, we know that $J$ is $1$-Lipschitz-continuous and bounded.
For simplicity, we use $O(1)$ to hide universal constants such as the bound of $J$, and its Lipschitz-continuous constant. 

Lemma \ref{lem:inverse_dis_shift} shows that the joint distribution of $(U_t, \pi_t(\token_t))$ under $H_1$ is 
\[
\PB_1(U_t \le r, \pi_t(\token_t)=\token)
= \frac{1}{|\Voca|!} \sum_{\pi \in\Pi}\PB\left(U_t\in \left(
a_{\pi,\token-1}, a_{\pi,\token}  \right]  \cap [0, r_1]\right),
\]
where $a_{\pi,\token}=\sum_{j=1}^{\token}P_{t,\pi(j)}$ is the sum of the first $\token$ probabilities of $\bP_t$ under the permutation $\pi$.
Hence,
\begin{align*}
\EB_{1, \bP_t} J(U_t, \eta(\pi_t(\token_t)))
&=  \frac{1}{|\Voca|!} \sum_{\token \in \Voca}\sum_{\pi \in\Pi}  \int_{a_{\pi,\token-1}}^{a_{\pi,\token}} J(r, \eta(\token)) \rd r.
\end{align*}

Since we fix $t \in [n]$, we drop the dependence of $\bP_t$ on $t$ for simplicity.
We let $P_{(1)} = \max_{\token \in \Voca} P_{\token}$ denote the largest probability in $P$ so that $P_{(1)} = 1-\Delta$.
Now, we discuss the position, denoted by $\ell$, that is mapped to 1 according to the permutation $\pi_t$, that is, $\pi_t(\ell) = 1$ and divide the summation $\sum_{\pi \in\Pi}$ into three parts.
We use $\Pi_{\ell}$ to represent the set of permutations that map $\ell$ to 1. Then $\Pi = \bigcup_{\ell \in \Voca} \Pi_\ell$ and thus
\begin{align*}
\EB_{1, \bP_t}&J(U_t, \eta(\pi_t(\token_t)))
= \frac{1}{|\Voca|!} \sum_{\token \in \Voca}  \sum_{\ell \in \Voca} \sum_{\pi \in\Pi_\ell}  \int_{a_{\pi,\token-1}}^{a_{\pi,\token}}  J(r, \eta(\token)) \rd r \notag \\
&= \frac{1}{|\Voca|!}  \sum_{\token \in \Voca}  \sum_{\ell=1}^{\token-1} \sum_{\pi \in\Pi_\ell} \int_{a_{\pi,\token-1}}^{a_{\pi,\token}} J(r, \eta(\token)) \rd r 
+ \frac{1}{|\Voca|!}  \sum_{\token \in \Voca} \sum_{\ell=\token+1}^{|\Voca|} \sum_{\pi \in\Pi_\ell} \int_{a_{\pi,\token-1}}^{a_{\pi,\token}} J(r, \eta(\token)) \rd r \notag\\
&\quad +  \frac{1}{|\Voca|!} \sum_{\token \in \Voca} \sum_{\pi \in\Pi_{\token}}  \int_{a_{\pi,\token-1}}^{a_{\pi,\token}} J(r, \eta(\token)) \rd r \notag\\
&=: \TM_1 +\TM_2 + \TM_3.
\end{align*}

In the following, we analyze the three terms $\TM_1, \TM_2$ and $\TM_3$ respectively.

\paragraph*{For the term $\TM_1$}
For simplicity, we let $P_{(2)} = \max\limits_{k: P_{\token} \neq P_{(1)}} P_{\token}$ is the second largest probability in $\bP$.
The Taylor expansion together with the boundedness of $J$ implies that
\[
\int_{a_{\pi,\token-1}}^{a_{\pi,\token}} J(r, \eta(\token)) \rd r
=  J(a_{\pi,\token}, \eta(\token)) P_{\pi(\token)} + O(P_{\pi(\token)}^2).
\]
Note that for any $\ell \neq \token$, $\pi \in \Pi_\ell$ mush satisfy $\pi(\token) \neq 1$ and thus $P_{\pi(\token)} \le P_{(2)}$.
As a result, it follows that
\begin{align}
\TM_1 &= \frac{1}{|\Voca|!}  \sum_{\token \in \Voca}  \sum_{\ell=\token+1}^{|\Voca|} \sum_{\pi \in\Pi_\ell} \left[J(a_{\pi,\token}, \eta(\token)) P_{\pi(\token)} + O(P_{\pi(\token)}^2)\right] \nonumber \\ 
&=\frac{1}{|\Voca|!}  \sum_{\token \in \Voca}  \sum_{\ell=\token+1}^{|\Voca|} \sum_{\pi \in\Pi_\ell} J(a_{\pi,\token}, \eta(\token)) P_{\pi(\token)}  + O(P_{(2)}) \nonumber \\
&=\frac{1}{|\Voca|!}  \sum_{\token \in \Voca}  \sum_{\pi \in\Pi} J(a_{\pi,\token}, \eta(\token)) P_{\pi(\token)} I_{\pi, \token}+ O(P_{(2)}) \nonumber \\
&=\EB_{\bar{\pi}} \sum_{\token \in \Voca} J(a_{\bar{\pi}, \token}, \eta(\token)) P_{\bar{\pi}(\token)} I_{\bar{\pi}, \token} + O(P_{(2)}),
\label{eq:bound-T1-first-part}
\end{align}
where $\bar{\pi}\sim \UM(\Pi)$ is a uniformly distributed permutation and  $I_{\pi, \token} = \1_{\{{\pi}^{-1}(1) >k\} }$ is the indicator function.
We then focus on the main term in \eqref{eq:bound-T1-first-part}.
By Lipschitz-continuity of $J(\cdot, \cdot)$, we have
\begin{align}
&\left| \sum_{\token \in \Voca} \left[J(a_{\bar{\pi}, \token}, \eta(\token)) P_{\bar{\pi}(\token)} I_{\bar{\pi}, \token}-
J\left(\frac{\token\Delta}{|\Voca|-1}, \eta(\token)\right)  \frac{\Delta I_{\bar{\pi}, \token}}{|\Voca|-1}  \right] \right| \nonumber \\
&\quad \le  \left|\sum_{\token \in \Voca}J(a_{\bar{\pi}, \token}, \eta(\token))  
\left( P_{\bar{\pi}(\token)} -   \frac{\Delta}{|\Voca|-1}  \right)I_{\bar{\pi}, \token} \right|  +O(1) \cdot \sum_{\token \in \Voca} \frac{\Delta I_{\bar{\pi}, \token}}{|\Voca|-1}  \cdot  \left| a_{\pi,\token} -\frac{\token\Delta}{|\Voca|-1} \right|.
\label{eq:bound-T1-second-part}
\end{align}
Here the inequality uses the Lipschitz continuity of $J(\cdot, \cdot)$.

We turn to focus on the right-hand side of \eqref{eq:bound-T1-second-part}.
We analyze the first term in \eqref{eq:bound-T1-second-part} using summation by parts.
We define 
\[
A_{\bar{\pi}, \token} =\sum_{j \in \Voca}  \left(P_{\bar{\pi}(j)} -  \frac{\Delta}{|\Voca|-1}\right)I_{\bar{\pi}, j}
\]
and make a convention that $A_{\pi, 0} = 0$ for any $\pi \in \Pi$.
We comment that $A_{\bar{\pi}, \token}$ can be viewed as a centered version of $a_{\bar{\pi}, \token}$.
It then follows that
\begin{align*}
\bigg|\sum_{\token \in \Voca}&J(a_{\bar{\pi}, \token}, \eta(\token))   (A_{\bar{\pi}, \token} -A_{\bar{\pi}, \token-1} ) \bigg| \\
&= \left|\sum_{\token=1}^{|\Voca|-1} \left[J(a_{\bar{\pi}, \token}, \eta(\token)) - J(a_{\bar{\pi}, \token+1}, \eta(\token+1))\right] A_{\bar{\pi}, \token}  + J(a_{\bar{\pi}, \token}, \eta(\token)) A_{\bar{\pi}, \token}\right| \\
&\le  \left[ \sum_{\token=1}^{|\Voca|-1}  \left| J(a_{\bar{\pi}, \token}, \eta(\token)) - J(a_{\bar{\pi}, \token+1}, \eta(\token+1))\right| + |J(a_{\bar{\pi}, \token}, \eta(\token)) | \right] \cdot \sup_{\token \in \Voca} |A_{\bar{\pi}, \token} | \\
&= O(1 )\cdot \sup_{\token \in \Voca} |A_{\bar{\pi}, \token} |.
\end{align*}
The last equation uses the boundedness and Lipschitz-continuity of $J(\cdot, \cdot)$ which implies
\begin{align*}
\sum_{\token=1}^{|\Voca|-1} & \left| J(a_{\bar{\pi}, \token}, \eta(\token)) - J(a_{\bar{\pi}, \token+1}, \eta(\token+1))\right| + |J(a_{\bar{\pi}, \token}, \eta(\token)) |\\
&\le O(1) \left[  \sum_{\token=1}^{|\Voca|-1} |a_{\bar{\pi}, \token}-a_{\bar{\pi}, \token+1}| +
\sum_{\token=1}^{|\Voca|-1}  |\eta(\token)-\eta(\token+1)| + 1
\right] = O(1).
\end{align*}
The second term in \eqref{eq:bound-T1-second-part} is smaller than $O(1) \cdot \sum_{\token \in \Voca} I_{\bar{\pi}} \left| a_{\pi,\token} -\frac{\token\Delta}{|\Voca|-1} \right|$.
Note that $\EB_{\bar{\pi}} I_{\bar{\pi}, \token} = \frac{|\Voca|-\token}{|\Voca|}$ for any $\token \in \Voca$.
Summarizing the above analysis for $\TM_1$, we have that  
\begin{align}
&\left| \EB_{\bar{\pi}} \sum_{\token \in \Voca} J(a_{\bar{\pi}, \token}, \eta(\token)) P_{\bar{\pi}(\token)} I_{\bar{\pi}, \token}-  \sum_{\token \in \Voca} J\left( \frac{\token\Delta}{|\Voca|-1}, \eta(\token)\right) \frac{|\Voca|-\token}{|\Voca|-1} \frac{\Delta}{|\Voca|}  \right| \nonumber \\
&\qquad \le O(1) \cdot \left[ \EB_{\bar{\pi}} \sup_{\token \in \Voca} |a_{\bar{\pi}, \token}|
+ \EB_{\bar{\pi}} \sup_{\token \in \Voca}  I_{\bar{\pi}, \token}\left|a_{\bar{\pi}, \token}  -  \frac{\token\Delta}{|\Voca|-1}\right| 
\right] \nonumber \\
&\qquad \le O(1) \cdot \left( \frac{1}{|\Voca|} + \sqrt{P_{(2)} \log |\Voca| } \right), \label{eq:bound-T1-third-part}
\end{align}
where the last inequality uses the following lemma.
\begin{lem}
\label{lem:inhomo-help1}
Define $I_{\pi, \token} = \1_{\{{\pi}^{-1}(1) > \token\} }$ and $A_{\bar{\pi}, \token} =\sum_{j \in \Voca}  \left(P_{\bar{\pi}(j)} -  \frac{\Delta}{|\Voca|-1}\right)I_{\bar{\pi}, j}$.
Then 
\begin{gather*}
\max\left\{\EB_{\bar{\pi}} \sup_{\token \in \Voca}  I_{\bar{\pi}, \token}\left|a_{\bar{\pi}, \token}  -  \frac{\token\Delta}{|\Voca|-1}\right|,\EB_{\bar{\pi}} \sup_{\token \in \Voca} \left|A_{\bar{\pi}, \token}\right|  \right\}
\le \sup_{\ell \in \Voca} 	\EB_{\bar{\pi}}  \left[ \sup_{\token \le \ell-1}  \left| a_{\bar{\pi}, \token} - \frac{\token\Delta}{|\Voca|-1}
\right| 
\bigg| \bar{\pi}(\ell) = 1
\right], \\
\sup_{\ell \in \Voca} 	\EB_{\bar{\pi}}  \left[ \sup_{\token \le \ell-1}  \left| a_{\bar{\pi}, \token} - \frac{\token\Delta}{|\Voca|-1}
\right| 
\bigg| \bar{\pi}(\ell) = 1
\right] \le \frac{1}{|\Voca|} +  \sqrt{\sum_{i=2}^{|\Voca|} P_{(i)}^2 \cdot c_0 \log (c_0|\Voca|)}  + P_{(2)} \cdot c_0 \log (c_0|\Voca|),
\end{gather*}
where $P_{(i)}$ is the $i$th largest probability in $\bP$ and $c_0$ is universal positive constant.
\end{lem}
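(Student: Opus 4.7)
The plan is to first reduce both suprema in the first displayed inequality to a common conditional quantity, and then control that quantity with a maximal Bernstein inequality for sampling without replacement. Relabel the tokens so that the top-probability token is token $1$ with $P_1 = P_{(1)} = 1-\Delta$. The key observation is that $I_{\bar\pi, k} = 1$ exactly when $k < \bar\pi^{-1}(1)$, i.e., token $1$ has not yet appeared in the first $k$ positions of the reordering $\bar\pi$. Reading $A_{\bar\pi, k}$ as a truncated sum up to index $k$, this gives $A_{\bar\pi, k} = a_{\bar\pi, k} - k\Delta/(|\Voca|-1)$ for $k < \bar\pi^{-1}(1)$, while its increments vanish afterwards. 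Consequently both $\sup_k \bigl|A_{\bar\pi, k}\bigr|$ and $\sup_k I_{\bar\pi, k}\bigl|a_{\bar\pi, k} - k\Delta/(|\Voca|-1)\bigr|$ coincide with $\sup_{k < \bar\pi^{-1}(1)} \bigl|a_{\bar\pi, k} - k\Delta/(|\Voca|-1)\bigr|$. Conditioning on $\{\bar\pi(\ell)=1\}$, using $\PB(\bar\pi(\ell)=1)=1/|\Voca|$ for every $\ell$, and upper bounding an average of conditional expectations by their supremum, delivers the first inequality.

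For the second inequality, condition on $\bar\pi(\ell)=1$. The remaining $|\Voca|-1$ positions are filled by a uniformly random permutation of $\{P_{(2)}, \ldots, P_{(|\Voca|)}\}$. Writing $\mu := \Delta/(|\Voca|-1)$ and $S_k := a_{\bar\pi, k}$ for $k \le \ell-1$, the sequence $S_k - k\mu$ is a sum of $k$ centered values drawn without replacement from a population with individual deviations bounded by $P_{(2)}$ and per-step variance at most $\sum_{i=2}^{|\Voca|} P_{(i)}^{2}/(|\Voca|-1)$. By Hoeffding's classical convex ordering, the moment generating function of $S_k - k\mu$ is dominated by its with-replacement i.i.d.\ analogue, so the Bernstein bound
\[
\PB\bigl(|S_k - k\mu|\ge t \,\bigm|\, \bar\pi(\ell)=1\bigr) \le 2\exp\!\Bigl(-\tfrac{t^{2}/2}{\sum_{i=2}^{|\Voca|} P_{(i)}^{2} + P_{(2)}\,t/3}\Bigr)
\]
holds for every $k \le \ell-1$. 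A union bound over $k \in \{1,\ldots,\ell-1\}$ (costing a factor $|\Voca|$), combined with tail integration and the crude support bound $|S_k - k\mu|\le 1$ near the boundary, yields an expectation of order $\sqrt{V\, c_0\log(c_0|\Voca|)} + P_{(2)}\, c_0 \log(c_0|\Voca|)$, where $V = \sum_{i=2}^{|\Voca|} P_{(i)}^{2}$; the residual tail mass contributes the additive $1/|\Voca|$.

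The main obstacle is the concentration step under sampling without replacement. The sums $S_k$ are not a martingale in their natural filtration in this regime, so Azuma- or Freedman-type inequalities do not directly apply. The cleanest workaround is Hoeffding's (1963) convex-ordering theorem, which transfers the moment-generating-function bound from the i.i.d.\ with-replacement setting, where Bennett--Bernstein inequalities are standard, to our setting. A minor bookkeeping step is the replacement of the centered per-step second moment $(P_{(i)}-\mu)^{2}$ by the raw second moment $P_{(i)}^{2}$, which gives away only a harmless $-\mu\Delta$ term; and the constant $c_0$ must be chosen large enough to absorb both the union-bound logarithmic factor and the constants appearing in the Bernstein tail.
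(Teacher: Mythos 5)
Your argument for the first displayed inequality is the same as the paper's: condition on the position $\ell$ of the top-probability token (so that $\bar\pi(\ell)=1$), observe that both the $I$-weighted supremum and the supremum of $|A_{\bar\pi, k}|$ reduce to $\sup_{k\le \ell-1}\bigl|a_{\bar\pi,k}-k\Delta/(|\Voca|-1)\bigr|$ on that event, and then dominate the average $\frac{1}{|\Voca|}\sum_{\ell}\EB[\,\cdot\mid\bar\pi(\ell)=1]$ by the supremum over $\ell$. The identification $A_{\bar\pi,k}=a_{\bar\pi,k}-k\Delta/(|\Voca|-1)$ for $k<\bar\pi^{-1}(1)$ (with the increments frozen afterwards) is exactly the intended reading of the partial sums despite the typo in the displayed definition.

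For the second inequality you take a genuinely different route. The paper invokes an off-the-shelf Bernstein-type concentration inequality for randomly permuted sums (its Lemma~\ref{lem:permu-concentration}, quoting Albert's Proposition~2.2), applies it per fixed $k<\ell$ with the weights $b_{j,l}=P_l\1_{\{j\le k,\,l\ne 1\}}$, union-bounds over $k$ at level $\delta=|\Voca|^{-2}$, and integrates the tail, with the crude bound $|a_{\bar\pi,k}-k\Delta/(|\Voca|-1)|\le 1$ absorbing the residual mass into the $1/|\Voca|$ term. You instead pass through Hoeffding's 1963 convex-ordering theorem to transfer the moment generating function of the without-replacement sum to the i.i.d.\ with-replacement one, then apply classical Bernstein, and union-bound over $k$ in the same way. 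Both approaches deliver the same shape $\sqrt{V\,c_0\log(c_0|\Voca|)}+P_{(2)}\,c_0\log(c_0|\Voca|)+1/|\Voca|$ with $V=\sum_{i\ge 2}P_{(i)}^2$; your route is more self-contained (it uses a classical reduction instead of a recent permutation-concentration lemma), while the paper's is a more direct black-box application. If you write this up, make explicit that conditional on $\bar\pi(\ell)=1$ the vector $(\bar\pi(1),\ldots,\bar\pi(\ell-1))$ is a size-$(\ell-1)$ sample drawn without replacement from $\{2,\ldots,|\Voca|\}$ with mean per draw $\Delta/(|\Voca|-1)$, maximum deviation at most $P_{(2)}$, and raw second moment at most $\tfrac{1}{|\Voca|-1}\sum_{i\ge 2}P_{(i)}^2$ per draw; this is the precise form to which Hoeffding's theorem (in your version) or the permutation inequality (in the paper's) must be applied, and it closes the small gap you flag about the conditional structure.
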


Finally, we observe that 
\begin{align}
\label{eq:bound-T1-four-part}
&\left| \frac{1}{|\Voca|}\sum_{\token \in \Voca} J\left( \frac{\token\Delta}{|\Voca|-1}, \eta(\token)\right) \frac{|\Voca|-\token}{|\Voca|-1}  -  \int_0^1 (1-x) J(\Delta x, x) \rd x\right| \nonumber \\
&\qquad \le \left| \frac{1}{|\Voca|}\sum_{\token \in \Voca} J\left( \eta(\token) \Delta, \eta(\token)\right) (1-\eta(\token))  -  \int_0^1 (1-x) J(\Delta x, x) \rd x\right| +  \frac{O(1)}{|\Voca|} \nonumber \\
&\qquad \le \frac{O(1)}{|\Voca|} 
\end{align}
where the first inequality uses the Lipschitz continuity of $J(\cdot, \cdot)$ and the second inequality uses the Taylor expansion and boundedness of $J(\cdot, \cdot)$. 

Combining \eqref{eq:bound-T1-first-part}, \eqref{eq:bound-T1-third-part}, and \eqref{eq:bound-T1-four-part}, we know that the convergence that 
\[
\lim\limits_{|\Voca|\to \infty}\TM_1 = \Delta \cdot \int_0^1 (1-x) J(\Delta x, x) \rd x
\]
holds uniformly over $\bP \in \TPM$ and $\Delta \in [0, 1-\frac{1}{|\Voca|}]$ in the sense that
\[
\lim\limits_{|\Voca|\to \infty} \sup_{J \in \mathcal{J}} \sup_{\Delta \in [0, 1-\frac{1}{|\Voca|}]}\sup_{\bP \in \TPM} \left| \TM_1  - \Delta \cdot \int_0^1 (1-x) J(\Delta x, x) \rd x\right| = 0.
\]

\paragraph*{For the term $\TM_2$}
The analysis for $\TM_1$ is essentially the same as previously discussed for itself. We will directly present the result without providing the proof here:
\[
\lim\limits_{|\Voca|\to \infty}\TM_2 = \Delta \int_0^1x  J(\Delta x+1-\Delta, x)\rd x
\]
holds uniformly over $\bP \in \TPM$ and $\Delta \in [0, 1-\frac{1}{|\Voca|}]$ in the sense that
\[
\lim\limits_{|\Voca|\to \infty}\sup_{J \in \mathcal{J}} \sup_{\Delta \in [0, 1-\frac{1}{|\Voca|}]}\sup_{\bP \in \TPM} \left| \TM_2  - 
\Delta \int_0^1x  J(\Delta x+1-\Delta, x)\rd x
\right| = 0.
\]

\paragraph*{For the term $\TM_3$}

The analysis for $\TM_3$ is much simpler than $\TM_1$ and $\TM_2$. It follows that

\begin{align*}
\TM_3
&=  \frac{1}{|\Voca|!} \sum_{\token \in \Voca} \sum_{\pi \in\Pi_k}  \int_{a_{\pi,\token-1}}^{a_{\pi,\token}} J(r, \eta(\token)) \rd r \\
&= \EB_{\bar{\pi}}\sum_{\token \in \Voca}  \int_{a_{\bar{\pi}, \token-1}}^{a_{\bar{\pi}, \token}} J(r, \eta(\token)) \1_{ \{\bar{\pi}(\token)=1\}}  \rd r\\
&=\frac{1}{|\Voca|} \sum_{\token \in \Voca}  \EB_{\bar{\pi}}\left[   \int_{a_{\pi,\token-1}}^{a_{\pi,\token-1}+1-\Delta } J(r, \eta(\token)) \rd r  \big| \bar{\pi}(\token)=1  \right]
\end{align*}
It then follows that
\begin{align}
&\left|  
\TM_3 - \frac{1}{|\Voca|} \sum_{\token \in \Voca}  \int_{\frac{\token-1}{|\Voca|-1}\Delta}^{\frac{\token-1}{|\Voca|-1}\Delta+1-\Delta } J(r, \eta(\token)) \rd r 
\right|\nonumber \\
&\qquad \le O(1) \cdot \sup_{\token \in \Voca} \EB_{\bar{\pi}} \left[\left|a_{\pi,\token-1}-\frac{\token-1}{|\Voca|-1}\Delta\right| \bigg|  \bar{\pi}(\token)=1  \right] \nonumber \\
& \qquad \le O(1) \cdot \left( \frac{1}{|\Voca|} + \sqrt{P_{(2)} \log |\Voca| } \right).
\label{eq:bound-T1-five-part}
\end{align}
where the first inequality uses the boundedness of $J(\cdot, \cdot)$ and the second inequality uses Lemma \ref{lem:inhomo-help1}.
On the other hand, using the same analysis in \eqref{eq:bound-T1-four-part}, we have
\begin{align}
\label{eq:bound-T1-six-part}
&\left|\frac{1}{|\Voca|} \sum_{\token \in \Voca}  \int_{\frac{\token-1}{|\Voca|-1}\Delta}^{\frac{\token-1}{|\Voca|-1}\Delta+1-\Delta } J(r, \eta(\token)) \rd r 
-\int_0^1 \rd x \int_{\Delta x}^{\Delta x + 1- \Delta } J(r, x) \rd r \right| = \frac{O(1)}{|\Voca|}.
\end{align}
Combining \eqref{eq:bound-T1-five-part} and \eqref{eq:bound-T1-six-part}, we know that the convergence that
\[
\lim\limits_{|\Voca|\to \infty}\TM_3 =\int_0^1 \rd x \int_{\Delta x}^{\Delta x + 1- \Delta } J(r, x) \rd r 
\]
holds uniformly over $\bP \in \TPM$ and $\Delta \in [0, 1-\frac{1}{|\Voca|}]$ in the sense that
\[
\lim\limits_{|\Voca|\to \infty} \sup_{J \in \mathcal{J}} \sup_{\Delta \in [0, 1-\frac{1}{|\Voca|}]} \sup_{\bP \in \TPM} \left| \TM_3  - \int_0^1 \rd x \int_{\Delta x}^{\Delta x + 1- \Delta } J(r, x) \rd r  \right| = 0.
\]

\end{proof}

In the end, we provide the missing proofs for Lemma~\ref{lem:inhomo-help1}.

\begin{proof}[Proof of Lemma~\ref{lem:inhomo-help1}]	
We first simplify the target expectations.
We note that
\begin{align*}
\EB_{\bar{\pi}} \sup_{\token \in \Voca}  I_{\bar{\pi}, \token}\left|a_{\bar{\pi}, \token}  -  \frac{\token\Delta}{|\Voca|-1}\right| 
&= \frac{1}{|\Voca|} \sum_{\ell =1}^{|\Voca|} 		\EB_{\bar{\pi}}  \left[ \sup_{\token \in \Voca}  I_{\bar{\pi}, \token}\left|a_{\bar{\pi}, \token}  -  \frac{\token\Delta}{|\Voca|-1}\right| 
\bigg| \bar{\pi}(\ell) = 1
\right]\\
&= \frac{1}{|\Voca|} \sum_{\ell =1}^{|\Voca|} 		\EB_{\bar{\pi}}  \left[ \sup_{\token \le \ell-1}  \left|a_{\bar{\pi}, \token}  -  \frac{\token\Delta}{|\Voca|-1}\right| 
\bigg| \bar{\pi}(\ell) = 1
\right]\\
&\le \sup_{\ell \in \Voca} 	\EB_{\bar{\pi}}  \left[ \sup_{\token \le \ell-1}  \left|a_{\bar{\pi}, \token}  -  \frac{\token\Delta}{|\Voca|-1}\right| 
\bigg| \bar{\pi}(\ell) = 1
\right],
\end{align*}
and
\begin{align*}
\EB_{\bar{\pi}} \sup_{\token \in \Voca} \left|\sum_{j \in \Voca}  \left(P_{\bar{\pi}(j)} -  \frac{\Delta}{|\Voca|-1}\right)I_{\bar{\pi}, j}\right| 
&= \frac{1}{|\Voca|} \sum_{\ell =1}^{|\Voca|} 		\EB_{\bar{\pi}}  \left[ \sup_{\token \in \Voca}  \left|\sum_{j \in \Voca}  \left(P_{\bar{\pi}(j)} -  \frac{\Delta}{|\Voca|-1}\right)I_{\bar{\pi}, j}\right| 
\bigg| \bar{\pi}(\ell) = 1
\right]\\
&= \frac{1}{|\Voca|} \sum_{\ell =1}^{|\Voca|} 		\EB_{\bar{\pi}}  \left[ \sup_{\token \le \ell-1} \left|\sum_{j \in \Voca}  \left(P_{\bar{\pi}(j)} -  \frac{\Delta}{|\Voca|-1}\right)\right| 
\bigg| \bar{\pi}(\ell) = 1
\right]\\
&\le \sup_{\ell \in \Voca} 	\EB_{\bar{\pi}}  \left[ \sup_{\token \le \ell-1}  \left| a_{\bar{\pi}, \token} - \frac{\token\Delta}{|\Voca|-1}
\right| 
\bigg| \bar{\pi}(\ell) = 1
\right].
\end{align*}
Hence, it suffices to focus on $\sup_{\ell \in \Voca} 	\EB_{\bar{\pi}}  \left[ \sup_{\token \le \ell-1}  \left| a_{\bar{\pi}, \token} - \frac{\token\Delta}{|\Voca|-1}
\right| 
\bigg| \bar{\pi}(\ell) = 1
\right]$.

Note that given the condition $ \bar{\pi}(\ell) = 1$, $\bar{\pi}$ is still a random permutation on the resting $|\Voca|-1$ numbers which permutes $\Voca-\{\ell\}$ to $\Voca-\{1\}$.
To that end, we will use the concentration inequality for randomly permuted sums developed by \citep{albert2019concentration}.

\begin{lem}[Proposition 2.2 in~\citep{albert2019concentration}]
\label{lem:permu-concentration}
Let $b_{j, l} \ge 0$ be a collection of non-negative numbers and $\bar{\pi}$ be a
random uniform permutation in $\Voca$, that is, $\bar{\pi} \sim \UM(\Voca)$.
Let $Z = \sum_{j \in \Voca} b_{j, \bar{\pi}(j)}$. Then for any $x > 0$,
\[
\PB\left( |Z - \EB_{\bar{\pi}} Z|\ge 2 \sqrt{ \left( \frac{1}{|\Voca|}\sum_{j,l=1}^{|\Voca|} b_{j,l}^2 \right) \cdot x } + \sup_{1 \le j,l\le |\Voca|} b_{j,l} \cdot x \right)
\le 8 \re^{1/16} \exp\left(- \frac{x}{16} \right).
\]
\end{lem}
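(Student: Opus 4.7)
The plan is to establish a Bernstein-type concentration inequality for the randomly permuted sum $Z=\sum_{j\in\Voca} b_{j,\bar\pi(j)}$ via a martingale decomposition adapted to the natural filtration generated by revealing $\bar\pi$ one coordinate at a time, and then to invoke Freedman's Bernstein inequality for martingales. The target two-regime tail $2\sqrt{Vx}+bx$ with $V=|\Voca|^{-1}\sum_{j,l}b_{j,l}^2$ and $b=\sup_{j,l}b_{j,l}$ is exactly the form Freedman's inequality produces after a standard inversion, so the work lies in exhibiting the correct pointwise bound and conditional variance proxy for the martingale increments.

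Concretely, I would first introduce the Doob martingale $M_k=\EB[Z\mid \bar\pi(1),\ldots,\bar\pi(k)]$ with $M_0=\EB Z$, $M_{|\Voca|}=Z$, and increments $D_k=M_k-M_{k-1}$. Using a one-step swap coupling --- compare $\bar\pi$ to the permutation obtained by exchanging $\bar\pi(k)$ with $\bar\pi(j)$ for a uniformly chosen $j>k$, which under the conditional law leaves the remaining coordinates uniform over the still-unassigned values --- I would prove the uniform bound $|D_k|\le 2b$. Next, exploiting the fact that given $\bar\pi(1),\ldots,\bar\pi(k-1)$ the coordinate $\bar\pi(k)$ is uniform over the remaining $|\Voca|-k+1$ labels, I would compute $\EB[D_k^2\mid\mathcal{F}_{k-1}]$ as the variance of a single uniform draw from a residual collection of $b$-values, and then sum over $k$ to obtain the accumulated variance bound $\sum_{k}\EB[D_k^2\mid\mathcal{F}_{k-1}]\le V$ almost surely.

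With both the pointwise bound $|D_k|\le 2b$ and the conditional variance bound $\sum_{k}\EB[D_k^2\mid\mathcal{F}_{k-1}]\le V$ in hand, I would apply Freedman's Bernstein inequality for martingales to obtain $\PB(|Z-\EB Z|\ge t)\le 2\exp\!\bigl(-\tfrac{t^2/2}{V+bt/3}\bigr)$. Setting $t=2\sqrt{Vx}+bx$ and using $2ab\le a^2+b^2$ on the cross term makes the denominator at most a constant multiple of $\max\{V,bt/3\}$ and the exponent at least $x/16$; accounting for the two tail directions and the slack in the inversion yields the prefactor $8e^{1/16}$ stated in the lemma.

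The main obstacle is the conditional variance calculation: a crude swap argument yields only a Hoeffding-type bound with variance proxy $\sum_{j,l}b_{j,l}^2$, and the improvement by the factor $|\Voca|^{-1}$ --- which is what ultimately lets Lemma~\ref{lem:inhomo-help1} deliver an $\sqrt{\sum_{i\ge 2}P_{(i)}^2\,\log|\Voca|}$ bound rather than the trivial $O(1)$ bound --- requires exploiting the full negative dependence of the uniform permutation. To track this cleanly, I would either use the Efron--Stein style $L^2$ decomposition on the symmetric group (writing each $D_k$ as an orthogonal projection onto the $k$-th coordinate subspace and averaging over uniform extensions), or, alternatively, apply Hoeffding's reduction from sampling without replacement to sampling with replacement coordinatewise, which transfers the problem to a sum of independent bounded variables where the $|\Voca|^{-1}$ normalization appears naturally from the uniform-sampling variance.
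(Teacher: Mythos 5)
The paper does not prove this lemma at all: it is imported verbatim as Proposition~2.2 of Albert (2019), ``Concentration inequalities for randomly permuted sums,'' and the specific constants $8\re^{1/16}$ and $1/16$ in the tail bound come straight from that reference. So there is no ``paper's own proof'' to compare against; you are being asked to re-derive a cited black-box result.

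As a re-derivation, your plan takes a genuinely different route from Albert's. Albert's proof is built on Chatterjee's method of exchangeable pairs (Stein's method for concentration): one couples $\bar\pi$ with $\bar\pi' = \bar\pi\circ\tau_{IJ}$ for a uniform random transposition $\tau_{IJ}$, checks the exchangeability and anti-symmetry conditions, controls the conditional second moment of $Z(\bar\pi)-Z(\bar\pi')$, and invokes Chatterjee's general moment-generating-function recursion to get the Bernstein-type tail. Your proposal instead uses the Doob martingale $M_k = \EB[Z\mid\bar\pi(1),\ldots,\bar\pi(k)]$ and Freedman's inequality. Both routes are in principle capable of producing a Bernstein-type bound, and the martingale route is arguably more elementary in that it avoids the Stein machinery. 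But the two routes place the difficulty in different places: the exchangeable-pair route puts all the work into the single conditional second-moment computation $\EB[(Z-Z')^2\mid\bar\pi]$, which is tractable because a random transposition changes only two coordinates; the martingale route must control the accumulated conditional variance $\sum_k\EB[D_k^2\mid\mathcal F_{k-1}]$, and here the increments $D_k$ carry a non-local correction term (the change in the conditional mean of the unrevealed part as the residual label set shrinks) which does not disappear and must be summed carefully.

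That accumulated-variance step is exactly the part you flag as ``the main obstacle'' and then defer to ``either Efron--Stein on the symmetric group or Hoeffding's reduction to sampling with replacement'' without carrying it out. This is the crux: a crude pointwise bound $|D_k|\le 2b$ together with Azuma--Hoeffding gives only the $\sum_{j,l}b_{j,l}^2$ variance proxy, and recovering the extra $|\Voca|^{-1}$ factor --- which is precisely what Lemma~\ref{lem:inhomo-help1} needs to get $\sqrt{\sum_{i\ge 2}P_{(i)}^2\log|\Voca|}$ rather than a vacuous bound --- is the entire content of the result. Until that variance computation is actually done (including the non-local part of $D_k$, not just the $b_{k,\bar\pi(k)}$ fluctuation), the proposal is a plausible plan rather than a proof. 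If you do want a martingale-based derivation, I would suggest either following Bercu--Delyon--Rio's treatment of permutation statistics, or using Hoeffding's 1963 convex-ordering reduction from sampling without replacement to sampling with replacement, which converts the problem to a sum of genuinely independent bounded variables where the $|\Voca|^{-1}$ normalization falls out directly; alternatively, simply cite Albert (2019) as the paper does.
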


To apply Lemma \ref{lem:permu-concentration}, we set $b_{j,l} = P_l \cdot
\1_{ \{j \le \token, l \neq 1\}}$.
Hence, with $Z = \sum_{j \in \Voca} b_{j,\bar{\pi}(j)}$, we have $Z = a_{\bar{\pi}, \token}$ and once $\token < \ell$, 
\[
\EB_{\bar{\pi}} [Z|\bar{\pi}(\ell)=1]
=\EB_{\bar{\pi}} \left[\sum_{j \in \Voca} P_{\bar{\pi}(j)}  \bigg|\bar{\pi}(\ell)=1\right]
=  \frac{\token \Delta}{|\Voca|-1}.
\]
Note that $\sup_{j, l} b_{j,l} \le P_{(2)}$ and $\sum_{j,l=1}^{|\Voca|} b_{j,l}^2 \le \token \cdot \sum_{j=2}^{|\Voca|} P_{(j)}^2 $.
As a result of  Lemma~\ref{lem:permu-concentration}, we know that for any $\ell \in \Voca$, when conditioning on $\bar{\pi}(\ell)=1$, there exists a universal constant $c_0>0$ such that for any $\delta > 0$ and any $k < \ell$, with probability at least $1 - \delta$,
\[
\left|a_{\bar{\pi}, \token} -\frac{\token \Delta }{|\Voca|-1} \right|
\le  \sqrt{\frac{\token}{|\Voca|} \sum_{j=2}^{|\Voca|} P_{(j)}^2 \cdot c_0 \log \frac{c_0}{\delta}}  + P_{(2)} \cdot c_0 \log \frac{c_0}{\delta}.
\]
Taking a union bound, setting $\delta =1/|\Voca|^2$, and slightly modifying the value of $c_0$, we then have
\[
\EB\sup_{\token < \ell} \left[ \left|a_{\bar{\pi}, \token} -\frac{\token \Delta }{|\Voca|-1}\right|
\bigg| \bar{\pi}(\ell)=1\right]
\le \frac{1}{|\Voca|} +  \sqrt{\sum_{j=2}^{|\Voca|} P_{(j)}^2 \cdot c_0 \log (c_0|\Voca|)}  + P_{(2)} \cdot c_0 \log (c_0|\Voca|).
\]
Taking the maximum over $\ell \in \Voca$, we then finish the proof.
\end{proof}

\subsection{Proof of Theorem~\ref{thm:asymptotic-dif}}

With Lemma \ref{lem:exact-CDF-dif} and Corollary \ref{cor:J-expectation}, we are ready to prove Theorem \ref{thm:asymptotic-dif}.

\begin{proof}[Proof of Theorem~\ref{thm:asymptotic-dif}]

By Lemma \ref{lem:exact-CDF-dif} and the definition of integral, it follows that for any $r \in [0, 1]$,
\begin{align*}
\lim_{|\Voca| \to \infty}
\PB_{H_0}(Y_t^{\dif} \le r) 
&=\frac{1}{|\Voca|}\sum_{i=1}^{|\Voca|} \left[ 
\min\left\{\eta(i)+r,1\right\}-\max\{\eta(i)-r, 0\}
\right]\\
&= \int_0^1 [ \min\{x+r,1\}- \max\{x-r,0\}] \rd x\\
&= 1-(1-r)^2.
\end{align*}

On the other hand, by setting $J(x, y) = \1_{\{|x-y| \le r\}}$ in Corollary \ref{cor:J-expectation}, it follows that for any $r \in [0, 1-\Delta]$,
\begin{align*}
\lim_{|\Voca| \to \infty}
\PB_{H_1}(Y_t^{\dif} \le r|\bP_t) 
&= 2\Delta \int_0^1(1-x) 
\1_{\{(1-\Delta)x\le r\}}
\rd x
+\int_0^1 \rd y
\int_{\Delta y}^{\Delta y + 1-\Delta} \1_{\{|x-y| \le r\}} \rd x\\
&= 2 \Delta \int^{\frac{r}{1-\Delta}\wedge 1 }_0(1-x) \rd x + (1-\Delta) - 2\int^{1}_0 ((1-\Delta)x-r)_+ \rd x\\
&=1-\left(1 - \frac{r}{1-\Delta} \right)^2.
\end{align*}
\end{proof}

\paragraph*{Visualization of weak convergence on an extreme case}
To better understand the weak convergence established in Theorem \ref{thm:asymptotic-dif}, we study a special case where $\Delta \to 0$ and obtain the following result.
\begin{cor}
\label{cor:CDF-exreame-case}
Let $P_{t,(1)} = \max_{\token} P_{t, \token}$ denote the largest probability in $\bP_t$.  For any $r \in \RB$,
\[
\lim_{\substack{|\Voca| \to \infty\\ \log |\Voca| \cdot P_{t,(1)} \to 0 }} 
\PB_{H_1}(Y_t^{\dif} \le r|\bP_t)  = \1_{\{r \ge 0\}}.
\]
\end{cor}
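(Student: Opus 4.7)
The plan is to deduce this corollary from Theorem~\ref{thm:asymptotic-dif} by taking the additional limit $\Delta \to 1$, corresponding to the NTP distribution $\bP_t$ spreading out so that no single token dominates. The main subtlety is that the corollary involves a \emph{joint} limit in $|\Voca|$ and $P_{t,(1)}$, whereas Theorem~\ref{thm:asymptotic-dif} fixes $\Delta = 1-P_{t,(1)}$ and only sends $|\Voca| \to \infty$; I would therefore rely on the \emph{uniformity in $\Delta$ and $\bP$} built into Lemma~\ref{lem:J}.

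First I would note that $\log|\Voca| \cdot P_{t,(1)} \to 0$ implies $\log|\Voca| \cdot P_{t,(2)} \to 0$ since $P_{t,(2)} \le P_{t,(1)}$, so each $\bP_t$ lies in $\TPM$ with $\Delta = \Delta^{(k)} := 1-P_{t,(1)}^{(k)} \to 1$ and with the required bound on the second-largest coordinate. Since the indicator $J(x,y) = \1_{\{|x-y|\le r\}}$ is not Lipschitz, I would sandwich it between two $1$-Lipschitz approximants $J_\eps^- \le J \le J_\eps^+$ that coincide with $J$ outside a strip of width $\eps$ around the boundary $|x-y| = r$. Applying Lemma~\ref{lem:J} uniformly in $\bP \in \TPM$ and $\Delta$ then yields
\[
F(\Delta^{(k)}, J_\eps^-) - o(1) \;\le\; \PB_{H_1}(\Ydif_t \le r \mid \bP_t^{(k)}) \;\le\; F(\Delta^{(k)}, J_\eps^+) + o(1),
\]
where $F(\Delta, \cdot)$ denotes the right-hand side of \eqref{eq:J-expectation}.

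Next I would compute $\lim_{\Delta \to 1} F(\Delta, J_\eps^\pm)$. In the first two integrals of \eqref{eq:J-expectation}, $J_\eps^\pm(\Delta x, x)$ and $J_\eps^\pm(\Delta x + 1-\Delta, x)$ both converge to $J_\eps^\pm(x,x)$ by continuity; in the third, the inner integration range $[\Delta y, \Delta y + 1-\Delta]$ collapses to $\{y\}$ and the integral vanishes. Combining with $\int_0^1 (1-x)\,\rd x + \int_0^1 x\,\rd x = 1$, we obtain $\lim_{\Delta\to 1} F(\Delta, J_\eps^\pm) = \int_0^1 J_\eps^\pm(x,x)\,\rd x$. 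For any fixed $r > 0$ and $\eps < r$, $J_\eps^\pm(x,x) \equiv 1$, so this limit equals $1$; squeezing and then sending $\eps \to 0$ gives $\PB_{H_1}(\Ydif_t \le r \mid \bP_t^{(k)}) \to 1$. For $r < 0$ the statement is immediate because $\Ydif_t \ge 0$.

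The main obstacle is verifying that the error term in Lemma~\ref{lem:J}, which was shown in its proof to be of order $O(1) \cdot (1/|\Voca| + \sqrt{P_{(2)} \log|\Voca|})$, is genuinely uniform in $\Delta$; this follows by inspecting the proof, since the implicit constants depend only on the Lipschitz constant and sup-norm of $J$, not on $\Delta$. A secondary subtlety is that $r=0$ is a discontinuity point of the Heaviside indicator $\1_{\{r \ge 0\}}$ while $\PB_{H_1}(\Ydif_t \le 0 \mid \bP_t) = 0$ for every finite $|\Voca|$ by Lemma~\ref{lem:exact-CDF-dif}; hence the stated equality should be understood at continuity points of the limit (i.e.\ $r \ne 0$), consistent with the weak-convergence interpretation $\Ydif_t \mid \bP_t \Rightarrow \delta_0$.
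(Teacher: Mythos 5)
The paper does not supply an explicit proof of Corollary~\ref{cor:CDF-exreame-case}; it is stated immediately after Theorem~\ref{thm:asymptotic-dif} as an illustrative consequence, and the reader is left to fill in the argument. Your proof does exactly what is needed, and it correctly identifies the non-trivial step that such a short statement tends to obscure: because the corollary is a \emph{joint} limit ($|\Voca|\to\infty$ together with $P_{t,(1)}\to 0$), one cannot simply plug $\Delta=1$ into the displayed CDF of Theorem~\ref{thm:asymptotic-dif} (which would read $1-(1-r/0)^2$ over a degenerate interval); rather one must invoke the uniform-in-$(\Delta,\bP)$ convergence that is actually built into Lemma~\ref{lem:J}. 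Your route — embed $\bP_t$ in $\TPM$ with $\Delta^{(k)} = 1-P_{t,(1)}^{(k)}\to 1$, approximate the indicator $J=\1_{\{|x-y|\le r\}}$ by Lipschitz functions $J_\eps^\pm$ exactly as in the proof of Corollary~\ref{cor:J-expectation}, apply Lemma~\ref{lem:J} uniformly, then let $\Delta\to 1$ in the explicit formula \eqref{eq:J-expectation} — is correct. The computation $\lim_{\Delta\to 1}F(\Delta,J_\eps^\pm)=\int_0^1 J_\eps^\pm(x,x)\,\rd x$ is also correct: the first two integrals of \eqref{eq:J-expectation} merge into $\int_0^1 J(x,x)\,\rd x$ since $(1-x)+x=1$, and the third vanishes because the inner integration window has length $1-\Delta\to 0$. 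Your observation that the constants in Lemma~\ref{lem:J}'s error $O(1)\bigl(1/|\Voca|+\sqrt{P_{(2)}\log|\Voca|}\bigr)$ depend only on $\|J\|_\infty$ and the Lipschitz constant, not on $\Delta$, is exactly the check one must make, and it holds by inspection of Lemma~\ref{lem:inhomo-help1}.

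You are also right to flag the boundary case $r=0$. Since $\Ydif_t$ has a continuous distribution for each finite $|\Voca|$ (Lemma~\ref{lem:exact-CDF-dif}), $\PB_{H_1}(\Ydif_t\le 0\mid\bP_t)=0$ for all $|\Voca|$, so the pointwise limit at $r=0$ is $0$, not the value $\1_{\{0\ge 0\}}=1$ suggested by the displayed formula. The statement should therefore be read at continuity points of the Heaviside limit (equivalently, as the weak convergence $\Ydif_t\mid\bP_t\Rightarrow\delta_0$), which is exactly how you have interpreted it. One cosmetic note: the paper's lead-in sentence says ``a special case where $\Delta\to 0$,'' but in the notation $\Delta=1-P_{t,(1)}$ of Theorem~\ref{thm:asymptotic-dif} the hypothesis $\log|\Voca|\cdot P_{t,(1)}\to 0$ forces $\Delta\to 1$, consistent with your reading; the paper's phrasing is an inconsistency, not a gap in your argument.
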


Let $F_{0}^{\dif}(r) = \PB(Y_t^{\dif} \le r|H_0) $ and $F_{1, \bP_t}^{\dif}(r) = \PB(Y_t^{\dif} \le r|\bP_t, H_1)$ denote the finite-vocabulary null and alternative CDFs respectively.
For any $r \in [0, 1]$, Lemma \ref{lem:exact-CDF-dif} shows $F_{0}^{\dif}(r) \to 1-(1-r)^2$ as $|\Voca| \to \infty$ and Corollary \ref{cor:CDF-exreame-case} shows $F_{1, \bP_t}^{\dif}(r) \to \1_{\{r \ge 0\}}$ if $\log |\Voca| \cdot P_{t,(1)} \to 0$.
It means that $\Ydif_t$ degenerates to a point mass distribution where all of the probability mass is concentrated on the original point. 

\begin{figure}[t!]
\centering
\includegraphics[width=1.0\textwidth]{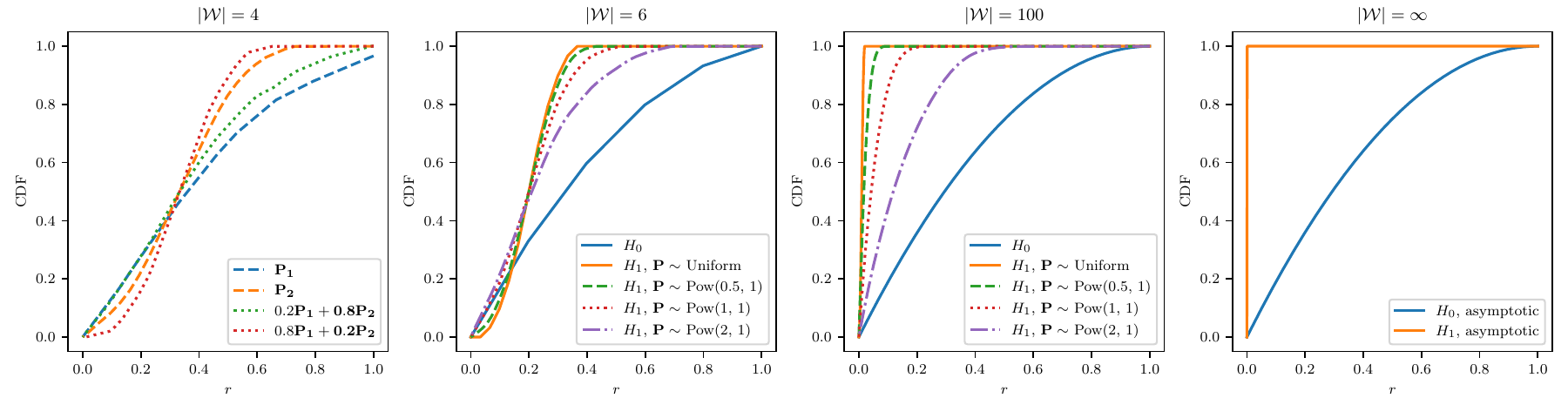}
\caption{Exact CDFs of $\Ydif_t$ under hypotheses $H_0$ and $H_1$, different NTP distributions $\bP_t$, and different vocabulary size $|\Voca|$.
The first panel shows $F_{1,\bP}^{\dif}(r)$ is not convex in $\bP$ for most values of $r \in [0, 1]$.
Here $\bP_1=(0.1,0.2,0.3,0.4)$ and $\bP_2=(0.8, 0.2, 0, 0)$.
The remaining three panels consider $|\Voca|=6$ (left column), $|\Voca|=100$ (middle column), and $|\Voca|=\infty$ (right column).
We say $\bP_t$ is $\mathrm{Pow}(a, b)$ if $P_{t,\token} \propto (\token+b)^{-a}$ and $\bP_t$ is uniform if $P_{t,\token} = \frac{1}{|\Voca|}$ for any $\token \in \Voca$.
}
\label{fig:inverse-pdf}
\end{figure}

Figure~\ref{fig:inverse-pdf} illustrates the CDF $F_{1, \bP_t}^{\dif}$ under different hypotheses, token distributions, and vocabulary sizes.
We observe that when $|\Voca|=100$, $F_{0}^{\dif}$ already align with its asymptotic null CDF well. 
Considering that in practice $|\Voca|$ typically ranges from $3 \times 10^4$ to $5 \times 10^4$, we can confidently replace the finite-vocabulary null CDF$F_{0}^{\dif}$ with the asymptotic one, that is $1-(1-r)_{[0, 1]}^2$. This stance contrasts with what \citep{piet2023mark} suggests, implying that the null hypothesis distribution is not that complex to compute from an asymptotic perspective. This closed-form distribution enables us to explicitly compute the critical value $\gamma_{n,\alpha}$ using the empirical estimate $\widehat{\gamma}_{n,\alpha}$, rather than resorting to simulation \citep{kuditipudi2023robust}.

However, whether the finite-vocabulary alternative CDF, that is, $F_{1, \bP_t}^{\dif}$, converges to its limit CDF depends on whether the condition $\log|\Voca| \cdot P_{t,(1)}=o(1)$ holds. For example, if $\bP_t \sim \mathrm{Pow}(2, 1)$, then $\log |\Voca| \cdot P_{t,(1)} =\Omega(1)$, and consequently, the finite-vocabulary alternative CDF does not converge to the limit (see the purple dotted curves in Figure \ref{fig:inverse-pdf}). In contrast, if $\bP_t$ follows $\mathrm{Pow}(a, b)$ with $a < 1$, then $\log|\Voca| \cdot P_{t,(1)}=o(1)$ is satisfied, so the finite-vocabulary CDF aligns well with its limit even if $|\Voca| = 100$.

\subsection{Proof of Lemma~\ref{lem:key-prop-inverse}}
\label{proof:optimal-score-dif}

In the following, we provide the omitted proof for lemmas in Section \ref{sec:proof-propp-key-prop-inverse}, namely 
Lemma \ref{lemma:super-technical-exchange-order} and Lemma \ref{lem:closed-form-bar-L}.

\begin{proof}[Proof of Lemma \ref{lemma:super-technical-exchange-order}]
The exchangeability is a direct consequence of Lemma \ref{lem:J}.
It suffices to fucos on $\phi_{\bP, h}(1) =\EB_{1, \bP} \re^{-h(\Ydif)}$.
We apply Lemma \ref{lem:J} by setting $J(x, y) = \exp(- h(|x-y|))$. It is easy to check this particular choice of $J$ is Lipschitz-continuous given that $h$ is Lipschitz-continuous.
Then for any $\bP \in \TPM$, we know that
\[
\lim_{|\Voca|\to \infty} \phi_{\bP, h}(1)
\]
exists and depends only on $\Delta$ and $h$.
The uniform convergence in Lemma \ref{lem:J} implies that we must have
\begin{equation}
\label{eq:uniform-convergence-h}
\lim_{|\Voca| \to \infty} \sup_{\Delta' \ge \Delta} \sup_{\bP \in \TPMo}
\left|\phi_{\bP, h}(1)  - \lim_{|\Voca|\to \infty} \phi_{\bP, h}(1) \right| = 0.
\end{equation}
As a result, we must have
\begin{align*}
\lim_{|\Voca| \to \infty} 
&\left|\sup_{\Delta' \ge \Delta} \sup_{\bP \in \TPMo}\phi_{\bP, h}(1) - \sup_{\Delta' \ge \Delta} \sup_{\bP \in \TPMo}\lim_{|\Voca|\to \infty} \phi_{\bP, h}(1) \right| \\
&\le \lim_{|\Voca| \to \infty} \sup_{\Delta' \ge \Delta} \sup_{\bP \in \TPMo}
\left|\phi_{\bP, h}(1)  - \lim_{|\Voca|\to \infty} \phi_{\bP, h}(1) \right| = 0.
\end{align*}
Therefore, it follows that
\begin{equation}
\label{eq:explain_exchange_1}
\lim_{|\Voca| \to \infty} 
\sup_{\Delta' \ge \Delta} \sup_{\bP \in \TPMo}\phi_{\bP, h}(1) = \sup_{\Delta' \ge \Delta} \sup_{\bP \in \TPMo}\lim_{|\Voca|\to \infty} \phi_{\bP, h}(1).
\end{equation}

Note that by the uniform convergence \eqref{eq:uniform-convergence-h}, we have for any fixed $\Delta'$,
\[
\lim_{|\Voca| \to \infty} \sup_{\bP \in \TPMo}
\left|\phi_{\bP, h}(1)  - \lim_{|\Voca|\to \infty} \phi_{\bP, h}(1) \right| = 0
\]
so that 
\begin{equation}
\label{eq:exchange-sup-lim}
\lim_{|\Voca|\to \infty} \sup_{\bP \in \TPMo} \phi_{\bP, h}(1) = \sup_{\bP \in \TPMo}\lim_{|\Voca|\to \infty} \phi_{\bP, h}(1).
\end{equation}
The equations \eqref{eq:explain_exchange_1} and \eqref{eq:exchange-sup-lim} imply that
\[
\lim_{|\Voca| \to \infty} \sup_{\Delta' \ge \Delta} \sup_{\bP \in \TPMo}\phi_{\bP, h}(1)
=\sup_{\Delta' \ge \Delta}  \lim_{|\Voca|\to \infty} \sup_{\bP \in \TPMo} \phi_{\bP, h}(1).
\]
Given that the limit exists, the $\limsup$ also exists and equals the limit.
\end{proof}

\begin{proof}[Proof of Lemma \ref{lem:closed-form-bar-L}]
As implied in the proof of Lemma \ref{lemma:super-technical-exchange-order}, we can replace $\limsup$ with $\lim$.
By Theorem \ref{thm:asymptotic-dif}, we already have
\[
\lim_{|\Voca| \to \infty} \EB_0 h(\Ydif) = \int_{[0, 1]} h(r) F_{\dif, 0} (\rd r).
\]
It then suffices to find the expression of $\lim\limits_{|\Voca| \to \infty} \phi_{\bP, h}(1)$.

Note that for any $\bP \in \TPM$, it follows from integration by parts that
\begin{equation}
\label{eq:help-phi}
\phi_{\bP, h}(1) 
= \int_{0}^1 \re^{- h(r)} F_{1, \bP}^{\dif}(\rd r)
= \re^{- h(1)} +  \int_0^1 
F_{1, \bP}^{\dif}(r) \re^{- h(r)} h (\rd r),
\end{equation}
where $F_{1, \bP}^{\dif}$ is the alternative CDF introduced in Lemma \ref{lem:exact-CDF-dif}.
Theorem \ref{thm:asymptotic-dif} implies the following weak convergence: for each $r \in [0, 1]$ and $\bP \in \TPM$, as $|\Voca| \to \infty$,
\[
F_{1, \bP}^{\dif}(r) \overset{d}{\to} F_{\dif, \Delta}(r)   
\]
where $F_{\dif, \Delta}(r) = 1 - (1-\frac{r}{1-\Delta})_{[0, 1]}^2$ is the limit CDF under $H_1$.
The weak convergence holds uniformly in the sense that
\begin{equation}
\label{eq:uniform-convergence-F}
\lim_{|\Voca|\to \infty}
\sup_{r \in [0, 1]}
|F_{\dif, \Delta}(r) - F_{1, \bP}^{\dif}(r)| = 0.
\end{equation}
This is because weak convergence in probability implies uniform convergence in cumulative distribution functions (see Lemma \ref{lem:uniform-convergence}).
\begin{lem}
\label{lem:uniform-convergence}
Let $\mu_n$ and $\mu$ be probability measures on $\RB$ with CDFs given by $F_n$ and $F$ respectively.
If $\mu_n$ weakly converges to $\mu$ and $F$is continuous, then $F_n$ converges uniformly to $F$ on $\RB$.
\end{lem}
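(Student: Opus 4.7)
The plan is to invoke the classical Pólya-type argument that upgrades pointwise convergence at continuity points to uniform convergence whenever the limit CDF is continuous. First I would translate the weak convergence hypothesis into the statement that $F_n(x) \to F(x)$ at every continuity point of $F$; since $F$ is continuous on all of $\RB$ by assumption, this immediately gives pointwise convergence $F_n(x) \to F(x)$ for every $x \in \RB$.

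Next, I would exploit the three key structural features of $F$: it is continuous, monotone non-decreasing, and satisfies $F(-\infty) = 0$ and $F(+\infty) = 1$. Fix $\epsilon > 0$. Choose $a < b$ such that $F(a) < \epsilon$ and $F(b) > 1 - \epsilon$. Since $F$ is uniformly continuous on the compact interval $[a, b]$, I can select a finite grid $a = x_0 < x_1 < \cdots < x_K = b$ with $F(x_{i+1}) - F(x_i) < \epsilon$ for each $i$. By the pointwise convergence already established, there exists $N$ such that $|F_n(x_i) - F(x_i)| < \epsilon$ for all $n \ge N$ and all $0 \le i \le K$.

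Finally, I would combine monotonicity with the grid bound. For any $x \in [x_i, x_{i+1}]$, monotonicity of $F_n$ and $F$ gives
\[
F_n(x) - F(x) \le F_n(x_{i+1}) - F(x_i) \le \bigl[F(x_{i+1}) - F(x_i)\bigr] + \bigl[F_n(x_{i+1}) - F(x_{i+1})\bigr] < 2\epsilon,
\]
and symmetrically $F(x) - F_n(x) < 2\epsilon$, so $\sup_{x \in [a,b]}|F_n(x) - F(x)| < 2\epsilon$. For the tails, if $x < a$ then $0 \le F_n(x) \le F_n(a) < F(a) + \epsilon < 2\epsilon$ and $0 \le F(x) \le F(a) < \epsilon$, so $|F_n(x) - F(x)| < 2\epsilon$; an analogous bound handles $x > b$. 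This yields $\sup_{x \in \RB}|F_n(x) - F(x)| \le 2\epsilon$ for all $n \ge N$, which is the desired uniform convergence.

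The only potential obstacle is the tail handling — one must pick the grid endpoints $a, b$ before invoking pointwise convergence so that the $\epsilon$-cushions on both the $F$ and $F_n$ sides remain controlled uniformly in $x$ — but this is routine since $F$ has genuine limits at $\pm\infty$ and $F_n$ is sandwiched between $0$ and $1$.
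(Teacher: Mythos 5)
Your proof is correct. This is Pólya's theorem, and the paper states Lemma~\ref{lem:uniform-convergence} without proof, treating it as a classical fact; your argument is exactly the standard one. Two small points worth noting in your writeup: the chain in the penultimate step should explicitly invoke monotonicity of $F_n$ (to get $F_n(x)\le F_n(x_{i+1})$) and of $F$ (to get $F(x)\ge F(x_i)$), which you do implicitly, and it is important, as you correctly do, that the tail endpoints $a,b$ coincide with grid points $x_0,x_K$ so that the pointwise bound $|F_n(a)-F(a)|<\epsilon$ is available for the tail estimate; with that observation the argument closes cleanly.
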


Then, by \eqref{eq:exchange-sup-lim}, \eqref{eq:help-phi}, and \eqref{eq:uniform-convergence-F}, it follows that
\[
\lim_{|\Voca| \to \infty}\sup_{\bP \in \TPM}
\phi_{\bP, h}(1) =
\lim_{|\Voca| \to \infty}
\phi_{\bP, h}(1) 
= \re^{- h(1)} +  \int_0^1 
F_{\dif, \Delta}(r) \re^{- h(r)} h (\rd r)
=\int_0^1   \re^{- h(r)}F_{\dif, \Delta}(\rd r)
\]
where the last equation uses the integration by parts.
\end{proof}

\subsection{Optimal Threshold Value}

We set $\delta=\re^{-1}$ for the indicator score function $\hind$ because this particular value $\re^{-1}$ maximizes its $\Rlimit$-efficiency when $\Delta \to 1$ as the following lemma shows.

\begin{lem}\label{lem:ind_optim}
\[
\re^{-1} = \arg\max\limits_{\delta \in [0, 1]} \lim_{\Delta \to 1} (1-\Delta)\Rlimit(\hind(\Yars)).
\]
\end{lem}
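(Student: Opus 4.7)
The plan is to reduce the statement to an elementary one-variable optimization by first writing the $\FPM$-efficiency of $\hind$ in closed form, then extracting its leading-order behavior as $\Delta\to 1$.

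First I would apply Lemma~\ref{lem:worst-MGF}: since $\hind(r)=\1_{\{r\ge\delta\}}$ is non-decreasing, the worst NTP distribution in $\FPM$ for the Gumbel-max MGF is the least-favorable $\bP_\Delta^{\star}$, and therefore the relevant efficiency equals $R_{\bP_\Delta^{\star}}(\hind)$. Combining this with the closed-form expression in Proposition~\ref{prop:efficiency} (Table~\ref{tab:homo-efficiency}) gives
\begin{equation*}
R_{\FPM}(\hind)=\delta\log\frac{\delta}{F_\Delta(\delta)}+(1-\delta)\log\frac{1-\delta}{1-F_\Delta(\delta)},
\end{equation*}
where $F_\Delta(r):=F_{1,\bP_\Delta^{\star}}(r)=\tilde\Delta\,r^{1/(1-\Delta)}+(1-\tilde\Delta)\,r^{1/(1-\tilde\Delta)}$ with $\tilde\Delta=(1-\Delta)\lfloor 1/(1-\Delta)\rfloor$, as read off from Lemma~\ref{lem:dis-r} applied to $\bP_\Delta^{\star}$ in~\eqref{eq:optimal-P}.

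Next I would analyze the asymptotics of $F_\Delta(\delta)$ as $\Delta\to 1$. Since $1-(1-\Delta)<\tilde\Delta\le 1$, we have $\tilde\Delta\to 1$ and $1/(1-\tilde\Delta)\ge 1/(1-\Delta)\to\infty$, so for any fixed $\delta\in(0,1)$ the second summand of $F_\Delta(\delta)$ is $o(\delta^{1/(1-\Delta)})$, giving $F_\Delta(\delta)=(1+o(1))\,\delta^{1/(1-\Delta)}\to 0$ and, crucially,
\begin{equation*}
\log F_\Delta(\delta)=\frac{\log\delta}{1-\Delta}+o\!\left(\frac{1}{1-\Delta}\right).
\end{equation*}
I would then substitute this into the expression for $R_{\FPM}(\hind)$ and multiply by $(1-\Delta)$. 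The first term contributes
\begin{equation*}
(1-\Delta)\delta\log\delta-(1-\Delta)\delta\log F_\Delta(\delta)\;\longrightarrow\;-\delta\log\delta,
\end{equation*}
while the second term is $(1-\delta)\log\frac{1-\delta}{1-F_\Delta(\delta)}\to(1-\delta)\log(1-\delta)$, a bounded quantity, so after multiplying by $(1-\Delta)$ it vanishes. This yields the clean limit
\begin{equation*}
\lim_{\Delta\to 1}(1-\Delta)\,R_{\FPM}(\hind)=-\delta\log\delta.
\end{equation*}

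Finally, the optimization $\max_{\delta\in[0,1]}(-\delta\log\delta)$ is a textbook calculation: setting the derivative $-\log\delta-1$ to zero gives $\delta=\re^{-1}$, and the second-derivative test (or concavity of $-\delta\log\delta$) confirms this is the unique maximizer, with maximal value $\re^{-1}$. This establishes the lemma. I anticipate the main obstacle to be the careful handling of the ``small remainder'' token in $\bP_\Delta^{\star}$: one must check that the $(1-\tilde\Delta)\,r^{1/(1-\tilde\Delta)}$ term does not disturb the leading asymptotics, and that the convergence is uniform enough on compact subsets of $\delta\in(0,1)$ to justify interchanging limit and argmax. This is essentially the only delicate step; the rest is direct calculation.
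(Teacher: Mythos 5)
There is a genuine gap: you have computed the wrong efficiency functional. The lemma is stated about $\Rlimit(\hind(\Yars))$, where $\Rlimit(h)=\liminf R_{\SPM}(h)$ is the limiting efficiency obtained by first restricting to the belief class $\SPM=\{\bP: P_{(1)}\le 1-\Delta,\ \Psecond\le\eps_{|\Voca|}\}$ and then sending $|\Voca|\to\infty$ with $\log|\Voca|\cdot\eps_{|\Voca|}\to 0$. Your proof instead evaluates $R_{\FPM}(\hind)$ for the $\Delta$-regular class and its least-favorable point $\bP_\Delta^\star$. These are different quantities, with genuinely different worst-case CDFs: under $\FPM$ the worst $\bP$ is $\bP_\Delta^\star$ with $\lfloor 1/(1-\Delta)\rfloor$ spikes of height $1-\Delta$, giving $F_\Delta(r)=\tilde\Delta r^{1/(1-\Delta)}+(1-\tilde\Delta)r^{1/(1-\tilde\Delta)}$; under the $\Rlimit$ construction the asymptotic CDF under $H_1$, given by Lemma~\ref{lem:asymptotic-ars}, is $(1-\Delta)r^{1/(1-\Delta)}+\Delta\,\1_{\{r=1\}}$, which has a single spike and an atom of mass $\Delta$ at $r=1$. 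The resulting efficiency expressions $R_{\FPM}(\hind)$ and $\Rlimit(\hind)$ differ for every finite $\Delta$ (e.g.\ at $\Delta=1/2$, $\delta=\re^{-1}$, one finds $R_{\FPM}\approx 0.17$ versus $\Rlimit\approx 0.38$), so your intermediate formula is simply not the quantity the lemma refers to.

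The paper's proof goes through $\Rlimit$ directly: it uses Lemma~\ref{lem:asymptotic-ars} to obtain the limiting CDF and plugs $F(\delta)=(1-\Delta)\delta^{1/(1-\Delta)}$ into the indicator-efficiency formula, yielding $\Rlimit(\hind(\Yars))=\delta\log\bigl(\delta^{-\Delta/(1-\Delta)}/(1-\Delta)\bigr)+(1-\delta)\log\bigl((1-\delta)/(1-(1-\Delta)\delta^{1/(1-\Delta)})\bigr)$, and then takes the $\Delta\to 1$ limit to get $-\delta\log\delta$. Your calculation happens to arrive at the same limit $-\delta\log\delta$, because in both cases the dominant behavior as $\Delta\to 1$ is governed by the $r^{1/(1-\Delta)}$ factor and the coefficient ($\tilde\Delta$ vs.\ $1-\Delta$) only enters through a $\log$ that is killed by the $(1-\Delta)$ prefactor. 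But this coincidence is unproved and not observed in your write-up; as written, your argument establishes $\re^{-1}=\arg\max_\delta\lim_{\Delta\to 1}(1-\Delta)R_{\FPM}(\hind)$, which is a different statement. To actually prove the lemma you should either replace $F_\Delta$ by the asymptotic CDF from Lemma~\ref{lem:asymptotic-ars} and redo the limit (the paper's route), or add a lemma showing that $(1-\Delta)R_{\FPM}(\hind)$ and $(1-\Delta)\Rlimit(\hind)$ share the same $\Delta\to1$ limit.
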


To prove this lemma, we first derive the limit distribution of $\Yars$ under the same setting in Section \ref{sec:inverse}.
\begin{lem}[Asymptotics of diverging $|\Voca|$]
\label{lem:asymptotic-ars}
Recall the auxiliary belief class:
\[
\TPM=\left\{\bP: \max_{\token \in \Voca} P_{\token}=1-\Delta, \log |\Voca|\cdot P_{(2)} \le \varepsilon_{|\Voca|}\right\},
\]
where $P_{(2)}$ is the second largest probability in $\bP$.
It follows that
\[
\lim_{|\Voca| \to \infty} \sup_{\Delta \in [0, 1-\frac{1}{|\Voca|}]}
\sup_{\bP\in\TPM} |F_{1, \bP}(r)-(1-\Delta)r^{1/(1-\Delta)}-\Delta\1_{\{r=1\}}|= 0
~~\text{for any}~~ r \in [0, 1].
\]
\end{lem}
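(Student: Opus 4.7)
The plan is to leverage the explicit CDF formula from Lemma~\ref{lem:dis-r} and exploit the smallness of the non-top probabilities enforced by the constraint on $P_{(2)}$ in $\TPM$. Writing $F_{1,\bP}(r) = \sum_{\token \in \Voca} P_\token r^{1/P_\token}$, I would isolate the tokens attaining the maximum (there are $\tau \ge 1$ of them, each with $P_\token = 1-\Delta$) from the remaining tokens:
\[
F_{1,\bP}(r) = \tau(1-\Delta) r^{1/(1-\Delta)} + \sum_{\token : P_\token < 1-\Delta} P_\token r^{1/P_\token}.
\]
Comparing against the target $(1-\Delta) r^{1/(1-\Delta)} + \Delta \1_{\{r=1\}}$ reduces the problem to bounding the residual
\[
R(\bP, r) := (\tau - 1)(1-\Delta) r^{1/(1-\Delta)} + \sum_{\token: P_\token < 1-\Delta} P_\token r^{1/P_\token} - \Delta \1_{\{r=1\}}.
\]
The case $r = 1$ is immediate since both $F_{1,\bP}(1)$ and $(1-\Delta)+\Delta$ equal $1$, so $R(\bP,1) = 0$.

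For $r \in [0, 1)$, the core observation is that every $P_\token$ appearing in the tail sum satisfies $P_\token \le P_{(2)} \le \eps_{|\Voca|}/\log|\Voca|$, so $1/P_\token \ge \log|\Voca|/\eps_{|\Voca|}$ and hence $r^{1/P_\token} \le r^{\log|\Voca|/\eps_{|\Voca|}}$. Factoring this bound out of the tail sum and using $\sum P_\token \le 1$ gives
\[
\sum_{\token: P_\token < 1-\Delta} P_\token r^{1/P_\token} \le r^{\log|\Voca|/\eps_{|\Voca|}},
\]
a bound independent of both $\bP$ and $\Delta$. Whenever $\tau \ge 2$, by definition $P_{(2)} = 1-\Delta$, which forces $1-\Delta \le \eps_{|\Voca|}/\log|\Voca|$; the same bound then applies to the first piece since $r^{1/(1-\Delta)} \le r^{\log|\Voca|/\eps_{|\Voca|}}$ and $(\tau-1)(1-\Delta) \le 1$. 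Combining, $|R(\bP, r)| \le 2 r^{\log|\Voca|/\eps_{|\Voca|}}$. Under the standing asymptotic convention (consistent with the paper's other usage $\log|\Voca| \cdot \eps_{|\Voca|} \to 0$), the exponent $\log|\Voca|/\eps_{|\Voca|} \to \infty$, so $r^{\log|\Voca|/\eps_{|\Voca|}} \to 0$ for each fixed $r \in [0, 1)$, uniformly in $\bP \in \TPM$ and $\Delta \in [0, 1-1/|\Voca|]$.

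The main obstacle is purely bookkeeping rather than analytic: the proof must cleanly handle the multiplicity $\tau$ of the maximum (which may exceed one precisely when $\Delta$ is near $1$), and it must confirm that the $r^{\log|\Voca|/\eps_{|\Voca|}}$ decay survives taking the double supremum. Both are dispatched by the single inequality $r^{1/P_\token} \le r^{1/P_{(2)}}$ combined with the $\TPM$-constraint on $P_{(2)}$; once this factorization is in place, the rest of the argument is elementary.
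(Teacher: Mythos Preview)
Your proposal is correct and follows essentially the same approach as the paper: isolate the top-probability token(s) in the explicit CDF formula $F_{1,\bP}(r)=\sum_\token P_\token r^{1/P_\token}$, then show the remaining tail vanishes by exploiting $P_{(2)}\le \eps_{|\Voca|}/\log|\Voca|$. Your treatment is in fact slightly more careful than the paper's---you explicitly handle the case of multiple maximizers $\tau\ge 2$ (which the paper glosses over by writing $\sum_{\token\ne\argmax_j P_j}$), and your tail bound via $\sum P_\token\le 1$ is marginally cleaner than the paper's cruder $P_{(2)}|\Voca|\,r^{1/P_{(2)}}$ estimate, but the core idea is identical.
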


\begin{proof}[Proof of Lemma \ref{lem:asymptotic-ars}]

First, consider the case $r\in[0,1)$. 
Note that the CDF of $\Yars$ satisfies that 
\begin{equation*}
\begin{aligned}
F_{1, \bP}(r)&=\sum_{\token\in\Voca}P_\token r^{1/P_\token}=(1-\Delta)r^{1/(1-\Delta)}+\sum_{\token\ne\argmax_jP_j}P_\token r^{1/P_\token}.
\end{aligned}
\end{equation*}
Moreover, since $P_{(2)}\log|\Voca|\to0$, we have
\[
\biggl|\sum_{\token\ne\argmax_jP_j}P_\token r^{1/P_\token}\biggr|\le P_{(2)}|\Voca|r^{1/P_{(2)}}=P_{(2)}\exp\biggl\{\frac{1}{P_{(2)}}\log r+\log|\Voca|\biggr\}\to0
\]
for any fixed $r\in[0,1)$. For $r=1$,  
\[
\sum_{\token\ne\argmax_jP_j}P_\token r^{1/P_\token}=\sum_{\token\ne\argmax_jP_j}P_\token =\Delta.
\]
Then our claim holds.  
\end{proof}

With Lemma \ref{lem:asymptotic-ars}, we can prove Lemma \ref{lem:ind_optim}.

\begin{proof}[Proof of Lemma \ref{lem:ind_optim}]
For any fixed $\Delta$, it follows from Lemma \ref{lem:asymptotic-ars} that
\[
\Rlimit(\hind(\Yars))=\delta\log\biggl(\frac{\delta^{-\frac{\Delta}{1-\Delta}}}{1-\Delta}\biggr)+(1-\delta)\log\biggl(\frac{1-\delta}{1-(1-\Delta)\delta^{\frac{1}{1-\Delta}}}\biggr).
\]
By direct calculation, we have
\[
\lim_{\Delta\to1}(1-\Delta)\Rlimit(\hind(\Yars))=-\delta\log\delta.
\]
Let $f(\delta) = -\delta \log \delta$.
Then $f'(\delta)=-\log\delta-1$ and $f{''}(\delta)=-1/\delta<0$ on $\delta\in[0,1]$. It means that $f(\delta)$ is maximized at $\delta=1/e$.
\end{proof}

\section{Details of Experiments}
\label{append:experiments}

\subsection{Simulation Setup}
\label{sec:exp-setup}

\paragraph*{Choice of pseudorandom variable}

We employ a context window of size $m=5$, allowing the randomness variable $\xi_t = \AM(s_{(t-m):(t-1)}, \Key)$ to depend on the previous $m$ tokens. For the hash function $\AM$, at each step $t$, we concatenate the $m$ preceding tokens with $\Key$, that is, $(\token_{(t-m):(t-1)}, \Key)$, to generate a random seed. This seed is then used with a pseudo-random number generator, specifically the PCG-64 generator \citep{o2014pcg}, which is the default option in Python's \textsf{NumPy} package \citep{harris2020array}.

\paragraph*{Computation of critical values}

Note that $\EB_0\hoptdif(\Ydif) = - \infty$, indicating that the central limit theorem cannot be applied to calculate the critical value for $\hoptdif(\Ydif)$. To determine the critical value for $\hoptdif$, we resort to simulation. For each $n$, we generate $n$ i.i.d. copies of $\hoptdif(\Ydif_t)$ and calculate the sum $\sum_{t=1}^n \hoptdif(\Ydif_t)$. This procedure is replicated 500 times, using the empirical $1-\alpha$ quantile as an initial estimate. To enhance the precision of this estimate, we repeat the process 10 times and average these ten initial estimates to establish the final critical value. For other score functions, the central limit theorem allows us to estimate the critical value as:
\[
\hat{\gamma}_{n,\alpha} = n \cdot \EB_0 h (Y) +  \Phi^{-1}(1-\alpha) \cdot \sqrt{n \cdot \Var_0(h(Y))}.
\]
\new{
This estimate is consistent in the sense that we have $\lim_{n \to \infty} \gamma_{n, \alpha}/\hat{\gamma}_{n, \alpha} \to 1$ for any $\alpha \in (0, 1)$ and any score function satisfying $\EB_0 |h(Y)|^2 < \infty$.

\begin{rem}
Hash collisions occur when the computed pseudorandom numbers are not truly i.i.d. 
It makes the way using $\hat{\gamma}_{n, \alpha}$ to control Type I error become invalid.
This issue is well illustrated in Figure 1 of \cite{fernandez2023three}, where it is shown that the empirical false positive rates for common watermarks often exceed theoretical predictions. The primary reason for this discrepancy is that their pseudorandom numbers rely only on local information, that is, $\zeta_t = \AM(w_{(t-m):(t-1)}, \Key)$ in our notation. Consequently, if there exists any $t \neq t'$ such that $w_{(t-m):(t-1)} = w_{(t'-m):(t'-1)}$, then $\zeta_t = \zeta_{t'}$, leading to the same pseudorandom numbers appearing multiple times. This violates the assumption of true i.i.d. nature and thus our working hypothesis.
To mitigate this, \cite{fernandez2023three} proposed heuristic methods to address such repetitions (see their Part C in Section III). By implementing these methods, they were able to improve Type I error control, as demonstrated in the rightmost panel of Figure 1.

However, in our paper, we did not consider the issue of hash collisions, as this issue is orthogonal to the main results of our paper, and addressing it is beyond the scope of our current work. Instead, our approach assumes $\zeta_t = \AM(w_{1:(t-1)}, \Key)$, which effectively corresponds to the case where $m=\infty$. Given that the hash function $\AM$ is highly sensitive to its inputs, even slight differences in these inputs result in outputs that are statistically independent. Therefore, as long as $t \neq t'$, we have $w_{1:(t-1)} \neq w_{1:(t'-1)}$, ensuring that $\zeta_t \perp \zeta_{t'}$. This implies that our pseudorandom numbers are truly i.i.d., allowing us to confidently use $\hat{\gamma}_{n,\alpha}$ to control the Type I error.

\end{rem}
}

\paragraph*{Additional results for other choices of \texorpdfstring{$b$}{b}}
Additional results are compiled in Figure~\ref{fig:inhomo-simu-appendix}. The performance ranking is consistent with that observed in Figure \ref{fig:inhomo-simu}. Notably, as there's a higher chance for $\Delta$ to assume larger values (e.g., $\Delta \sim \UM(0.001, 0.7)$ compared to $\Delta \sim \UM(0.001, 0.1)$), fewer tokens are needed to achieve a comparable Type II error rate.

\begin{figure}[t!]
\centering
\includegraphics[width=1.0\textwidth]{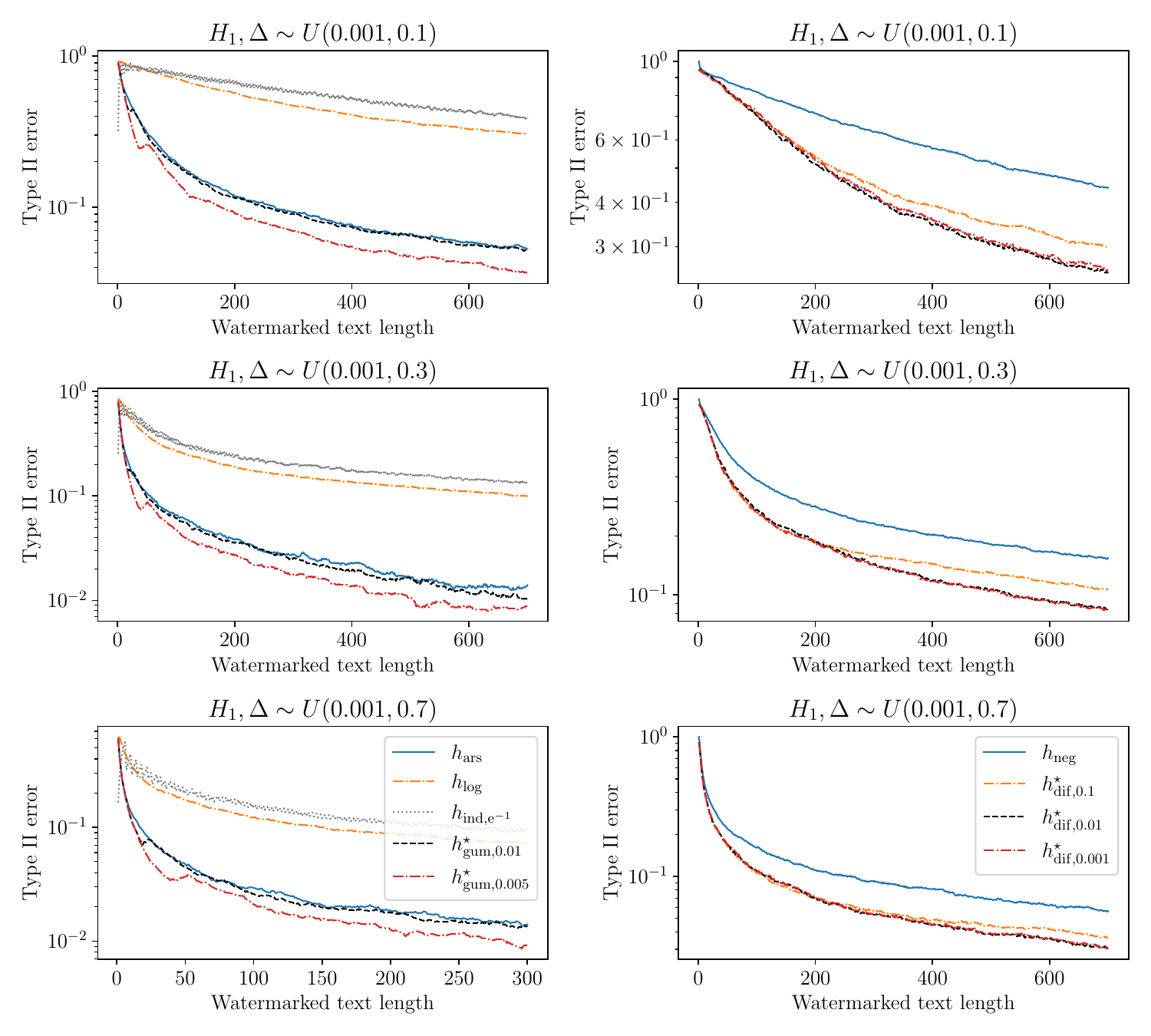}
\caption{Empirical type II errors versus the text length on simulated datasets for the Gumbel-max watermarks (left column) and inverse transform watermark (right column).
}
\label{fig:inhomo-simu-appendix}
\end{figure}

\subsection{Real-world Experimental Details}
\label{sec:real-data-detail}
\label{appen:extened-results}

\paragraph*{Experiment setup}
The experimental setup we employed is largely based on the methodology described in Appendix D.1 of \citep{kuditipudi2023robust}. 
In our approach, each generation is conditioned on a prompt which is obtained by sampling documents from the news-oriented segment of the C4 dataset \citep{raffel2020exploring}.
We enforce a minimum prompt size of 50 tokens in all experiments and skip over any document that is not long enough.
Note that retokenization may not precisely match the original tokens. Therefore, to guarantee that the verifier consistently receives at least $n$ tokens, we augment its input with special padding tokens, which vary according to the tokenizer of each model. Additionally, to mitigate the need for padding, we initially generate many buffer tokens beyond $n$. 
We set the number of buffer tokens to be 20 in every experiment.
This additional buffer typically makes padding unnecessary.

The hashing function $\AM$ is the \texttt{Skip} function from \citep{kirchenbauer2023reliability} with the window size $m=4$.
This choice is motivated by the observations of Figure 2 in \citep{kirchenbauer2023reliability} that the hash function \texttt{Skip} with context width $c=4$ has a good balance between generation diversity and adversarial robustness.
The way we use to determine critical values for different score functions mirrors the approach taken in our simulations.

\paragraph*{Extended results for the Sheared-LLaMA-2.7B model}

\begin{figure}[t!]
\centering
\includegraphics[width=\textwidth]{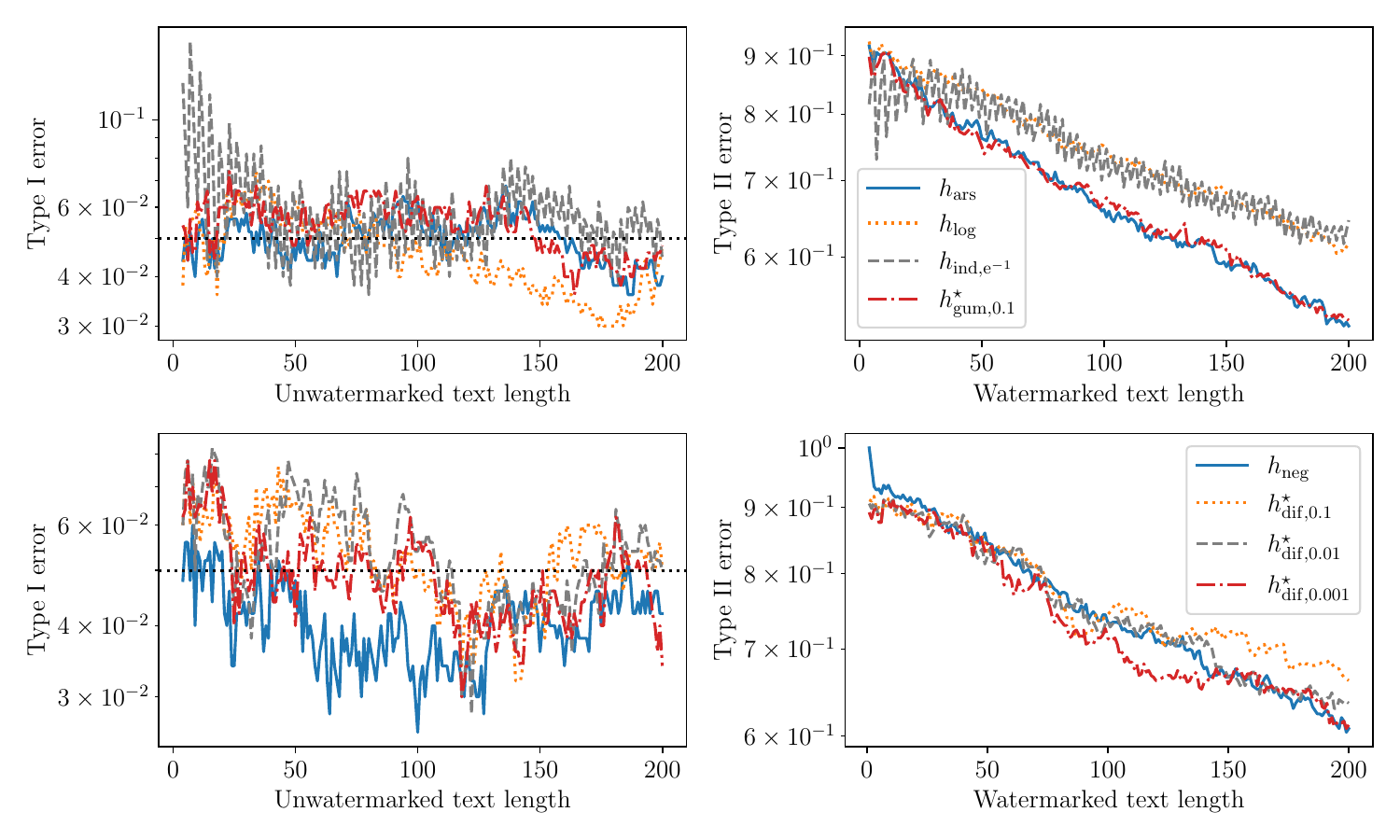}
\caption{
Type I (first column) and type II errors (second column) versus the text length for Sheared-LLaMA-2.7B. 
Each curve is averaged over 500 samples from the C4 dataset.
Top row: Gumbel-max watermark. Bottom row: Inverse transform watermark.
}
\label{fig:c4-2p7B}
\end{figure}

In our real-world experiments, we evaluate two models: OPT-1.3B \citep{zhang2022opt} and Sheared-LLaMA-2.7B \citep{xia2023sheared}. For both models, we adjust the temperature parameter to $0.1$. Due to limited space, the experimental outcomes related to Sheared-LLaMA-2.7B \citep{xia2023sheared} are documented in the appendix (refer to Figure \ref{fig:c4-2p7B}). It will become clear that most patterns observed are consistent with those found in the OPT-1.3B experiments, which are elaborated on in the main text of the paper. It is worth noting that our introduced score functions continue to exhibit comparable effectiveness with existing methods in the context of the Sheared-LLaMA-2.7B model.

\subsection{Details of Figure \ref{fig:empirical-delta}}
\label{detail:empirical-delta}
We prompted {ChatGPT-4} to generate a list of twenty open-ended and engaging questions to foster thought-provoking conversations. 
These questions, detailed below, encompass a broad spectrum of topics, including personal growth, societal challenges, and theoretical future scenarios.
Subsequently, we employed these questions as prompts, directing {ChatGPT-3.5-turbo} to respond by forecasting the next token in the sequence where the temperature parameter is set to be $1$.
 We gathered the highest probability predictions $\max_{\token} P_{t, \token}$ across various steps $t$ and different questions.
 About 5000 points of $\max_{\token} P_{t, \token}$ have been recorded, and a frequency histogram is plotted in Figure \ref{fig:empirical-delta}.

We list the considered twenty questions below.
\begin{enumerate}
    \item  What do you think are the most significant changes humanity will face in the next 50 years?
\item If you could gain one quality or ability that you currently don't have, what would it be and why?
\item What book or movie has profoundly impacted your view of the world, and in what way?
\item  If you could live in any historical period, when would it be and why?
\item  What do you believe is the most important issue facing the world today, and how would you propose we address it?
\item  If you could instantly become an expert in any subject or skill, what would it be and why?
\item  How do you think technology will affect human relationships in the future?
\item  If you could solve one unsolved mystery, which one would you choose and why?
\item  What does success mean to you, and do you feel you've achieved it?
\item If you had the power to change one law or policy in your country, what would it be and why?
\item  What's a belief you had as a child that you've completely changed your opinion on?
\item If you could have a conversation with any person from history, who would it be and what would you ask them?
\item  How do you think the concept of work will evolve in the next 100 years?
\item If you could witness any event in history, what would it be and why?
\item What's something that you've learned about yourself during a difficult time?
\item  How do you define happiness, and what makes you happy?
\item What's one piece of advice you would give to your younger self?
\item  If you could invent something that would make life easier for people, what would it be?
\item How do you think education will change in the future?
\item If humanity were to establish a colony on another planet, what do you think are the most important considerations to ensure its success?
\end{enumerate}

\new{
\subsection{Discussion on the Selection of 
\texorpdfstring{$\Delta$}{Delta}}
\label{appen:Delta-selection}

In practice, we choose $\Delta$ using prior knowledge. It is important to note that there is a trade-off when selecting the value of $\Delta$.

Assume that an exact $\gamma$-fraction of $\bP_1, \ldots, \bP_n$ belongs to $\FPM$, and we lack information about the remaining NTP distributions. Consequently, the rest are considered to be in the full class $\PM_0 = \Simplex(\Voca)$, which includes all categorical distributions over the vocabulary $\Voca$. Proposition \ref{thm:inhomo-type-II-general} indicates that $\lim\sup\limits_{n \to \infty}\PB_{H_1}(T_h(Y_{1
}) = 0)^{1/n} \le \mathrm{e}^{- \gamma R_{\FPM}(h)}$. This inequality is tight in the worst-case scenario, implying that without additional knowledge, this is the best convergence rate we can expect.

Theoretically, we view $\gamma$ as a function of $\Delta$, interpreting it as the fraction of $\Delta$-regular NTP distributions among $\bP_1, \ldots, \bP_n$. Clearly, $\gamma(\Delta)$ is a decreasing function of $\Delta$. As $\Delta$ increases, we impose a stricter condition on the largest probability in $\bP$, requiring $\max_{\token \in \Voca} P_{\token} \le 1 - \Delta$, resulting in a smaller fraction satisfying this condition. However, as shown in Figure \ref{fig:inhomo-efficiency} (or the middle panel of Figure \ref{fig:empirical-gamma}), $R_{\FPM}(h)$ is an increasing function of $\Delta$.
Therefore, the final efficiency rate, $\gamma(\Delta)R_{\FPM}(h)$, is a complex function of $\Delta$, neither strictly increasing nor decreasing. This implies a trade-off in choosing $\Delta$, and an optimal choice exists.

\begin{figure}[t!]
\centering
\includegraphics[width=\textwidth]{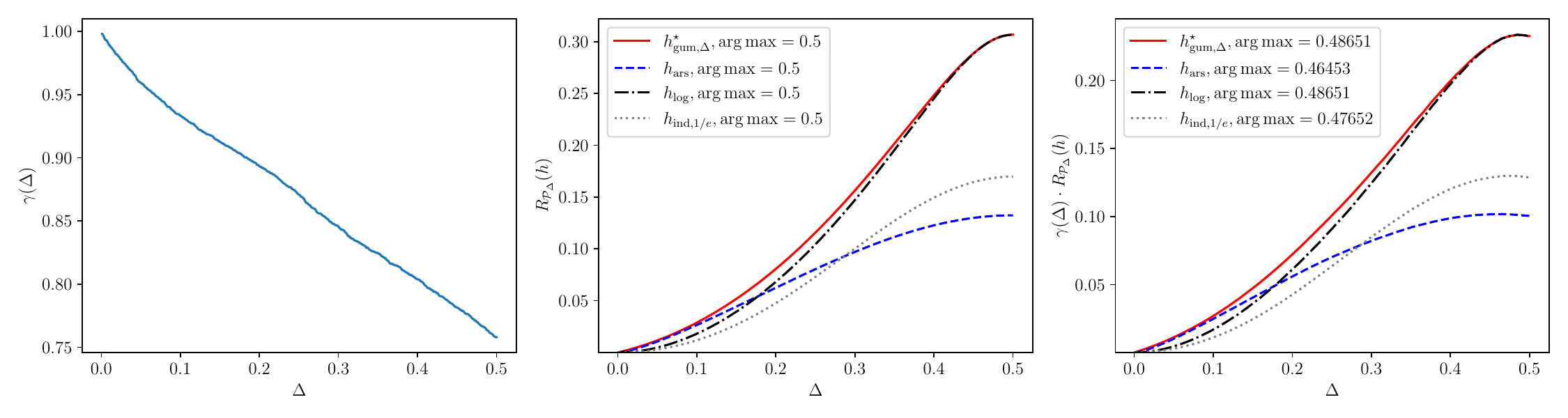}
\caption{
Illustration of $\gamma(\Delta)$ (left), $R_{\FPM}(h)$ (middle), and $\gamma(\Delta) \cdot R_{\FPM}(h)$ (right) as functions of $\Delta$. Since $\Delta$ is typically small in practice, we focus on the range $\Delta \in [0.001, 0.5]$. The experimental setup is consistent with that of Figure \ref{fig:empirical-delta}.
}
\label{fig:empirical-gamma}
\end{figure}

We support the above analysis with further empirical study, following the same setup as in Figure \ref{fig:empirical-delta} and presenting the results in Figure \ref{fig:empirical-gamma}.
Its left panel shows the empirical estimation of $\gamma(\Delta)$, which is clearly a decreasing function of $\Delta$. 
The middle panel, essentially a zoomed-in version of Figure \ref{fig:inhomo-efficiency}, is presented here within the interval $\Delta \in [0.001, 0.5]$ for reader convenience.
The $\arg\max$ denotes the value of $\Delta$ at which the value of the $y$-axis reaches its maximum. Since theoretically $R_{\FPM}(h)$ is an increasing function of $\Delta$, we find that $\arg\max \equiv 0.5$ for all score functions.
However, when we focus on the product $\gamma(\Delta) \cdot R_{\FPM}(h)$, it typically increases at first and then decreases. This can be validated by observing the value of $\arg\max$. In the right panel of Figure \ref{fig:empirical-gamma}, the computed values of $\arg\max$ for all score functions shift from $0.5$, indicating that the inclusion of $\gamma(\Delta)$ indeed impacts the final efficiency. 
We highlight that in our particular example, the effect of $\gamma(\Delta)$ seems limited, as the shift in $\arg\max$ is moderate. However, there are cases where this deviation can be significant.
Hence, choosing a good value of $\Delta$ would be a case-by-case argument, but the idea of choosing an optimal $\Delta$ would always be applicable.

}
\end{appendix}

\end{document}